\numberwithin{equation}{section}
\newcommand{\Adja}{\mathcal A}  
\newcommand{\Lapl}{\mathcal L}  
\newcommand{\Schr}{\mathcal H}  
\newcommand{\Mult}{\mathcal M}  
\newcommand{\Diag}{\mathcal D}  
\newcommand{\Tran}{\mathcal T}  
\newcommand{\IdOp}{\mathcal I}  
\newcommand{\Torus}{\mathbb T}  
\newcommand{\Ftrafo}[1]{\widehat{#1}}  
\newcommand{\invFtrafo}[1]{\widecheck{#1}}  
\newcommand{\Energy}{\mathcal E}
\renewcommand{\phi}{\varphi}  
\theoremstyle{plain}
\newtheorem{thm}{Theorem}[section]
\newtheorem{prp}[thm]{Proposition}
\newtheorem{cor}[thm]{Corollary}
\newtheorem{lem}[thm]{Lemma}
\theoremstyle{definition}
\newtheorem{defa}[thm]{Definition}
\newtheorem{problem}[thm]{Problem}
\newtheorem{rem}[thm]{Remark}
\newtheorem{assumption}[thm]{Assumption}
\newtheorem{exa}[thm]{Example}
\newtheorem*{rem*}{Remark}
\newcommand{\dd}{\mathrm{d}}
\newcommand{\ee}{\mathrm{e}}
\newcommand{\ii}{\mathrm{i}}
\newcommand{\N}{\mathbb{N}}
\newcommand{\Z}{\mathbb{Z}}
\newcommand{\Q}{\mathbb{Q}}
\newcommand{\R}{\mathbb{R}}
\newcommand{\C}{\mathbb{C}}
\newcommand{\T}{\mathbb{T}}
\newcommand{\cA}{\mathcal{A}}
\newcommand{\cD}{\mathcal{D}}
\newcommand{\cH}{\mathcal{H}}
\newcommand{\cL}{\mathcal{L}}
\newcommand{\fa}{\mathfrak{a}}
\newcommand{\eps}{\varepsilon}
\renewcommand{\phi}{\varphi}  
\newcommand{\funda}{V_0} 
\DeclareMathOperator{\one}{\mathbbm{1}}
\DeclareMathOperator{\ran}{ran}
\DeclareMathOperator{\diag}{diag}
\DeclareSymbolFont{extraup}{U}{zavm}{m}{n}
\DeclareMathSymbol{\varheart}{\mathalpha}{extraup}{86}
\DeclareMathSymbol{\vardiamond}{\mathalpha}{extraup}{87}
\title{
The curious spectra and dynamics of non-locally finite crystals
}
\author{Joachim Kerner, Olaf Post, Mostafa Sabri and Matthias T\"aufer}
\address{Lehrgebiet Angewandte Stochastik, Fakult\"at Mathematik und Informatik, FernUniversit\"at in Hagen, 58084 Hagen, Germany}
\email{joachim.kerner@fernuni-hagen.de}
\address{Fachbereich 4 – Mathematik, Universit\"at Trier, 54286 Trier, Germany}
\email{olaf.post@uni-trier.de}
\address{Science Division, New York University Abu Dhabi, Saadiyat Island, Abu Dhabi, UAE.}
\email{mostafa.sabri@nyu.edu}
\address{Lehrgebiet Analysis, Fakult\"at Mathematik und Informatik, FernUniversit\"at in Hagen, 58084 Hagen, Germany}
\email{matthias.taeufer@fernuni-hagen.de}
\subjclass[2020]{Primary 81Q10. Secondary 42A32, 42B05.}
\keywords{Periodic graphs, non-locally finite graphs, singular spectrum, speed of motion, dispersion, fractional Laplacian.}
\newlength{\temp@wc@width}
\newlength{\temp@wc@height}
\newcommand{\widecheck}[1]{%
  \setlength{\temp@wc@width}{\widthof{$#1$}}%
  \setlength{\temp@wc@height}{\heightof{$#1$}}%
  #1\hspace{-\temp@wc@width}%
  \raisebox{\temp@wc@height+2pt}[\heightof{$\widehat{#1}$}]%
     {\rotatebox[origin=c]{180}{\vbox to 0pt{\hbox{$\widehat{\hphantom{#1}}$}}}}%
}
\begin{document}

\begin{abstract}
This paper is devoted to the investigation of the spectral theory and dynamical properties of periodic graphs which are not locally finite but carry non-negative, symmetric and summable edge weights. These graphs are shown to exhibit rather intriguing behaviour: for example, we construct a periodic graph whose Laplacian has purely singular continuous spectrum. Regarding point spectrum, and different to the locally finite case, we construct a graph with a partly flat band whose eigenvectors must have infinite support. Concerning dynamical aspects, under some assumptions we prove that motion remains ballistic along at least one layer. We also construct a graph whose Laplacian has purely absolutely continuous spectrum, exhibits ballistic transport, yet fails to satisfy a dispersive estimate. This provides a negative answer to an open question in this context. Furthermore, we include a discussion of the fractional Laplacian for which we prove a phase transition in its dynamical behaviour. Generally speaking, many questions still remain open, and we believe that the studied class of graphs can serve as a playground to better understand exotic spectra and dynamics.
\end{abstract}

\maketitle

\section{Introduction}

This paper is about non-locally finite periodic graphs. The study of periodic operators --- differential or discrete --- is an old and fascinating subject in mathematics as well as physics, see, for example,~\cite{KuchmentFloquetTheory,KuchmentPeriodicOperators} and references therein.
From the physical point of view, periodic systems are ubiquitous in nature and therefore prominently arise in fields such as condensed matter physics.
As an example, the investigation of energy bands associated to a periodic Schr\"odinger operator allows to understand conductivity properties of metals.
On the other hand, due to technological advances in the last decades, new materials with exciting properties such as graphene~\cite{KP} (whose underlying periodic structure is the two-dimensional honeycomb lattice) have gained increasing attention.
As a matter of fact, many other two-dimensional periodic systems such as the Kagome lattice are now studied with great effort due to their peculiar properties, often in connection with the existence of so-called flat bands; see~\cite{KangKagome} and references therein.

In the discrete setting, beyond periodic Schr\"odinger operators on $\Z^d$ which already offer many challenges~\cite{KuchmentPeriodicOperators,Liu,Liu2}, much ongoing activity has been devoted to understanding the spectrum and dynamics of general periodic graphs which are locally finite~\cite{KorSa,KorSa2,FLM,BdMS23,SabriY-23}. It is known that the spectrum consists  of bands of absolutely continuous spectrum, and some bands may be degenerate, thus inducing an eigenvalue. The problem of characterizing these ``flat bands'' and understanding how they behave under perturbation is still open, except in special settings such as for Archimedean tilings~\cite{TaeuferPeyerimhoff, KernerTaeuferWintermayr}, see~\cite{HN,SabriY-23} for some partial results. Away from this point spectrum, wavefunctions are completely delocalized~\cite{MS} and singular continuous spectrum is absent \cite{HN}. Concerning the dynamics, the speed of motion is known to be ballistic for any initial state associated to the continuous spectrum~\cite{BdMS23}, and the spreading is ergodic~\cite{BdMS24}. Dispersion (the fact that waves flatten out with time as they spread) is understood only in particular settings~\cite{YZ20,YZ22,AS23}.

In the present paper we go beyond the locally finite setting and consider periodic graphs where a vertex may have infinitely many neighbours. In order to maintain the physical meaning that an edge encodes a probability of hopping, we assume that the edges carry positive, locally summable weights. Our main interest is to test whether known spectral and dynamical properties of locally finite crystals carry over to this more general setting, and more specifically, whether we can exhibit phenomena which are either impossible or currently out of reach in the locally finite case. Before discussing some results, let us give some background on as to why we are interested in such questions.

In the topological study of quantum walks and Hamiltonians, in the context of Kitaev's periodic table, the class of operators which naturally arises is the one of \emph{essentially local} operators \cite{CGGSVWW,CGSVWW,CGGSVWW2,CGWW,GS,CS}. The classification is best understood up to norm-continuous deformations respecting this  notion of locality, which is formulated in terms of a commutator relation, and is much weaker than \emph{strict locality} (as in locally finite graphs, or graphs with exponentially decaying weights). In principle, it may thus happen that two operators in the same class can be connected only by breaking strict locality. By focussing on periodic operators, the question that arises is whether the spectral and dynamical properties observed in the locally finite case are really good representatives of the full set of essentially local periodic Hamiltonians. In this paper, we consider the subclass of periodic graphs with summable weights, which seems to be close to the borderline where essential locality holds, cf. Proposition~\ref{prp:local}. As we will see, already in this subclass, one can observe much richer phenomena than in the locally finite case.

Similar questions have already been studied in statistical mechanics, where particle models with infinite range interactions are quite common \cite{GallavottiMiracleSole,GallavottiBook,RuelleBook}. In particular, while the $1d$ Ising model with finite range interaction is uninteresting in the sense that it does not exhibit a phase transition, the picture is different if one allows for long-range interactions. In fact, if the associated Hamiltonian is given by $H=-\sum_{i>j} J(i-j)\mu_i\mu_j$ where $J(n)\ge 0$ describes the interaction and $\mu_i \in \{-1,1\}$ refer to spin variables, then it was shown by Ruelle \cite{RuelleCMP} that no phase transition occurs if $\sum_n nJ(n)<\infty$, and by Dyson \cite{Dyson} that a phase transition \emph{exists} if $J(n)=n^{-\alpha}$ with $1<\alpha<2$; see also \cite{WraggGehring,PhysicsReportPaper} for later developments. Quite interestingly, we will encounter parallels of this situation in our analysis of periodic operators. 

Closer to our framework is the intense and ongoing investigation of the spectra and dynamics of long-range operators \cite{Han,JL,SS,GebMol,DER,LPW,Shi,JS,CRS+,PKL+,NagGarg}, which can be regarded as weighted adjacency matrices of certain non-locally finite graphs. In those references, the focus typically is on generalizing spectral and dynamical properties from the locally finite to the long-range case. This often needs significant efforts, and does not require the operator to be periodic. Most of our paper is headed in a different direction by searching for exotic properties in the special setting of periodic operators. 


Another motivation comes from the increasing interest in interpreting random matrices as weighted graphs, for example in order to understand the convergence of empirical spectral measures. The limit of these graphs, if it exists (cf.~\cite{AS,AL}), is not necessarily locally finite, one example being the Poisson weighted infinite tree (PWIT). Of course, while a random graph is far from having a perfect $\Z^d$-periodic structure, it nevertheless seems desirable to investigate the spectrum and dynamics in this ideal, non-generic framework for a better contrast.

As we alluded to previously, our main contribution with this paper is to show that in the class of periodic graphs, the transition to the non-locally finite setting leads to a host of new and unexpected spectral and dynamical phenomena. To get a taste of the kind of results presented here, let us state the following theorem; we refer to Table~\ref{tab:summary} for a more comprehensive summary of our results.

\begin{thm}\label{thm:intro}
Consider the periodic graph $\Gamma$ over $\Z$ where each $n\in \Z$ is connected to the vertex $n+k$ with edge-weight $w(k)$ for $k\neq 0$. Let $\cA_\Gamma$ be the associated adjacency matrix.
\begin{enumerate}[\rm (1)]
\item If $w(k) = k^{-2}$ for all $k$, then $\cA_\Gamma$ has purely absolutely continuous spectrum, any initial state $\psi\neq 0$ spreads out at ballistic speed, and $\cA_\Gamma$ disperses faster than $\cA_\Z$: we have $\|\ee^{-\ii t\cA_\Gamma}\psi\|_\infty \le C t^{-1/2} \|\psi\|_1$.
\item If $w(k)=k^{-2}$ for odd $k$ and $w(k)=0$ for even $k$, then $\cA_\Gamma$ has purely absolutely continuous spectrum, any initial state $\psi\neq 0$ spreads out at ballistic speed, however $\cA_\Gamma$ violates dispersion: we have $\|\ee^{-\ii t\cA_\Gamma} \delta_n\|_\infty> c>0$ at all times. In fact, $\delta_n$ starts with full mass at $n$, then splits into two tents to the right and the left, traveling at linear speed, with peaks not decaying. The peaks are located near $n\pm k$ at time $t\approx k$, and the mass decays polynomially away from the peaks.
\item If $w(k)=k^{-2}$ for odd $k$, $w(k)=2k^{-2}$ for $k\in 2\Z\setminus 4\Z$, $w(k)=0$ for $k\in 4\Z$, then $\cA_\Gamma$ has a flat band (infinitely degenerate eigenvalue), and any eigenvector corresponding to it has infinite support. It violates dispersion, and in fact, a sizable mass of the initial state $\delta_0$ stays at the origin at all times.
\end{enumerate}
\end{thm}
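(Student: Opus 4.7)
The common tool is Fourier diagonalization. Since $\cA_\Gamma$ is the convolution operator on $\ell^2(\Z)$ with the summable symmetric kernel $w$, the Fourier transform $\cF \colon \ell^2(\Z)\to L^2(\T)$ conjugates $\cA_\Gamma$ to multiplication by the real continuous symbol
$$\hat{w}(\theta) = 2\sum_{k\ge 1} w(k)\cos(k\theta).$$
All three statements reduce to reading spectral and dynamical information off $\hat{w}$, which I compute using the classical identity $\sum_{k\ge 1} k^{-2}\cos(k\theta) = \tfrac14\theta^2 - \tfrac12\pi\theta + \tfrac16\pi^2$ on $[0,2\pi]$. Straightforward algebra yields: in (1), $\hat{w}$ is a continuous piecewise quadratic function with a single corner at $\theta=0$ and $\hat{w}''\equiv 1$ in between; in (2), the quadratic contributions cancel between the odd-frequency and full series, leaving the tent $\hat{w}(\theta) = \tfrac14\pi^2 - \tfrac12\pi|\theta|$ on $[-\pi,\pi]$; and in (3), a further cancellation (from subtracting off the mod-$4$ sublattice) leaves $\hat{w}(\theta) = \tfrac38\pi^2 - \pi|\theta|$ on $|\theta|\le\pi/2$ and the constant $-\tfrac18\pi^2$ on $\pi/2\le|\theta|\le\pi$.

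The spectral claims are now immediate. A multiplication operator $M_{\hat{w}}$ on $L^2(\T)$ has purely absolutely continuous spectrum whenever $\hat{w}$ is strictly monotone on finitely many intervals with derivative non-vanishing a.e., which covers (1) and (2). In (3), the plateau $\{\hat{w}=-\pi^2/8\}$ has positive Lebesgue measure $\pi$, which is precisely the condition that $-\pi^2/8$ be an eigenvalue of infinite multiplicity of $M_{\hat{w}}$, with eigenspace $\cF^{-1}\bigl(L^2(\{\pi/2 \le |\theta|\le \pi\})\bigr)$. The infinite-support claim follows from the analyticity of Fourier transforms of finitely supported sequences: a nonzero trigonometric polynomial cannot vanish on the open set $\{|\theta|<\pi/2\}$.

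For the dynamics I pass to the Heisenberg picture. Since $\cF X \cF^{-1} = -\ii\partial_\theta$, a direct commutator computation gives $\cF\,\ee^{\ii tH} X \ee^{-\ii tH}\cF^{-1} = -\ii\partial_\theta - t\,M_{\hat{w}'}$, so
$$\lim_{t\to\infty}\frac{\|X\ee^{-\ii tH}\psi\|}{t} = \|\hat{w}'\hat\psi\|_{L^2(\T)},$$
which is positive whenever $\hat{w}'$ does not vanish a.e.\ on $\supp\hat\psi$. In (1), $\hat{w}'(\theta)=\theta-\pi$ vanishes only at a single point, and in (2) we have $|\hat{w}'|\equiv\pi/2$; in both cases every nonzero state spreads at ballistic speed. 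Dispersion in (1) follows by non-degenerate stationary phase applied to $(\ee^{-\ii tH}\delta_0)(n) = (2\pi)^{-1}\int_{-\pi}^\pi \ee^{-\ii t\hat{w}(\theta) - \ii n\theta}\dd\theta$: the critical-point equation $\hat{w}'(\theta) = -n/t$ has a unique solution where $|\hat{w}''|\equiv 1$, and the single corner at $\theta=0$ contributes only an $O(1/t)$ boundary term by integration by parts, yielding the uniform $O(t^{-1/2})$ bound.

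The failure of dispersion in (2) and (3) requires different arguments. In (2), $\hat{w}''\equiv 0$ a.e.\ kills the stationary-phase gain and the integral can be evaluated explicitly, producing two Dirichlet-type kernels concentrated near $n=\pm t\pi/2$ that take values of order one at the closest integers and decay like $|n\mp t\pi/2|^{-1}$ away from them; this reproduces the travelling-tent picture. In (3), decompose $\delta_0 = P_E\delta_0 + (1-P_E)\delta_0$ with $P_E$ the spectral projection onto $E=-\pi^2/8$; then $(P_E\delta_0)(0) = (2\pi)^{-1}|\{\pi/2\le|\theta|\le\pi\}| = 1/2$, and on the orthogonal complement the restricted symbol has non-vanishing derivative, so integration by parts in $(2\pi)^{-1}\int_{|\theta|<\pi/2} \ee^{-\ii t\hat{w}(\theta)}\dd\theta$ gives $O(1/t)$ decay at the origin. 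Hence $|(\ee^{-\ii tH}\delta_0)(0)| \to 1/2$. The main delicate points I anticipate are the piecewise bookkeeping needed to obtain the symbols in (2) and (3) cleanly, and careful handling of the corners of $\hat{w}$ in the stationary-phase step of (1); these corners are precisely what interpolates between (1) and (2), since flattening the quadratic curvature of $\hat{w}$ to a piecewise linear shape is exactly what destroys the dispersive estimate.
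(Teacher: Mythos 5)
Your overall strategy is exactly the paper's: diagonalize by Fourier series, identify the symbols (your $\hat w$ for (1), (2), (3) are, after the substitution $\theta\mapsto 2\pi\theta$, precisely the paper's Floquet functions $2\pi^2(\theta-\tfrac12)^2-\tfrac{\pi^2}{6}$, $\pi^2\lvert\tfrac12-\theta\rvert-\tfrac{\pi^2}{4}$ and $2\pi^2 c(\theta)-\tfrac{\pi^2}{8}$, and your piecewise formulas all check out), use the non-vanishing-derivative/change-of-variables criterion for purely absolutely continuous spectrum, the positive-measure plateau for the infinitely degenerate eigenvalue, the trigonometric-polynomial argument for infinite support, the formula $\lim_{t\to\infty} t^{-1}\lVert X\ee^{-\ii tH}\psi\rVert=\lVert\hat w'\hat\psi\rVert_{L^2}$ for ballistic motion (you derive it in the Heisenberg picture, the paper by differentiating the multiplier; both require $\lVert X\psi\rVert<\infty$, which you should state explicitly since the theorem as phrased allows arbitrary $\psi\neq 0$), and explicit oscillatory integrals for the dispersive claims. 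Parts (2) and (3) of the dynamics are only sketched (the order-one lower bound on the peaks in (2) requires evaluating the explicit kernel at the integer nearest the peak, including the times when the peak sits exactly on an integer, which is the actual content of the non-dispersion claim), but the computations you defer are elementary and match what the paper carries out.

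The one step that would fail as written is the dispersive bound in (1). You claim the corner of $\hat w$ at $\theta=0$ contributes ``only an $O(1/t)$ boundary term by integration by parts.'' The boundary terms produced at the corner have size $\lvert n+\pi t\rvert^{-1}$ and $\lvert n-\pi t\rvert^{-1}$ (reciprocals of the one-sided phase derivatives $n-t\hat w'(0^{\pm})$), and these are \emph{not} $O(1/t)$ uniformly in $n$: when $n\approx\mp\pi t$ the stationary point collides with the corner and the corresponding term is $O(1)$. Since the asserted estimate $\lVert\ee^{-\ii t\cA_\Gamma}\rVert_{\ell^1\to\ell^\infty}\le Ct^{-1/2}$ must be uniform over all matrix entries, the decomposition ``non-degenerate stationary phase plus corner correction'' does not close in that regime. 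The repair is what the paper does: on the fundamental domain $[0,2\pi]$ the symbol is a single global quadratic, so completing the square writes the kernel as a unimodular factor times $\int_a^b\ee^{\ii ts^2}\,\dd s$ with $n$-dependent endpoints, and the van der Corput lemma bounds this by $Ct^{-1/2}$ \emph{uniformly} in $a,b$, with no case analysis on where the critical point lies (equivalently, apply van der Corput separately to each piece on which $\lvert\partial_\theta^2\rvert$ of the phase equals $t$). With that substitution your argument is complete and coincides with the paper's proof.
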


\begin{figure}
  \begin{tikzpicture}[scale=0.40]
    \newcommand\n{16}
    \newcommand\m{16}
    \newcommand\mythickness{4.0}
    \pgfmathsetmacro{\denomi}{int(2*1*1)}
    \draw[line width = \mythickness/\denomi] (-\n- .5, 0) -- (\n + .5, 0);

    \foreach \j in {2,...,\n}  
    {
      \pgfmathsetmacro{\denomi}{int(\j*\j)}
      \draw[line width = \mythickness/\denomi] (0,0) arc (180:0:\j*.5 cm and {1.75 cm})
      node[label={[yshift=0cm]below:\tiny$\frac1{\denomi}$}]{};
      \draw[line width = 4./\denomi] (0,0) arc (0:180:\j*.5 cm and {1.75 cm})
      node[label={[yshift=0cm]below:\tiny$\frac1{\denomi}$}]{};
    }
    \foreach \i in {-\n,...,\n}
    {
      \ifthenelse{\i=1 \OR \i=-1}
      {
        \draw[thick, fill = white] (\i,0) circle (4pt)
        node[label={[yshift=-0.05cm]below:\tiny$1$}]{};
      }
      {
        \draw[thick, fill = white] (\i,0) circle (4pt);
      }
    }
    \draw[thick, fill = black] (0,0) circle (3pt);
  \end{tikzpicture}
  \caption{The graph of Theorem~\ref{thm:intro}(1); we only plot the edges to the nearest neighbour and the first edges from the black vertex to its
    infinitely many neighbours; the numbers indicate the weight of the
    corresponding edge.}
  \label{fig:3ex}
\end{figure}
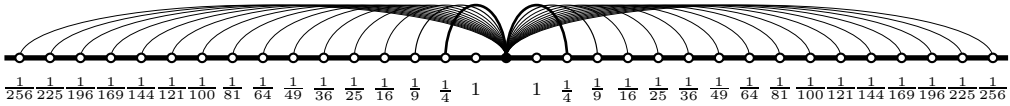

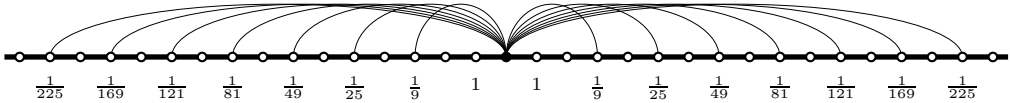
\begin{figure}
  \begin{tikzpicture}[scale=0.40]
    \newcommand\n{16}
    \newcommand\m{16}
    \newcommand\mythickness{2.0}

    \pgfmathsetmacro{\denomi}{int(1*1)}
    \draw[line width = \mythickness/\denomi] (-\n- .5, 0) -- (\n + .5, 0);

    \foreach \j in {2,...,\n}
    {
      \pgfmathsetmacro{\denomi}{int(\j*\j)}
      \ifthenelse {\isodd\j}
      {
        \draw[line width = \mythickness/\denomi] (0,0) arc (180:0:\j*.5 cm and {1.75 cm})
        node[label={[yshift=0cm]below:\tiny$\frac1{\denomi}$}]{};
        \draw[line width = \mythickness/\denomi] (0,0) arc (0:180:\j*.5 cm and {1.75 cm})
        node[label={[yshift=0cm]below:\tiny$\frac1{\denomi}$}]{};
      }
      {
      }
    }
    \foreach \i in {-\n,...,\n}
    {
      \ifthenelse{\i=1 \OR \i=-1}
      {
        \draw[thick, fill = white] (\i,0) circle (4pt)
        node[label={[yshift=0cm]below:\tiny$1$}]{};
      }
      {
        \draw[thick, fill = white] (\i,0) circle (4pt);
      }
    }
    \draw[thick, fill = black] (0,0) circle (3pt);
  \end{tikzpicture}
  \caption{The graph of Theorem~\ref{thm:intro}(2) under the same conventions.}
  \label{fig:2ex}
\end{figure}
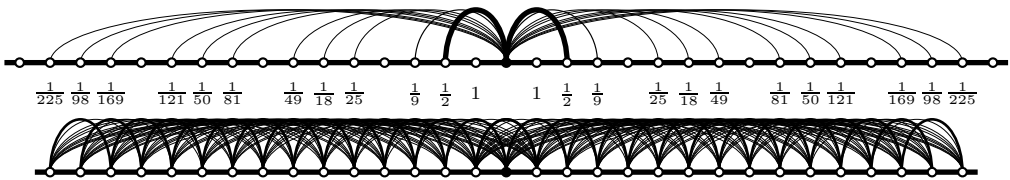
\begin{figure}
  \begin{tikzpicture}[scale=0.40]
    \newcommand\n{16}
    \newcommand\m{16}
    \newcommand\mythickness{4.0}

    \pgfmathsetmacro{\denomi}{int(2*1*1)}
    \draw[line width = \mythickness/\denomi] (-\n- .5, 0) -- (\n + .5, 0);

    \draw (2,0) [line width = \mythickness/2] (0,0) arc (180:0:2*.5 cm and {1.75 cm})
        node[label={[yshift=0.05cm]below:\tiny$\frac12$}]{};
    \draw (1,0) 
        node[label={[yshift=-0.05cm]below:\tiny$1$}]{};
    \draw (-1,0)
        node[label={[yshift=-0.05cm]below:\tiny$1$}]{};
    \draw (-2,0) [line width = \mythickness/2] (0,0) arc (180:0:-2*.5 cm and {1.75 cm})
        node[label={[yshift=0.05cm]below:\tiny$\frac12$}]{};    
    
    \foreach \j in {3,...,\n}
    {
      \ifthenelse {\isodd\j}
      {
        \pgfmathsetmacro{\denomi}{int(\j*\j)}
      }
      {
        \pgfmathsetmacro{\denomi}{int(\j*\j/2)}
      }
      
      \ifthenelse{\j=4 \OR \j=8 \OR \j=12 \OR \j=16}
      {}
      {
        \draw[line width = \mythickness/\denomi] (0,0) arc (180:0:\j*.5 cm and {1.75 cm})
        node[label={[yshift=0cm]below:\tiny$\frac1{\denomi}$}]{};
        \draw[line width = \mythickness/\denomi] (0,0) arc (0:180:\j*.5 cm and {1.75 cm})
        node[label={[yshift=0cm]below:\tiny$\frac1{\denomi}$}]{};
      }
    }
    \foreach \i in {-\n,...,\n}
    {
        \draw[thick, fill = white] (\i,0) circle (4pt);
    }
    \draw[thick, fill = black] (0,0) circle (3pt);
  \end{tikzpicture}
  \begin{tikzpicture}[scale=0.40]
    \newcommand\n{16}
    \newcommand\m{16}
    \newcommand\mythickness{4.0}

    \pgfmathsetmacro{\denomi}{int(2*1*1)}
    \draw[line width = \mythickness/\denomi] (-\n+ .5, 0) -- (\n - .5, 0);

    \foreach \k in {2,...,\n}
    {
      \foreach \j in {2,...,\k}
      {
        \ifthenelse {\isodd\j}
        {
          \pgfmathsetmacro{\denomi}{int(2*\j*\j)}
        }
        {
          \pgfmathsetmacro{\denomi}{int(\j*\j)}
        }
        \ifthenelse{\j=4 \OR \j=8 \OR \j=12 \OR \j=16}
        {}
        {
          \draw[line width = \mythickness/\denomi] (-\k+1,0)
          arc (180:0:\j*.5 cm and {1.75 cm});
          \draw[line width = \mythickness/\denomi] (\k-1,0)
          arc (0:180:\j*.5 cm and {1.75 cm});
        }
      }
    }
    \pgfmathsetmacro{\nn}{\n-1}
    \foreach \i in {-\nn,...,\nn}
    {
        \draw[thick, fill = white] (\i,0) circle (4pt);
    }
    \draw[thick, fill = black] (0,0) circle (3pt);
  \end{tikzpicture}    
  \caption{The graph of Theorem~\ref{thm:intro}(3) under the same conventions (on top). Below is an attempt to draw all edges.}
  \label{fig:1ex}
\end{figure}

Let us highlight that Theorem~\ref{thm:intro}(2) answers negatively Open Question~9 in~\cite{DMY} in the present setting of non-locally finite crystals. Theorem~\ref{thm:intro} also illustrates the rather puzzling behaviour of dispersion: starting from $\Z$, if we add edges to all vertices with $w(k)\sim k^{-2}$, we boost dispersion from $\lesssim t^{-1/3}$ to $t^{-1/2}$. If we add edges only to odd vertices, we kill dispersion, but motion stays ballistic: initial states spread out like bumps. On the other hand, the intermediate regime in (3) is very different: part of the initial state stays localized at all times.

Let us also mention that our analysis includes a discussion of the fractional Laplacian $(-\Delta)^\alpha$ for which we establish a certain phase transition: more precisely, we show that in dimension $d=1$, the motion turns from ballistic for $\alpha>\frac{1}{4}$ to super-ballistic (blowing up in finite time) for $\alpha \le \frac{1}{4}$. Interestingly, this transition does not occur in higher dimension.

\begin{table}
\begin{tabular}{|p{2.5cm}|p{5.5cm}|p{5.5cm}|}
  \hline
  Periodic graph& Locally Finite & Non-locally Finite\\
  \hline\hline
  Flat bands& Flat bands are absent if the fundamental cell is a single vertex. A spectral band is either entirely flat or a.e. non-constant. To each flat band there exists an eigenvector of compact support. & None of these properties is true, Theorem~\ref{thm:parfla}. \\
  \hline
  Top of spectrum (spec\-tral bot\-tom for Laplacian)& The top of the spectrum of a periodic Schr\"odinger operator cannot be an eigenvalue.& This remains true, Theo\-rem~\ref{thm:top}.\\
    \hline

  Regularity of eigenvalues& Eigenvalue functions are analytic a.e.& Eigenvalue functions are continuous but may be differentiable nowhere, Proposition~\ref{prp:weyl}. Edge-weight decay gives better regularity, \S~\ref{sec:eireg}.\\
    \hline

  Spectral type & Spectrum is purely absolutely continuous, with possibly a finite number of infinitely degenerate eigenvalues (flat bands). No singular continuous spectrum. & There exists a crystal whose free Laplacian has purely singular continuous spectrum, Theorem~\ref{thm:pitt}. A criterion exists for pure AC spectrum, Theorem~\ref{thm:purely_absolutely_continuous_spectrum}.\\
    \hline

  Speed of motion& Any initial state associated to the continuous spectrum spreads out at a ballistic speed.& Mean squared displacement can blow up, Proposition~\ref{prp:smalquart}. Ballistic speed remains true in some generality at least along one layer, for point masses, Corollary~\ref{cor:balayer}.\\
    \hline

  Dispersion& Proved in one dimension for $\cA_\Z+Q$, with $Q$ periodic. Also in specific periodic graphs. Expected more generally. & A graph can have purely absolutely continuous spectrum, exhibit ballistic motion, but violate dispersion (sliding tents behaviour), Theorem~\ref{thm:open}. There exists a periodic graph over $\Z$ dispersing faster than $\cA_\Z$, Proposition~\ref{prp:megadis}.\\
    \hline
Combes-Thomas estimate& Resolvent kernel has exponential off-diagonal decay.& Only polynomial decay in general, and this is sharp, Proposition~\ref{prp:ct}.\\\hline
\end{tabular}
\caption{Summary of the main results}\label{tab:summary}
\end{table}

A particularly positive aspect of our framework is that many models we construct are exactly solvable. This puts the paper among a long tradition of models \cite{AGHH,Taka} which previously helped to deepen the understanding of spectral problems.
 
Our paper is organized as follows: In \S~\ref{SectionMathBack} we formally introduce the periodic graphs and operators on which we will focus in the sequel. In \S~\ref{sec:floquet} we discuss Floquet theory and in \S~\ref{sec:loca} we address the aforementioned question of locality. Basic but important examples are constructed in \S~\ref{sec:exa}, and more details on the construction of such graphs are given in \S~\ref{sec:gen1d}. The spectral analysis starts in  \S~\ref{APartlyFlatBand}, where we discuss new phenomena related to flat bands (point spectra). In \S~\ref{Sec:regularity} we study the regularity of the Floquet eigenvalue functions. In \S~\ref{sec:sinc} we construct a periodic graph with purely singular continuous spectrum; a criterion for purely absolutely continuous spectrum appears in \S~\ref{sec:ac}. In \S~\ref{sec:speed} we show that the speed of transport remains ballistic along a layer under weak assumptions. Dispersion is discussed in \S~\ref{sec:dis}, where we finish the proof of Theorem~\ref{thm:intro} and also discuss related questions for fractional and integer powers of the standard Laplacian. Resolvent estimates are briefly discussed in \S~\ref{sec:ct}, illustrating differences to the locally-finite case. We conclude the paper in \S~\ref{sec:conc} with a discussion of several open problems.

\section{Non-Locally Finite Crystals}
\subsection{Periodic graphs and Schr\"odinger operators}\label{SectionMathBack}
In this paper we study periodic infinite graphs $\Gamma$ defined over a countable vertex set $V$.  Following~\cite[Sec.~0.1.1]{KellerLW-book}, we formally introduce (weighted) graphs (with a potential) as follows:
\begin{defa}[Graph]
	\label{def:graph}
	A \emph{graph} over a countable vertex set $V$ is a pair
        $\Gamma = (w, Q)$, consisting of a function
        $w \colon V \times V \to [0, \infty)$ such that
	\begin{itemize}
		\item
		$w(v,v') = w(v',v)$ for all $v,v' \in V$,
              \item $w(v,v) = 0$ for all $v \in V$,\footnote{If one
                  consider quotient graphs, then it is useful to allow for
                  loops at a vertex $v$, i.e.,\
                  $w(v,v)>0$. In our convention, loops are absorbed in the potential $Q$.}
		\item
		$\sum_{v' \in V} w(v,v') < \infty$ for all $v \in V$,
	\end{itemize}
	and a function $Q \colon V \to \R$.

        We call every pair $(v,v') \in V \times V$ with $w(v,v') > 0$ an \emph{edge} with \emph{edge weight} $w(v,v')$. The vertices $v$ and $v'$ are \emph{neighbours} in this case, denoted $v \sim v'$.
        
        The function $Q$ is called a \emph{potential}.
\end{defa}
For convenience, we assume the vertex set $V$ to be embedded in $\R^d$.
\begin{defa}[Crystal]
  \label{def:per-graph}
  We say that $\Gamma=(w,Q)$ is a crystal, or a \emph{$\Z^d$-periodic graph with finite fundamental domain} if the following holds:
  \begin{itemize}
  \item $\Gamma$ is connected.
  \item there is a free group action of $\Z^d$ on $V$ with \textbf{finite
    quotient}; due to the embedding in $\R^d$ we may assume that the
    group action is given by translation by linearly independent
    vectors $\fa_1,\dots,\fa_d$, i.e., the group action is then
    \begin{equation*}
      \Z^d \times V \to V, \qquad
      (k, v) \mapsto v+k_\fa,
      \quad\text{where}\quad
      k_\fa:= \sum_{i=1}^dk_i\fa_i.
    \end{equation*}
  \item $w$ and $Q$ are $\Z^d$-invariant, i.e.,
    \begin{equation*}
      w(v+k_\fa,v'+k'_\fa)=w(v,v'+k'_\fa-k_\fa)
      \qquad\text{and}\qquad
      Q(v+k_\fa)=Q(v)
      \end{equation*}
      for all $v,v' \in V$ and $k,k' \in \Z^d$.
  \item We call $d$ the \emph{dimension (of periodicity)} of the
    periodic graph $\Gamma$.
  \end{itemize}
Denoting the lattice spanned by $\fa_1,\dots,\fa_d$ by
$\Z_\fa^d = \{n_\fa:n\in\Z^d\}$, it follows that
\begin{equation}\label{eq:vertex}
  V = \funda + \Z_\fa^d \cong \funda \times \Z^d
\end{equation}
for some \textbf{finite fundamental cell} $\funda \subset V$.  Moreover, this
decomposition is unique, i.e.\ each vertex $v \in V$ can uniquely be
written as $v = v_0 + k_\fa$ for $v_0\in \funda$ and $k \in \Z^d$
which allows to bijectively identify $V$ with $\funda \times \Z^d$.
For convenience, we index the vertices in the
fundamental cell by
\begin{equation}
  \label{eq:fund.cell}
  \funda = \{ v_1, \dots, v_\nu \},
  \quad\text{where}\quad
  \nu:= |\funda|
\end{equation}

An important consequence of the invariance of the potential $Q$ and
the weight $w$ under the periodic action of the graph is that $Q$ only
takes $\nu$ possible values $Q(v_1),\dots, Q(v_{\nu})$ for
$v_i\in \funda$, which are copied over the translates of $\funda$, and
similarly, there are only $\nu$ families of weights $w(v_i,\cdot)$,
each family being infinite if the vertex $v_i$ has infinitely many
neighbours, and these weights are then periodized over $\Gamma$.
We define
\begin{equation}
\label{eq:index}
   \begin{aligned}
   I_{ij} &:= \{k\in \Z^d:v_j+k_\fa\sim v_i\},
   \\
   w_{ij}(k)
   &:=
   w(v_i, v_j + k_\fa),
   \\
   Q_i &:= Q(v_i)
 \end{aligned}
\end{equation}
for $i,j \in \{1, \dots, \nu \}$.  Note that $I_{ji}=-I_{ij}$ and $w_{ji}(k)=w_{ij}(-k)$ by the assumed symmetry and invariance of $w$.  We can think of $I_{ij}$ as the
\emph{indices} $k$ in the group
$\Z^d$ telling that vertex $v_i$ is connected with the vertex $v_j$ in
the $k$-copy of the fundamental domain.
\end{defa}

\begin{figure}[h!]
\begin{center}
\setlength{\unitlength}{1cm}
\thicklines
\begin{picture}(2,2)(-1,-2)
	 \put(-7,-1){\line(1,0){6}}
	 \put(-6,-1){\line(0,1){1}}
	 \put(-4,-1){\line(0,1){1}}
	 \put(-2,-1){\line(0,1){1}}
	 \put(-6,-2){\line(0,1){1}}
	 \put(-4,-2){\line(0,1){1}}
	 \put(-2,-2){\line(0,1){1}}
	 \put(-4,-1){\circle*{.2}}
	 \put(-4,0){\circle*{.2}}
	 \put(-6,-1){\circle*{.2}}
	 \put(-6,0){\circle*{.2}}
	 \put(-2,-1){\circle*{.2}}
 	 \put(-6,-2){\circle*{.2}}
	 \put(-4,-2){\circle*{.2}}
	 \put(-2,-2){\circle*{.2}}
	 \put(-2,0){\circle*{.2}}
 	 \put(1,0){\line(1,0){6}}
	 \put(1,-1){\line(1,0){6}}
	 \put(1,-2){\line(1,0){6}}
	 \put(2,-1){\line(0,1){1}}
	 \put(2,-2){\line(0,1){1}}
	 \put(4,-1){\line(0,1){1}}
	 \put(4,-2){\line(0,1){1}}
	 \put(6,-1){\line(0,1){1}}
	 \put(6,-2){\line(0,1){1}}
	 \put(2,0){\circle*{.2}}
	 \put(2,-1){\circle*{.2}}
	 \put(2,-2){\circle*{.2}}
	 \put(4,0){\circle*{.2}}
	 \put(4,-1){\circle*{.2}}
	 \put(4,-2){\circle*{.2}}
	 \put(6,0){\circle*{.2}}
	 \put(6,-1){\circle*{.2}}
	 \put(6,-2){\circle*{.2}}
\end{picture}
\caption{Both graphs have $d=1$ and $\nu=3$. Left: $I_{11}=I_{33}=\emptyset$, $I_{22}=\{\pm 1\}$, $I_{12}=I_{23}=\{0\}$, $I_{13}=\emptyset$. Right: $I_{ii}=\{\pm 1\}$ for each $i$, $I_{12}=I_{23}=\{0\}$, $I_{13}=\emptyset$.}\label{fig:3simp}
\end{center}
\end{figure}
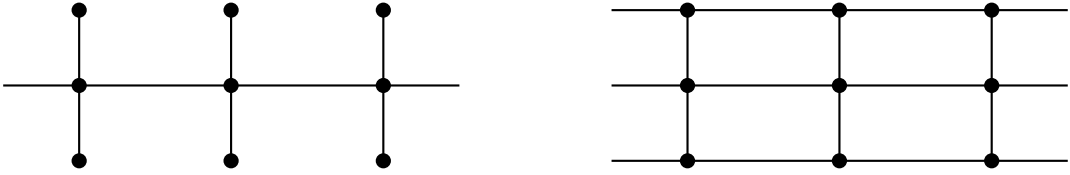

Let us now define an important property of graphs used in this article:
\begin{defa}[Non-locally finite graphs]
  A $\Z^d$-periodic graph $\Gamma$ with finite fundamental cell is \emph{non-locally finite} if
  $I_{ij}$ is \emph{infinite} for at least one pair
  $(i,j) \in \{1,\dots,\nu\}^2$, or equivalently, $w_{ij}(k) > 0$
  for \emph{infinitely} many $k \in \Z^d$. Otherwise, the graph is
  locally finite.
 \end{defa}

 Thus, a vertex can in general have infinitely many neighbours, however, let us emphasize again that throughout the article, 
 \begin{framed}
\begin{center}
The fundamental cell $V_0$ is always finite, with $\nu$ vertices. 
\end{center}
 \end{framed}
 When discussing the special case $\nu = 1$, that is graphs with exactly one vertex in the fundamental cell, we will simply write $w(k) := w_{11}(k)$. 
 \begin{exa}
The adjacency matrix of the Euclidean lattice $\Gamma=\Z^d$ has $V_0 = \{0\}$, $\nu=1$, $w(k) = 1$ if $k=\pm e_i$, and $w(k)=0$ otherwise, where $(e_i)_{i=1}^d$ is the standard basis of $\R^d$.

Each of the graphs in Theorem~\ref{thm:intro} has $V_0 = \{0\}$, $\nu=1$ and $w(k)$ as described in the theorem.
 \end{exa}
In a next step, we introduce the relevant operators, all acting on $\ell^2(V)$.
\begin{defa}[Adjacency, Laplace and Schr\"odinger operators on graphs]
  \label{def:graph.ops}
  Let $\Gamma=(w, Q)$ be a graph.
  \begin{itemize}
  \item The \emph{adjacency operator} $\Adja=\Adja_{\Gamma}$ on
    $\Gamma$ is the operator given by
    \[
      (\Adja_{\Gamma} \psi)(v)
      =
      \sum_{v' \in V}
      w(v,v')
      \psi(v').
    \]
  \item The \emph{(discrete) Laplace operator} $\Lapl=\Lapl_{\Gamma}$ on
    $\Gamma$ is the operator given by
    \[
      (\Lapl_{\Gamma} \psi)(v)
      =
      \sum_{v' \in V}
      w(v,v')
      \bigl(\psi(v)-\psi(v')).
    \]
  \item A \emph{(discrete) Schrödinger operator} on $\Gamma$ is an operator of the form
    \[
      \Schr_{\Gamma} = \Adja_\Gamma + Q
      ,
    \]
    where $Q$ is the operator multiplying with $Q$,
    i.e.\ $(Q\psi)(v)=Q(v)\psi(v)$.
  \end{itemize}
 
\end{defa}

We have
\begin{equation*}
  \Lapl_\Gamma=\Diag_\Gamma-\Adja_\Gamma,
\end{equation*}
where $\Diag_\Gamma$ is the multiplication with the (weighted) degree
function $\cD_\Gamma$ given by
\begin{equation}
  \label{eq:degree}
  \cD_\Gamma(v)=\sum_{v' \in V} w(v,v'),
\end{equation}
i.e.\  $(\Diag_\Gamma \psi)(v)=\cD_\Gamma(v)\psi(v)$.
For \emph{regular} (weighted) graphs, i.e.\  graphs such that there is
$r>0$ with $\cD_\Gamma(v)=r$ for all $v \in V$, we have
$\Lapl_\Gamma=r\IdOp-\Adja_\Gamma$ and the spectrum resp.\ all
spectral properties from one operator can be seen for the other one
and vice versa.

Note that a Schr\"odinger operator for us is ``adjacency plus
potential'' and not ``Laplacian plus potential''. We follow this convention as in \cite{DT,Liu,Liu2,LPW,AS22,BdMS23}, which changes nothing to the spectral analysis since $\cD_\Gamma$ can be regarded as a periodic potential. 

We have the following Green's
formula~\cite[Prp.1.4]{KellerLW-book}, between $\Lapl_\Gamma$ and the
so-called \emph{energy form} $\Energy_\Gamma$:
\begin{equation}
  \label{eq:dir1}
  \langle \Lapl_\Gamma \psi, \psi\rangle
  = \Energy_\Gamma(\psi,\psi)
  := \frac12\sum_{v,v' \in V} w(v,v')|\psi(u)-\psi(u')|^2.
\end{equation}
This simply follows from expanding the square modulus on the right.

Periodic operators commute with the group action, i.e.\  if $\Schr$
denotes any of the operators $\Adja_\Gamma$, $\Lapl_\Gamma$,
$\Diag_\Gamma$, $\Mult_Q$ or $\Schr_\Gamma$, and
\begin{equation*}
  \Tran_k\colon \ell^2(V)\to\ell^2(V),
  \qquad
  \Tran_k\psi(v)=\psi(v-k_\fa),
\end{equation*}
denotes the $k$-translate of $\psi$, then
\begin{equation*}
  [\Schr, \Tran_k]:=\Schr \Tran_k-\Tran_k\Schr=0
\end{equation*}
for all $k \in \Z^d$.

Using~\eqref{eq:vertex}, we identify $\ell^2(V)$ with
$\ell^2(\Z^d)^\nu$; more precisely, the map
\begin{equation*}
  \ell^2(V) \to \ell^2(\Z^d)^\nu, \qquad
  \psi \mapsto (\psi_i)_{i \in \{1,\dots,\nu\}}
  \quad\text{with}\quad
  \psi_i(k)=\psi(v_i+k_\fa).
\end{equation*}
is unitary, and we mostly use the second representation. For example, a function $\psi$ over the graphs in Figure~\ref{fig:3simp} is identified with a vector function $\psi(k)$ over $\Z$ with three (vertical) components $(\psi_1(k),\psi_2(k),\psi_3(k))$. In
particular, periodic operators can equivalently
be regarded as operating on $\ell^2(\Z^d)^\nu$, e.g. $\Schr_\Gamma$ can be written as
\begin{equation}
  (\Schr_\Gamma \psi)_i(k)
  = \sum_{j=1}^\nu \sum_{k'\in I_{ij}} w_{ij}(k')\psi_j(k+k') + Q_i\psi_i(k),
  \quad
  i \in \{1, \dots, \nu \},
	\
	k \in \Z^d.
\end{equation}
\begin{assumption}\label{AssumptionEdgeWeights}
  Using the previous notations, one immediately checks that Definition~\ref{def:graph} is equivalent to the following conditions which we assume throughout the paper:
  \begin{subequations}
    \begin{align}
      &w_{ij}(k) > 0 \text{ for all } i,j \in \{1, \dots, \nu \}
        \text{ and } k \in I_{ij}, \\
      &0\notin I_{ii} \quad \forall\,i\in\{1,\dots,\nu\},\\
      &w_{ji}(-k) =w_{ij}(k) \text{ for all } i,j \in \{1, \dots, \nu \}
        \text{ and } k \in I_{ij},\\
      &\sum_{k\in I_{ij}} w_{ij}(k) < \infty \text{ for all } i,j \in \{1, \dots, \nu \}.
    \end{align}
  \end{subequations}
\end{assumption}

\begin{rem}
    These conditions on the edge weights are quite standard in the mathematical literature~\cite{KellerLW-book}. Non-local finiteness in graphs gives access to a nice correspondence with regular Dirichlet forms, see~\cite[Lem.~1.16--1.17]{KellerLW-book}.
\end{rem}

In the following, we shall always invoke~Assumption~\ref{AssumptionEdgeWeights} unless stated otherwise.

\begin{lem}[Boundedness of periodic operators]
  Let $\Gamma$ be a $\Z^d$-periodic graph with finite fundamental cell.
  Then, the adjacency operator $\Adja_\Gamma$, the Laplacian $\Lapl_\Gamma$, and the Schr\"odinger operator $\Schr_\Gamma$ are bounded and self-adjoint
  with operator norms fulfilling
  \[
    \lVert \Adja_\Gamma \rVert
    \leq
    \max_{1 \leq i \leq \nu}
    \sum_{j = 1}^\nu
    \lVert w_{ij} \rVert,
    \quad
    \lVert \Lapl_\Gamma \rVert
    \leq
    2\max_{1 \leq i \leq \nu}
    \sum_{j = 1}^\nu
    \lVert w_{ij} \rVert,
    \quad
    \|\Schr_\Gamma\|
    \le \max_{1\le i\le \nu} \sum_{j=1}^\nu\|w_{ij}\|_1+\|Q\|_\infty \,.
  \]
  Furthermore, $\Lapl_\Gamma$ is non-negative.
\end{lem}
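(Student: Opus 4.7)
The plan is to reduce all three operators to the adjacency operator plus a bounded multiplication, so the main work is the bound on $\Adja_\Gamma$ itself. Set $M := \max_{1\le i\le \nu}\sum_{j=1}^\nu \|w_{ij}\|_1$. First I would observe that by periodicity, the weighted degree of any vertex $v = v_i + k_\fa$ is
\[
\cD_\Gamma(v) = \sum_{v'\in V} w(v,v') = \sum_{j=1}^\nu \sum_{k'\in I_{ij}} w_{ij}(k') = \sum_{j=1}^\nu \|w_{ij}\|_1 \le M,
\]
which is finite by Assumption~\ref{AssumptionEdgeWeights}, and by the symmetry $w(v,v')=w(v',v)$ the same bound holds for the column sums $\sum_v w(v,v')$.

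Next I would apply Schur's test in its Cauchy--Schwarz incarnation: for $\psi \in \ell^2(V)$,
\[
|(\Adja_\Gamma \psi)(v)|^2 \le \Bigl(\sum_{v'} w(v,v')\Bigr)\Bigl(\sum_{v'} w(v,v')|\psi(v')|^2\Bigr) \le M \sum_{v'} w(v,v')|\psi(v')|^2.
\]
Summing over $v$ and swapping the order of summation (legal by Tonelli, since all terms are non-negative) gives $\|\Adja_\Gamma \psi\|_2^2 \le M^2 \|\psi\|_2^2$. Self-adjointness then follows from boundedness combined with the symmetry of $w$, via the direct identity $\langle \Adja_\Gamma \psi, \phi\rangle = \langle \psi, \Adja_\Gamma \phi\rangle$ on $\ell^2(V)$.

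For the Laplacian I would write $\Lapl_\Gamma = \Diag_\Gamma - \Adja_\Gamma$, note that $\Diag_\Gamma$ is multiplication by the bounded function $\cD_\Gamma$ with $\|\Diag_\Gamma\| = \|\cD_\Gamma\|_\infty \le M$, and invoke the triangle inequality to obtain $\|\Lapl_\Gamma\| \le 2M$. Non-negativity is immediate from Green's formula~\eqref{eq:dir1}, since $\langle \Lapl_\Gamma \psi,\psi\rangle = \Energy_\Gamma(\psi,\psi) \ge 0$. For $\Schr_\Gamma = \Adja_\Gamma + Q$, the potential $Q$ is a real-valued periodic function taking only the finitely many values $Q_1,\dots,Q_\nu$, hence multiplication by $Q$ is bounded and self-adjoint with norm $\|Q\|_\infty = \max_i |Q_i|$; a final triangle inequality yields the claimed bound.

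The only real subtlety --- more bookkeeping than obstacle --- is justifying that $\Adja_\Gamma \psi$ is \emph{a priori} a well-defined $\ell^2$-function on the non-locally finite graph, which is precisely why one derives the pointwise Schur-type estimate before summing over $v$. Beyond requiring the $\ell^1$-summability hypothesis on each row of weights, non-local finiteness plays no role in the argument.
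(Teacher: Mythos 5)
Your proof is correct and follows essentially the same route as the paper: the same Cauchy--Schwarz/Schur-test estimate on $\Adja_\Gamma$, periodicity to replace suprema of row sums by the maximum over the fundamental cell, symmetry of $w$ for self-adjointness, Green's formula for non-negativity, and the triangle inequality for $\Lapl_\Gamma$ and $\Schr_\Gamma$. Your explicit remark about first establishing the pointwise estimate to ensure $\Adja_\Gamma\psi$ is well defined on a non-locally finite graph is a welcome (if minor) clarification of a point the paper leaves implicit.
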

\begin{proof}
  Observe that
  \begin{align*}
    \|\Adja_\Gamma \psi\|^2
    = \sum_{v \in V} \Bigl|\sum_{v' \in V} w(v,v')\psi(v')\Bigr|^2
    &\le \sum_{v, v' \in V}  w(v,v')\sum_{v' \in V} w(v,v') |\psi(v')|^2\\
    &\le \sup_{v \in V} \lVert w(v,\cdot) \rVert_1 \sup_{v' \in V} \lVert w(\cdot,v') \rVert_1 \sum_{v' \in V} \lVert \psi(v') \rVert^2.
  \end{align*}
  By periodicity, the suprema can be replaced by maxima over the fundamental cell. 
  Furthermore, for any $v=v_i$, where $i \in \{1, \dots, \nu\}$, one has
  \begin{align*}
    \|w(v,\cdot)\|_1
    = \sum_{v' \in V} w(v,v')
    = \sum_{j = 1}^\nu \sum_{k \in \Z^d} w_{ij}(k)
    = \sum_{j = 1}^\nu \|w_{ij}\|_1 < \infty.
  \end{align*}
  This shows boundedness of $\cL_\Gamma$. 
  Self-adjointness follows from symmetry of the weights and the fact that $Q$ is real-valued.
  The arguments for $\Lapl_\Gamma$ and $\Schr_\Gamma$ are completely analogous.
  Non-negativity of $\Lapl_\Gamma$ follows from~\eqref{eq:dir1}. 
\end{proof}

\subsection{Fourier and Floquet transform}
\label{sec:floquet}

Let us first recall some facts about  Fourier series.  Let $\Torus= \R/\Z$.
We often identify $\Torus$ with $[0,1]$ or any other interval of
length $1$ with endpoints identified.  Then $\Torus^d$ denotes the
flat $d$-dimensional torus; we partly follow the conventions
in~\cite[Ch.~3]{grafakos:09}.

If we consider functions $f \colon \Torus^d \to \C$, we often use
$f \colon [0,1)^d \to \C$ as a
representative of the $1$-periodic function.  In formulas, it is
tacitly understood that we extend such a representative
$1$-periodically.

For $f \colon \Torus^d \to \C$ integrable, the \emph{Fourier
  coefficients} $\Ftrafo f \colon \Z^d \to \C$ is defined by 
\begin{equation}
  \label{eq:fourier-trafo}
  \Ftrafo f(k)
  = \int_{\Torus^d} f(\theta) \ee^{-2\pi \ii \theta \cdot k} \dd \theta,
  \qquad k \in \Z^d.
\end{equation}
We have that
\begin{equation}
  \label{eq:f-trafo}
  \Ftrafo \cdot \colon L^2(\Torus^d) \to \ell^2(\Z^d),
  \qquad
  f \mapsto (\Ftrafo f(k)),
\end{equation}
is unitary.  We denote the \emph{Wiener algebra} by
\begin{align*}
  A(\Torus^d)
  := \{f \in L^2(\Torus^d) \mid \Ftrafo f \in \ell^1(\Z^d) \}.
\end{align*}

The \emph{convolution} $f \ast g \colon \Torus^d \to \C$ of two
functions $f,g \colon \Torus^d \to \C$ is defined by
\begin{equation}
  \label{eq:convolution}
  (f\ast g)(\theta)
  = \int_{\Torus^d}f(\theta-\theta')g(\theta') \dd \theta'.
\end{equation}
We have
\begin{equation}
  \label{eq:conv.f-trafo}
  \Ftrafo {f \ast g} = \Ftrafo f \cdot \Ftrafo g,
\end{equation}
where $(\Ftrafo f \cdot \Ftrafo g)(k)=\Ftrafo f(k)\cdot \Ftrafo g(k)$
denotes the pointwise multiplication.

The inverse transform, also called the \emph{Fourier series} of
 $\phi \colon \Z^d \to \C$, is given by
 \begin{equation}
   \label{eq:inv-fourier-trafo}
   \invFtrafo \phi(\theta)
   = \sum_{k \in \Z^d} \phi(k) \ee^{2\pi \ii \theta \cdot k},
   \qquad \theta \in \Torus^d.
 \end{equation}
 The convergence of the series
 in~\eqref{eq:inv-fourier-trafo} is in $L^2$-sense.  If
 $\varphi \in \ell^1(\Z^d)$ then the convergence of the series is uniform by the Weierstra\ss \ M-test
 and the series is continuous. We mainly deal with this case.  If
 $f \in A(\Torus^d)$, then $f=\invFtrafo{\Ftrafo f}$ implies in particular that pointwise, we have
 \begin{equation}
   \label{eq:inv.f-trafo-2}
   f(\theta)
   = \sum_{k \in \Z^d} \Ftrafo f(k) \ee^{2\pi \ii \theta \cdot k} .
 \end{equation}

 The \emph{(discrete) convolution} $\phi \ast \gamma$ of two sequences
 $\phi,\gamma \colon \Z^d \to \C$ is defined by 
\begin{equation}
  \label{eq:disc.convolution}
  (\phi \ast \gamma)(k)
  = \sum_{k' \in \Z^d} \phi(k')\gamma(k-k').
\end{equation}
We have
\begin{equation}
  \label{eq:disc.conv.f-trafo}
  \invFtrafo {\phi \ast \gamma}
  = \invFtrafo \phi \cdot \invFtrafo \gamma,
\end{equation}
where $\cdot$ denotes the pointwise multiplication of functions
$\Torus^d \to \C$.

The next statement is easily verified: %
\begin{lem}
  \label{lem:symm}
  Let $f \colon \Torus \to \C$ be in $A(\Torus)$. Then the following
  are equivalent:%
  \begin{subequations}
    \begin{align}
      \label{eq:symm.a}
      \Ftrafo f(-k)
      &= \Ftrafo f(k)
      &\text{for all}\qquad  k \in \Z,\\
      \label{eq:symm.b}
      \Ftrafo f(k)
      &=\int_\Torus \cos(2\pi k\theta) f(\theta) \dd \theta
      & \text{for all}\qquad  k \in \Z\\
      \label{eq:symm.c}
      f(\theta)
      &= \widehat{f}(0) + 2\sum_{k=1}^\infty \Ftrafo f(k) \cos(2\pi k\theta)
      & \text{(absolute convergence),}\\
      \label{eq:symm.d}
      f(\theta)
      &=f(1-\theta)
      &\text{for almost all}\qquad
      \theta \in \Torus.
    \end{align}
  \end{subequations}
  If the latter property holds we say that $f$ is \emph{symmetric
    (with respect to $1/2$)}.
\end{lem}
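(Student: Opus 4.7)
The plan is to prove the four equivalences by a small cycle of short, elementary arguments, using only the Fourier inversion formula~\eqref{eq:inv.f-trafo-2} (available because $f\in A(\Torus)$) and the uniqueness of Fourier coefficients on $\Torus$. Concretely, I would establish \eqref{eq:symm.a}$\Leftrightarrow$\eqref{eq:symm.b}, then \eqref{eq:symm.a}$\Leftrightarrow$\eqref{eq:symm.c}, and finally \eqref{eq:symm.a}$\Leftrightarrow$\eqref{eq:symm.d}, all of which are purely formal manipulations.

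For \eqref{eq:symm.a}$\Leftrightarrow$\eqref{eq:symm.b}, I would start from the definition~\eqref{eq:fourier-trafo} and compute
\[
\tfrac12\bigl(\Ftrafo f(k)+\Ftrafo f(-k)\bigr)
=\int_\Torus f(\theta)\,\tfrac12\bigl(\ee^{-2\pi\ii k\theta}+\ee^{2\pi\ii k\theta}\bigr)\dd\theta
=\int_\Torus f(\theta)\cos(2\pi k\theta)\dd\theta.
\]
If \eqref{eq:symm.a} holds the left-hand side equals $\Ftrafo f(k)$, giving \eqref{eq:symm.b}; conversely \eqref{eq:symm.b} implies \eqref{eq:symm.a} since $\cos(2\pi k\theta)$ is even in $k$.

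For \eqref{eq:symm.a}$\Leftrightarrow$\eqref{eq:symm.c}, the hypothesis $f\in A(\Torus)$ allows me to use~\eqref{eq:inv.f-trafo-2} with absolute convergence, so pairs $\pm k$ can be regrouped freely. Assuming \eqref{eq:symm.a}, I pair the terms $k$ and $-k$ in~\eqref{eq:inv.f-trafo-2}:
\[
\Ftrafo f(k)\ee^{2\pi\ii k\theta}+\Ftrafo f(-k)\ee^{-2\pi\ii k\theta}
=2\Ftrafo f(k)\cos(2\pi k\theta),
\]
which yields \eqref{eq:symm.c} after separating the $k=0$ term. Conversely, \eqref{eq:symm.c} is a (uniformly convergent) expansion of $f$ in the orthonormal system $\{\ee^{2\pi\ii k\theta}\}_{k\in\Z}$, so comparing with~\eqref{eq:inv.f-trafo-2} and using uniqueness of Fourier coefficients forces $\Ftrafo f(k)=\Ftrafo f(-k)$.

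For \eqref{eq:symm.a}$\Leftrightarrow$\eqref{eq:symm.d}, the key is the change of variables $\theta'=1-\theta$ together with $1$-periodicity, which gives, for every $k\in\Z$,
\[
\int_\Torus f(1-\theta)\ee^{-2\pi\ii k\theta}\dd\theta
=\int_\Torus f(\theta')\ee^{-2\pi\ii k(1-\theta')}\dd\theta'
=\int_\Torus f(\theta')\ee^{2\pi\ii k\theta'}\dd\theta'
=\Ftrafo f(-k),
\]
so the Fourier coefficients of the function $\theta\mapsto f(1-\theta)$ are exactly $\Ftrafo f(-k)$. Thus \eqref{eq:symm.d} holds almost everywhere if and only if $\Ftrafo f(-k)=\Ftrafo f(k)$ for all $k$, by the uniqueness theorem for Fourier series on $L^2(\Torus)$. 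There is no genuine obstacle here; the only mild subtlety is to invoke absolute convergence from $f\in A(\Torus)$ to justify the regrouping in~\eqref{eq:symm.c}, and to remember that \eqref{eq:symm.d} is only an almost-everywhere identity, matching the use of uniqueness in $L^2$.
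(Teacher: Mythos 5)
Your proof is correct and uses essentially the same ingredients as the paper's: the cosine averaging identity for \eqref{eq:symm.a}$\Leftrightarrow$\eqref{eq:symm.b}, pairing the $\pm k$ terms of~\eqref{eq:inv.f-trafo-2} for \eqref{eq:symm.c}, and the substitution $\theta'=1-\theta$ for \eqref{eq:symm.d}. The only difference is organizational --- you prove three biconditionals centered at \eqref{eq:symm.a} (invoking uniqueness of Fourier coefficients for the reverse directions), whereas the paper closes a single cycle \eqref{eq:symm.a}$\Rightarrow$\eqref{eq:symm.b}$\Rightarrow$\eqref{eq:symm.c}$\Rightarrow$\eqref{eq:symm.d}$\Rightarrow$\eqref{eq:symm.a} --- which is immaterial.
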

\begin{proof}
  \eqref{eq:symm.a}$\implies$\eqref{eq:symm.b}: %
  We have
  \begin{align*}
    \Ftrafo f(k)
    =\frac12\bigl(\Ftrafo f(k) + \Ftrafo f(-k)\bigr)
    = \frac12\int_\Torus
    \underbrace{\bigl(\ee^{-2\pi\ii \theta k} + \ee^{2\pi\ii \theta k}\bigr)}%
    _{=2\cos (2\pi k \theta)}%
    f(\theta) \dd \theta,
  \end{align*}
  and the result follows. 

  \eqref{eq:symm.b}$\implies$\eqref{eq:symm.c}: %
  From~\eqref{eq:symm.b} we have $\widehat{f}(-k)=\widehat{f}(k)$. It follows from~\eqref{eq:inv.f-trafo-2} that
  \[
  f(\theta) = \widehat{f}(0) + \sum_{k=1}^\infty (\widehat{f}(k)+\widehat{f}(-k))\ee^{2\pi\ii\theta k} = \widehat{f}(0) + \sum_{k=1}^\infty \widehat{f}(k)(\ee^{2\pi\ii \theta k} + \ee^{-2\pi\ii \theta k})
  \]
  substituting the second index $k$ into $-k$. The claim follows.

  \eqref{eq:symm.c}$\implies$\eqref{eq:symm.d}: %
  This follows from $\cos(2\pi k(1-\theta))=\cos(2\pi k \theta)$.

  \eqref{eq:symm.d}$\implies$\eqref{eq:symm.a}: %
  Using the symmetry of $f$, we have
  \begin{align*}
    \Ftrafo f(k)
    = \int_\Torus \ee^{-2\pi\ii \theta k} f(1-\theta) \dd \theta
    = \int_\Torus \underbrace{\ee^{-2\pi\ii (1-\theta') k}}%
    _{=\ee^{-2\pi\ii \theta (-k)}}%
    f(\theta') \dd \theta
    = \Ftrafo f(-k)
  \end{align*}
  by substitution $\theta'=1-\theta$ for the second equality.
\end{proof}
We now introduce an important tool --- the Floquet transform --- which
will be used throughout the paper.
\begin{defa}[Floquet transform]
  The \emph{Floquet transform} $U:\ell^2(\Z^d)^\nu \to L^2(\Torus^d)^\nu$ is%
  
  \begin{equation}\label{eq:utile}
    (U\psi)_j(\theta) = \sum_{k\in \Z^d} \ee^{2\pi \ii\theta \cdot k} \psi_j(k)
    \quad
    \text{for $j \in \{ 1,\dots,\nu \}$}.
  \end{equation}
\end{defa}
Note that $U \psi=(\invFtrafo \psi_1,\dots,\invFtrafo \psi_\nu)$, where
$\invFtrafo \psi_i$ is the Fourier series
of~\eqref{eq:inv-fourier-trafo}.  In particular, $U\psi$ can be seen
as a vector-valued version of the Fourier series.

The following lemma states that, via the Floquet transform, periodic
operators such as adjacency, Laplace or Schr\"odinger operators are unitarily equivalent to multiplications by a matrix function on
$L^2(\Torus^d)^\nu$. This generalizes the fact that the adjacency operator on $\Gamma=\Z^d$, is equivalent
to multiplication by a cosine-type function in $L^2(\Torus^d)$.

\begin{lem}
  \label{lem:Floquet_transform}
  The Floquet transform $U$ is unitary.
  Furthermore, for any $\Z^d$-periodic graph with $\nu$ vertices in their fundamental cell,
  \begin{equation}\label{eq:dir}
    U \Adja_{\Gamma} U^{-1} = \Mult_A
    \quad
    \text{and}
    \quad
    U \Schr_{\Gamma} U^{-1} = \Mult_H
  \end{equation}
  where $\Mult_A$ and $\Mult_H$ are multiplication operators on
  $L^2(\Torus^d)^\nu$ by the matrix function
  \begin{equation}
    \label{eq:hthe}
    \begin{aligned}
      A(\theta)
      =
      (a_{ij}(\theta))_{i,j=1}^\nu,
      \qquad\qquad\qquad
      H(\theta)
      =
      (h_{ij}(\theta))_{i,j=1}^\nu,
      \\
      a_{ij}(\theta)
      =
      \sum_{k\in I_{ij}}
      w_{ij}(k) \ee^{2\pi\ii\theta\cdot k},
	\qquad
	h_{ij}(\theta)
	=
		\begin{cases}
			a_{ij}(\theta) \quad &i \neq j,\\
			a_{ii}(\theta) + Q_i \quad &i = j.
		\end{cases}
	\end{aligned}
	\end{equation}
Formula \eqref{eq:hthe} holds pointwise.
\end{lem}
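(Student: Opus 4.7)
The plan is to reduce the lemma to elementary Fourier-series identities on $\Z^d$ in three steps. First, unitarity of $U$ is immediate from the fact that, component by component, $U$ is the scalar inverse Fourier series map $\invFtrafo \colon \ell^2(\Z^d) \to L^2(\Torus^d)$, which is already known to be unitary by~\eqref{eq:f-trafo}. Since $\nu < \infty$, the direct sum of $\nu$ copies of $\invFtrafo$ is a unitary isomorphism onto $L^2(\Torus^d)^\nu$, and no further argument is needed.

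Second, for the conjugation identity $U\Adja_\Gamma U^{-1} = \Mult_A$, I would start from the component expression
\[
  (\Adja_\Gamma \psi)_i(l) = \sum_{j=1}^\nu \sum_{k \in I_{ij}} w_{ij}(k)\, \psi_j(l+k)
\]
displayed just before Assumption~\ref{AssumptionEdgeWeights}, first assuming $\psi \in \ell^2(\Z^d)^\nu$ is finitely supported so that all manipulations are patently legal. Applying~\eqref{eq:utile}, interchanging the resulting sums, and performing the shift $l \mapsto l - k$ in the outer sum factors out a phase of the form $\ee^{\pm 2\pi\ii\theta \cdot k}$ and rebuilds $(U\psi)_j(\theta)$. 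This exhibits $U\Adja_\Gamma U^{-1}$ as multiplication by a matrix whose $(i,j)$-entry is the Fourier series of $w_{ij}$, that is, $a_{ij}(\theta) = \sum_{k \in I_{ij}} w_{ij}(k)\, \ee^{2\pi\ii\theta\cdot k}$; the symmetry $w_{ij}(k) = w_{ji}(-k)$ from Assumption~\ref{AssumptionEdgeWeights} then forces $A(\theta)^* = A(\theta)$, consistent with self-adjointness of $\Adja_\Gamma$. Extension from finitely supported $\psi$ to all of $\ell^2(\Z^d)^\nu$ uses the already established boundedness of $\Adja_\Gamma$ together with unitarity of $U$. Crucially, the summability $\sum_{k\in I_{ij}} w_{ij}(k) < \infty$ in Assumption~\ref{AssumptionEdgeWeights} places $a_{ij}$ in the Wiener algebra $A(\Torus^d)$, so the defining series for $a_{ij}$ converges absolutely and uniformly; this is what allows one to read~\eqref{eq:hthe} pointwise rather than only almost everywhere, using~\eqref{eq:inv.f-trafo-2}.

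Third, for $\Schr_\Gamma$ only the multiplication by $Q$ remains. $\Z^d$-invariance of $Q$ means $Q(v_i + k_\fa) = Q_i$ depends only on the cell index $i$; hence in components $Q$ acts on $\psi_i$ as the scalar $Q_i$, so $UQU^{-1}$ is multiplication by the diagonal constant matrix $\diag(Q_1, \dots, Q_\nu)$. Combining this with the adjacency computation produces $\Mult_H$ with $h_{ij}$ exactly as in~\eqref{eq:hthe}.

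The proof is essentially a bookkeeping exercise, with no deep analytic ingredient beyond Plancherel and the absolute convergence afforded by $w_{ij} \in \ell^1$. The only delicate point I foresee is tracking the phase signs carefully and invoking the Wiener-algebra membership to justify the pointwise (not merely a.e.) statement at the end of the lemma.
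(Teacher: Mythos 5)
Your proposal is correct and follows essentially the same route as the paper: the paper's proof likewise reduces to the locally finite computation of \cite[Lemma 2.1]{SabriY-23}, justifying the interchange of sums by Fubini for compactly supported $\psi$ (then extending by density and boundedness), and obtains the pointwise validity of \eqref{eq:hthe} from the uniform convergence of $\sum_k w_{ij}(k)\ee^{2\pi\ii\theta\cdot k}$ via the Weierstra\ss\ $M$-test, exactly as in your Wiener-algebra remark. The only point worth flagging is that even for finitely supported $\psi$ the image $(\Adja_\Gamma\psi)_i$ is not finitely supported (the graph is non-locally finite), so the sum interchange is not "patently legal" but does require the Fubini--Tonelli argument you implicitly supply through $w_{ij}\in\ell^1$.
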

\begin{proof}
  The proof is the same as~\cite[Lemma 2.1]{SabriY-23}, where the
  interchange of sums is now justified by Fubini's theorem (the
  function $\psi$ can first be taken to have compact support, then the
  result extends to all $\psi \in \ell^2(V)$).
  
Since $\sum_k w_{ij}(k)<\infty$, the series $\sum_k w_{ij}(k)\ee^{2\pi\ii\theta\cdot k}$ converges uniformly by the Weierstra\ss\ $M$-test. In particular, the formulas hold pointwise.
\end{proof}

If $\nu = 1$, the Floquet transform reduces to the Fourier transform and turns the adjacency matrix into the operator of multiplication with a function $h_{11}$ on the torus.

\begin{defa}\label{def:flofu}
If $\nu=1$, we call the function $h:=h_{11}$ in \eqref{eq:hthe} the \emph{Floquet function} corresponding to the graph.
We also say that a graph $\Gamma = (w,Q)$ is \emph{defined by a function $f \in A(\mathbb{T}^d)$} if $f=h$ is the Floquet function of $\Gamma$, that is if $\widehat f(k) = w(k)$ for all $k \in \Z^d\setminus \{0\}$ and $Q = \widehat f(0)$.
\end{defa}

Lemma~\ref{lem:Floquet_transform} implies that
\begin{equation}\label{eq:bands}
  \sigma(\Schr_{\Gamma}) = \bigcup_{j=1}^\nu \sigma_j \,,
\end{equation}
where $\sigma_j = \ran E_j(\cdot)$ are spectral bands; here,
$E_j(\theta)$ are the (real) eigenvalues of the (symmetric) matrix
$H(\theta)$, which are continuous on $\T^d$, see \S~\ref{Sec:regularity}.

In relation to the spectrum, let us mention that Shnol's theorem continues to hold in our setting if $\nu=1$ and $w(k)\lesssim |k|_2^{-\alpha}$ with $\alpha>2d$, where the result of \cite{Han} directly applies.

\subsection{Locality}\label{sec:loca}
In this subsection we restrict our attention to $d=1$, and also take $\nu=1$ for simplicity. It was observed in~\cite{CGGSVWW,CGSVWW,CGGSVWW2,CGWW,GS,CS} that in order to obtain a topological classification of one-dimensional quantum walks and spectrally-gapped insulators, one needs to relax the notion of locality as follows: a bounded operator $\Schr$ on $\ell^2(\Z)$ is \emph{essentially local} if the commutator
\begin{equation}\label{eq:comm}
T = \left[\Schr,\one\right]
\end{equation}
is a compact operator, where $\one$ is a half-line cutoff function, say $\one = \one_{\N}$. When speaking of quantum walks $W$, the condition is sometimes expressed instead as the requirement that $S=\one-W^\ast \one W$ is compact, however, as $W$ is unitary in that context, then $S$ is compact if and only if $WS=T$ is compact.

Taking $d=\nu=1$ in our case is equivalent to considering the operator
\begin{equation}\label{eq:opsimplest}
(\Schr_\Gamma f)(n) = \sum_{k\in \Z} w(k)f(n+k) + Q f(n)
\end{equation}
where $Q\in \R$ is a constant.

For $\Schr = \Adja_\Z$, the adjacency operator, it is easy to see that
$T$ is a rank-two operator. In case of exponentially decaying weights,
it is shown in~\cite[Lemma 2]{GS} that $T$ is trace class. 

The next statement suggests that our setting is close to the concept of essentially local operators.
More precisely, under Assumption~\ref{AssumptionEdgeWeights} $\Schr_\Gamma$ is always essentially local, and the commutator $T$ is not necessarily a Hilbert-Schmidt operator, suggesting that $T$ is barely compact in general and that our assumptions are not very restrictive.

\begin{prp}\label{prp:local}
Let $\Gamma$ be a $\Z$-periodic graph with a one-element fundamental cell.
\begin{enumerate}[\rm(a)]
\item
  \label{local.a}
  Under our standard assumptions $w\ge 0$, $w\in \ell^1(\mathbb{Z})$ and
  $w(-k)=w(k)$, the commutator $T$ in~\eqref{eq:comm} is compact, hence
  $\Schr_\Gamma$ in~\eqref{eq:opsimplest} is essentially local.
\item
  \label{local.b}
  If we assume moreover that $w(k)$ decreases with $|k|$ or that $w$
  is ``sup-summable'' in the sense that
  $\sum_{k\ge 1}\sup_{n\ge k} w(n)$ converges, then the operator $T$, defined in~\eqref{eq:comm} is a Hilbert-Schmidt operator.
\item
  \label{local.c}
  If we drop the extra assumption of~\eqref{local.b}, $T$ is no longer necessarily a Hilbert-Schmidt operator.
\end{enumerate}
\end{prp}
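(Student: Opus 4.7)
The plan is to compute $T = [\Schr_\Gamma, \one_\N]$ explicitly from~\eqref{eq:opsimplest}. The potential $Q$ commutes with multiplication by $\one_\N$ and drops out; a short calculation yields that $T$ is the integral operator on $\ell^2(\Z)$ with kernel
\[
T(n,m) = w(m-n)\bigl(\one_\N(m) - \one_\N(n)\bigr),
\]
which is supported on the ``L-shape'' $\{(n,m) : n<0 \le m \text{ or } m<0 \le n\}$. All three parts will be read off from this single formula.

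For~(\ref{local.a}), I approximate $T$ by truncations $T_N$ whose kernel is $T(n,m)\,\one_{|m-n|\le N}$. Inspection of the L-shape shows that $T_N$ is supported in the finite rectangle $\{-N,\dots,-1\}\times\{0,\dots,N-1\}$ together with its transpose, so $T_N$ is of finite rank. For the remainder, Schur's test gives
\[
\|T - T_N\|_{op} \;\le\; \sup_n \sum_m |T(n,m) - T_N(n,m)| \;\le\; \sum_{|k|>N} w(k),
\]
which tends to zero since $w \in \ell^1(\Z)$. Hence $T$ is a norm limit of finite-rank operators, so it is compact, and $\Schr_\Gamma$ is essentially local.

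For~(\ref{local.b}), I compute the Hilbert--Schmidt norm exactly. Grouping the pairs $(n,m)$ with $m\ge 0$, $n<0$ by $k := m-n$, and noting that each $k\ge 1$ admits exactly $k$ such pairs, I obtain
\[
\|T\|_{HS}^2 \;=\; 2\sum_{k\ge 1} k\, w(k)^2.
\]
When $w$ is nonincreasing on $\N$, the elementary inequality $k\, w(k) \le \sum_{j=1}^{k} w(j) \le \tfrac12\|w\|_1$ yields $k\, w(k)^2 \le \tfrac12\|w\|_1\, w(k)$, which sums to a finite quantity. When $w$ is merely sup-summable, I apply the same argument to its decreasing envelope $W(k) := \sup_{n\ge k} w(n)$, which is summable by hypothesis, and use $w(k) \le W(k)$.

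For~(\ref{local.c}), I propose a lacunary counterexample: set $w(k) = j^{-2}$ when $|k| = 2^j$ for some $j\ge 1$, and $w(k) = 0$ otherwise (extended symmetrically). Then $w \in \ell^1(\Z)$ since $\sum_j j^{-2} < \infty$, yet
\[
\sum_{k\ge 1} k\, w(k)^2 \;=\; \sum_{j\ge 1} 2^j\, j^{-4} \;=\; \infty,
\]
so $T$ fails to be Hilbert--Schmidt. I do not anticipate a serious obstacle; the one subtle point is the $k$-multiplicity appearing in the Hilbert--Schmidt computation, which is precisely what distinguishes mere compactness (requiring only $w\in \ell^1$) from the Hilbert--Schmidt property (requiring $\sum_k k\, w(k)^2 < \infty$), and which the lacunary construction exploits.
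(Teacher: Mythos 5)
Your proposal is correct and follows essentially the same route as the paper's Appendix~\ref{app:prp_local}: compute the kernel $T(n,m)=w(m-n)\bigl(\one_\N(m)-\one_\N(n)\bigr)$ supported on the L-shape, approximate by finite-rank truncations for compactness, and evaluate the Hilbert--Schmidt norm for (b) and (c). Two of your steps are in fact a little cleaner than the paper's. First, you bound $\|T-T_N\|$ by Schur's test (legitimate here since $|T(n,m)|=|T(m,n)|$), getting the tail $\sum_{|k|>N}w(k)\to 0$; the paper instead estimates $\|(T-T_N)f\|^2$ by hand via Cauchy--Schwarz and Tonelli and arrives at the same tail bound up to a constant. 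Second, you collapse the paper's double sum $2\sum_{k\ge 0}\sum_{m\ge 1}w(k+m)^2$ into the exact identity $\|T\|_{HS}^2=2\sum_{k\ge 1}k\,w(k)^2$ by counting the $k$ admissible pairs on each anti-diagonal; this is equivalent but makes both the sufficiency arguments in (b) (your inequality $k\,w(k)\le\sum_{j\le k}w(j)$ plays the role of the paper's $w(n+m)^2\le w(n)w(m)$) and the divergence in (c) transparent. One small repair is needed in (c): with $j\ge 1$ your weight is supported only on even $k$ and has $w(1)=0$, so the resulting graph is disconnected and therefore not a $\Z$-periodic graph in the sense of Definition~\ref{def:per-graph}. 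Shift to $w(k)=(j+1)^{-2}$ for $|k|=2^j$, $j\ge 0$, so that $w(1)=1$; this keeps $w\in\ell^1(\Z)$ and still gives $\sum_{k\ge 1}k\,w(k)^2=\sum_{j\ge 0}2^j(j+1)^{-4}=\infty$. (The paper's choice $w(k)=k^{-1/2}$ for $k=2^n$, $n\ge 0$, avoids this issue for the same reason.)
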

In~\eqref{local.b}, the assumption that $w$ decreases actually implies
the one of summability (in our context) since $w\downarrow$ implies
$\sum_{k\ge 1} \sup_{n\ge k} w(n) = \sum_{k\ge 1} w(k) <\infty$.

We defer the proof of Proposition~\ref{prp:local} to Appendix~\ref{app:prp_local}.

%
\section{A simple playground: Crystals with \texorpdfstring{$\nu=1$}{\texttt{nu=1}}}
\label{sec:1cell-graphs}
%

A simple and quite
fruitful idea in our framework is to create graphs by starting from
the Floquet side, by choosing appropriate functions on the torus and
ensuring that they induce connected graphs with nonnegative summable
weights, via the Floquet transform. In case $\nu=1$, i.e. when $V_0$ has only one vertex, we only need one function. Also note that a graph with $\nu=1$ is automatically regular since the degree function \eqref{eq:degree} becomes a constant.

\subsection{One-dimensional examples with one-element fundamental cell}
\label{sec:exa}
Let us consider the following simple examples which already induce
crystals with remarkable properties, as we will see later in the
article. We define

\begin{minipage}{0.48\linewidth}
  \begin{subequations}
    \begin{align}
      \label{eq:exa3}
      a(\theta)
      &=\Bigl(\theta-\frac12\Bigr)^2\\
      \label{eq:pecu}
      b(\theta)
      &= \begin{cases}
        \frac{1}{2}-\theta\ ,&\theta \in [0,\frac{1}{2}],\\
        \theta-\frac{1}{2}\ , &\theta\in [\frac{1}{2},1],
      \end{cases}\\
      \label{eq:parfla}
      c(\theta)
      &=
        \begin{cases}
          \frac14-\theta\ ,& \theta\in [0,\frac14],\\
          0\ ,& \theta\in [\frac14,\frac34]\ ,\\
          \theta-\frac34\ ,& \theta\in [\frac34,1].
        \end{cases}
    \end{align}
  \end{subequations}
\end{minipage}
\begin{minipage}{0.48\linewidth}
  \centering
  \begin{tikzpicture}[domain=0:4,scale=1]
    \draw[very thin,color=gray,dotted] (-0.1,-0.1) grid (4.1,2.1);

    \draw[->] (-0.2,0) -- (4.2,0) node[right] {\footnotesize$\theta$};
    \draw[->] (0,-0.2) -- (0,2.2) node[above] {\footnotesize$f(\theta)$};
    \draw (1,-0.1) -- (1,0.1) node[below,yshift=-1ex] {\small $\frac14$};
    \draw (2,-0.1) -- (2,0.1) node[below,yshift=-1ex] {\small $\frac12$};
    \draw (3,-0.1) -- (3,0.1) node[below,yshift=-1ex] {\small $\frac34$};
    \draw (4,-0.1) -- (4,0.1) node[below,yshift=-1ex] {\small $1$};
    \draw (-0.1,1) -- (0.1,1) node[left,xshift=-1ex] {\small $\frac14$};
    \draw (-0.1,2) -- (0.1,2) node[left,xshift=-1ex] {\small $\frac12$};

    \draw[thick]    plot (\x,{max(0,1-\x)+max(0,\x-3)}) node[right] {\footnotesize$f=c$};
    \draw[thick,dashed]   plot (\x,{abs(\x-4/2)})    node[right] {\footnotesize$f=b$};
    \draw[thick,densely dotted] plot (\x,{(2-\x)^2/4})
    node[above,left, xshift=-2.7ex, yshift=-2.4ex] {\footnotesize$f\!=\!a$};
  \end{tikzpicture}
  
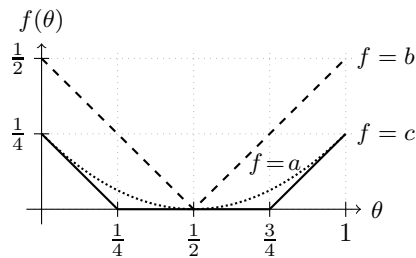
\captionof{figure}{The functions $a$ (dotted), $b$ (dashed)
    and $c$ (continuous line).}
  \label{fig:functions.a.b.c}
\end{minipage}
\begin{lem}
  \label{lem:Foufla}
  The functions $a$, $b$ and $c$ defined
  by~\eqref{eq:exa3},~\eqref{eq:pecu} and~\eqref{eq:parfla} have
  Fourier coefficients $\Ftrafo f \colon \Z \to \R$, such that
  $\Ftrafo f(k)\ge 0$, $\Ftrafo f(-k)=\Ftrafo f(k)$ for all $k$ and
  $\sum_k \Ftrafo f(k) <\infty$.

  More explicitly, $\Ftrafo a(0)=\frac{1}{12}$, $\Ftrafo b(0)=\frac14$,
  $\Ftrafo c(0)=\frac{1}{16}$ and for $k\neq 0$,
  \[
    \Ftrafo a(k) = \frac{1}{2\pi^2k^2},
    \qquad
    \Ftrafo b(k)
    = \begin{cases}
      \dfrac{1}{\pi^2k^2}& k\text{ odd}\\[1ex]
      0& k\text{ even} \end{cases}
    \qquad \text{and} \quad
     \Ftrafo c(k)
    = \begin{cases}
      \dfrac{1}{2\pi^2k^2}& k\text{ odd}\\[1ex]
      \dfrac{1-(-1)^{k/2}}{2\pi^2k^2}& k\text{ even.}
    \end{cases}
  \]
\end{lem}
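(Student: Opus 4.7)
The plan is to compute the three sequences of Fourier coefficients directly. The key structural observation is that each of $a$, $b$, $c$ is manifestly symmetric about $\theta=1/2$ on $[0,1]$: for $a$ this is immediate from the squared expression, and for $b$ and $c$ it is visible in the piecewise definitions. Hence Lemma~\ref{lem:symm} gives $\widehat f(-k)=\widehat f(k)$ for free and reduces each coefficient to $\widehat f(k)=\int_0^1 \cos(2\pi k\theta)\,f(\theta)\,d\theta$. Since $\cos(2\pi k\theta)$ itself is symmetric about $1/2$, one may further integrate only over $[0,1/2]$ and double the result, which is particularly convenient for $c$ since it vanishes on $[1/4,3/4]$.

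The $k=0$ coefficients are elementary: $\int_0^1(\theta-1/2)^2\,\dd\theta=1/12$ for $a$; two congruent triangles of base $1/2$ and height $1/2$ give $\widehat b(0)=1/4$; and two triangles of base $1/4$ and height $1/4$ give $\widehat c(0)=1/16$.

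For $k\neq 0$, I would integrate by parts. For $a$, integration by parts twice on $\int_0^1(\theta-1/2)^2\cos(2\pi k\theta)\,\dd\theta$ kills the boundary term after the first step (since $\sin(2\pi k\theta)$ vanishes at $\theta=0,1$) and produces $1/(2\pi^2 k^2)$ from the boundary contribution at the second step. For $b$, writing $\widehat b(k)=2\int_0^{1/2}(1/2-\theta)\cos(2\pi k\theta)\,\dd\theta$ and integrating by parts once gives $(1-(-1)^k)/(2\pi^2 k^2)$, which equals $1/(\pi^2 k^2)$ for odd $k$ and vanishes for even $k$. For $c$, the same maneuver applied to $\widehat c(k)=2\int_0^{1/4}(1/4-\theta)\cos(2\pi k\theta)\,\dd\theta$ yields $(1-\cos(\pi k/2))/(2\pi^2 k^2)$; splitting into the three residue classes of $k \bmod 4$ (odd; $k\in 4\Z$; $k\in 2\Z\setminus 4\Z$) reproduces the stated formula and in particular the factor $1-(-1)^{k/2}$ for even $k$.

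Nonnegativity of $\widehat f(k)$ is then immediate from the closed forms (the factors $1-(-1)^k$ and $1-\cos(\pi k/2)$ are nonnegative), and summability follows from the uniform bound $\widehat f(k)=O(k^{-2})$ together with $\sum_k k^{-2}<\infty$. The main obstacle is purely computational---careful bookkeeping of the boundary contributions in the integration by parts and the parity/congruence case analysis for $b$ and $c$---but there is no conceptual difficulty.
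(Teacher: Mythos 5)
Your proposal is correct and follows essentially the same route as the paper: both proofs compute the coefficients directly by integration by parts and read off nonnegativity, symmetry, and the $O(k^{-2})$ summability from the resulting closed forms. The only cosmetic difference is that you exploit the symmetry about $\theta=\tfrac12$ to work with cosine integrals over half-intervals, whereas the paper integrates the complex exponential over the full pieces; your case analysis for $\Ftrafo b$ and $\Ftrafo c$ checks out against the stated formulas.
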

\begin{proof}
  All claims follow by integrating by parts, for instance
  \begin{align*}
    \Ftrafo c(k)
    &=\int_0^1 c(\theta) \ee^{-2\pi \ii k\theta} \dd\theta
      = \int_0^{\frac14}\Bigl(\frac14-\theta\Bigr)
        \ee^{-2\pi \ii k\theta} \dd\theta
      + \int_{\frac34}^1\Bigl(\theta-\frac34\Bigr)
        \ee^{-2\pi \ii k\theta} \dd\theta\\
    &=\frac14\cdot \frac{1}{2\pi \ii k}
      +\frac{\ee^{-\pi \ii k/2}-1}{-4\pi^2 k^2}
      -\frac14\cdot \frac{1}{2\pi \ii k}
      - \frac{1-\ee^{-3\pi \ii k/2}}{-4\pi^2k^2} \\
    & = \frac{2-(-\ii)^k-\ii^k}{4\pi^2k^2}
      = \frac{2-(1+(-1)^k)\ii^k}{4\pi^2k^2},
  \end{align*}
  which yields $\Ftrafo a(k)$. The calculations for $\Ftrafo a(k)$ and
  $\Ftrafo b(k)$ are similar.
\end{proof}
For the next proposition, recall Definition~\ref{def:flofu}.

\begin{prp}\label{prp:abcvsabc}
The Floquet functions of the graphs in Theorem~\ref{thm:intro}(1), (2), (3) are given by $f_1(\theta)=2\pi^2 a(\theta)-\frac{\pi^2}{6}$, $f_2(\theta) = \pi^2b(\theta)-\frac{\pi^2}{4}$ and $f_3(\theta) = 2\pi^2c(\theta)-\frac{\pi^2}{8}$, respectively. In particular, the graphs defined by the functions $a,b,c$ exhibit exactly the same spectral and dynamical behaviour as those defined by $f_1,f_2,f_3$, respectively.
\end{prp}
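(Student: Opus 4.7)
The plan is to reduce the whole proposition to a direct Fourier check. By Definition~\ref{def:flofu} and Lemma~\ref{lem:Floquet_transform} (in the case $\nu=1$), the Floquet function of a crystal with weights $w$ and potential $Q$ is
\[
h(\theta)=Q+\sum_{k\in I_{11}}w(k)\ee^{2\pi\ii\theta k},
\]
whose Fourier coefficients are precisely $\Ftrafo h(0)=Q$ and $\Ftrafo h(k)=w(k)$ for $k\neq 0$. Since all three graphs in Theorem~\ref{thm:intro} have $Q=0$ and $\nu=1$, I only need to verify that $\Ftrafo{f_j}(0)=0$ and that $\Ftrafo{f_j}(k)$ equals the prescribed weight $w(k)$ for $k\neq 0$ in each of the three cases.

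For case (1), Lemma~\ref{lem:Foufla} gives $\Ftrafo a(0)=\tfrac1{12}$ and $\Ftrafo a(k)=\frac1{2\pi^2k^2}$ for $k\neq 0$. Hence $\Ftrafo{f_1}(0)=2\pi^2\cdot\tfrac1{12}-\tfrac{\pi^2}{6}=0$ and $\Ftrafo{f_1}(k)=k^{-2}=w(k)$. For case (2), $\Ftrafo b(0)=\tfrac14$, $\Ftrafo b(k)=\frac1{\pi^2k^2}$ for odd $k$ and $\Ftrafo b(k)=0$ for even $k\neq 0$. Thus $\Ftrafo{f_2}(0)=\pi^2\cdot\tfrac14-\tfrac{\pi^2}{4}=0$ and $\Ftrafo{f_2}(k)$ is $k^{-2}$ on odd $k$, zero on nonzero even $k$, matching the weights of (2). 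For case (3), using $\Ftrafo c(0)=\tfrac1{16}$ and the formula for $\Ftrafo c(k)$, one sees that $(1-(-1)^{k/2})$ equals $2$ when $k\in 2\Z\setminus 4\Z$ and $0$ when $k\in 4\Z\setminus\{0\}$. Hence $\Ftrafo{2\pi^2c}(k)$ is $k^{-2}$ on odd $k$, $2k^{-2}$ on $2\Z\setminus 4\Z$, and $0$ on $4\Z\setminus\{0\}$, while $\Ftrafo{2\pi^2c}(0)=\tfrac{\pi^2}{8}$, which is cancelled by the constant $-\tfrac{\pi^2}{8}$ in $f_3$. This matches the weights of (3) exactly.

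For the second assertion, let $\Gamma_a,\Gamma_b,\Gamma_c$ denote the graphs defined by $a,b,c$ and $\Gamma_1,\Gamma_2,\Gamma_3$ those of Theorem~\ref{thm:intro}. Via the Floquet isomorphism of Lemma~\ref{lem:Floquet_transform}, the adjacency operators are unitarily equivalent to multiplication by the Floquet functions, so one has relations of the form $\cA_{\Gamma_1}=2\pi^2\cA_{\Gamma_a}-\tfrac{\pi^2}{6}\IdOp$ and analogously in the other two cases. A positive affine transformation $T\mapsto \alpha T+\beta\IdOp$ with $\alpha>0$ preserves the spectral type (absolutely continuous, singular continuous and pure point components are mapped to each other), the presence and multiplicity of eigenvalues, and multiplicities of flat bands. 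On the dynamical side, $\ee^{-\ii t(\alpha T+\beta\IdOp)}=\ee^{-\ii t\beta}\ee^{-\ii(t\alpha)T}$ is a constant phase times a time rescaling, so mean squared displacement, ballisticity, dispersive bounds and related quantities only change by an innocent rescaling of the time variable. Consequently every spectral and dynamical statement proved for $\cA_{\Gamma_a}$, $\cA_{\Gamma_b}$, $\cA_{\Gamma_c}$ translates verbatim (up to constants) to the corresponding operators of Theorem~\ref{thm:intro}.

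There is no real obstacle here; the only minor bookkeeping is checking the parity-dependent cases for $c$, which is what makes the rewriting of Theorem~\ref{thm:intro}(3) in terms of $c$ non-obvious at first glance but immediate from Lemma~\ref{lem:Foufla}.
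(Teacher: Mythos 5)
Your proof is correct and follows essentially the same route as the paper: compare Fourier coefficients via Lemma~\ref{lem:Foufla} to identify the Floquet functions, then observe that a positive affine transformation of the operator leaves spectral type and dynamics unchanged (a phase factor times a time rescaling). One cosmetic slip: the relation should read $\cA_{\Gamma_1}=2\pi^2\cH_{\Gamma_a}-\tfrac{\pi^2}{6}\IdOp$ with the \emph{Schr\"odinger} operator of the graph defined by $a$ (whose Floquet function is $a$ itself, including the potential $Q=\Ftrafo a(0)$), not its adjacency operator; the constant is off otherwise, but since either version is still an affine map with positive slope, the conclusion is unaffected.
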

\begin{proof}
The claim follow immediately from Lemma~\ref{lem:Foufla} by comparing the Fourier coefficents. For example, in Theorem~\ref{thm:intro}(3), we have $w(k)=2\pi^2 \widehat{c}(k)$ for $k\neq 0$ and $w(0)=0$. So its Floquet function is $f_3(\theta) = 2\pi^2\sum_{k\neq 0} \widehat{c}(k)\ee^{2\pi\ii k\theta} = 2\pi^2(c(\theta)-\widehat{c}(0))$.

If $\cH_\Gamma$ is the Schr\"odinger operator defined by $c$, it follows that the adjacency matrix in Theorem~\ref{thm:intro}(3) is $2\pi^2 \cH_\Gamma - \frac{\pi^2}{8} \operatorname{Id}$ . This clearly changes nothing to the spectral type or the dynamics, since $\ee^{\ii t (c\cH_{\Gamma} + c')} = \ee^{\ii t c'}\ee^{\ii t c\cH_\Gamma}$, and $|\ee^{\ii t c'}|=1$ is just a phase.

The case of the functions $a,b$ is similar. 
\end{proof}

\subsection{Functions defining one-dimensional periodic graphs}
\label{sec:gen1d}
In \S~\ref{sec:exa} we started with explicit functions
$a, b, c \colon \Torus \to \C$ which turned out to be in the Wiener Algebra $A(\T)$ and thus Floquet functions of $\Z$-periodic graphs. 
In this section we generalize this idea and prove some principles to construct Floquet functions defining periodic graphs. 

For a function $f(\theta) = \sum_{k\in \Z} \Ftrafo f(k) \ee^{2\pi\ii k\theta}$
on $[0,1]$ to correspond to the adjacency operator $\Adja_\Gamma$ of a
$\Z$-periodic graph satisfying our assumptions, it is necessary that
$\Ftrafo f(-k)=\Ftrafo f(k)$, since the $\Ftrafo f(k)$ will stand for
the edge-weight $w(k)$ which has to be symmetric.  It is also
necessary that $\Ftrafo f(k)\ge 0$ and $\sum_k \Ftrafo f(k) <\infty$.  The previous conditions are sufficient if the graph defined by $w(k)=\Ftrafo f(k)$ is connected.  To ensure this connectedness, it is sufficient to have
$\Ftrafo f(1)\neq 0$, for in this case $w(1)\neq 0$, which means that each
vertex $n$ is connected to $n\pm 1$ with edge weight $w(1)>0$.  More generally, we have the following statement. 
\begin{lem}[A condition ensuring connectedness]
  \label{lem:connected}
  Let $f \colon \Torus^d \to \C$ such that $\Ftrafo f(k) \ge 0$, $\Ftrafo f(-k) = \Ftrafo f (k)$, and $\sum_k \Ftrafo f(k) < \infty$.  
  If integer linear combinations of $\{k \in \Z^d \mid \Ftrafo f(k) \ne 0\}$ span the whole of $\Z^d$, then the crystal $\Gamma$ with $\nu=1$, defined by $f$ is connected. 
  In particular, if $d=1$ it suffices to have $\Ftrafo f(1)>0$.
\end{lem}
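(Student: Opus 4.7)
The plan is to reduce the claim to a purely algebraic statement about the subgroup generated by the support of $\widehat f$.

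First, I would translate the connectedness question into graph-theoretic terms. The crystal $\Gamma$ has vertex set $\Z^d$ (since $\nu=1$ and $V_0=\{0\}$), and by the construction of $\Gamma$ from $f$, the edge weight between $n$ and $n+k$ equals $w(k) = \widehat f(k)$. Therefore $n \sim m$ if and only if $\widehat f(m-n) > 0$, i.e. $m - n \in S := \{k \in \Z^d : \widehat f(k) \neq 0\}$. Since the hypotheses guarantee $\widehat f(k) \ge 0$, there is no cancellation issue: $\widehat f(k) \neq 0 \iff \widehat f(k) > 0$.

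Next, I would exploit vertex-transitivity. Because both $w$ and the graph structure are invariant under translations in $\Z^d$, it suffices to show that every vertex lies in the connected component of $0$. A vertex $m$ is reachable from $0$ by a walk along edges iff there exist $k_1,\dots,k_N \in S$ such that $m = k_1 + \cdots + k_N$. In other words, the connected component of $0$ coincides with the additive \emph{subsemigroup} of $\Z^d$ generated by $S$.

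The symmetry assumption $\widehat f(-k) = \widehat f(k)$ gives $S = -S$, so this subsemigroup is in fact the \emph{subgroup} generated by $S$, which is exactly the set of integer linear combinations of elements of $S$. By the hypothesis of the lemma, this subgroup equals $\Z^d$, whence every vertex is reachable from $0$, and the graph is connected. For the final assertion, if $d=1$ and $\widehat f(1) > 0$, then $1 \in S$, and the subgroup generated by $\{1\} \subset S$ is all of $\Z$, so the spanning condition is automatic.

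There is no real obstacle here: the proof is essentially a one-line observation once the dictionary between edges of $\Gamma$ and the support of $\widehat f$ is made explicit. The only minor point worth stating carefully is why the semigroup coincides with the group, which follows from the symmetry $S = -S$ enforced by $\widehat f(-k) = \widehat f(k)$.
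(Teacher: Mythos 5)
Your proof is correct and follows exactly the route the paper intends: the paper itself states this lemma without proof, offering only the $d=1$ observation that $\Ftrafo f(1)>0$ forces each $n$ to be joined to $n\pm1$, and your argument is the natural completion of that idea (connected component of $0$ = subsemigroup generated by the support of $\Ftrafo f$, which is a subgroup by the symmetry $S=-S$, hence equals the span). The only micro-point you could add is that $0$ may lie in $S$ when $\Ftrafo f(0)\neq 0$ even though $w(0)=0$ by convention, but this is harmless since $0$ contributes nothing to the span or to the walks.
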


A preliminary result towards constructing a Floquet function is the following:

\begin{lem}\label{lem:Kah}
Suppose $f\in C(\T)$ has nonnegative Fourier coefficients. Then $f\in A(\T)$. 
\end{lem}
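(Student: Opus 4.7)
The plan is to apply Fejér's theorem combined with the nonnegativity of the Fourier coefficients and a monotone convergence argument, yielding the sharp identity $\|\widehat f\|_{\ell^1(\Z)} = f(0)$.

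First I would consider the Fejér means
\begin{equation*}
\sigma_N f(\theta) = \sum_{|k|\le N}\Bigl(1-\frac{|k|}{N+1}\Bigr)\Ftrafo f(k) \ee^{2\pi \ii k\theta}.
\end{equation*}
Since $f \in C(\T)$, Fejér's theorem gives that $\sigma_N f \to f$ uniformly on $\T$; in particular $\sigma_N f(0) \to f(0)$ as $N \to \infty$. Evaluating at $\theta = 0$ gives
\begin{equation*}
\sigma_N f(0) = \sum_{|k|\le N}\Bigl(1-\frac{|k|}{N+1}\Bigr)\Ftrafo f(k).
\end{equation*}

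Next, I would use the assumption $\Ftrafo f(k) \ge 0$ to control the tail. For any fixed $M \in \N$ and any $N \ge M$, all summands are nonnegative, hence
\begin{equation*}
\sum_{|k|\le M}\Bigl(1-\frac{|k|}{N+1}\Bigr)\Ftrafo f(k) \le \sigma_N f(0).
\end{equation*}
Letting $N \to \infty$ on both sides (the left-hand side has finitely many terms, each factor converging to $1$), I obtain $\sum_{|k|\le M}\Ftrafo f(k) \le f(0)$ for every $M$. Letting $M \to \infty$ and using nonnegativity gives $\sum_{k \in \Z}\Ftrafo f(k) \le f(0) < \infty$, so $\Ftrafo f \in \ell^1(\Z)$ and $f \in A(\T)$. (In fact equality holds, since the Fourier series then converges absolutely and $f(0) = \sum_k \Ftrafo f(k)$.)

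No serious obstacle is expected: Fejér's theorem is classical and the nonnegativity assumption is precisely what turns the Cesàro-summable identity $\sigma_N f(0) \to f(0)$ into genuine absolute summability via monotone convergence. The only subtlety is that one must pass to the limit through the Cesàro factors $(1-|k|/(N+1))$ rather than through the raw partial sums $S_N f(0)$, which need not converge to $f(0)$ for merely continuous $f$.
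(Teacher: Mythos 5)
Your proof is correct and follows essentially the same route as the paper: both arguments apply Fej\'er's theorem at $\theta=0$ and use the nonnegativity of $\Ftrafo f(k)$ to pass from Ces\`aro convergence to the bound $\sum_{|k|\le M}\Ftrafo f(k)\le f(0)$, hence $\ell^1$ summability. Writing the Ces\`aro mean in the explicit form $\sum_{|k|\le N}(1-\frac{|k|}{N+1})\Ftrafo f(k)$ rather than as an average of partial sums is only a cosmetic difference.
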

\begin{proof}
This result is known \cite[p.9]{Kah}, let us recall the argument. Let $\sigma_N=\frac{S_0+\dots+S_{N-1}}{N}$, where $S_j(\theta) = \sum_{k=-N}^N \widehat{f}(k)\ee^{2\pi\ii k\theta}$. Then as $f$ is continuous, Fej\'er's theorem states that $\sigma_N$ converges uniformly to $f$. In particular, $C_N=\frac{1}{N}\sum_{j=0}^{N-1} S_j(0) \to f(0)$. But $S_j(0)$ is increasing in $j$ since $\widehat{f}(k)\ge 0$. So for $m<N$, $C_N\ge \frac{1}{N}\sum_{j=0}^{m-1} S_j(0) + S_m(0)(1-\frac{m}{N})$. Taking $N\to\infty$ yields $S_m(0)\le f(0)$ for any $m$.
\end{proof}

We saw that the functions in Figure~\ref{fig:functions.a.b.c} defined crystals
satisfying our assumptions; these
functions are convex in $[0,1]$ and symmetric. This turns out to be a general fact:
\begin{lem}[Floquet functions defining a graph I]\label{lem:conv}
  Let $f \in C([0,1])$ be real-valued and symmetric (with respect
  to $1/2$).  If $f$ non-constant and convex, then it defines a crystal over $\Z$ with $\nu=1$ (i.e. one-element
  fundamental cell) and potential $Q=\int_0^1 f(\theta) \dd \theta$.
\end{lem}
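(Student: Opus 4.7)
The plan is to show that $f$ satisfies the four conditions required for $f$ to be the Floquet function of a $\Z$-periodic graph with $\nu = 1$ (cf.\ Definition~\ref{def:flofu} and Assumption~\ref{AssumptionEdgeWeights}): namely (i) $\Ftrafo f(-k)=\Ftrafo f(k)$; (ii) $\Ftrafo f(k)\ge 0$ for $k \ne 0$; (iii) $\sum_k \Ftrafo f(k)<\infty$, so that $f \in A(\Torus)$; and (iv) the indices $k$ with $\Ftrafo f(k)\ne 0$ generate $\Z$ as a subgroup, so that Lemma~\ref{lem:connected} guarantees connectedness. Once these are verified, the potential is read off as $Q=\Ftrafo f(0)=\int_0^1 f(\theta)\,\dd\theta$. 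Condition (i) is immediate from Lemma~\ref{lem:symm} applied to the symmetry of $f$.

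The heart of the argument is condition (ii). Since $f$ is symmetric with respect to $1/2$, we have $f(0)=f(1)$, and since $f$ is convex, its (a.e.-defined) derivative $f'$ is non-decreasing, so $\mu:=\dd f'$ is a nonnegative Borel measure on $[0,1]$. Integrating by parts twice in the expression $\Ftrafo f(k)=\int_0^1 \ee^{-2\pi\ii k \theta} f(\theta)\,\dd\theta$ — the first IBP having no boundary contribution thanks to $f(0)=f(1)$, the second performed in the Lebesgue--Stieltjes sense since $f$ need not be $C^2$ — yields for $k\ne 0$
\[
  \Ftrafo f(k)
  =\frac{1}{(2\pi k)^2}\int_{[0,1]}\bigl(1-\ee^{-2\pi\ii k\theta}\bigr)\,\dd\mu(\theta).
\]
Taking real parts (the left-hand side is real by (i)) gives
\[
  \Ftrafo f(k)
  =\frac{1}{(2\pi k)^2}\int_{[0,1]}\bigl(1-\cos(2\pi k\theta)\bigr)\,\dd\mu(\theta)
  \ge 0.
\]
If one prefers to avoid Stieltjes calculus, the same formula can be obtained by mollifying $f$ with a symmetric nonnegative smooth kernel, applying the identity in the smooth case, and passing to the limit using uniform convergence.

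For condition (iii), since $\Ftrafo f(k)\ge 0$ for $k\ne 0$ we may choose $C\in\R$ so large that $\Ftrafo f(0)+C\ge 0$; then $f+C$ is continuous with all Fourier coefficients nonnegative, so Lemma~\ref{lem:Kah} gives $f+C\in A(\Torus)$, whence $f\in A(\Torus)$. For condition (iv), suppose the subgroup of $\Z$ generated by $\{k\ne 0 : \Ftrafo f(k)\ne 0\}$ were $d\Z$ for some $d\ge 2$; then $f$ would be $1/d$-periodic, so in particular $f(0)=f(1/d)=\dots=f(1)$. A convex function on $[0,1]$ attaining the same value at three (or more) distinct points is constant on the interval they span, contradicting the hypothesis that $f$ is non-constant. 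Hence the generated subgroup is all of $\Z$ and Lemma~\ref{lem:connected} applies.

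The only real subtlety is the second integration by parts, which is clean under $C^2$ regularity but requires either the Lebesgue--Stieltjes framework or a mollification argument in the genuinely non-smooth case; the remaining steps are essentially bookkeeping with results already established in the paper.
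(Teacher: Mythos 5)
Your proposal is correct and follows the paper's skeleton for the first three conditions (symmetry gives even coefficients, convexity gives nonnegativity, and Lemma~\ref{lem:Kah} gives summability), but it diverges in two substantive ways. First, where the paper simply cites the classical fact that a continuous, symmetric, convex function has nonnegative Fourier coefficients, you prove it via a double integration by parts with $\dd\mu=\dd f'$; this is a legitimate self-contained argument, though as you note the second (Stieltjes) integration by parts needs care at the endpoints, where $f'$ may be unbounded and $\mu$ may have infinite total mass --- your observation that the test function $1-\ee^{-2\pi\ii k\theta}$ vanishes at $0$ and $1$, or the mollification fallback, handles this. Your fix of adding a constant before invoking Lemma~\ref{lem:Kah} (since $\Ftrafo f(0)$ may be negative) is actually slightly more careful than the paper's phrasing. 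Second, for connectedness the paper proves the stronger statement $\Ftrafo f(1)>0$ directly, by writing $\Ftrafo f(1)$ as an integral of a nonnegative combination of values of $f$ and deriving a contradiction with convexity if it vanishes; you instead verify the weaker hypothesis of Lemma~\ref{lem:connected} by showing the support of $\Ftrafo f$ generates $\Z$, using that a convex function equal at $0$, $1/d$ and $1$ must be constant. Your route is cleaner for the lemma at hand, but note two small points: (i) you should also dispose of the degenerate case where $\Ftrafo f(k)=0$ for all $k\neq 0$ (trivial subgroup, not of the form $d\Z$ with $d\ge 2$), which immediately forces $f$ constant; and (ii) the paper's stronger conclusion $\Ftrafo f(1)>0$ is reused later (e.g.\ in Lemma~\ref{lem:convo} to ensure connectedness of $f\ast g$), so your weaker version would not substitute for the paper's proof everywhere it is invoked.
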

\begin{proof}
Due to symmetry of $f$ we conclude that $\Ftrafo f(k)$ is given by~\eqref{eq:symm.b} and $\widehat{f}(-k)=\widehat{f}(k)$. As $f$ is convex, this implies $\widehat{f}(k)\ge 0$. This can be seen through two integrations by parts if $f$ is $C^2$. For general continuous $f$, see \cite[p.9]{Kah} or \cite[Thm. 35]{HR}. We deduce that $\sum_k \widehat{f}(k)<\infty$ by Lemma~\ref{lem:Kah}.
  
To see connectedness of $\Gamma$, we show that $\widehat{f}(1)>0$. As in \cite{HR}, write $\widehat{f}(1)=\int_0^{1/4}[f(\theta)-f(\frac{1}{2}+\theta)-f(\frac{1}{2}-\theta)+f(1-\theta)]\cos 2\pi\theta\,\dd\theta$ and use convexity to see that the integrand is nonnegative. If $\widehat{f}(1)=0$, the term in square brackets must thus be identically zero on $[0,\frac{1}{4}]$. Using symmetry, this leads to $f(\theta)=f(\frac{1}{2}\pm \theta)$. In particular, $f(0)=f(\frac{1}{2})=f(1)$. But $f(0)=\sum_k \widehat{f}(k)$ is the maximum value of $f$. If $f$ is non-constant, then $f$ must have a minimum between $f(0)$ and $f(\frac{1}{2})$, and between $f(\frac{1}{2})$ and $f(1)$. This contradicts convexity. Thus, $\widehat{f}(1)>0$.
\end{proof}

A function is in $A(\T^d)$ if it is a convolution \eqref{eq:convolution} of two $L^2$ functions, see \cite[p.10]{Kah}. The same argument shows:
\begin{lem}[Floquet functions defining a graph II]\label{lem:convo}
  Let $f \in L^2(\Torus)$ be real-valued and symmetric with respect to $1/2$.
  Then $h := f\ast f$ defines a $\Z$-periodic graph (possibly not-connected) with $\nu=1$.
  Connectedness is ensured if $\Ftrafo f(1) \neq 0$.

  If $f,g$ are two functions satisfying the assumptions of
  Lemma~\ref{lem:conv}, then $h = f\ast g$ defines a connected
  periodic graph with one vertex in its fundamental cell.
\end{lem}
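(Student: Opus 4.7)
The plan is to reduce everything to a single observation: by the convolution identity~\eqref{eq:conv.f-trafo}, we have $\Ftrafo{h}(k) = \Ftrafo{f}(k)\cdot \Ftrafo{g}(k)$ (with $g=f$ in the first part), so the required properties of $h$ as a Floquet function follow from properties of $\Ftrafo{f}$ and $\Ftrafo{g}$ that we either have at hand or can extract immediately.

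First I would handle the self-convolution case $h = f \ast f$. Since $f$ is real-valued we have $\Ftrafo{f}(-k) = \overline{\Ftrafo{f}(k)}$, and since $f$ is symmetric with respect to $1/2$ Lemma~\ref{lem:symm} gives $\Ftrafo{f}(-k) = \Ftrafo{f}(k)$; combining these two identities forces $\Ftrafo{f}(k) \in \R$. Therefore $\Ftrafo{h}(k) = \Ftrafo{f}(k)^2 \ge 0$ and $\Ftrafo{h}(-k) = \Ftrafo{f}(-k)^2 = \Ftrafo{f}(k)^2 = \Ftrafo{h}(k)$. Summability is even easier: since $f \in L^2(\Torus)$, Parseval's identity gives
\[
  \sum_{k \in \Z} \Ftrafo{h}(k) \;=\; \sum_{k \in \Z} \Ftrafo{f}(k)^2 \;=\; \|f\|_{L^2}^2 \;<\; \infty,
\]
so $h \in A(\Torus)$ and thus defines a $\Z$-periodic graph with $\nu=1$ via Definition~\ref{def:flofu}. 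For the connectedness part, the assumption $\Ftrafo{f}(1) \neq 0$ yields $\Ftrafo{h}(1) = \Ftrafo{f}(1)^2 > 0$, and since $\{1\}$ generates $\Z$, Lemma~\ref{lem:connected} applies.

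For the second assertion, let $f$ and $g$ both satisfy the hypotheses of Lemma~\ref{lem:conv}. From (the proof of) that lemma, $\Ftrafo{f}(k)$ and $\Ftrafo{g}(k)$ are real, nonnegative, symmetric in $k$, summable, and moreover $\Ftrafo{f}(1) > 0$ and $\Ftrafo{g}(1) > 0$. With $h = f \ast g$, we have $\Ftrafo{h}(k) = \Ftrafo{f}(k)\Ftrafo{g}(k) \ge 0$ and $\Ftrafo{h}(-k) = \Ftrafo{h}(k)$ directly. For summability note that $\Ftrafo{g} \in \ell^1(\Z)$ implies $\Ftrafo{g} \in \ell^\infty(\Z)$, hence
\[
  \sum_{k \in \Z} \Ftrafo{h}(k) \;\le\; \|\Ftrafo{g}\|_\infty \sum_{k \in \Z} \Ftrafo{f}(k) \;<\; \infty.
\]
Finally $\Ftrafo{h}(1) = \Ftrafo{f}(1)\Ftrafo{g}(1) > 0$, so Lemma~\ref{lem:connected} again yields connectedness.

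There is no real obstacle in this proof: the whole content is that taking Fourier coefficients intertwines convolution with pointwise product, so both nonnegativity (automatic from squaring, or from the product of two nonnegative sequences) and $\ell^1$-summability (Parseval in the $f\ast f$ case, $\ell^1 \times \ell^\infty$ in the $f \ast g$ case) come essentially for free. The only subtle point worth flagging is that for the first claim one genuinely uses both hypotheses on $f$ (real-valued \emph{and} symmetric) to ensure $\Ftrafo{f}(k) \in \R$ and hence $\Ftrafo{f}(k)^2 \ge 0$; dropping either hypothesis would leave $\Ftrafo{f}(k)$ complex and the nonnegativity of $\Ftrafo{h}(k)$ would fail.
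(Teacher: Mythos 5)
Your proof is correct and follows essentially the same route as the paper: both reduce everything to the identity $\Ftrafo{h}=\Ftrafo{f}\cdot\Ftrafo{g}$, use realness of $\Ftrafo f$ (from real-valuedness plus symmetry) to get $\Ftrafo{f}(k)^2\ge 0$, use Parseval for summability in the $f\ast f$ case, and invoke Lemma~\ref{lem:connected} via $\Ftrafo{h}(1)\neq 0$. The only cosmetic difference is that for $\sum_k\Ftrafo f(k)\Ftrafo g(k)<\infty$ you use the $\ell^1\times\ell^\infty$ bound where the paper uses Cauchy--Schwarz; both are immediate and equivalent here.
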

\begin{proof}
  Symmetry of $f$ implies that $\Ftrafo f$ is real-valued and even, as
  we saw in the previous proof.  By~\eqref{eq:conv.f-trafo}, we have
  $\Ftrafo{h}(k) = \Ftrafo{f}(k)^2\ge 0$.  Also,
  $\sum_k \Ftrafo{h}(k) = \sum_k \Ftrafo{f}(k)^2 = \|f\|^2<\infty$. If
  $\Ftrafo{f}(1)\neq 0$, then $\Ftrafo{h}(1)\neq 0$ and the graph
  defined by $h$ is connected.

  Finally, if $f,g$ satisfy the assumptions of Lemma~\ref{lem:conv},
  then $\Ftrafo{h}(k)=\Ftrafo{f}(k)\cdot \Ftrafo{g}(k)\ge 0$ is summable by
  the Cauchy-Schwarz inequality, and one has $\Ftrafo h(1)\neq 0$ which
  ensures connectedness.
\end{proof}
\begin{rem}
  The Floquet function in~\eqref{eq:parfla} is covered
  by Lemma~\ref{lem:conv} and also by Lemma~\ref{lem:convo}. Indeed, by
  taking $f = \one_{[1/2-\eps,1/2+\eps]}$ with
  $\eps = 1/8$ we conclude $c=f\ast f$. Also note that
  such an $f$ has Fourier coefficients
  $\frac{(-1)^k \sin 2\pi k \eps}{\pi k}$, whose squares give the
  $\Ftrafo c(k)$ of $c$ if $\eps=1/8$, as expected.
\end{rem}

\begin{lem}[Floquet functions defining a graph III]\label{lem:prod}
  Let $f,g \colon \Torus \to \C$ have non-negative, symmetric and summable Fourier coefficients.
  Then the product $f \cdot g$ satisfies the same properties and hence defines a $\Z$-periodic graph with $\nu=1$. 
  It is connected if $(\Ftrafo{fg})(1)\neq 0$.
\end{lem}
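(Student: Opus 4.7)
The plan is to reduce all four claimed properties of $\widehat{fg}$ to the corresponding properties of $\widehat f$ and $\widehat g$ via the dual of~\eqref{eq:conv.f-trafo}, which reads
\[
  \widehat{fg}(k) = (\widehat f \ast \widehat g)(k)
  = \sum_{k' \in \Z} \widehat f(k')\,\widehat g(k-k'),
  \qquad k \in \Z.
\]
This identity is standard, but one must justify the interchange of sums. Since $\widehat f, \widehat g \in \ell^1(\Z)$, both Fourier series converge absolutely and uniformly by the Weierstra\ss\ $M$-test, so $f,g \in A(\Torus)$ are continuous and bounded. Substituting the two series into $\int_\Torus f(\theta)g(\theta)\ee^{-2\pi\ii k\theta}\,\dd\theta$ and applying Fubini (which is legitimate because $\sum_{k',k''}|\widehat f(k')\widehat g(k'')|<\infty$), the integral picks out the diagonal $k'+k''=k$ and gives the convolution formula above.

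From this identity the three algebraic properties follow at once. Non-negativity: each term $\widehat f(k')\widehat g(k-k')$ is a product of non-negative numbers, hence $(\widehat f\ast\widehat g)(k)\ge 0$. Symmetry: the substitution $k'\mapsto -k'$ and the assumed symmetries $\widehat f(-k')=\widehat f(k')$, $\widehat g(-k'')=\widehat g(k'')$ yield
\[
  (\widehat f\ast\widehat g)(-k)
  = \sum_{k'}\widehat f(k')\widehat g(-k-k')
  = \sum_{k'}\widehat f(-k')\widehat g(-k+k')
  = (\widehat f\ast \widehat g)(k).
\]
Summability: by Fubini again, $\sum_{k\in\Z}(\widehat f\ast \widehat g)(k) = \bigl(\sum_{k'}\widehat f(k')\bigr)\bigl(\sum_{k''}\widehat g(k'')\bigr) < \infty$; equivalently this is Young's inequality $\|\widehat f\ast \widehat g\|_1\le \|\widehat f\|_1\|\widehat g\|_1$. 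Together with $(\widehat{fg})(k)\ge 0$ this also gives $\|\widehat{fg}\|_1<\infty$.

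Having checked the three properties of Assumption~\ref{AssumptionEdgeWeights} for $w(k):=\widehat{fg}(k)$, $k\ne 0$, and $Q := \widehat{fg}(0)$, the pair $(w,Q)$ defines a $\Z$-periodic graph with $\nu=1$ in the sense of Definitions~\ref{def:graph}--\ref{def:per-graph}. Connectedness is then immediate from Lemma~\ref{lem:connected}: if $(\widehat{fg})(1)\ne 0$ then $w(1)>0$, so each vertex $n$ is linked to $n\pm1$ and the graph is connected. The only genuinely delicate point is the Fubini interchange in the derivation of $\widehat{fg}=\widehat f\ast \widehat g$, but absolute summability of both Fourier series makes this routine.
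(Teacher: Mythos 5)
Your proof is correct and follows essentially the same route as the paper: both reduce everything to the identity $\Ftrafo{fg}=\Ftrafo f\ast\Ftrafo g$ and then check non-negativity, symmetry (via the substitution $k'\mapsto -k'$), and summability (Young's inequality / Tonelli) termwise. The only difference is that you re-derive the convolution identity via Fubini, whereas the paper simply cites its stated formula~\eqref{eq:disc.conv.f-trafo}; your justification is a harmless (and correct) addition.
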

\begin{proof}
  We have $\Ftrafo{f \cdot g}=\Ftrafo f \ast \Ftrafo g$
  by~\eqref{eq:disc.conv.f-trafo}.  If $\Ftrafo f(k) \ge 0$ and
  $\Ftrafo g(k) \ge 0$ then $(\Ftrafo f \ast \Ftrafo g)(k)\ge 0$ as sum
  over non-negative summands (see~\eqref{eq:disc.convolution}).  We
  also have
  \begin{align*}
    (\Ftrafo f \ast \Ftrafo g)(-k)
    = \sum_{k'} \Ftrafo f(k') \Ftrafo g(-k-k')
    &= \sum_{k'} \Ftrafo f(-k') \Ftrafo g(k+k')\\
    &= \sum_{k'} \Ftrafo f(k') \Ftrafo g(k-k')
    =(\Ftrafo f \ast \Ftrafo g)(k),
  \end{align*}
  where we used the symmetry of the coefficients in the second
  equality and switched the indices $k'\to-k'$ in the third.  Finally,
  \begin{equation*}
    \sum_k |(\Ftrafo f \ast \Ftrafo g)(k)|
    \le \sum_k \Ftrafo g(k) \sum_{k'} \Ftrafo f(k'-k)
    \le \|\Ftrafo f\|_1\|\Ftrafo g\|_1<\infty. \qedhere
  \end{equation*}
\end{proof}

In higher dimension we have no recipe to construct graphs, however, we quote the following partial result from \cite[Thm 3.3.16]{grafakos:09}.

\begin{lem}
If $f\in C^{[d/2]}(\T^d)$ and all partial derivatives of order $[\frac{d}{2}]$ are H\"older continuous, then $f\in A(\T^d)$.
\end{lem}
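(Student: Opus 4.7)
The plan is to show that $f\in H^s(\Torus^d)$ for some $s>d/2$ and then deduce $f\in A(\Torus^d)$ from Cauchy--Schwarz. Granted such an $s$, one writes
\[
\sum_{k\in\Z^d}|\Ftrafo f(k)|
\le\Bigl(\sum_{k\in\Z^d}(1+|k|^2)^s|\Ftrafo f(k)|^2\Bigr)^{1/2}
\Bigl(\sum_{k\in\Z^d}(1+|k|^2)^{-s}\Bigr)^{1/2}
=\|f\|_{H^s}\Bigl(\sum_{k\in\Z^d}(1+|k|^2)^{-s}\Bigr)^{1/2},
\]
and the last factor is finite since $2s>d$. So the whole task reduces to establishing enough Sobolev regularity.

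To upgrade the given regularity to $H^s$ with $s>d/2$, set $m:=[d/2]$. For each multi-index $\alpha$ with $|\alpha|=m$, the relation $\Ftrafo{\partial^\alpha f}(k)=(2\pi\ii k)^\alpha\Ftrafo f(k)$ combined with Plancherel applied to $\partial^\alpha f\in L^2(\Torus^d)$ gives $f\in H^m$. Since $m$ either equals $d/2$ (for $d$ even) or falls $1/2$ short of it (for $d$ odd), I must extract a little extra regularity from the H\"older assumption. Writing $g:=\partial^\alpha f$ and denoting by $\beta\in(0,1]$ its H\"older exponent, Parseval applied to the finite difference yields
\[
\sum_{k\in\Z^d}|\ee^{2\pi\ii k\cdot h}-1|^2|\Ftrafo g(k)|^2
=\|g(\cdot+h)-g\|_{L^2(\Torus^d)}^2
\lesssim |h|^{2\beta}.
\]
Choosing $h$ of magnitude $\sim 2^{-N}$ along the appropriate coordinate direction forces $|\ee^{2\pi\ii k\cdot h}-1|\gtrsim 1$ on the dyadic shell $|k|\sim 2^N$; taking the minimum over the $d$ coordinate directions yields the Bernstein-type bound $\sum_{|k|\sim 2^N}|\Ftrafo g(k)|^2\lesssim 2^{-2N\beta}$. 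Summing dyadically gives $g\in H^{\beta-\varepsilon}$ for every $\varepsilon>0$, and hence $f\in H^{m+\beta-\varepsilon}$. This exceeds $d/2$ provided $\beta>d/2-m$, i.e.\ for any $\beta>0$ when $d$ is even and for $\beta>1/2$ when $d$ is odd.

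The main obstacle is the dyadic Bernstein-type estimate translating H\"older continuity into an $\ell^2$-decay of Fourier coefficients on shells; the rest of the argument is Plancherel bookkeeping and the single application of Cauchy--Schwarz. A side-observation emerging from the calculation is that in odd dimensions one implicitly needs the H\"older exponent to exceed $1/2$ for the $H^s$-threshold to be crossed, matching the classical Bernstein restriction.
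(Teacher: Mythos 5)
Your argument is correct, and it is worth noting that the paper offers no proof of this lemma at all --- it is quoted as a known result from Grafakos (Thm.~3.3.16) --- so there is nothing in the text to diverge from; your proof is essentially the standard one behind that citation. The route through $H^{s}(\T^d)\hookrightarrow A(\T^d)$ for $s>d/2$ plus the dyadic finite-difference (Bernstein) estimate $\sum_{|k|\sim 2^N}|\widehat{g}(k)|^2\lesssim 2^{-2N\beta}$ is exactly how the textbook argument runs; the only cosmetic difference is that the textbook applies Cauchy--Schwarz shell by shell, $\sum_{|k|\sim 2^N}|\widehat{f}(k)|\lesssim 2^{Nd/2}\cdot 2^{-N([d/2]+\beta)}$, rather than once globally through a Sobolev norm, which avoids your harmless $\varepsilon$-loss but yields the same condition $[d/2]+\beta>d/2$. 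One point you make only in passing deserves to be foregrounded: that condition is a genuine hypothesis, not an artifact of the method, and it is missing from the lemma as stated in the paper. Already for $d=1$ the statement as printed reads ``every H\"older continuous function on $\T$ lies in $A(\T)$'', which is false at exponent $1/2$ --- Bernstein's theorem is sharp there, e.g.\ by the Hardy--Littlewood series $\sum_{n\ge 1} n^{-1}\ee^{\ii n\log n}\ee^{2\pi\ii n\theta}$ --- so for odd $d$ one must require the H\"older exponent to exceed $1/2$; the version in Grafakos carries this condition and the paper's transcription dropped it. Finally, a small wording slip: ``taking the minimum over the $d$ coordinate directions'' should be ``summing over the $d$ directions'' (or taking the maximum), since for a given $k$ in the shell only the direction of its largest coordinate guarantees $|\ee^{2\pi\ii k\cdot h}-1|\gtrsim 1$; this does not affect the bound.
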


\subsection{The Fractional Laplacian}\label{sec:fractio}
Consider the usual discrete Laplacian on $\Z^d$, that is $-\Delta = 2d - \cA_{\Z^d}$. Given $\alpha\in (0,1)$, the \emph{fractional Laplacian} on $\Z^d$ is the operator $(-\Delta)^\alpha$, defined via functional calculus.

This operator is usually regarded as a ``nonlocal'' operator on $\Z^d$, see \cite{CRS+}. Here we show that it can be instead regarded as the standard Laplacian of a new lattice $\Gamma$ which is not locally finite but which is contained in our special framework.

\begin{prp}\label{prp:frac}
Consider the crystal $\Gamma$ over $\Z^d$ with $\nu=1$ defined by the weight function $w(k):= - (-\Delta)^\alpha(0,k)$ for $k\in \Z^d\setminus \{0\}$ and $w(0)=0$. Then $w$ satisfies Assumption~\ref{AssumptionEdgeWeights}. Moreover,
\begin{equation}\label{e:fralap}
(-\Delta)^\alpha = \mathcal{L}_\Gamma = \mathcal{D}_\Gamma-\mathcal{A}_\Gamma \,.
\end{equation}
\end{prp}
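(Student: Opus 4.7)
The plan is to work entirely on the Fourier side, using that for $\Z^d$ with $\nu=1$ the Floquet transform of Lemma~\ref{lem:Floquet_transform} reduces to the standard Fourier transform on $\T^d$. Under this transform $-\Delta$ becomes multiplication by the non-negative continuous symbol $g_1(\theta)=4\sum_{i=1}^d\sin^2(\pi\theta_i)$, and by bounded functional calculus $(-\Delta)^\alpha$ becomes multiplication by the continuous, non-negative, $\theta\mapsto -\theta$-invariant function $g(\theta):=g_1(\theta)^\alpha$. The proof then splits into (i) verifying that $w(k):=-(-\Delta)^\alpha(0,k)$ fulfils Assumption~\ref{AssumptionEdgeWeights} and yields a connected graph, and (ii) identifying the Floquet symbol of the associated Laplacian $\cL_\Gamma$ with $g$.

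For (i) I would use the Bochner subordination formula
\[
\lambda^\alpha=\frac{\alpha}{\Gamma(1-\alpha)}\int_0^\infty(1-\ee^{-t\lambda})\,t^{-\alpha-1}\,\dd t\qquad(\lambda\ge 0,\ \alpha\in(0,1)),
\]
which, applied via functional calculus to the bounded non-negative operator $-\Delta$, gives
\[
(-\Delta)^\alpha=\frac{\alpha}{\Gamma(1-\alpha)}\int_0^\infty(I-\ee^{t\Delta})\,t^{-\alpha-1}\,\dd t.
\]
Taking matrix elements and setting $p_t(k):=\langle\delta_k,\ee^{t\Delta}\delta_0\rangle$ for the continuous-time simple random walk kernel (strictly positive and satisfying $p_t(-k)=p_t(k)$) yields, for $k\neq 0$,
\[
w(k)=\frac{\alpha}{\Gamma(1-\alpha)}\int_0^\infty p_t(k)\,t^{-\alpha-1}\,\dd t>0,
\]
with symmetry $w(-k)=w(k)$ immediate. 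Summability follows by Fubini--Tonelli from
\[
\sum_{k\neq 0}w(k)=\frac{\alpha}{\Gamma(1-\alpha)}\int_0^\infty(1-p_t(0))\,t^{-\alpha-1}\,\dd t,
\]
which converges near $0$ because $p_t(0)=1-2dt+O(t^2)$ and near $\infty$ because $1-p_t(0)\le 1$ together with $\alpha>0$. Strict positivity of $w(k)$ on all of $\Z^d\setminus\{0\}$ makes $\Gamma$ complete, hence connected.

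For (ii), since $\nu=1$ the graph is regular with constant degree $\cD:=\sum_{k\neq 0}w(k)$, so by Lemma~\ref{lem:Floquet_transform} the Floquet symbol of $\cL_\Gamma=\cD_\Gamma-\cA_\Gamma$ equals $\cD-h(\theta)$ with $h(\theta):=\sum_{k\neq 0}w(k)\ee^{2\pi\ii\theta\cdot k}$. Setting $K(k):=(-\Delta)^\alpha(0,k)=\Ftrafo g(k)$, step (i) shows $K\in\ell^1(\Z^d)$, so the Fourier series of $g$ converges absolutely:
\[
g(\theta)=\sum_{k\in\Z^d}K(k)\ee^{2\pi\ii\theta\cdot k}=K(0)-h(\theta).
\]
Evaluating at $\theta=0$ and using $g(0)=0$ gives $K(0)=h(0)=\cD$, so the symbol of $\cL_\Gamma$ is $\cD-h(\theta)=g(\theta)$, which coincides with that of $(-\Delta)^\alpha$. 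Unitarity of the Floquet transform then proves \eqref{e:fralap}. The only technical step requiring any care is the passage from the operator-valued subordination integral to the pointwise formula for $w(k)$ and the accompanying interchange of sum and integral; both are justified by the non-negativity of $p_t$ and the small-$t$ expansion of $p_t(0)$.
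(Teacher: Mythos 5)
Your proof is correct, and it reaches the same two milestones as the paper (verifying Assumption~\ref{AssumptionEdgeWeights} and identifying the degree with the diagonal kernel value) by a somewhat different route. The paper simply cites \cite{GebMol} for positivity, symmetry and summability of $w$ (obtaining along the way the sharp decay $w(k)\asymp|k|^{-d-\alpha}$, which you do not recover but also do not need), and then reduces \eqref{e:fralap} to the identity $\sum_{k}(-\Delta)^\alpha(0,k)=0$, which it proves from the explicit Bessel representation $\ee^{t\Delta}(n,m)=\prod_j\ee^{-2t}I_{m_j-n_j}(2t)$ and the generating-function identity $\sum_r\ee^{-2t}I_r(2t)=1$. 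You instead extract everything from the Bochner subordination formula: strict positivity and symmetry of $w$ come from the corresponding properties of the heat kernel $p_t$, summability from $\sum_kp_t(k)=1$ together with $1-p_t(0)=O(t)$, and the key identity is closed on the Fourier side by evaluating the absolutely convergent series $g(\theta)=\sum_kK(k)\ee^{2\pi\ii\theta\cdot k}$ at $\theta=0$, where the symbol vanishes. These are the same facts in disguise ($\sum_kK(k)=g(0)=0$ is exactly the paper's \eqref{e:sumker}, and $\sum_rp_t(r)=1$ is the probabilistic reading of the Bessel identity), but your version is more self-contained, avoids the Bessel machinery, and makes transparent why the degree equals $(-\Delta)^\alpha(0,0)$. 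The only points worth spelling out in a final write-up are the ones you already flag: norm convergence of the operator-valued subordination integral (so that matrix elements pass inside), and the fact that for $k\neq0$ the integral $\int_0^\infty p_t(k)t^{-\alpha-1}\,\dd t$ converges at $t=0$ because $p_t(k)=O(t^{|k|_1})$ with $|k|_1\ge1>\alpha$.
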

As we will see, the corresponding crystal $\Gamma$ satisfies that $w(k)>0$ for all $k\neq 0$, hence in $1d$, the graph looks exactly like Figure~\ref{fig:3ex}, but with bigger weights.
\begin{proof}
The proof relies on known properties of the fractional Laplacian. In fact, by \cite[Thm. 2.2]{GebMol}, we have $w(k)>0$ for all $k\neq 0$ and $w(k)\asymp |k|^{-d-\alpha}$ is summable. Next, $w(-k) = -(-\Delta)^\alpha(0,-k) = w(k)$ by \cite[eq. (5.2)-(5.5)]{GebMol}. This shows that Assumption~\ref{AssumptionEdgeWeights} is satisfied. To prove \eqref{e:fralap}, recall that
by the functional calculus,
\begin{equation}\label{e:frafuncal}
(-\Delta)^\alpha = \frac{-1}{|\Gamma(\frac{-\alpha}{2})|}\int_0^\infty \dd t\, t^{-1-\alpha/2}(\ee^{t\Delta}-\mathbf{1})\,.
\end{equation}
As is well-known~\cite{GebMol}, $\ee^{t\Delta}(n,m) = \prod_{j=1}^d \ee^{-2t} I_{m_j-n_j}(2t)$, where $I_k$ is the modified Bessel function.\footnote{Simply write $\ee^{t\Delta}(n,m) = \langle \mathscr{F}^{-1}\delta_n, (\mathscr{F}^{-1}\ee^{t\Delta}\mathscr{F})\mathscr{F}^{-1}\delta_m\rangle =  \int_{\T^d} \dd\theta\,\ee^{2\pi\ii (m-n)\cdot\theta} \ee^{-t\sum_{j=1}^d (2-2\cos 2\pi \theta_j)} $ where $\mathscr{F}f = \widehat{f}$ is the Fourier transform, yielding the representation.} In particular, $(-\Delta)^\alpha(n,m)=(-\Delta)^\alpha(0,n-m)$ for all $n,m$ and thus
\[
((-\Delta)^\alpha f)(n)  = (-\Delta)^\alpha(0,0)f(n) + \sum_{k\neq 0} (-\Delta)^\alpha(0,k) f(n+k)\,.
\]
Since $\sum_{k\neq 0} (-\Delta)^\alpha(0,k) f(n+k) = -(\cA_\Gamma f)(n)$, then \eqref{e:fralap} will follow if we show that $(-\Delta)^\alpha(0,0) = \cD_\Gamma = \sum_k w(k)$. Equivalently, we need to show that
\begin{equation}\label{e:sumker}
\sum_{k\in \Z^d} (-\Delta)^\alpha(0,k)=0 \,.
\end{equation}
To prove \eqref{e:sumker}, we use \eqref{e:frafuncal} to get
\[
\sum_{k\in \Z^d} (-\Delta)^\alpha(0,k) = \frac{-1}{|\Gamma(\frac{-\alpha}{2})|} \int_0^\infty \dd t \, t^{-1-\alpha/2} \Big(\sum_{k\in \Z^d} \prod_{j=1}^d \ee^{-2t} I_{k_j}(2t)-\delta_{0,k}\Big)\,.
\]
The generating function of $I_r$ has the form $\sum_{r\in \Z} a^r I_r(z) = \ee^{z(\frac{a+a^{-1}}{2})}$. Taking $z=2t$ and $a=1$, we get $\sum_r \ee^{-2t} I_r(2t)=1$. So $\sum_{k\in \Z^d} \prod_{j=1}^d \ee^{-2t} I_{k_j}(2t)=1$. This proves \eqref{e:sumker}.
\end{proof}

If $\nu=1$, the Floquet transform reduces to the Fourier transform, and yields that $U(-\Delta)^\alpha U^{-1}=M_h$, where $h(\theta)=(2d-\sum_{i=1}^d 2\cos 2\pi \theta_i)^\alpha = (\sum_{i=1}^d 4\sin^2\pi \theta_i)^\alpha$. Thus, although $(-\Delta)^\alpha$ is commonly viewed as a ``nonlocal operator'', Proposition~\ref{prp:local} implies that it is in fact \emph{essentially local} if $d=1$.

\begin{rem} Note that other powers of the Laplacian \emph{do not} necessarily define Laplacians on lattices. In fact, in $d=1$, $(-\Delta)^2=(2I-\cA_\Z)^2 = 4I - 2\cA_\Z +\cA_\Z^2$. We see that $(-\Delta)^2(0,1)= -2$,  but $(-\Delta)^2(0,2)=1$, which changes sign. For integer powers $p$, it might be more appropriate to work with powers of the adjacency matrix; this defines a locally finite lattice $\Gamma$ with $w(k) = \cA_{\Z^d}^p(0,k)$.
\end{rem}

\section{Flat bands on non-locally finite graphs }\label{APartlyFlatBand}

We start the spectral investigation by studying the point spectrum of
$\Schr_\Gamma$. We show that dropping local finiteness can lead to new
phenomena regarding flat bands.
\subsection{A partly flat band}
In this subsection we prove the following statement. It is remarkable since all described scenarios are impossible in the locally finite setting as explained in~\cite[p.2 and Lemma~2.3 and Thm. 2.4]{SabriY-23}.
\begin{thm}\label{thm:parfla}
  The graph $\Gamma$, defined by the function $c$ of Theorem~\ref{thm:intro}(3), Figure~\ref{fig:1ex}, satisfies the following:
  \begin{enumerate}[\rm(i)]
  \item
    \label{parfla.a}
  $\Adja_\Gamma$ has an infinitely degenerate eigenvalue $\lambda_0$,
  \item
    \label{parfla.b}
    There is no eigenvector of finite support corresponding to $\lambda_0$,
  \item
    \label{parfla.c}
    A spectral band of $\Adja_\Gamma$ is partly flat, i.e.\
    $E_j(\cdot)$ is constant on a set of positive measure, but not on
    the whole torus $\Torus$.
  \end{enumerate}
  Each of~\eqref{parfla.a},~\eqref{parfla.b} or~\eqref{parfla.c} is
  impossible in the locally finite setting.
\end{thm}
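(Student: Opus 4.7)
The plan is to work on the Floquet side. Since $\nu=1$ and the graph is defined by $c$, Lemma~\ref{lem:Floquet_transform} and Definition~\ref{def:flofu} give $U\cH_\Gamma U^{-1} = \Mult_c$ on $L^2(\Torus)$. The potential is the constant $Q=\Ftrafo c(0)=\frac{1}{16}$, so $\cA_\Gamma=\cH_\Gamma-\frac{1}{16}\IdOp$ is unitarily equivalent to multiplication by $c-\frac{1}{16}$. Set $\lambda_0:=-\frac{1}{16}$.

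For \eqref{parfla.a}, observe that by construction $c$ vanishes identically on the interval $[\tfrac14,\tfrac34]$, which has Lebesgue measure $\tfrac12>0$. Thus $\lambda_0$ is an eigenvalue of $\Mult_{c-1/16}$ whose eigenspace is $\{g\in L^2(\Torus):\supp g\subset [\tfrac14,\tfrac34]\}$, an infinite-dimensional subspace. Pulling back via $U$, the eigenspace $\ker(\cA_\Gamma-\lambda_0)$ is also infinite-dimensional.

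For \eqref{parfla.b}, the key observation is that if $\psi\in\ell^2(\Z)$ has finite support, then $U\psi(\theta)=\sum_{k}\psi(k)\ee^{2\pi\ii k\theta}$ is a (non-zero) trigonometric polynomial. Such a function, viewed in the complex variable $z=\ee^{2\pi\ii\theta}$, is a non-zero Laurent polynomial and therefore has only finitely many zeros on $\Torus$. In particular it cannot vanish on the set $[0,\tfrac14)\cup(\tfrac34,1]$ where $c-\tfrac1{16}\neq\lambda_0$. Hence the equation $(c-\tfrac1{16}-\lambda_0)U\psi=0$ almost everywhere forces $U\psi\equiv 0$, i.e.\ $\psi=0$. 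So no non-trivial eigenvector of $\cA_\Gamma$ at $\lambda_0$ has finite support.

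For \eqref{parfla.c}, since $\nu=1$ there is a single band function $E_1(\theta)=c(\theta)-\frac{1}{16}$, which is constant (equal to $\lambda_0$) on the set $[\tfrac14,\tfrac34]$ of positive measure but strictly varying on $[0,\tfrac14)\cup(\tfrac34,1]$, so it is partly, but not entirely, flat. Finally, each of \eqref{parfla.a}, \eqref{parfla.b}, \eqref{parfla.c} is impossible in the locally finite case by the cited results of~\cite[Lem.~2.3 and Thm.~2.4]{SabriY-23}: in that setting, $\nu=1$ forces an injective Floquet matrix (no flat bands), eigenvectors at a flat band always have compact support, and a band is either entirely flat or almost everywhere non-constant. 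The only step requiring genuine content is~\eqref{parfla.b}, and its heart is the elementary fact that a non-trivial trigonometric polynomial cannot vanish on a set of positive Lebesgue measure.
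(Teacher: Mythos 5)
Your proposal is correct and follows essentially the same route as the paper: pass to the Floquet side where $\cA_\Gamma$ becomes multiplication by the (affinely shifted) function $c$, read off the infinitely degenerate eigenvalue from the flat plateau on $[\tfrac14,\tfrac34]$, rule out finitely supported eigenvectors because a non-zero trigonometric polynomial cannot vanish on a set of positive measure, and observe that the single band is partly but not entirely flat. The only cosmetic difference is normalization — you take the graph literally defined by $c$ (so $\lambda_0=-\tfrac1{16}$), whereas the paper works with the Figure~\ref{fig:1ex} weights $2\pi^2\,\Ftrafo c(k)$ and gets $\lambda_0=-\tfrac{\pi^2}{8}$; Proposition~\ref{prp:abcvsabc} makes these interchangeable.
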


\begin{proof}[Proof of Theorem~\ref{thm:parfla}]
The proof is based on a few relatively simple lemmas.

\begin{lem}\label{lem:M_a_first_example}
  Let $\Mult_c$ be the multiplication operator on $L^2(\Torus)$ by the
  function $c(\cdot)$ given in~\eqref{eq:parfla}.  Then
  $\sigma(\Mult_c)=[0, \frac14]$.  Furthermore, $\Mult_c$ has an
  eigenvalue zero with infinite multiplicity.
\end{lem}
\begin{proof}
  The first statement is obvious upon noting
  $\sigma(\Mult_c) = \operatorname{Ran}(c(\cdot)) = [0,\frac14]$.  For the
  second statement, we partition $[\frac14,\frac34]$ into $n$ disjoint
  subintervals $I_j$ of equal length and consider $f_j := \one_{I_j}$.
  Then $(\Mult_c f_j)(\theta) = c(\theta) f_j(\theta) = 0$ for all
  $\theta\in\Torus$, so $\Mult_c f_j=0$ for any $j$.  This shows that
  $f_j$ is an eigenvector of $\Mult_c$ for any $j$ to the eigenvalue
  $0$.  So $0$ is an eigenvalue of multiplicity at least $n$ for
  $\Mult_c$. Since $n$ is arbitrary, we conclude that the multiplicity
  is actually infinite.
\end{proof}

As discussed in Proposition~\ref{prp:abcvsabc}, the graph in Figure~\ref{fig:1ex} has $f_3(\theta)=2\pi^2c(\theta)-\frac{\pi^2}{8}$ as Floquet function. Since $\nu=1$, this also coincides with the single eigenvalue function, $E(\theta)$. From the construction of $c(\cdot)$, we
see that this spectral band is only partly flat. This
proves~\eqref{parfla.c} in
Theorem~\ref{thm:parfla}. Part~\eqref{parfla.a} is then a consequence of
the following lemma.

\begin{lem}[Partly flat bands I]\label{PropPartlyFlat}
  The graph $\Gamma$ of Theorem~\ref{thm:intro}(3), Figure~\ref{fig:1ex}, satisfies
  $\sigma(\Adja_\Gamma) = [\frac{-\pi^2}{8},\frac{3\pi^2}{8}]$ and $\frac{-\pi^2}{8}$ is an eigenvalue of
  infinite multiplicity.
\end{lem}

\begin{proof}
The Floquet function in Theorem~\ref{thm:intro}(3) is precisely $2\pi^2c(\theta)-\frac{\pi^2}{8}$, see Proposition~\ref{prp:abcvsabc}. The statement now
 follows from Lemma~\ref{lem:M_a_first_example} and Lemma~\ref{lem:Floquet_transform}.
\end{proof}

  It remains to prove~\eqref{parfla.b}. Let us remark that in general, if the
  infinite graph operator $\Schr_\Gamma$ has an eigenvector $\psi$,
  $\Schr_\Gamma \psi = \lambda_0\psi$, with $\psi$ of finite support
  $\Lambda_i$ for each $i=1,\dots,\nu$, then the function
  $p_i(\theta) = \sum_{k\in \Lambda_i} \ee^{-2\pi\ii k\cdot\theta}
  \psi_i(k)$ is an eigenvector for its Floquet matrix $H(\cdot)$,
  since
  $H(\theta)p(\theta) = H(\theta) (U\psi)(\theta) = (U\Schr_\Gamma
  \psi)(\theta) = \lambda_0 p(\theta)$.

  Specializing to our setting $\nu=1$ with $f_3(\theta)=2\pi^2c(\theta)-\frac{\pi^2}{8}$ and
  $\lambda_0=\frac{-\pi^2}{8}$, we see that if $\cA_\Gamma$ has an eigenvector
  $\psi$ of finite support $\Lambda\subset \Z$, then
  $p(\theta)= \sum_{k\in\Lambda} \ee^{-2\pi\ii k\theta} \psi(k)$ is a
  trigonometric polynomial such that $f_3(\theta)p(\theta)=\lambda_0p(\theta)$, i.e. $c(\theta)p(\theta)=0$ for all
  $\theta$. In particular, $(\frac14-\theta)p(\theta)=0$ on
  $[0,\frac14]$ implies $p(\cdot)$ vanishes on $[0,\frac14)$, so $p$
  is the zero polynomial, $p=0$. This implies $\psi = U^{-1}p = 0$, a
  contradiction. Hence, $\Schr_\Gamma$ has no eigenvector of finite
  support.
\end{proof}

\begin{rem}
  We have shown that the graph we constructed has no eigenvectors of
  finite support, so let us explain here how the eigenvectors look
  like. If $\cH_\Gamma$ is the Schr\"odinger operator corresponding to $c$, then
  $U \cH_{\Gamma} U^{-1} = \Mult_c$, so
  $\cH_\Gamma U^{-1} f_j = U^{-1} \Mult_c f_j=0$ for all $j$, where
  $f_j$ are eigenvectors of $\Mult_c$ for $\lambda=0$ constructed in the proof of
  Lemma~\ref{lem:M_a_first_example}.  Since $\cA_\Gamma = 2\pi^2 \cH_\Gamma - \frac{\pi^2}{8}\operatorname{Id}$, we get $\cA_\Gamma U^{-1}f_j = \frac{-\pi^2}{8}U^{-1}f_j$. This shows that $U^{-1} f_j$ is
  an eigenvector of $\cA_{\Gamma}$ for each $j$. The entries of the
  vectors $U^{-1}f_j$ can be explicitly calculated by noting that
  $\one_{[\theta^-,\theta^+]} = \sum_k \Ftrafo \one(k) \ee^{2\pi \ii k\theta}$
  with
  \begin{align*}
    \Ftrafo \one(k)
    = \int_{\theta^-}^{\theta^+} \ee^{-2\pi \ii k\theta}d\theta
    = \frac{\ee^{-2\pi \ii k \theta^-}-\ee^{-2\pi \ii k \theta^+}}{2\pi \ii k},
  \end{align*}
  and $(U^{-1} \one_{[\theta^-,\theta^+]})_k = \Ftrafo \one(k)$.  In
  particular,
  $|(U^{-1}f_j)_k|^2=(\sin^2\pi k(\theta^-_j-\theta^+_j))/(\pi^2k^2)$
  has full support on $\Gamma$ if $(\theta^-_j-\theta^+_j)\notin\Q$.
\end{rem}

\begin{rem}
  We can also construct graphs with $\nu=2$ which have flat bands that
  only admit eigenvectors of infinite support. In fact, if $c(\cdot)$
  is the function~\eqref{eq:parfla} and we define
  \[
    H(\theta)
    = \begin{pmatrix}
      c^3(\theta)& c^2(\theta)\\
      c^2(\theta)&c(\theta)
    \end{pmatrix},
  \]
  then $H(\cdot)$ defines the desired graph. The fact that
  $H(\cdot)$ defines a periodic graph uses Lemma~\ref{lem:prod},
  while the required property uses that
  $\ker H(\theta) = \C((-1,c(\theta)))$.
\end{rem}

\subsection{Partly flat bands: a generalization}

The following proposition generalizes some of the results of the
previous subsection.
\begin{prp}[Partly flat bands II]
  \label{PartlyFlatGeneral}
  Let $\Gamma$ be a $\Z^d$-periodic graph with finite fundamental cell, and let $H(\cdot) \in L^\infty(\Torus^d)^{\nu \times \nu}$
  be the Floquet matrix of the Schr\"odinger operator $\Schr_\Gamma$.
  Then $\Schr_\Gamma$ has an eigenvalue if and only if some eigenvalue function $E_p(\cdot)$ is
  constant on a subset $S$ of $\Torus^d$ of positive Lebesgue
  measure. The value of the eigenvalue for $\Schr_\Gamma$ is the
  constant value of $E_p(\cdot)$, and the eigenvalue has infinite multiplicity.
\end{prp}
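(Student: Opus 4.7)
My plan is to leverage the unitary equivalence $U\Schr_\Gamma U^{-1} = \Mult_H$ from Lemma~\ref{lem:Floquet_transform}, which identifies $\Schr_\Gamma$ with the matrix-valued multiplication operator on $L^2(\Torus^d)^\nu$. Since $U$ is unitary, $\lambda$ is an eigenvalue of $\Schr_\Gamma$ with multiplicity $m$ if and only if the same holds for $\Mult_H$. The problem thus reduces to characterizing when $\Mult_H$ has point spectrum.

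For the easy direction, suppose $\Mult_H \psi = \lambda \psi$ for some nonzero $\psi \in L^2(\Torus^d)^\nu$. Then $(H(\theta) - \lambda I)\psi(\theta) = 0$ for almost every $\theta$, and the set $T := \{\theta : \psi(\theta) \neq 0\}$ has positive Lebesgue measure. For every $\theta \in T$, $\lambda$ is an eigenvalue of the Hermitian matrix $H(\theta)$, so $\lambda = E_p(\theta)$ for some $p \in \{1,\dots,\nu\}$. Partitioning $T = \bigcup_p S_p$ with $S_p := \{\theta \in T : E_p(\theta) = \lambda\}$, a finite union, at least one $S_p$ has positive measure. Conversely, suppose $E_p(\theta) = \lambda$ on a set $S$ of positive measure. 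I need to produce a nonzero eigenvector of $\Mult_H$, which requires selecting a measurable unit vector $v(\theta) \in \ker(H(\theta) - \lambda I)$ for $\theta \in S$. Setting $\psi := \one_S v$ then gives $\psi \in L^2(\Torus^d)^\nu$, $\psi \neq 0$, and $\Mult_H \psi = \lambda \psi$.

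The main technical obstacle is the measurable selection in the converse direction, since the dimension of $\ker(H(\theta) - \lambda I)$ may vary with $\theta$. The cleanest approach is to decompose $S = \bigsqcup_{k=1}^\nu S^{(k)}$ according to the rank of $H(\theta) - \lambda I$ (a measurable function of $\theta$, since it equals the largest size of a nonvanishing minor). On each $S^{(k)}$ one may further split according to which $k$-subset of coordinates indexes a basis of the kernel, and solve $(H(\theta) - \lambda I)v(\theta) = 0$ by Cramer-type formulas whose entries are measurable (indeed, rational) in the entries of $H(\theta)$. Alternatively, one may simply invoke the Kuratowski–Ryll-Nardzewski measurable selection theorem, which applies since the multifunction $\theta \mapsto \{v \in \C^\nu : \|v\|=1,\ H(\theta)v = \lambda v\}$ has closed nonempty values and is measurable in $\theta$ (the entries of $H(\theta)$ are continuous by Lemma~\ref{lem:Floquet_transform}).

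Finally, the infinite multiplicity is essentially automatic. Once one nonzero eigenvector $\psi$ of $\Mult_H$ at $\lambda$ is produced, the set $T = \{\theta : \psi(\theta) \neq 0\}$ has positive measure and can be split as $T = \bigsqcup_{n \in \N} T_n$ into countably many disjoint measurable subsets of positive measure. Setting $\psi_n := \one_{T_n} \psi$, one checks immediately that $(H(\theta) - \lambda I)\psi_n(\theta) = \one_{T_n}(\theta)(H(\theta) - \lambda I)\psi(\theta) = 0$ a.e., so each $\psi_n$ is an eigenvector at $\lambda$, and the $\psi_n$ are linearly independent because their supports are pairwise disjoint and each has positive measure. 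Pulling back by $U^{-1}$ produces an infinite family of linearly independent eigenvectors of $\Schr_\Gamma$.
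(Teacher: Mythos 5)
Your proof is correct and follows essentially the same route as the paper: pass to the multiplication operator $\Mult_H$ via the Floquet transform, identify eigenvalues of $\Mult_H$ with level sets of the $E_p(\cdot)$ of positive measure, and obtain infinite multiplicity by splitting a positive-measure set into disjoint pieces. The one substantive difference is that the paper's converse direction simply writes $g(\theta)=V_\theta^{-1}f(\theta)$ for a pointwise diagonalization $V_\theta H(\theta)V_\theta^{-1}=D(\theta)$ without justifying that this can be done measurably in $\theta$, whereas you explicitly supply the missing measurable selection of a unit eigenvector field (via rank stratification with Cramer-type formulas, or Kuratowski--Ryll-Nardzewski); this is a genuine, if minor, gap in the paper's argument that your version closes, and your forward direction is also slightly cleaner in that it avoids diagonalization altogether.
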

\begin{proof}
  $\impliedby$: Assume $E_p(\cdot)$ takes the value $\lambda_0$ on
  $S \subset \Torus^d$ where $\lvert S \rvert > 0$. First diagonalize
  $H(\theta)$ to a diagonal matrix $D(\theta)$ with diagonal elements
  $E_i(\theta)$; say $V_\theta H(\theta)V_\theta^{-1} = D(\theta)$
  with unitary $V_\theta$.  Consider the vector function
  $f\in L^2(\Torus^d)^\nu$, where $f_i(\theta)=0$ for $i\neq p$ and
  $f_p(\theta) = \one_S(\theta)$. Then
  $D(\theta) f(\theta) =\lambda_0 f(\theta)$ for all
  $\theta$. Consequently
  $H(\theta)V_\theta^{-1} f(\theta) = V_\theta^{-1} D(\theta)
  f(\theta) = \lambda_0 V_\theta^{-1} f(\theta)$, so
  $g(\theta) = V_\theta^{-1} f(\theta)$ is an eigenvector for
  $H(\theta)$ to the eigenvalue $\lambda_0$. Consequently, $U^{-1} g$
  is an eigenvector for $\Schr_\Gamma$ to the eigenvalue $\lambda_0$.

$\implies$: Conversely, if $\Schr_\Gamma$ has an eigenvector, then $\Mult_H$ has an eigenvector, $\Mult_Hf=\lambda_0f$, implying $H(\theta)f(\theta)=\lambda_0 f(\theta)$ for all $\theta$. Diagonalizing $H(\theta)$ as before, we get $D(\theta)g(\theta) = \lambda_0 g(\theta)$ for all $\theta$, where $g(\theta)=V_\theta f(\theta)$. This implies $E_j(\theta)g_j(\theta)=\lambda_0 g_j(\theta)$ for all $j$ and $\theta$. Now for fixed $\theta$, $f(\theta)\neq 0$ implies $g(\theta)\neq 0$ since $V_\theta$ is unitary. So $f\neq 0$ implies $g\neq 0$, which implies $g_j\neq 0$ for some $j$, say $g_j(\theta)\neq 0$ on a set $S$ of positive measure. Since $(E_j(\theta)-\lambda_0)g_j(\theta)=0$ on $S$, we must have $E_j(\theta)=\lambda_0$ on $S$.

It remains to verify that $\lambda_0$ must have infinite multiplicity for $\Schr_\Gamma$. To see this, if $E_p(\theta)=\lambda_0$ on $S$ with $|S|>0$, partition $S$ into $n$ subsets $S_k$, each of positive Lebesgue measure, and instead of taking $f_p = \one_S$ in the first paragraph, take $f_p^{(k)} := \one_{S_k}$. Then $g^{(k)}(\theta)=V_\theta^{-1}f^{(k)}$ now satisfies that $U^{-1}g^{(k)}$ is an eigenvector for $\Schr_\Gamma$ corresponding to $\lambda_0$, for each $k=1,\dots, n$, implying the multiplicity of $\lambda_0$ is at least $n$. Since $n$ is arbitrary, the claim follows.
\end{proof}

\subsection{The top of the spectrum of the Schrödinger operator is not flat}
\label{sec:bottom_spec}

In this section we prove, for a connected periodic graph $\Gamma$, that the top of the spectrum of the Schrödinger operator
$\Schr_\Gamma = \Adja_\Gamma + Q$ cannot be an eigenvalue and that the
upper spectral band cannot be entirely flat, cf. also
Proposition~\ref{PartlyFlatGeneral}. 
A short proof of this in the
locally finite case is given
in~\cite[Th. 2.7]{SabriY-23}. Unfortunately, it strongly uses the fact
that if $\Schr_\Gamma $ has a flat band, then it has a corresponding
eigenvector of compact support -- which is no longer true in our
context, see Theorem~\ref{thm:parfla}. Therefore, a proof will need
more effort. Let us mention that this phenomenon has recently been
observed in~\cite{FHSZ} in the very general framework of unimodular
random graphs which are locally finite.

We shall first prove that the spectral bottom of the Laplacian cannot
be an eigenvalue. We start with the following statement. 
\begin{lem}\label{lem:matt}
  Let $\Gamma$ be a connected $\Z^d$-periodic graph with finite fundamental cell.
  Then $0\in \sigma(\Lapl_\Gamma)$.
\end{lem}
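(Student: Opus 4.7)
The plan is to exhibit $0$ as an eigenvalue of the Floquet matrix of $\Lapl_\Gamma$ at $\theta = 0$ and then invoke the spectral description~\eqref{eq:bands}. Since $\Lapl_\Gamma = \Diag_\Gamma - \Adja_\Gamma$ and $\Diag_\Gamma$ is a $\Z^d$-periodic potential taking the values $\cD_\Gamma(v_i) = \sum_{j=1}^\nu \sum_{k \in I_{ij}} w_{ij}(k)$, Lemma~\ref{lem:Floquet_transform} yields that $\Lapl_\Gamma$ is unitarily equivalent to multiplication by the matrix function $L(\theta) = D - A(\theta)$ on $L^2(\Torus^d)^\nu$, where $D = \diag(\cD_\Gamma(v_1), \dots, \cD_\Gamma(v_\nu))$ and $a_{ij}(\theta) = \sum_{k \in I_{ij}} w_{ij}(k) \ee^{2\pi \ii \theta \cdot k}$.

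Evaluating at $\theta = 0$, all the phases $\ee^{2\pi \ii \theta \cdot k}$ equal $1$, and the summability from Assumption~\ref{AssumptionEdgeWeights} ensures that $L(0)$ is a well-defined $\nu \times \nu$ matrix with entries
\begin{equation*}
L(0)_{ij} = \delta_{ij} \sum_{j'=1}^\nu \sum_{k \in I_{ij'}} w_{ij'}(k) - \sum_{k \in I_{ij}} w_{ij}(k).
\end{equation*}
The key observation is that the constant vector $\mathbf{1} = (1, \dots, 1)^T \in \C^\nu$ lies in $\ker L(0)$: indeed,
\begin{equation*}
(L(0)\mathbf{1})_i = \sum_{j'=1}^\nu \sum_{k \in I_{ij'}} w_{ij'}(k) - \sum_{j=1}^\nu \sum_{k \in I_{ij}} w_{ij}(k) = 0.
\end{equation*}
Thus $0$ is an eigenvalue of $L(0)$, so $0 = E_{j_0}(0)$ for at least one of the eigenvalue functions $E_j(\cdot)$. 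By~\eqref{eq:bands}, $\sigma(\Lapl_\Gamma) = \bigcup_{j=1}^\nu \ran E_j(\cdot)$, and therefore $0 \in \sigma(\Lapl_\Gamma)$.

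I do not anticipate a serious obstacle: the whole argument is essentially the observation that Floquet theory reduces the problem to a finite linear algebra statement, and the constant vector $\mathbf{1}$ is always an eigenvector of $L(0)$ at eigenvalue $0$ because the Laplacian annihilates constants. Note that connectedness of $\Gamma$ is not actually needed for this lemma (it is built into the definition of crystal); it would rather be relevant if one wanted to claim that $0$ is a simple eigenvalue of $L(0)$, or that $0$ is at the bottom of the spectrum in a non-degenerate sense, both of which go beyond what is stated here. An alternative approach would be to build a Weyl sequence directly on the graph side using normalized indicators of growing fundamental domains, but the Floquet reduction is more transparent and immediately produces $0$ as a genuine eigenvalue at a fiber rather than only as an approximate one.
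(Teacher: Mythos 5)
Your argument is correct, but it takes a genuinely different route from the paper. You work on the Floquet side: the Laplacian is unitarily equivalent to multiplication by $L(\theta)=D-A(\theta)$, and at $\theta=0$ the constant vector $\mathbf{1}$ lies in $\ker L(0)$ because the row sums of $A(0)$ are exactly the degrees; hence $0=E_{j_0}(0)$ for some band and $0\in\sigma(\Lapl_\Gamma)$ by~\eqref{eq:bands}. The paper instead builds a Weyl sequence directly on the graph, taking $\psi=\one_{\Lambda_N}$ on growing cubes and using the summability of the weights (the tail estimate~\eqref{eq:farwsmal}) together with the energy form~\eqref{eq:dir1} to show $\langle\Lapl_\Gamma\psi,\psi\rangle\le\eps\|\psi\|^2$ --- the physical-space counterpart of your fiber computation, with $\one_{\Lambda_N}$ playing the role of a state concentrated near $\theta=0$ with fiber component $\mathbf{1}$. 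Your version is shorter and pinpoints where in the Brillouin zone the spectral bottom is attained; its only (mild) dependency is that you invoke~\eqref{eq:bands}, whose justification (continuity of the $E_j$, Lemma~\ref{lem:ctseigen}) formally appears later in the paper, though one can sidestep this by testing $\Mult_L$ against $c_n\one_{B(0,1/n)}\mathbf{1}$ and using only continuity of $L$ at $0$ (Proposition~\ref{PropContinuity}). The paper's choice buys something else: the Weyl-sequence argument isolates the real mechanism --- polynomial growth/amenability plus uniform summability of the weights --- which is why the paper can immediately contrast with the regular tree, where the same conclusion fails; that robustness is invisible in the Floquet computation, which is wedded to exact $\Z^d$-periodicity. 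Your closing remarks (connectedness is not needed here; the alternative Weyl-sequence route) are accurate --- indeed the alternative you mention is precisely what the paper does.
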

A key property we use is that
$\Gamma$ has polynomial growth.
Beyond this setting, this may no longer be true. For instance, on the
$(q+1)$-regular tree $\mathcal{T}_q$, the discrete Laplacian has spectrum $[q+1-2\sqrt{q},q+1+2\sqrt{q}]$ and thus the infimum of the spetrum is $(\sqrt{q}-1)^2$.
This spectral gap is caused by issues of non-amenability, see, e.g.~\cite{Woe}.
\begin{proof}
  We know $\sigma(\Lapl_\Gamma)\subset [0,\infty)$.  So, it suffices
  to construct for any $\eps > 0$ a function $\psi \in \ell^2(V)$
  such that
  \begin{equation*}
    \left\langle \Lapl_\Gamma \psi, \psi \right\rangle
    \leq \eps
    \lVert \psi \rVert_{\ell^2(V)}^2\ .
  \end{equation*}
  Recall that $\sum_{v' \in V} w(v,v') < \infty$ and that the
  fundamental cell is finite.  Hence, for every $\eps > 0$, there
  exists $L_0 = L_0(\eps) \in \N$ such that if $v \in \funda + k_\fa$
  for $k \in \Z^d$, then
  \begin{equation}
    \label{eq:farwsmal}
    \sum_{\substack{v' \in \Gamma \colon v' \in \funda + k'_\fa,
        \\ |k - k'|_\infty \geq L_0}}
    w(v,v') < \eps/2.
  \end{equation}
  Now, let
  \begin{equation*}
    \Lambda_N
    = \bigcup_{k\in \{0,\dots,N-1\}^d}(\funda+k_\fa)
    \subset V
  \end{equation*}
  be the $N$-cube and take $\psi = \one_{\Lambda_N}$ where
  $N \gg 1$ will be determined later.  Clearly,
  $\lVert \psi \rVert_{\ell^2(V)}^2 = \lvert \funda \rvert \cdot N^d =
  \nu N^d$.

  Applying~\eqref{eq:dir1}, the terms in the sum on the right hand
  side vanish if $u,u'$ are both in $\Lambda_N$ or both outside
  $\Lambda_N$.  Thus,
  \[
    \langle \Lapl_\Gamma \psi, \psi\rangle
    = \frac{1}{2}\Big(\sum_{\substack{v\in \Lambda_N\\v'\notin\Lambda_N,\ v'\sim v}}
    w(v,v')
    + \sum_{\substack{v\notin\Lambda_N\\v'\in\Lambda_N,\ v'\sim v}}
    w(v,v')\Big).
  \]
  We expand the first sum and bound it as follows:
  \begin{multline*}
    \sum_{\substack{v\in \Lambda_{N-L_0}\\v'\notin \Lambda_N,\ v'\sim v}}
    w(v,v')
    + \sum_{\substack{v\in \Lambda_N\setminus\Lambda_{N-L_0}\\v'\notin \Lambda_N,\ v'\sim u}}
    w(v,v') \\
    \le \nu \big((N-L_0)^d\eps/2
    + (N^d-(N-L_0)^d)\sup_v\|w(v,\cdot)\|_1\big)
    \le (\eps/2 + c_wN^{-1}) \|\psi\|_{\ell^2(V)}^2,
  \end{multline*}
  where $c_w$ is a constant, depending on $w$ and on the dimension.
  The second sum is controlled analogously.  Since $N>L_0$ is
  arbitrary, the statement follows.
\end{proof}

Applying Floquet theory to $\Lapl_\Gamma$, we know the spectrum
consists of bands $\bigcup_{j=1}^\nu \sigma_j$. 
As we saw in Theorem~\ref{thm:parfla}, some bands may be
flat, but the next statement shows that the infimum of the spectrum cannot correspond to a flat part of a band.
\begin{cor}\label{cor:botla}
  Let $\Gamma$ be a connected $\Z^d$-periodic graph with finite fundamental cell.
  Then, the spectral bottom of $\Lapl_\Gamma$ is not an eigenvalue.
\end{cor}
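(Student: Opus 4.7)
The plan is to reduce the statement to the familiar Dirichlet energy argument. First, I would combine Lemma~\ref{lem:matt} with the non-negativity of $\Lapl_\Gamma$ (established at the end of \S~\ref{SectionMathBack}) to conclude that the spectral bottom of $\Lapl_\Gamma$ is exactly $0$. Hence it suffices to prove that $0$ is \emph{not} an eigenvalue.

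Suppose, for the sake of contradiction, that there exists $\psi \in \ell^2(V)$ with $\psi \neq 0$ and $\Lapl_\Gamma \psi = 0$. Applying the Green-type identity~\eqref{eq:dir1} yields
\[
0 = \langle \Lapl_\Gamma \psi, \psi \rangle = \frac{1}{2} \sum_{v,v' \in V} w(v,v') \lvert \psi(v) - \psi(v') \rvert^2 .
\]
Since every term in the sum is non-negative, we deduce $\psi(v)=\psi(v')$ whenever $w(v,v')>0$, i.e.\ whenever $v \sim v'$.

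Using that the crystal $\Gamma$ is connected (part of Definition~\ref{def:per-graph}), this implies that $\psi$ is constant on $V$. However, the vertex set $V = \funda + \Z_\fa^d$ is infinite (the fundamental cell is finite, but the group $\Z^d$ is infinite), so the only constant function belonging to $\ell^2(V)$ is the zero function, contradicting $\psi \neq 0$.

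There is essentially no obstacle here: the argument is the standard Dirichlet energy argument, made legitimate by the Green identity~\eqref{eq:dir1}, which holds in our non-locally finite setting thanks to Assumption~\ref{AssumptionEdgeWeights}. The only subtlety worth noting is the need to combine Lemma~\ref{lem:matt} (to know $0$ is in the spectrum) with this argument (to know $0$ is not an eigenvalue); without Lemma~\ref{lem:matt} one would only be showing that, \emph{if} $0$ is the spectral bottom, then it is not an eigenvalue.
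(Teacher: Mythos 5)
Your proof is correct and follows essentially the same route as the paper: use Lemma~\ref{lem:matt} together with $\Lapl_\Gamma\ge 0$ to identify the spectral bottom as $0$, then apply the energy identity~\eqref{eq:dir1} and connectedness to force any candidate eigenfunction to be constant, hence zero in $\ell^2(V)$. Nothing is missing.
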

\begin{proof}
  We know $\Lapl_\Gamma \ge 0$ and $0\in \sigma(\Lapl_\Gamma)$.  If
  $\psi$ fulfils $\Lapl_\Gamma \psi=0$, then
  \[
    0
    = \langle \Lapl_\Gamma \psi,\psi \rangle
    = \frac{1}{2}\sum_{v,v'\in V} w(v,v')|\psi(v)-\psi(v')|^2.
  \]
  Since $w(v,v')>0$ if $v \sim v'$, it follows that
  $\psi(v) = \psi(v')$ for all $v\in \Gamma$ and all $v'\sim v$. Since
  $\Gamma$ is connected, we conclude that $\psi$ is constant. As
  $\psi\in \ell^2(V)$, this implies $\psi=0$, hence $\psi$ is not an
  eigenfunction.
\end{proof}
Lemma~\ref{lem:matt} is the main technical novelty needed to address the lack of local finiteness.
The passage from $\Lapl_\Gamma$ to $\Schr_{\Gamma} = \Adja_\Gamma+Q$ on the other hand requires a rather simple generalization of a perturbation argument in~\cite{KorSa2}, in which the
locally-finite case with weights
$w(v,v') = \frac{1}{\sqrt{\cD_\Gamma(v)\cD_\Gamma(v')}}$
(see~\eqref{eq:degree}) was considered. We only state the result here and defer details to Appendix~\ref{app}.

\begin{thm}\label{thm:top}
   Let $\Gamma$ be a connected $\Z^d$-periodic graph with finite fundamental cell.
  Then, the top spectral band of $\Schr_{\Gamma} = \Adja_\Gamma+Q$ is not entirely flat, and the maximum of the spectrum is not an eigenvalue.
\end{thm}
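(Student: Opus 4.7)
The plan is to combine Floquet theory (Lemma~\ref{lem:Floquet_transform}) with a Perron-Frobenius argument applied fibrewise to the matrix $H(\theta)$. Let $M=\max\sigma(\Schr_\Gamma)=\sup_{\theta\in\Torus^d}\lambda_{\max}(H(\theta))$. The goal is to prove that the set $\{\theta\in\Torus^d:\lambda_{\max}(H(\theta))=M\}$ reduces to $\{0\}$. Since every eigenvalue function satisfies $E_p(\theta)\le\lambda_{\max}(H(\theta))$ and $\{0\}$ has Lebesgue measure zero, no $E_p(\cdot)$ would then be identically $M$ on a set of positive measure. Proposition~\ref{PartlyFlatGeneral} would then immediately yield that $M$ is not an eigenvalue; moreover, the top spectral band, being the range of the continuous function $\theta\mapsto\lambda_{\max}(H(\theta))$ which takes values strictly below $M$ on a set of positive measure, strictly exceeds $\{M\}$ and so is not entirely flat.

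The first step is a fibrewise upper bound via a complex triangle inequality. For any unit $u\in\C^\nu$, using $w_{ij}(k)\ge 0$ (and hence $|a_{ij}(\theta)|\le a_{ij}(0)$) together with the elementary inequality $\langle H(\theta)u,u\rangle\le|\langle A(\theta)u,u\rangle|+\sum_i Q_i|u_i|^2$, one finds
\[
\langle H(\theta)u,u\rangle\le\sum_{i,j}a_{ij}(0)|u_i||u_j|+\sum_i Q_i|u_i|^2 = \langle H(0)|u|,|u|\rangle \le \lambda_{\max}(H(0)),
\]
where $|u|:=(|u_1|,\dots,|u_\nu|)$. Thus $M=\lambda_{\max}(H(0))$. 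The connectedness of $\Gamma$ makes the quotient matrix $A(0)=(a_{ij}(0))_{i,j}$ irreducible, so after a scalar shift rendering $H(0)$ entrywise non-negative, Perron-Frobenius applies: $M$ is a simple eigenvalue of $H(0)$ with a strictly positive eigenvector $v_0$.

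Suppose now $\lambda_{\max}(H(\theta))=M$ with unit eigenvector $u$; every inequality above must saturate. Equality $\langle H(0)|u|,|u|\rangle=M$ combined with simplicity forces $|u|$ to be proportional to $v_0$, so $|u_i|>0$ for every $i$. The equality $|a_{ij}(\theta)|=a_{ij}(0)$ in the triangle inequality applied to the non-negative series $\sum_{k\in I_{ij}}w_{ij}(k)\ee^{2\pi\ii k\cdot\theta}$ forces $\ee^{2\pi\ii k\cdot\theta}$ to be a constant $\ee^{\ii\phi_{ij}}$ as $k$ ranges over $I_{ij}$, for every pair with $a_{ij}(0)>0$. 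Finally, saturation in the triangle inequality for $|\langle A(\theta)u,u\rangle|$, after writing $u_j=|u_j|\ee^{\ii\alpha_j}$, yields the phase identity
\[
\phi_{ij}+\alpha_j-\alpha_i\equiv 0\pmod{2\pi}\qquad\text{whenever }a_{ij}(0)>0.
\]

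The last step extracts $\theta=0$ by exploiting connectedness of $\Gamma$. For any cycle $v_{i_0}\to v_{i_1}\to\cdots\to v_{i_\ell}=v_{i_0}$ in the quotient graph, lifted to a path in $\Gamma$ with successive translations $k_s\in I_{i_s i_{s+1}}$, telescoping the phase identity gives $\sum_s\phi_{i_s i_{s+1}}\equiv 0\pmod{2\pi}$, that is $\bigl(\sum_s k_s\bigr)\cdot\theta\in\Z$. Since $\Gamma$ is connected, for every $\Delta\in\Z^d$ there exists a path in $\Gamma$ from $(v_{i_0},0)$ to $(v_{i_0},\Delta)$, realising $\Delta$ as such a cycle sum. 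Hence $m\cdot\theta\in\Z$ for every $m\in\Z^d$, which forces $\theta=0$ in $\Torus^d$. The main obstacle is precisely this combinatorial step, translating connectedness of $\Gamma$ into the fact that cycle translation sums generate $\Z^d$; a subsidiary technicality is that Perron-Frobenius must be applied to $H(0)$ after a diagonal shift, since the potential $Q$ is not a priori non-negative.
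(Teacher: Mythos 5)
Your proof is correct, but it follows a genuinely different route from the paper's. The paper first proves that $0=\inf\sigma(\Lapl_\Gamma)$ is attained but is not an eigenvalue (Lemma~\ref{lem:matt}, via test functions $\one_{\Lambda_N}$ on large boxes, and Corollary~\ref{cor:botla}, via the energy form), and then transfers this to the top of $\Schr_\Gamma$ by a ground-state transform: using the Perron--Frobenius eigenvector $h>0$ of $H(0)$ it derives the quantitative comparison $\lambda_1(\theta)\le \frac{h_+^2}{h_-^2}\,(E_\star-E_\star(\theta))$, where $\lambda_1(\theta)\ge 0$ is the lowest Floquet eigenvalue of $\Lapl_\Gamma$; a flat top band would then force $\lambda_1(\theta)=0$ on a set of positive measure, contradicting Corollary~\ref{cor:botla} via Proposition~\ref{PartlyFlatGeneral}. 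You instead work entirely fibrewise: the triangle-inequality chain $\langle H(\theta)u,u\rangle\le\langle H(0)|u|,|u|\rangle\le\lambda_{\max}(H(0))$ identifies $M=\lambda_{\max}(H(0))$, and the rigidity of the equality case (strict positivity of $|u|$ from simplicity of the Perron root, constancy of the phases $\ee^{2\pi\ii k\cdot\theta}$ over each $I_{ij}$, and the gauge identity $\phi_{ij}+\alpha_j-\alpha_i\equiv 0$) combined with the fact that closed-walk translation sums exhaust $\Z^d$ pins the level set $\{\lambda_{\max}(H(\cdot))=M\}$ down to $\{\theta=0\}$. Both arguments hinge on irreducibility of $A(0)$ coming from connectedness and both invoke Proposition~\ref{PartlyFlatGeneral} at the end, but yours bypasses the Laplacian results entirely and yields the strictly stronger conclusion that the band maximum is attained at a single quasimomentum, whereas the paper's version only excludes a level set of positive measure; what the paper's route buys in exchange is the explicit inequality relating the Schr\"odinger band profile near the top to the spectral bottom of the Laplacian, which is of independent use. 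The two technical points you flagged — generating $\Z^d$ from cycle sums (immediate from connectedness of $\Gamma$, since a path from $v_{i_0}$ to $v_{i_0}+\Delta_\fa$ projects to a closed walk with translation sum $\Delta$) and shifting $H(0)$ before applying Perron--Frobenius — are both handled correctly.
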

In the locally finite case, having an entirely flat band is equivalent
to having a partially flat one, which is equivalent to having an
eigenvalue; so, the ``moreover'' part is automatic. In our context, it
can actually happen that the top band is partially flat (see
Theorem~\ref{thm:parfla}), creating an eigenvalue. However, the
theorem says that even in this scenario, this eigenvalue cannot be at
the top of the spectrum.

%
\section{Regularity of eigenvalues}\label{Sec:regularity}
%

In this section we study the regularity of the eigenvalue functions $E_j=E_j(\cdot)$, $j=1,...,\nu$, of the Floquet matrix, cf. \S~\ref{sec:floquet}. As we will see in the following, graphs with barely $\ell^1$ weights can have rather ill-behaved eigenvalue functions, while those with fast decay will have regular eigenvalues.

\subsection{Lack of regularity: an example}
\label{SectionWeierstrass}
In the locally finite case, the eigenvalue functions $E_j(\cdot)$ are
analytic almost everywhere~\cite{SabriY-23}. The situation can be
drastically different in the non-locally finite case, even when
$\nu=1$.  
We prove the following:
\begin{prp}
  \label{prp:weyl}
  Let $\Gamma$ be a $\Z$-periodic graph with one-element fundamental cell given in Figure~\ref{fig:weierstrass}. More precisely, let the weights be given by
  \begin{equation}
    \label{eq:eignon}
    w(k)
    = \begin{cases}
      \frac{1}{4k}& \text{if } k=2^{n-1} \text{ for some } n\ge 1\ ,\\
      0&\text{otherwise}\ ,
    \end{cases}
  \end{equation}
  for $k \geq 0$, and $w(-k):=w(k)$.  
  Then the Floquet eigenvalue function of
  $\Adja_\Gamma$ is continuous but not differentiable at any point
  $\theta$.
\end{prp}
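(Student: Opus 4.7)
Since $\nu=1$, Lemma~\ref{lem:Floquet_transform} reduces $\Adja_\Gamma$ to multiplication by the scalar Floquet function
\[
h(\theta) = \sum_{k\in\Z} w(k)\,\ee^{2\pi\ii k\theta},
\]
which in this case is also the unique eigenvalue function. Using the symmetry $w(-k)=w(k)$ together with the explicit form~\eqref{eq:eignon} (so that $w(2^{n-1})=1/2^{n+1}$), a direct computation gives
\[
h(\theta) \;=\; 2\sum_{n=1}^{\infty} w(2^{n-1})\cos\!\bigl(2\pi\cdot 2^{n-1}\theta\bigr)
\;=\; \sum_{n=1}^{\infty} \frac{1}{2^n}\cos\!\bigl(2\pi\cdot 2^{n-1}\theta\bigr).
\]
After reindexing $m=n-1$ and setting $x=2\pi\theta$, this can be rewritten as $h(\theta) = \tfrac12\,W(x)$, where $W(x)=\sum_{m=0}^{\infty} a^m\cos(b^m x)$ is the classical Weierstra\ss{} series with $a=\tfrac12$ and $b=2$, so that $ab=1$.

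\emph{Continuity.} This is immediate from the Weierstra\ss{} $M$-test: $\sum 2^{-n}<\infty$, and each term is continuous, so the series converges uniformly on $\T$ and $h$ is continuous. (Alternatively, $w\in\ell^1(\Z)$ places $h$ in the Wiener algebra $A(\T)$, cf.\ the discussion around~\eqref{eq:inv.f-trafo-2}.)

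\emph{Nowhere differentiability.} The plan here is to invoke Hardy's sharpening of Weierstra\ss{}'s theorem, namely that for $0<a<1$ and integer $b\ge 2$ with $ab\ge 1$, the function $\sum_{m\ge 0}a^m\cos(b^m x)$ has no finite derivative at any point of $\R$. Our parameters $(a,b)=(\tfrac12,2)$ saturate the borderline $ab=1$, so Hardy's theorem applies and transfers via $x=2\pi\theta$ to nowhere differentiability of $h$ on $\T$.

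The main obstacle is precisely this critical case $ab=1$: the original Weierstra\ss{} proof requires $ab>1+\tfrac{3\pi}{2}$ and does not apply, while Hardy's argument is appreciably more delicate. It proceeds by estimating the difference quotients $\bigl(h(\theta_0+h_N)-h(\theta_0)\bigr)/h_N$ along two carefully chosen sequences $h_N\to 0$; the lacunarity of the frequencies $2^{n-1}$ makes the high-frequency tail $n\ge N+1$ contribute in a controlled way (in fact vanish identically for the natural choice $h_N=2^{-N}$, since $2^{n-1}h_N\in\Z$), and one then shows that the two difference quotients cannot share a common finite limit, irrespective of $\theta_0$. A potentially self-contained alternative is to exploit the self-similarity relation
\[
h(\theta) \;=\; \tfrac12\cos(2\pi\theta) + \tfrac12\, h(2\theta),
\]
which follows by reindexing the series; however, turning this into a nowhere-differentiability statement is not straightforward, because the doubling map $\theta\mapsto 2\theta$ on $\T$ is chaotic and differentiability of $h$ at $\theta_0$ does not transparently contradict (or even interact cleanly with) differentiability at $2\theta_0$. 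For this reason I would stick with the reference to Hardy's theorem for the concluding step.
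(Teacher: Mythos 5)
Your proof is correct and follows essentially the same route as the paper: both identify the Floquet function $h(\theta)=\sum_{n\ge 1}2^{-n}\cos(2\pi\cdot 2^{n-1}\theta)$ with (a rescaling of) the Weierstra\ss{} function at the borderline $ab=1$ and invoke Hardy's theorem for nowhere differentiability. The only cosmetic difference is that the paper writes $h=W-\cos\pi\theta$ (the Weierstra\ss{} series minus its smooth leading term) while you write $h(\theta)=\tfrac12 W(2\pi\theta)$ directly; these are the same function, and your reduction and continuity argument are both sound.
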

\begin{figure}
  \begin{tikzpicture}[scale=0.40]
    \newcommand{\logn}{4}
    \newcommand{\n}{16} 
    \newcommand\mythickness{8.0}

    \pgfmathsetmacro{\denomi}{int(4)}
    \draw[line width = \mythickness/\denomi] (-\n- .5, 0) -- (\n + .5, 0);
    \draw[] (-1,0)--(-1,0) node[label={[yshift=0cm]below:\tiny$\frac1{\denomi}$}]{};
    \draw[] (1,0)--(1,0) node[label={[yshift=0cm]below:\tiny$\frac1{\denomi}$}]{};
    \foreach \j in {1,...,\logn}
    {
      \pgfmathsetmacro{\denomi}{int(4*2^\j)}
      \draw[line width = \mythickness/\denomi] (0,0)
      arc (180:0:{2^\j*.5 cm} and {1.75 cm})
      node[label={[yshift=0cm]below:\tiny$\frac1{\denomi}$}]{};
      \draw[line width = \mythickness/\denomi]
      (0,0) arc (0:180:{2^\j*.5 cm} and {1.75 cm})
      node[label={[yshift=0cm]below:\tiny$\frac1{\denomi}$}]{};
    }
    \foreach \i in {-\n,...,\n}
    {
        \draw[thick, fill = white] (\i,0) circle (4pt);
    }
    \draw[thick, fill = black] (0,0) circle (3pt);
  \end{tikzpicture}
  \caption{The graph from the Weierstra\ss\ function having relatively sparse
    neighbours with rather large weight.}
  \label{fig:weierstrass}
\end{figure}
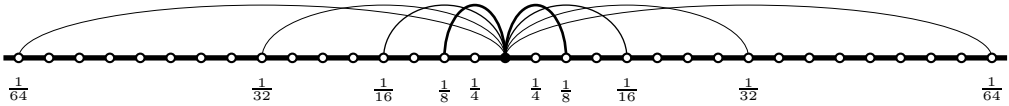
\begin{proof}
  Consider the \emph{Weierstra\ss \ function}
  \begin{equation}\label{eq:wei}
    W(\theta)
    = \sum_{n=0}^\infty \frac{\cos\big(2^n\pi \theta\big)}{2^n}
    = \cos\pi\theta + 2\sum_{n=1}^\infty \frac{1}{2^{n+1}} \cos\big(2\pi(2^{n-1}\theta)\big).
  \end{equation}
  It was proved by Hardy in a more general context~\cite{Har} that
  $W(\cdot)$ is continuous but not differentiable anywhere. The
  same is consequently true for the function%
  \[
    h(\theta) = W(\theta)-\cos\pi\theta = \sum_{k\in\Z} w(k)
    \cos(2\pi k \theta),
  \]
  where $w$ is given by~\eqref{eq:eignon}. Since $w(k)\ge 0$,
  $w(-k)=w(k)$ and $\sum_k w(k) = 2\sum_{n\ge 1} 2^{-n-1}=2$, the
  function $h$ is an admissible Floquet function and defines the
  required graph.
\end{proof}
Observe that in the example above, while most of the weights $w(k)$
are zero, the non-zero spikes decay quite slowly, at a rate of
order $1/k$.

\subsection{Regularity of the Floquet matrix elements}
    \label{subsec:regularity}
As a first step towards guaranteeing regularity for the eigenvalue
functions, we start by discussing decay conditions on the weights
which ensure some regularity for the Floquet matrix elements.
\begin{prp}\label{PropContinuity}
Under Assumption~\ref{AssumptionEdgeWeights} the entries of the Floquet matrix $h_{ij}:\Torus^d \rightarrow \mathbb{C}$ are uniformly continuous.
\end{prp}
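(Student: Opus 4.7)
The plan is to establish that each $h_{ij}$ is a uniform limit of continuous functions on the compact torus $\T^d$, and then invoke the Heine–Cantor theorem. There is essentially no obstacle here: the whole argument rests on the summability condition $\sum_{k \in I_{ij}} w_{ij}(k) < \infty$ from Assumption~\ref{AssumptionEdgeWeights}, which immediately makes the defining series absolutely and uniformly convergent.

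First I would recall the definition
\[
h_{ij}(\theta) = \sum_{k \in I_{ij}} w_{ij}(k)\, \ee^{2\pi \ii \theta \cdot k} \qquad (i \neq j),
\]
and $h_{ii}(\theta) = a_{ii}(\theta) + Q_i$. For a finite subset $F \subset I_{ij}$ the partial sum $S_F(\theta) := \sum_{k \in F} w_{ij}(k)\, \ee^{2\pi \ii \theta \cdot k}$ is a trigonometric polynomial, hence continuous on $\T^d$. Since
\[
\bigl| w_{ij}(k)\, \ee^{2\pi \ii \theta \cdot k} \bigr| = w_{ij}(k)
\]
and $\sum_{k \in I_{ij}} w_{ij}(k) < \infty$, the Weierstrass M-test yields uniform convergence of $S_F \to h_{ij}$ on $\T^d$ as $F \uparrow I_{ij}$. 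Consequently $h_{ij}$ is continuous on $\T^d$ (and the same holds trivially for $h_{ii}$ after adding the constant $Q_i$).

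Finally, since $\T^d$ is compact, every continuous function on it is uniformly continuous by the Heine–Cantor theorem. In fact one can be slightly more quantitative: for any $\eps > 0$, choose a finite $F \subset I_{ij}$ with $\sum_{k \notin F} w_{ij}(k) < \eps/3$; then
\[
|h_{ij}(\theta) - h_{ij}(\theta')|
\le |S_F(\theta) - S_F(\theta')| + 2\sum_{k \notin F} w_{ij}(k)
\le |S_F(\theta) - S_F(\theta')| + \tfrac{2\eps}{3},
\]
and $S_F$, being a finite trigonometric sum, is uniformly continuous, so $|S_F(\theta) - S_F(\theta')| < \eps/3$ whenever $|\theta - \theta'| < \delta$ for some $\delta = \delta(\eps, F)$. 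This directly exhibits the modulus of continuity and completes the proof.
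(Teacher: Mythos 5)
Your proof is correct and follows essentially the same route as the paper's: both arguments rest entirely on the summability $\sum_{k\in I_{ij}} w_{ij}(k)<\infty$ and the compactness of $\Torus^d$. The paper establishes continuity via dominated convergence along sequences rather than via the Weierstrass M-test and uniform limits of trigonometric polynomials, but this is a cosmetic difference, and your explicit $\eps/3$ modulus-of-continuity estimate is a harmless bonus.
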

\begin{proof} 
Let $(\theta_n)$ be a sequence such that $\theta_n\to \theta \in \Torus^d$. Then $\ee^{2\pi \ii k\cdot \theta_n} \to \ee^{2\pi \ii k\cdot \theta}$ and hence $h_{ij}(\theta_n)\to h_{ij}(\theta)$ by dominated convergence since $\sum_{k \in I_{ij}} w_{ij}(k)<\infty$, proving continuity. Uniform continuity then follows since each $h_{ij}$ is bounded on the compact torus, with $\|h_{ij}\|_\infty \le \|w_{ij}\|_1 +|Q_i|$.
\end{proof}
A faster decay of the weights leads to a higher regularity as illustrated by the following statement. 
\begin{prp}\label{prp:matrilip}
Under Assumption~\ref{AssumptionEdgeWeights}, if the weights satisfy $\sum_{k \in I_{ij}} w_{ij}(k)|k|_2<\infty$ then $h_{ij}$ is Lipschitz continuous. This condition is not necessary.
\end{prp}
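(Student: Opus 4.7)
The plan is to bound $|h_{ij}(\theta) - h_{ij}(\theta')|$ directly from the Fourier series representation given in Lemma~\ref{lem:Floquet_transform}. Writing
\begin{equation*}
  h_{ij}(\theta) - h_{ij}(\theta')
  = \sum_{k \in I_{ij}} w_{ij}(k)\bigl(\ee^{2\pi\ii\theta\cdot k} - \ee^{2\pi\ii\theta'\cdot k}\bigr)
\end{equation*}
(the potential term $Q_i\delta_{ij}$, when $i=j$, is constant and cancels), the elementary inequality $|\ee^{\ii x}-\ee^{\ii y}|\le |x-y|$ combined with the Cauchy--Schwarz inequality in $\R^d$ yields, termwise,
\begin{equation*}
  |\ee^{2\pi\ii\theta\cdot k} - \ee^{2\pi\ii\theta'\cdot k}|
  \le 2\pi\,|(\theta-\theta')\cdot k|
  \le 2\pi\,|\theta-\theta'|_2\,|k|_2 .
\end{equation*}
Since the resulting majorant is summable by hypothesis, one may sum termwise and conclude
\begin{equation*}
  |h_{ij}(\theta) - h_{ij}(\theta')|
  \le 2\pi \Bigl(\sum_{k \in I_{ij}} w_{ij}(k)\,|k|_2\Bigr)\,|\theta-\theta'|_2 ,
\end{equation*}
which is the desired Lipschitz estimate.

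For the second claim, I would exhibit an explicit crystal whose Floquet function is Lipschitz yet whose weights violate the moment condition. The natural candidate is the function $a$ from~\eqref{eq:exa3}: its $1$-periodic extension satisfies $a(0)=a(1)=1/4$ and has one-sided derivatives bounded in absolute value by $1$, so it is Lipschitz with constant~$1$. On the other hand, Lemma~\ref{lem:Foufla} gives $\Ftrafo a(k) = (2\pi^2 k^2)^{-1}$ for $k\neq 0$, and hence
\begin{equation*}
  \sum_{k\in\Z\setminus\{0\}} |k|\, \Ftrafo a(k)
  = \sum_{k\in\Z\setminus\{0\}} \frac{1}{2\pi^2\,|k|}
  = \infty .
\end{equation*}
Since $a$ is an admissible Floquet function by Lemma~\ref{lem:conv}, this shows that the moment condition is sufficient but not necessary.

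I do not anticipate any conceptual obstacles: the forward direction is a routine Fourier estimate, and the counterexample uses a function already introduced in the paper. The only small point to verify carefully is that the periodized $a$ is truly Lipschitz across the period boundary, which is ensured by $a(0)=a(1)$ together with the boundedness of the one-sided derivatives.
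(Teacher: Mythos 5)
Your proof is correct and takes essentially the same approach as the paper: the same termwise bound $|\ee^{2\pi\ii k\cdot\theta}-\ee^{2\pi\ii k\cdot\theta'}|\le 2\pi|k|_2|\theta-\theta'|_2$ summed against the weights (the paper merely telescopes over coordinates before applying Cauchy--Schwarz, whereas you apply $|\ee^{\ii x}-\ee^{\ii y}|\le|x-y|$ directly to the dot product), yielding the identical Lipschitz constant $2\pi\sum_{k\in I_{ij}}w_{ij}(k)|k|_2$. Your counterexample is also the one the paper intends, since it simply points to the functions of Figure~\ref{fig:functions.a.b.c}, of which your choice $a$ is one; your explicit check that the periodized $a$ is Lipschitz while $\sum_k|k|\,\Ftrafo a(k)=\infty$ is a welcome elaboration of that reference.
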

\begin{proof}
Simply notice that, for some $k \in \mathbb{Z}^d$, $|\ee^{2\pi\ii k\cdot \theta}-\ee^{2\pi\ii k\cdot \phi}| = |\ee^{2\pi\ii k\cdot (\theta-\phi)}-1|\le \sum_{j=1}^d |\ee^{2\pi\ii k_j(\theta_j-\phi_j)}-1|\le \sum_{j=1}^d 2\pi |k_j(\theta_j-\phi_j)|\le 2\pi |k|_2|\theta-\phi|_2$. Hence, $|h_{ij}(\theta)-h_{ij}(\phi)|\le \sum_{k \in I_{ij}} w_{ij}(k) |\ee^{2\pi\ii k\cdot \theta}-\ee^{2\pi\ii k\cdot \phi}| \le C_{ij} |\theta-\phi|_2$ where $C_{ij}:=2\pi\sum_{k \in I_{ij}} w_{ij}(k)|k|_2$. Finally, the assumed condition is indeed not necessary as the examples of Figure~\ref{fig:functions.a.b.c} show.
\end{proof}
In the following, we set $\langle k\rangle:=\sqrt{1+|k|_2^2}=\sqrt{1+k_1^2+\dots+k_d^2}$, $k \in \mathbb{C}^d$.
%
%
\begin{prp}\label{prp:cr}
Under Assumption~\ref{AssumptionEdgeWeights}, if the weights $w_{ij}$ satisfy 
    \[
    \sum_{k\in I_{ij}} w_{ij}(k)^2 \langle k\rangle^{2s}<\infty
    \]
    with $s>r+\frac{d}{2}$, then $h_{ij}\in C^r(\Torus^d)$. 
This holds in particular if $w_{ij}(k)\lesssim \langle k\rangle^{-\alpha}$ with $\alpha>r+d$.
\end{prp}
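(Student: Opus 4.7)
The condition $\sum_k w_{ij}(k)^2 \langle k\rangle^{2s} < \infty$ is exactly the statement that the Fourier coefficients of $h_{ij}$ lie in the weighted $\ell^2$ space defining the Sobolev space $H^s(\Torus^d)$, so the plan is essentially to invoke Sobolev embedding on the torus, $H^s(\Torus^d) \hookrightarrow C^r(\Torus^d)$ for $s > r + d/2$. I would give a direct, self-contained proof of this embedding in our concrete setting rather than quoting it, as the argument reduces to one application of Cauchy--Schwarz.

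First I would recall from \eqref{eq:hthe} that $h_{ij}$ is (up to the constant $Q_i\delta_{ij}$) a Fourier series with coefficients $w_{ij}(k)$. For a multi-index $\beta \in \N_0^d$ with $|\beta| \le r$, the formally differentiated series is
\[
  \sum_{k \in I_{ij}} w_{ij}(k)\,(2\pi\ii k)^\beta \,\ee^{2\pi\ii k \cdot \theta}.
\]
To justify differentiation term-by-term, I need this series to converge absolutely and uniformly in $\theta$. Applying Cauchy--Schwarz with the weight $\langle k\rangle^s$ gives
\[
  \sum_{k} w_{ij}(k)\,|k|_2^{|\beta|}
  \le \Bigl(\sum_k w_{ij}(k)^2 \langle k\rangle^{2s}\Bigr)^{\!1/2}
      \Bigl(\sum_k \langle k\rangle^{2(|\beta|-s)}\Bigr)^{\!1/2}.
\]
The first factor is finite by hypothesis; the second is finite since $2(s-|\beta|) \ge 2(s-r) > d$ ensures convergence of $\sum_{k \in \Z^d} \langle k\rangle^{-2(s-r)}$ by the standard comparison with $\int_{\R^d}(1+|x|^2)^{-(s-r)}\dd x$. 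Iterating this argument for $|\beta| = 1, 2, \dots, r$, each partial derivative $\partial^\beta h_{ij}$ is the uniform limit of continuous functions and is therefore continuous, so $h_{ij} \in C^r(\Torus^d)$.

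For the ``in particular'' clause, I would simply verify that the pointwise decay $w_{ij}(k) \lesssim \langle k\rangle^{-\alpha}$ gives
\[
  \sum_{k} w_{ij}(k)^2 \langle k\rangle^{2s}
  \lesssim \sum_{k} \langle k\rangle^{2(s-\alpha)},
\]
which is finite as soon as $2(\alpha - s) > d$, that is $s < \alpha - d/2$. The compatibility window with the Sobolev condition $s > r + d/2$ is nonempty precisely when $r + d/2 < \alpha - d/2$, i.e.\ $\alpha > r + d$, and any $s$ in this window yields the result by the first part. I do not see a genuine obstacle here; the only mildly delicate point is getting the bookkeeping of the exponents right so that the two thresholds ($s > r + d/2$ for Sobolev embedding and $s < \alpha - d/2$ for summability under polynomial decay) match up to give the sharp condition $\alpha > r + d$.
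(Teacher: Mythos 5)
Your proof is correct and follows essentially the same route as the paper: the hypothesis plus Cauchy--Schwarz gives $\langle k\rangle^{r}w_{ij}(k)\in\ell^1(\Z^d)$ (using $\sum_k\langle k\rangle^{-\beta}<\infty$ for $\beta>d$), which justifies term-wise differentiation up to order $r$, and the paper handles the ``in particular'' clause by the same exponent bookkeeping, simply choosing $s=\frac{\alpha+r}{2}$ in the admissible window you identified. The paper also notes the Sobolev embedding $H^s(\Torus^d)\hookrightarrow C^r(\Torus^d)$ as an alternative, which is the abstract version of your explicit argument.
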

\begin{proof}
The assumption implies that $\langle k\rangle^r w_{ij}(k)\in \ell^1(\Z^d)$, since $\sum_{k \in \mathbb{Z}^d}\langle k\rangle^{-\beta}<\infty$ for $\beta>d$. Hence, one can deduce the statement by differentiating term-wise, using dominated convergence as in Proposition~\ref{PropContinuity} and induction. Alternatively, the statement also follows from the Sobolev embedding theorem $H^s(\Torus^d)\hookrightarrow C^r(\Torus^d)$ if $s>r+\frac{d}{2}$, see~\cite[Thm. 5.6]{EinWa}.
%
The particular case follows by taking $s=\frac{\alpha+r}{2}$.
\end{proof}
As an immediate corollary, we obtain the following result.
\begin{cor}
    Under Assumption~\ref{AssumptionEdgeWeights}, if the weights $w_{ij}$ satisfy 
    \[
    \sup_{k \in I_{ij}} w_{ij}(k)\langle k\rangle^M <\infty
    \quad
    \text{for all $M \in \N$,}
    \]
    then $h_{ij}\in C^\infty(\Torus^d)$.
\end{cor}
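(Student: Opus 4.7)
The plan is to deduce the corollary directly from Proposition~\ref{prp:cr} by verifying its hypothesis for every finite order of differentiability $r$. The assumption $\sup_{k\in I_{ij}} w_{ij}(k)\langle k\rangle^M < \infty$ for every $M\in\N$ says precisely that the weights decay faster than any polynomial, so in particular they satisfy the polynomial-decay condition required by Proposition~\ref{prp:cr} for arbitrarily large exponents.

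Concretely, I would argue as follows. Fix an arbitrary $r\in\N$ and choose $M = r+d+1$. By the assumption there exists $C_{r}<\infty$ such that
\[
w_{ij}(k)\langle k\rangle^{r+d+1} \le C_r \qquad\text{for all } k\in I_{ij},
\]
which is exactly the decay bound $w_{ij}(k)\lesssim \langle k\rangle^{-\alpha}$ with $\alpha = r+d+1 > r+d$ required in the last sentence of Proposition~\ref{prp:cr}. Applying that proposition yields $h_{ij}\in C^r(\T^d)$.

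Since $r$ was arbitrary, we obtain $h_{ij}\in \bigcap_{r\in\N} C^r(\T^d) = C^\infty(\T^d)$, which is the claim. There is essentially no obstacle here: the corollary is a purely logical strengthening of Proposition~\ref{prp:cr}, converting the ``for each $r$ pick an $\alpha$'' statement into a ``one condition gives all $r$'' statement, and the proof amounts to choosing $\alpha$ large enough in terms of $r$.
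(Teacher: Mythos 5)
Your proof is correct and is exactly the intended argument: the paper states this as an immediate corollary of Proposition~\ref{prp:cr} without further proof, and your deduction (fix $r$, take $M=r+d+1$ so that $\alpha=r+d+1>r+d$, conclude $h_{ij}\in C^r(\T^d)$, then intersect over all $r$) is precisely the route the authors have in mind.
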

An even faster decay leads to analytic Floquet matrix elements.
\begin{prp}\label{PropAnalytic}
      Under Assumption~\ref{AssumptionEdgeWeights}, if there are $M_{ij},\eps_{ij}>0$ such that the weights $w_{ij}$ additionally satisfy
\begin{equation}\label{eq:wexpdec}
w_{ij}(k)\le M_{ij}\ee^{-\eps_{ij}|k|}
\end{equation}
for all $k\in I_{ij}$, then $h_{ij}$ is analytic, and vice-versa.
\end{prp}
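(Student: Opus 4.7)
The statement is an equivalence, which I would prove as two separate implications, exploiting the fact that $h_{ij}$ is essentially a Fourier series (up to the additive constant $Q_i$ when $i=j$) whose coefficients are the $w_{ij}(k)$.

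For the implication $(\Leftarrow)$, I would simply plug complex arguments into the series. Set $h_{ij}(z) = \sum_{k\in I_{ij}} w_{ij}(k) \ee^{2\pi\ii k\cdot z}$ for $z\in \C^d/\Z^d$. For $z=\theta+\ii\eta$ with $\eta \in \R^d$ and $|\eta|_\infty<\delta$, the summand is bounded in modulus by $M_{ij}\ee^{-\eps_{ij}|k|+2\pi |k|_1 \delta}$. Choosing $\delta< \eps_{ij}/(2\pi \sqrt d)$ (using $|k|_1\le \sqrt d\,|k|_2$ and equivalence of norms) makes the series absolutely and locally uniformly convergent on this complex strip, so $h_{ij}$ extends holomorphically there by Weierstrass's theorem. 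Restriction to $\T^d$ yields real analyticity.

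For the implication $(\Rightarrow)$, I would argue by a contour shift in the Fourier inversion formula. If $h_{ij}$ is real-analytic on $\T^d$, then by compactness of $\T^d$ and analyticity, it extends holomorphically and boundedly to some complex strip $S_\eps := \{z \in \C^d/\Z^d : |\Im z|_\infty<\eps\}$, with $C := \sup_{z\in S_\eps}|h_{ij}(z)|<\infty$. The Fourier coefficient $w_{ij}(k) = \int_{[0,1]^d} h_{ij}(\theta)\ee^{-2\pi\ii k\cdot\theta}\,\dd\theta$ (the constant $Q_i$ contributes only to $k=0$) can be rewritten by shifting the contour in each variable $\theta_j$ by an imaginary amount $\ii s_j$ with $|s_j|<\eps$. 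By $\Z^d$-periodicity and Cauchy's theorem, the boundary contributions cancel and one gets
\begin{equation*}
w_{ij}(k) = \ee^{2\pi s\cdot k}\int_{[0,1]^d} h_{ij}(\theta+\ii s)\ee^{-2\pi\ii k\cdot\theta}\,\dd\theta.
\end{equation*}
Choosing $s_j = -\eps'\sgn(k_j)$ with $0<\eps'<\eps$ gives $s\cdot k = -\eps'|k|_1$, and hence
\begin{equation*}
|w_{ij}(k)|\le C\,\ee^{-2\pi\eps'|k|_1}\le C\,\ee^{-2\pi\eps' |k|_2}\,.
\end{equation*}
This yields the desired exponential bound with $M_{ij}=C$ and $\eps_{ij}=2\pi\eps'$.

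The only slightly delicate point is justifying the contour deformation in several variables simultaneously, but this is standard: one can shift one variable at a time, each shift being valid by one-dimensional Cauchy's theorem in the compact rectangle $[0,1]\times[0,|s_j|]$ (where $h_{ij}$, extended by periodicity, is holomorphic and bounded by $C$), and the vertical boundary pieces at $\theta_j=0$ and $\theta_j=1$ cancel by periodicity. I do not anticipate any substantive obstacle; the result is really a Paley--Wiener-type correspondence between exponential decay of Fourier coefficients and holomorphic extension to a strip, adapted to our notation.
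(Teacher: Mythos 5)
Your proposal is correct: both directions are the standard Paley--Wiener argument for Fourier series (exponential decay of coefficients $\Leftrightarrow$ holomorphic extension to a complex strip), which is exactly what the paper invokes — except that the paper simply cites the Paley--Wiener theorem rather than writing out the proof. Your write-up supplies the details (absolute convergence on a strip for one direction, periodic contour shift with $s_j=-\eps'\sgn(k_j)$ for the other) correctly, so there is nothing to object to.
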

\begin{proof}
This is the Paley-Wiener theorem for Fourier series, see, e.g.~\cite[Ch.~3.12.D]{Ar} and~\cite[Lemma 5.6]{BT}.
\end{proof}
Note that the intermediate decay $w_{ij}(k) \le c_\eps \ee^{-\eps |k|^{1/s}}$ leads to the so-called Gevrey class $G^s(\Torus^d)$. The Paley-Wiener theorem states that $G^1(\Torus^d) = C^\omega(\Torus^d)$, $C^\omega(\Torus^d)$ being the class of analytic functions.

Note that $C^k(\T^d)$ and $C^\omega(\T^d)$ are understood as periodic functions of class $C^k$ and $C^\omega$. In particular, the function $a(\theta)=(\theta-\frac{1}{2})^2$ on $[0,1]$ is not analytic when considered on the torus (it is not differentiable at $0$), which explains why $\widehat{a}(k)\sim k^{-2}$, as proved in Lemma~\ref{lem:Foufla}, does not decay exponentially.

\subsection{Regularity of eigenvalue surfaces}\label{sec:eireg}
In this section we show that the regularity of the Floquet matrix entries $h_{ij}$ implies some regularity on the eigenvalue functions $E_j(\cdot)$.

In the case $\nu=1$, the Floquet matrix $H(\cdot)$ is a scalar function and we only have one weight function $w(k)$. As seen in the previous section, its decay (polynomial or exponential) entails a corresponding regularity ($C^r$ or analytic) of $E(\cdot)=H(\cdot)$. We shall discuss the case of more general $\nu$ in the following.
\begin{lem}\label{lem:ctseigen}
Under Assumptions~\ref{AssumptionEdgeWeights} there exist $\nu$ continuous functions $E_1(\cdot)\le \dots\le E_\nu(\cdot)$ on $\Torus^d$ consisting, for each $\theta \in \Torus^d$, of the eigenvalues of $H(\theta)$.
\end{lem}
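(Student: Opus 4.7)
The plan is to exploit the fact that $H(\theta)$ is a $\nu\times\nu$ Hermitian matrix depending continuously on $\theta$, and then invoke the standard fact that the ordered eigenvalues of a Hermitian matrix depend continuously on the matrix (in operator norm).

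First, I would verify Hermiticity. By Assumption~\ref{AssumptionEdgeWeights}, $w_{ji}(-k)=w_{ij}(k)$ and $I_{ji}=-I_{ij}$, hence for any $\theta \in \Torus^d$,
\begin{equation*}
  \overline{h_{ij}(\theta)}
  = \sum_{k\in I_{ij}} w_{ij}(k)\ee^{-2\pi\ii\theta\cdot k}
  = \sum_{k'\in I_{ji}} w_{ji}(k')\ee^{2\pi\ii\theta\cdot k'}
  = h_{ji}(\theta),
\end{equation*}
after substituting $k'=-k$. Since the potential contribution $Q_i$ on the diagonal is real, this shows $H(\theta)^\ast=H(\theta)$. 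Consequently $H(\theta)$ admits $\nu$ real eigenvalues, which we list in non-decreasing order as $E_1(\theta)\le\dots\le E_\nu(\theta)$, with multiplicities.

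Next, I would establish continuity. By Proposition~\ref{PropContinuity}, each entry $h_{ij}$ is continuous on $\Torus^d$, so if $\theta_n\to\theta$ then $h_{ij}(\theta_n)\to h_{ij}(\theta)$ for every $i,j$. Since we are in a fixed finite dimension $\nu$, entry-wise convergence implies operator-norm convergence via the crude bound
\begin{equation*}
  \lVert H(\theta_n)-H(\theta)\rVert_{\mathrm{op}}
  \le \Bigl(\sum_{i,j=1}^\nu |h_{ij}(\theta_n)-h_{ij}(\theta)|^2\Bigr)^{1/2}
  \xrightarrow[n\to\infty]{} 0.
\end{equation*}

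Finally, I would invoke Weyl's perturbation inequality for Hermitian matrices: for any two Hermitian $\nu\times\nu$ matrices $A,B$, their ordered eigenvalues satisfy $|E_j(A)-E_j(B)|\le \lVert A-B\rVert_{\mathrm{op}}$ for every $j\in\{1,\dots,\nu\}$. Applied to $A=H(\theta_n)$ and $B=H(\theta)$, this gives $|E_j(\theta_n)-E_j(\theta)|\le \lVert H(\theta_n)-H(\theta)\rVert_{\mathrm{op}}\to 0$, so each $E_j$ is continuous on $\Torus^d$. There is no real obstacle here; the only thing worth emphasising is that Weyl's inequality does the job even at points where eigenvalues of $H(\theta)$ collide (where the individual eigenvalue branches cease to be smooth), which is precisely why ordering them is essential and why one should not hope for more than continuity at this level of generality; higher regularity of the $E_j$ away from crossings would require the stronger decay assumptions studied in the subsequent subsection.
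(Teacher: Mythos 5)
Your proof is correct. The paper's own proof is two lines: it notes that the entries $h_{ij}$ are continuous by Proposition~\ref{PropContinuity} and then simply cites ``Kato's classic argument'' (pp.~106--109 of Kato's book) for the continuity of the eigenvalues of a continuous Hermitian matrix family. You follow the same overall structure (continuity of entries $\Rightarrow$ continuity of ordered eigenvalues) but replace the citation with a self-contained quantitative argument via Weyl's perturbation inequality $|E_j(A)-E_j(B)|\le\lVert A-B\rVert$, after upgrading entrywise convergence to operator-norm convergence through the Frobenius bound. This is a perfectly valid substitute---indeed it is exactly the tool the paper itself deploys one lemma later (Lemma~\ref{lem:eigenlip}\eqref{eigenlip.a}) to get Lipschitz continuity of the $E_j$ from Lipschitz continuity of the $h_{ij}$, so your route arguably unifies the two statements. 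Your explicit verification that $H(\theta)$ is Hermitian (using $w_{ji}(-k)=w_{ij}(k)$ and $I_{ji}=-I_{ij}$) is a detail the paper leaves implicit, and your closing remark about why only ordering, not labelling by analytic branches, survives at eigenvalue crossings is accurate and consistent with the discussion in \S~\ref{sec:eireg}.
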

\begin{proof}
Recall that, by Proposition~\ref{PropContinuity}, all $h_{ij}$ are continuous. The continuity of the eigenvalue functions now follows from Kato's classic argument~\cite[p. 106--109]{Kato}.
\end{proof}

\begin{lem}\label{lem:eigenlip}
  Assume each $h_{ij}$ is Lipschitz-continuous (e.g. assume
  $\sum_k w_{ij}(k)|k|_2<\infty$ for all $i,j=1,...,\nu$). Then, the
  following holds:
  \begin{enumerate}[\rm(a)]
  \item
  \label{eigenlip.a}
    The eigenvalue functions of Lemma~\ref{lem:ctseigen} are
    Lipschitz-continuous on $\Torus^d$.
  \item
    \label{eigenlip.b}
    If $\lambda_i(\theta)$ is a simple eigenvalue of $H(\theta)$, its
    eigenprojection is Lipschitz continuous near $\theta$.
  \item
    \label{eigenlip.c}
    If $\lambda_1(\theta),\dots,\lambda_{r(\theta)}(\theta)$ are the
    distinct eigenvalues of $H(\theta)$ (which may have higher
    multiplicity), fix a small contour $\gamma_j$ around $\lambda_j$
    enclosing no other eigenvalue of $H(\theta)$, then
    $\phi\mapsto P_{\gamma_j}(\phi)$, the total projection for the
    eigenvalues of $H(\phi)$ lying inside $\gamma_j$, is Lipschitz
    continuous near $\theta$.
  \end{enumerate}
\end{lem}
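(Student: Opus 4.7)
For part~\eqref{eigenlip.a}, the plan is to reduce Lipschitz continuity of the eigenvalue functions $E_1,\dots,E_\nu$ to Lipschitz continuity of $\theta \mapsto H(\theta)$ in operator norm. Since $H(\theta)-H(\theta')$ is an $\nu \times \nu$ Hermitian matrix whose entries are bounded in absolute value by $L_{ij}|\theta-\theta'|_2$ (with $L_{ij}$ the Lipschitz constant of $h_{ij}$), I would bound the operator norm by the Frobenius norm, obtaining
\[
  \|H(\theta)-H(\theta')\| \le \Bigl(\sum_{i,j=1}^\nu L_{ij}^2\Bigr)^{1/2}|\theta-\theta'|_2 =: L\,|\theta-\theta'|_2.
\]
Then by the Weyl/Lidskii inequality for Hermitian matrices, applied to the ordered eigenvalues,
\[
  |E_j(\theta)-E_j(\theta')| \le \|H(\theta)-H(\theta')\| \le L\,|\theta-\theta'|_2
\]
for each $j\in\{1,\dots,\nu\}$. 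The assumption $\sum_k w_{ij}(k)|k|_2 < \infty$ gives Lipschitz continuity of $h_{ij}$ by Proposition~\ref{prp:matrilip}, so the hypothesis is indeed sufficient.

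For parts~\eqref{eigenlip.b} and~\eqref{eigenlip.c}, the plan is to represent the spectral projections by the Riesz integral and apply the resolvent identity. Fix $\theta_0 \in \Torus^d$ and let $\gamma$ be a simple closed contour enclosing a chosen group of eigenvalues of $H(\theta_0)$ but no others; set $\delta := \dist(\gamma, \sigma(H(\theta_0))) > 0$. By the continuity of the eigenvalues established in~\eqref{eigenlip.a}, there is a neighbourhood $U$ of $\theta_0$ such that for all $\theta \in U$, the contour $\gamma$ still encloses exactly the deformed group and $\dist(\gamma, \sigma(H(\theta))) \ge \delta/2$. On $\gamma$, the resolvent therefore satisfies $\|(z-H(\theta))^{-1}\| \le 2/\delta$ uniformly in $\theta\in U$ and $z\in\gamma$.

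The projection in~\eqref{eigenlip.c} is then given by
\[
  P_\gamma(\theta)
  = \frac{1}{2\pi\ii}\oint_\gamma (z-H(\theta))^{-1}\,\dd z,
\]
and the second resolvent identity yields
\[
  P_\gamma(\theta) - P_\gamma(\theta')
  = \frac{1}{2\pi\ii}\oint_\gamma (z-H(\theta))^{-1}\bigl(H(\theta)-H(\theta')\bigr)(z-H(\theta'))^{-1}\,\dd z.
\]
Taking norms and using the uniform resolvent bound together with the operator-norm Lipschitz estimate on $H$ proved above, I get
\[
  \|P_\gamma(\theta)-P_\gamma(\theta')\|
  \le \frac{|\gamma|}{2\pi}\cdot \frac{4L}{\delta^2}\,|\theta-\theta'|_2,
\]
for $\theta,\theta'\in U$, which is precisely the claimed Lipschitz continuity. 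Part~\eqref{eigenlip.b} is the special case of~\eqref{eigenlip.c} with $\gamma$ a small circle around the simple eigenvalue $\lambda_i(\theta_0)$.

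I expect no serious obstacle: everything follows from standard Kato-type perturbation arguments once the operator-norm Lipschitz bound on $H(\theta)$ is in hand. The only point requiring a little care is the choice of $U$ in parts~\eqref{eigenlip.b}-\eqref{eigenlip.c}, where one must invoke part~\eqref{eigenlip.a} (or merely continuity, Lemma~\ref{lem:ctseigen}) to guarantee that the contour $\gamma$ continues to isolate the correct group of eigenvalues; and noting that the Lipschitz constant of $P_\gamma$ depends on $\theta_0$ through $\delta$, so the statement is inherently local, matching the phrasing of the lemma.
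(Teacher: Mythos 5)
Your proposal is correct and follows essentially the same route as the paper: an operator-norm Lipschitz bound on $\theta\mapsto H(\theta)$ (you use the Frobenius norm, the paper interpolates between the $1$- and $\infty$-norms), Weyl's inequality for part (a), and the Riesz contour integral with the resolvent identity for parts (b) and (c). The only cosmetic difference is that you control the resolvent on $\gamma$ via the Hermitian identity $\|(z-H)^{-1}\|=1/\dist(z,\sigma(H))$ together with eigenvalue stability, whereas the paper uses a Neumann-series perturbation of the resolvent; both are standard and valid.
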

\begin{rem}
  Lemma~\ref{lem:eigenlip} is in general the best statement one can
  have by abstract perturbation theory. For example, suppose
  \[
    S(t)
    = \begin{cases}
      \begin{pmatrix} 1+t&0\\0&1-t\end{pmatrix} &\text{for } t\le 0,\\
      \begin{pmatrix} 1&t\\t&1\end{pmatrix} &\text{for } t> 0.
    \end{cases}
  \]
  Then\footnote{We came across this example by A.~Quas in an old
    \texttt{mathoverflow} post, entry \texttt{116123}.}  $S(t)$ is
  Lipschitz, but the eigenvectors are
  $u(t)=\begin{pmatrix} 1\\0\end{pmatrix}$ and
  $v(t)=\begin{pmatrix} 0\\1\end{pmatrix}$ for $t\le 0$ and
  $\begin{pmatrix} 1\\1\end{pmatrix}$ and
  $\begin{pmatrix} 1\\-1\end{pmatrix}$ for $t>0$. Clearly
  $P_{1+t}(t) = \frac{\langle \cdot, u(t)\rangle}{\|u(t)\|^2} u(t)$ is
  not continuous at $t=0$. However, claim~\eqref{eigenlip.c} does hold
  everywhere here.

  One may have the intuition that\eqref{eigenlip.b} still holds almost
  everywhere. This is true in the analytic framework because the
  eigenvalue functions meet on a trivial set (either zero or full
  measure), but there is no reason for this to hold when
  $t\mapsto S(t)$ is only smooth.
\end{rem}

\begin{proof}[Proof of Lemma~\ref{lem:eigenlip}]
  Each $h_{ij}$ is Lipschitz-continuous
  (cf. Proposition~\ref{prp:matrilip}), with Lipschitz constant
  $C_{ij} > 0$; let $C = \max_{i,j\le \nu} C_{ij}$. Then for the
  matrix norms, we obtain the estimates
  $\|H(\theta)-H(\phi)\|_1 = \max_j \sum_i
  |h_{ij}(\theta)-h_{ij}(\phi)|\le C \nu |\theta-\phi|_2$ and
  similarly $\|H(\theta)-H(\phi)\|_\infty \le C \nu
  |\theta-\phi|_2$. Hence,
  $\|H(\theta)-H(\phi)\|\le C\nu |\theta-\phi|_2$ for the operator
  norm.

  Claim~\eqref{eigenlip.a} now follows immediately from the Weyl
  inequalities: $|E_k(A+B)-E_k(A)|\le \|B\|$, with $A=H(\phi)$ and
  $B=H(\theta)-H(\phi)$ (see also~\cite{KMR} for a stronger
  statement).

  The proof of claims~\eqref{eigenlip.b} and~\eqref{eigenlip.c} follow
  again from Kato's arguments, let us give more details for the
  reader's convenience:

  Let $z\in \rho(H(\theta))$ be in the resolvent set of $H(\theta)$.
  Writing
  \begin{gather*}
    H(\phi)-z
    = (1+(H(\phi)-H(\theta))G_\theta(z))(H(\theta)-z)
      \quad\text{with}\quad
      G_\theta(z) = (H(\theta)-z)^{-1},\\
    \intertext{we see that if}
    \|H(\phi)-H(\theta)\|
    <\|G_\theta(z)\|^{-1},
      \qquad\text{then}\qquad
      z\in \rho(H(\phi)).
  \end{gather*}
  This is guaranteed by taking $\theta$ close enough to $\phi$. Under
  such assumption, the resolvent equation implies that
  $\|G_\theta(z)-G_\phi(z)\| =
  \|G_\theta(z)(H(\theta)-H(\phi))G_\phi(z)\|\le C_z |\theta-\phi|_2$,
  where $C_z=c_d \|G_\theta(z)\|\|G_\phi(z)\|$ for some $c_d > 0$.

  Now let $\lambda$ be an eigenvalue of $H(\theta)$ of multiplicity
  $m$ and let $\gamma$ be a closed curve enclosing $\lambda$ and no
  other eigenvalue. Then
  $ \frac{-1}{2\pi \ii}\int_\gamma G_\theta(z)\,\dd
  z=P_\gamma(\theta)$ is the eigenprojection onto the eigenspace of
  $H(\theta)$ for the eigenvalue $\lambda$. By choosing
  $|\theta-\phi|_2<\delta$ so small that
  $\|H(\theta)-H(\phi)\|<\min_{z\in \gamma}\|G_\theta(z)\|^{-1}$, we
  deduce that $z\in \rho(H(\phi))$ for all $z\in \gamma$ and
  $\frac{-1}{2\pi\ii}\int_\gamma G_\phi(z)\,\dd z = P_\gamma(\phi)$ is
  the eigenprojection of $H(\phi)$ for the eigenvalues lying in
  $\gamma$ (i.e.\ upon perturbation, $\lambda$ generates $m$
  eigenvalues in $\gamma$, which may or may not be distinct).

  Thus,
  $\|P_\gamma(\theta)-P_\gamma(\phi)\|\le C_\gamma |\theta-\phi|_2$,
  for $C_\gamma = \frac{1}{2\pi}\int_\gamma C_z\,\dd z$. This
  shows~\eqref{eigenlip.c}. When $m=1$, $P_\gamma=P_\lambda$ and we
  get~\eqref{eigenlip.b}.
\end{proof}
As a final result in this section we now formulate a statement which
guarantees analyticity of the eigenvalue functions $E_j=E_j(\cdot)$
almost everywhere.
\begin{prp}[Analyticity of eigenvalue functions]
  Let $\Gamma$ be a $\Z^d$-periodic graph with finite fundamental cell.
  Assume that the weights $w_{ij}$ decay exponentially as in~\eqref{eq:wexpdec}. 
  Then there exists a
  real analytic variety $X\subset \Torus^d$ of dimension $\le d-1$ (so
  $X$ is a closed nullset), such that each eigenvalue function $E_j(\cdot)$ of $\Schr_\Gamma$ is analytic on
  $\Torus^d\setminus X$. 
  Each eigenvalue has constant multiplicity on
  each of the finitely many connected components of
  $\Torus^d\setminus X$.
\end{prp}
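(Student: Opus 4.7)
The plan is to combine real analyticity of the Floquet matrix (coming from the exponential decay assumption) with the classical theory of discriminants and real analytic subvarieties. By Proposition~\ref{PropAnalytic}, the exponential bound~\eqref{eq:wexpdec} ensures that every entry $h_{ij}$ of $H(\cdot)$ is real analytic on $\Torus^d$, so the characteristic polynomial $p(\theta,\lambda) := \det(\lambda I - H(\theta))$ is a monic polynomial of degree $\nu$ in $\lambda$ whose coefficients are real analytic functions of $\theta$. Its discriminant $\Delta(\theta)$, being a universal polynomial expression in these coefficients, is therefore real analytic on $\Torus^d$, and vanishes exactly at those $\theta$ for which $H(\theta)$ possesses a repeated eigenvalue.

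I would first treat the generic case $\Delta \not\equiv 0$ and set $X := \{\theta \in \Torus^d : \Delta(\theta) = 0\}$. Since $\Torus^d$ is compact and connected, $X$ is a proper closed real analytic subvariety; standard stratification theory of real analytic sets gives $\dim X \le d-1$, so $X$ is a Lebesgue nullset, and compactness of $\Torus^d$ yields that $\Torus^d \setminus X$ has only finitely many connected components. On $\Torus^d \setminus X$ all $\nu$ eigenvalues of $H(\theta)$ are simple, so $\partial_\lambda p(\theta, E_j(\theta)) \neq 0$, and the analytic implicit function theorem upgrades the continuous branches from Lemma~\ref{lem:ctseigen} to real analytic functions. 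Each $E_j$ has constant multiplicity one on every connected component.

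The degenerate case $\Delta \equiv 0$, where some labeled eigenvalues are permanently tied together, would be handled by passing to the squarefree part $p_{\mathrm{red}} := p / g$ with $g := \gcd(p, \partial_\lambda p)$, computed in the ring of polynomials with meromorphic coefficients on $\Torus^d$ via the Euclidean algorithm. The locus $X_0$ where pivots in this algorithm vanish is a proper real analytic subvariety of dimension $\le d-1$; off $X_0$, $p_{\mathrm{red}}$ has real analytic coefficients and a non-identically-vanishing discriminant, so the previous paragraph applied to $p_{\mathrm{red}}$ produces a variety $X \supset X_0$ of dimension $\le d-1$ outside of which the distinct roots of $p_{\mathrm{red}}$ are real analytic. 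By continuity of the labeled eigenvalues (Lemma~\ref{lem:ctseigen}), each $E_j$ coincides on $\Torus^d \setminus X$ with one of these analytic roots, and the multiplicities — read off from the exponents in the factorization $p = g \cdot p_{\mathrm{red}}$ — are locally constant on each connected component.

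The main obstacle will be the degenerate case, where one must control factorization over the (non-UFD) ring of real analytic functions and carefully track the singular set of the Euclidean algorithm. A cleaner alternative that entirely avoids meromorphic factorization is to work locally: around any $\theta_0 \in \Torus^d$, the total projections $P_j(\theta) = -\frac{1}{2\pi\ii}\int_{\gamma_j}(H(\theta)-z)^{-1}\,\dd z$ from the proof of Lemma~\ref{lem:eigenlip}\eqref{eigenlip.c} are real analytic near $\theta_0$, as are the averaged eigenvalues $\tr(H(\theta)P_j(\theta))/\tr P_j(\theta)$. Applying this on an open cover of $\Torus^d$ and gluing, together with compactness and a Noetherian argument on the splitting loci, should then yield the claimed globally-defined real analytic variety $X$.
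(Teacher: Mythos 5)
Your argument is correct and is essentially the paper's own proof: the paper simply observes that the exponential decay makes every $h_{ij}$ analytic (Proposition~\ref{PropAnalytic}) and then cites Wilcox~\cite{Wil} (see also~\cite[Lemma 2.2]{SabriY-23}), whose argument is exactly the discriminant-of-the-characteristic-polynomial construction you spell out, including the reduction to the squarefree part when the discriminant vanishes identically. Your write-up is just a self-contained version of that cited argument, so no substantive comparison is needed.
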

\begin{proof}
The assumptions imply that each matrix entry $h_{ij}$ is analytic, so the arguments of Wilcox~\cite{Wil} in the (more technical) framework of periodic Schr\"odinger operators in $\R^3$ apply directly to the present context. See~\cite[Lemma 2.2]{SabriY-23} for more details.
\end{proof}
\begin{rem}
    The $C^\infty$ setting is in general more subtle~\cite{AKML}, i.e.\  to transmit smoothness of the matrix into smoothness of the eigenvalues, even for $d=1$. Because of this, the extra regularity in Proposition~\ref{prp:cr} compared to Proposition~\ref{prp:matrilip} seems only useful if $\nu=1$.
\end{rem}

\section{Spectral types}

\subsection{Singular continuous spectrum}\label{sec:sinc}
It is known that for locally finite periodic graphs there is no singular continuous spectrum; see, e.g.,~\cite[Prp. 4.5]{HN}. Remarkably, this is no longer true in our context, even for $d=\nu=1$.

\begin{thm}[A graph with purely singular continuous
  spectrum]\label{thm:pitt}
  Let $\Gamma$ be the $\Z$-periodic graph $\Gamma$ with one-element fundamental cell and
  weights given by
  \[
    w(k)
    = \begin{cases}
      \frac{1}{4k\sqrt{\log_2(2k)}} & \text{if } k=2^{n-1}
      \text{ for some } n\ge 1\ ,\\
      0&\text{otherwise}
    \end{cases}
  \]
  for $k \geq 0$, and $w(-k):=w(k)$. 
  Then $\Adja_\Gamma$ and $\Lapl_\Gamma$ have purely singularly continuous spectrum.
\end{thm}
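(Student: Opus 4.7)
The plan is to reduce to a multiplication operator via Floquet and then analyze the pushforward measure. By Lemma~\ref{lem:Floquet_transform}, $\Adja_\Gamma$ is unitarily equivalent to multiplication by
\[
h(\theta) = \sum_{n=1}^\infty \frac{\cos(2\pi\cdot 2^{n-1}\theta)}{2^n\sqrt{n}}
\]
on $L^2(\T)$. Since $\nu=1$, the graph is regular with constant degree $r=\sum_k w(k)$, so $\Lapl_\Gamma = r\IdOp - \Adja_\Gamma$ shares the spectral type of $\Adja_\Gamma$. For any $\psi\in L^2(\T)$, the spectral measure of $M_h$ is $\mu_\psi = h_*(|\psi|^2\dd\theta) \ll \mu := h_*(\dd\theta)$, so the Lebesgue decomposition of $\mu$ dictates the spectral type of $M_h$: it suffices to prove that $\mu$ is purely singular continuous.

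Continuity of $\mu$ amounts to $|\{\theta : h(\theta)=c\}|=0$ for every $c\in\R$, which follows from a classical uniqueness theorem of Zygmund for Hadamard-lacunary trigonometric series: any such series vanishing on a set of positive Lebesgue measure must have all coefficients equal to zero. Since every $a_n:=(2^n\sqrt n)^{-1}$ is non-zero, $h-c$ cannot vanish on a positive-measure set. For the absolutely-continuous versus singular dichotomy, the asymptotic independence of the lacunary characters $\cos(2\pi\cdot 2^{n-1}\theta)$ under Lebesgue measure permits invoking a Jessen--Wintner-type purity law, so that $\mu$ is either purely absolutely continuous or purely singular. Combined with continuity, the proof reduces to excluding $\mu\ll\dd\lambda$.

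To rule out absolute continuity, I would use the Riemann--Lebesgue lemma contrapositively, exhibiting a subsequence $k_N\to\infty$ with $\widehat\mu(k_N)\not\to 0$. Asymptotic independence of the lacunary exponentials yields the approximate factorization
\[
\widehat\mu(k) = \int_0^1 \ee^{-2\pi\ii k h(\theta)}\,\dd\theta \;\approx\; \prod_{n\ge 1} J_0\!\left(\frac{2\pi k}{2^n\sqrt n}\right),
\]
where $J_0$ is the Bessel function of the first kind. Taking $k_N\sim 2^N$, the tail product $\prod_{n>N}J_0(\cdot)$ stays above a positive constant thanks to $J_0(x) = 1 - x^2/4 + O(x^4)$ and the bound $k_N^2\sum_{n>N}a_n^2 \lesssim 1/N$---here the $\sqrt{\log_2(2k)}$ correction built into $w$ is precisely the sharpening needed to keep this exponent uniformly bounded. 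The delicate step is controlling the head product $\prod_{n\le N}J_0(2\pi\cdot 2^{N-n}/\sqrt n)$, which requires choosing $k_N$ so that its arguments simultaneously land near extrema of the Bessel functions. Making this simultaneous alignment rigorous at the borderline Hadamard gap ratio $q=2$ is the principal obstacle; alternatively, one may directly appeal to H.\,R.\ Pitt's classical theorem on distributions of lacunary trigonometric series (from which the label of the theorem seemingly derives) to conclude singularity, then transfer the conclusion to $\Lapl_\Gamma$ via the identity $\Lapl_\Gamma = r\IdOp - \Adja_\Gamma$.
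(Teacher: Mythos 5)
Your reduction to the multiplication operator $\Mult_h$ and the observation that it suffices to show the occupation measure $\mu=h_*(\dd\theta)$ is purely singular continuous are both correct, and the transfer to $\Lapl_\Gamma$ via $\Lapl_\Gamma=r\IdOp-\Adja_\Gamma$ matches the paper. The continuity step via Zygmund's uniqueness theorem for lacunary series is a reasonable (if underspecified) route to $|h^{-1}(c)|=0$.

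The singularity argument, however, has a genuine gap, and I do not think it can be repaired along the lines you propose. The ``approximate factorization'' $\widehat\mu(k)\approx\prod_n J_0(2\pi k a_n)$ presupposes that the variables $\cos(2\pi\cdot 2^{n-1}\theta)$ behave like independent random variables under Lebesgue measure. At gap ratio $2$ they are about as far from independent as possible: the doubling identity $\cos(2\pi\cdot 2^{n}\theta)=2\cos^2(2\pi\cdot 2^{n-1}\theta)-1$ makes each term a deterministic function of its predecessor, so no Jessen--Wintner-type purity law is available and the product formula for the characteristic function fails. Moreover, even granting the factorization, the strategy cannot work: the head product $\prod_{n\le N}J_0\bigl(2\pi 2^{N-n}/\sqrt n\bigr)$ decays super-exponentially in $N$ because $|J_0(x)|=O(x^{-1/2})$ near \emph{every} extremum, so no choice of $k_N$ keeps it bounded below; and, more fundamentally, singular measures may well satisfy $\widehat\mu(k)\to0$ (Rajchman measures), so non-decay of $\widehat\mu$ is not a viable criterion for singularity. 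The paper closes the argument by an entirely different, soft route: it records (from Zygmund's book, p.~48) that $H(\theta)=\sum_n 2^{-n}n^{-1/2}\cos(2^n\pi\theta)$ lies in the small Zygmund class $\lambda_\ast$ while being differentiable only on a null set, and then invokes Theorem~1 of Anderson--Housworth--Pitt~\cite{AHP} (this is L.~D.~Pitt, on the spectral theory of multiplication operators by non-differentiable functions in the Zygmund class, not a classical theorem of H.~R.~Pitt on distributions of lacunary series), which states precisely that multiplication by such a function has purely singular continuous spectrum. That citation is the missing ingredient your proof needs; the $\sqrt{\log_2(2k)}$ factor in $w$ is there to place $H$ in $\lambda_\ast$, not to control a Bessel tail product.
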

This graph looks exactly like Figure~\ref{fig:weierstrass}, except that the weights decay slightly faster.

While Schr\"odinger operators with singular continuous spectra are quite abundant, constructions of free Laplacians with purely singular continuous spectra are rather limited~\cite{Si96,Bre07,BreF09}. In contrast to the graphs in these papers, our example is a ``dense'' graph, each vertex having infinitely many neighbours, and can in fact be made as dense as possible using Remark~\ref{rem:adw}. As this graph is regular, the theorem also holds for the Laplacian.

Quoting Simon~\cite{Si96}, our proof relies on a ``cheap trick'', which in our case combines Floquet theory with interesting results of Pitt et al. regarding the singular spectra of multiplication operators. Though the proof is a simple application of~\cite{AHP}, the conclusion is rather surprising for periodic graphs. Interestingly, that line of research originated while investigating Gaussian processes~\cite{Pi82}.
\begin{proof}
It suffices to prove the statement for $\Adja_\Gamma$ since $\Lapl_\Gamma$ only differs by a multiple of the identity and a minus sign.
Consider the following variant of the Weierstra\ss\ function,
\[
H(\theta): = \sum_{n=1}^\infty \frac{\cos(2^n\pi\theta)}{2^n\sqrt{n}}\,\ .
\]
It is shown in~\cite[p.~48]{Zyg} that $H$ belongs to the small Zygmund class $\lambda_\ast$ but is non-differentiable a.e. (in contrast to the Weierstrass function~\eqref{eq:wei}, it is still differentiable on an uncountable set of points). It follows from~\cite[Thm. 1]{AHP} that $\Mult_H$, the multiplication operator by $H(\cdot)$, has purely singular continuous spectrum. Since
\[
H(\theta) =2\sum_{n=1}^\infty\frac{1}{2^{n+1}\sqrt{n}}\cos(2\pi(2^{n-1}\theta))= 2\sum_{k=1}^\infty w(k)\cos(2\pi k\theta)
\]
then $H$ is the Floquet function of the graph in the statement and the claim follows.
\end{proof}
\begin{rem}\label{rem:adw}
Quite remarkably, this result remains true under $C^1$ perturbations, see~\cite[Thm. 1]{AHP}. In our context, this means that we can add further edges with fast enough decaying weights $\tilde{w}$, say $\tilde{w}(k)\lesssim |k|^{-2-\eps}$ for some $\eps > 0$, such that the resulting adjacency operator still has purely singular continuous spectrum.
\end{rem}
\subsection{A sufficient criterion for purely absolutely continuous spectrum}\label{sec:ac}
In the previous \S~\ref{sec:sinc} we saw that a periodic graph $\Gamma$ can have (purely) singular continuous spectrum. Also, in Proposition~\ref{PartlyFlatGeneral}, we saw that graphs can have flat bands. In this section, we explain how to avoid these scenarios. Let us first recall the Rademacher theorem~\cite[Thm 3.2]{EG} which we will use without reference here and in the next section: if $F:\R^n\to\R^m$ is locally Lipschitz continuous, then it is differentiable Lebesgue a.e.

We now give the following criterion ensuring a fully \emph{delocalized} regime.

\begin{thm}[Purely absolutely continuous spectrum]
    \label{thm:purely_absolutely_continuous_spectrum}
    Let $\Gamma$ be a crystal such that each eigenvalue function is Lipschitz continuous over $\Torus^d$. Suppose that for each $j$, we have $\nabla E_j\neq 0$ a.e. Then $\sigma(\Schr_{\Gamma})$ is purely absolutely continuous.

As a partial converse, if under the same assumptions we have $\nabla E_j = 0$ for some $j$ on an open subset $\emptyset\neq S \subset \Torus^d$, then $\Schr_{\Gamma}$ has an infinitely degenerate eigenvalue.
\end{thm}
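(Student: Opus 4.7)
My approach is to reduce the spectral question to a pointwise one on the Brillouin zone via Floquet theory, and then to exploit the coarea formula for Lipschitz functions. By Lemma~\ref{lem:Floquet_transform}, $\Schr_\Gamma$ is unitarily equivalent via $U$ to multiplication by the symmetric matrix $H(\theta)$ on $L^2(\Torus^d)^\nu$. Diagonalize $H(\theta)=V_\theta^\ast D(\theta)V_\theta$ with $D(\theta)=\diag(E_1(\theta),\dots,E_\nu(\theta))$; although $V_\theta$ need not be globally continuous, this is irrelevant because the Borel functional calculus $\varphi(H(\theta))$ is intrinsic. For $\psi\in\ell^2(V)$, setting $f:=U\psi$ and $g(\theta):=V_\theta f(\theta)$, one obtains, for every bounded Borel $\varphi\colon\R\to\R$,
\begin{equation*}
  \langle \psi,\varphi(\Schr_\Gamma)\psi\rangle
  =\sum_{j=1}^\nu\int_{\Torus^d}\varphi(E_j(\theta))\,|g_j(\theta)|^2\,\dd\theta,
\end{equation*}
so that the spectral measure $\mu_\psi$ is a sum of push-forwards $(E_j)_\ast(|g_j|^2\,\dd\theta)$. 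Pure absolute continuity of $\sigma(\Schr_\Gamma)$ will therefore follow once I show that, for each $j$, the preimage $E_j^{-1}(B)\subset\Torus^d$ has Lebesgue measure zero whenever $B\subset\R$ has Lebesgue measure zero.

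The core of the argument is then the coarea formula, which applies because $E_j\colon\Torus^d\to\R$ is Lipschitz (viewing it as a periodic Lipschitz function on $\R^d$ via a fundamental-domain chart). Taking $A=E_j^{-1}(B)$, it gives
\begin{equation*}
  \int_{E_j^{-1}(B)}|\nabla E_j(\theta)|\,\dd\theta
  =\int_B\mathcal H^{d-1}\bigl(E_j^{-1}(y)\bigr)\,\dd y = 0,
\end{equation*}
since $B$ is null in $\R$. Because $|\nabla E_j|>0$ almost everywhere on $\Torus^d$ by hypothesis, this identity forces $|E_j^{-1}(B)|=0$, which together with the preceding paragraph proves pure absolute continuity of $\sigma(\Schr_\Gamma)$.

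For the partial converse, assume $\nabla E_j\equiv 0$ on an open set $\emptyset\neq S\subset\Torus^d$. Pick any open ball $B(\theta_0,r)\subset S$: since Lipschitz functions are absolutely continuous along line segments, the vanishing of $\nabla E_j$ on $B(\theta_0,r)$ forces $E_j$ to take a constant value $\lambda_0$ there. Because $B(\theta_0,r)$ has positive Lebesgue measure, Proposition~\ref{PartlyFlatGeneral} yields at once that $\lambda_0$ is an eigenvalue of $\Schr_\Gamma$ of infinite multiplicity.

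The step I expect to be most delicate is the setup in the first paragraph: when eigenvalue multiplicities of $H(\theta)$ vary with $\theta$, there is in general no continuous (or even globally measurable) choice of $V_\theta$, as illustrated by the remark following Lemma~\ref{lem:eigenlip}. The clean remedy is to avoid choosing eigenvectors altogether and to rely only on the Borel functional calculus applied to the direct-integral decomposition of $M_H$ on $L^2(\Torus^d)^\nu$; once that is settled, the coarea step is essentially routine, and the partial converse is a short consequence of Proposition~\ref{PartlyFlatGeneral}.
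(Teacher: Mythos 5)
Your proposal is correct and follows essentially the same route as the paper: Floquet reduction to multiplication by $H(\theta)$, the coarea formula for the Lipschitz functions $E_j$ to establish that $|B|=0$ implies $|E_j^{-1}(B)|=0$ (the Luzin $N^{-1}$ property, which the paper's Remark~\ref{rem:Luzin} identifies as the key ingredient), and Proposition~\ref{PartlyFlatGeneral} for the partial converse. The only cosmetic difference is that the paper applies the coarea formula with $g=|\psi|^2$ to extract an explicit density $f_\psi(t)=\int_{E_j^{-1}(t)}|\psi|^2/|\nabla E_j|\,\dd H_{d-1}$ for the spectral measure, whereas you apply it to the indicator of $E_j^{-1}(B)$ and conclude absolute continuity of the push-forwards without writing down the density.
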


Note that Lipschitz continuity of the eigenvalue functions can for instance be ensured by assuming $\sum_{k \in I_{ij}} w_{ij}(k)|k|_2<\infty$, cf.~Section~\ref{subsec:regularity}. 

The assumptions of the theorem can be relaxed, see Remark~\ref{rem:Luzin}.
\begin{proof}
Recall that $\sigma(\Schr_{\Gamma}) = \bigcup_{j=1}^\nu \sigma_j$, with $\sigma_j=\ran E_j(\cdot)$. We are thus reduced to studying the spectrum of each multiplication operator $\Mult_{E_j}$ on $\Torus^d$. The first part of the theorem then follows from the fact that if $\nabla f\neq 0$ a.e., then the associated multiplication operator $\Mult_f$ has purely absolutely continuous spectrum. Let us give the details for the reader's convenience:

Observe that, by functional calculus, $g(\Mult_{E_j}) = \Mult_{g\circ E_j}$ whenever $g\circ E_j$ is defined. Hence, the spectral measure $\mu_\psi$ of $\Mult_{E_j}$ in the state $\psi$ is given by
\[
\mu_\psi(B) = \langle \psi,\one_B(\Mult_{E_j})\psi\rangle = \int_{\Torus^d} (\one_B\circ E_j)(\theta)|\psi(\theta)|^2\,\dd\theta =\int_{E_j^{-1}(B)}|\psi(\theta)|^2\,\dd \theta\,, 
\]
which is also known as the ``occupation measure'' in the context of stochastic processes. Now recall the \emph{coarea formula}, saying that for real-valued Lipschitz-continuous functions $f$ and non-negative measurable functions $g$, we have\footnote{Some authors state the result for integrable $g$. The case of non-negative $g$ on $\Torus^d$ can be deduced by considering $g_n = g \one_{\{0\le g\le n\}}$ and using the monotone convergence theorem (though not needed here, one can also consider $g\ge 0$ on $\R^d$ via $h_n=g_n\one_{B_n}$, where $B_n$ is the ball of radius $n$).}
\[
\int_{f^{-1}(B)}g(x)|\nabla f(x)|\,\dd x = \int_B\Big(\int_{f^{-1}(t)}g(x)\,\dd H_{d-1}(x)\Big)\,\dd t \,,
\]
where $H_{d-1}$ is the $(d-1)$-dimensional Hausdorff measure. This replaces the integral over an inverse image by the integral over the various $(d-1)$-dimensional level sets of $f$.

Hence, since by assumption $\nabla E_j\neq 0$ a.e., we get $\mu_\psi(B) = \int_Bf_\psi(t)\,\dd t$ for the density $f_\psi(t) = \int_{E_j^{-1}(t)}\frac{|\psi(\theta)|^2}{|\nabla E_j(\theta)|}\,\dd H_{d-1}(\theta)$. In particular, if $|B|=0$ then $\mu_\psi(B)=0$ and $\mu_\psi$ is absolutely continuous. Since this holds for any $\psi$ and any $j$, this proves the first part.

Lastly, suppose that $\nabla E_j = 0$ on an open subset $S \subset \Torus^d$. Then $E_j(\cdot)$ is locally constant on $S$, hence $\Schr_{\Gamma}$ has an infinitely degenerate eigenvalue by Proposition~\ref{PartlyFlatGeneral}.
\end{proof}

\begin{exa}
The graphs defined by~\eqref{eq:exa3} and~\eqref{eq:pecu} have purely absolutely continuous spectra. Here, the coarea formula is simply a change of variables: $\dd\mu_\psi(t)=\frac{|\psi(f^{-1}(t))|^2}{|f'(f^{-1}(t)|}\,\dd t$, for $f=a,b$. That is, for measurable $B$, one has $\mu_\psi(B) = \int_B \frac{|\psi(\frac{1}{2}-\sqrt{t})|^2 + |\psi(\frac{1}{2}+\sqrt{t})|^2}{2\sqrt{t}}\,\dd t$ and $\mu_\psi(B) = \int_B (|\psi(\frac{1}{2}-t)|^2 + |\psi(\frac{1}{2}+t)|^2)\,\dd t$, respectively.

In view of Theorem~\ref{thm:parfla} and Proposition~\ref{prp:abcvsabc}, this completes the proof of the spectral types stated in Theorem~\ref{thm:intro}.
\end{exa}

\begin{rem}\label{rem:Luzin}
The main ingredient in the proof of Theorem~\ref{thm:purely_absolutely_continuous_spectrum} is that each $f=E_j$ should satisfy that $|B|=0\implies |f^{-1}(B)|=0$. This is known as the \emph{Luzin property $N^{-1}$}. We have given a proof using the coarea formula as this also gives access to the spectral measure. However, this property holds beyond Lipschitz continuity. Namely, it follows from \cite{Pon} that if $f:\T^d\to \R$ is \emph{approximately differentiable a.e.}, then $f$ has the $N^{-1}$ property iff $\nabla f_{ap}\neq 0$ a.e. Note that any function of locally bounded variation is approximately differentiable a.e., and that this includes Sobolev functions $W^{1,p}$ for any $1\le p\le \infty$, see \cite[Thm 6.4]{EG}.

This shows that if $f=E_j$ have some regularity, the above criterion is essentially a characterization. However, the situation appears to be wilder for $f$ which are merely continuous. Namely, it is shown in \cite{KT16} that there exists a continuous $f:[0,1]\to [0,1]$ which has the $N^{-1}$ property but such that $f_{ap}'$ exists almost nowhere.

Let us also stress that, for $d=1$, the assumption $f'\neq 0$ a.e. cannot be replaced by monotonicity. One can construct a Lipschitz continuous, strictly increasing function $f$ which violates the $N^{-1}$ property. 
This does not contradict the theorem. It is known in general that even $C^1[0,1]$ functions $f$ can be strictly increasing yet have $\{x:f'(x)=0\}$ of positive Lebesgue measure.
\end{rem}

\begin{exa}
The fractional Laplacian has purely absolutely continuous spectrum for any $\alpha>0$. Here $\nu=1$, so $E_j(\theta)$ is just $h(\theta) = (\sum_{i=1}^d 4\sin^2 \pi\theta_i)^\alpha$. This function is of locally bounded variation and has $\nabla h(\theta)\neq 0$ for any $ \theta\in \T^d\setminus \{0,\frac{1}{2},1\}$, where $x=(x,\dots,x)$ for $x=0,\frac{1}{2},1$. Note that, for $d=1$, $h$ is not Lipschitz if $\alpha<\frac{1}{2}$. We have $\sigma((-\Delta)^\alpha)=\operatorname{Ran}(h)=[0,(4d)^\alpha]$.
\end{exa}

\section{Transport}\label{Sec:transport}

\subsection{Speed of motion}\label{sec:speed}
It is known that in great generality, the speed of motion associated with Schr\"odinger operators cannot be faster than ballistic. This goes back to works of Radin and Simon \cite{RS} in the continuum, and has since been extended to locally finite graphs (periodic or not) in \cite[App. A]{BdMS23}, and also to some long-range operators in one dimension \cite[Thm 1.2]{JL} assuming the edge-weights decay fast enough. There is a natural question of whether the same holds true in our context of periodic graphs with weights decaying only as elements of $\ell^1$. Remarkably, this is no longer the case: motion can be super-ballistic, and the example is not artificial: it is the fractional Laplacian $(-\Delta)^\alpha$ for $0<\alpha\le \frac{1}{4}$.

\subsubsection{Super-ballistic motion}\label{sec:subal}
Consider the fractional Laplacian for $d=1$ (compare with Section~\ref{sec:fractio}). In this case the Floquet function reduces to $h(\theta)=4^\alpha \sin^{2\alpha}\pi\theta$. 

\begin{prp}\label{prp:smalquart}
The fractional Laplacian $(-\Delta)^\alpha$ on $\Z$ exhibits super-ballistic motion for any $\alpha\le \frac{1}{4}$. More precisely, one has $\|x\ee^{\ii t(-\Delta)^\alpha}\delta_0\|=\infty$ for any $t\neq 0$, showing blow up in finite time.
\end{prp}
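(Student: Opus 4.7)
My plan is to push everything through the Fourier transform, since the fractional Laplacian has a one-element fundamental cell and so the Floquet transform reduces to the standard Fourier series, where computations become explicit.

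First, I would identify $\ell^2(\Z) \cong L^2(\T)$ via $U\psi(\theta) = \sum_k \ee^{2\pi\ii\theta k}\psi(k)$. Then $U\delta_0 \equiv 1$, and by Lemma~\ref{lem:Floquet_transform}, $U(-\Delta)^\alpha U^{-1} = \Mult_h$ with $h(\theta)=4^\alpha \sin^{2\alpha}(\pi\theta)$. Consequently
\[
U\bigl(\ee^{\ii t (-\Delta)^\alpha}\delta_0\bigr)(\theta)
= \ee^{\ii t h(\theta)}.
\]
The position operator $x\colon \psi(k)\mapsto k\psi(k)$ transforms to $\frac{1}{2\pi\ii}\partial_\theta$, so by Plancherel
\[
\bigl\|x\,\ee^{\ii t(-\Delta)^\alpha}\delta_0\bigr\|_{\ell^2(\Z)}^2
= \frac{1}{4\pi^2}\int_{\T}\bigl|\partial_\theta \ee^{\ii t h(\theta)}\bigr|^2\,\dd\theta
= \frac{t^2}{4\pi^2}\int_{\T} |h'(\theta)|^2\,\dd\theta.
\]

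Next I would differentiate explicitly: $h'(\theta) = 2\pi\alpha\,4^\alpha \sin^{2\alpha-1}(\pi\theta)\cos(\pi\theta)$. Using $\sin(\pi\theta)\sim \pi\theta$ as $\theta\to 0^+$ (and the analogous behaviour at $\theta=1$), we get $|h'(\theta)|^2 \asymp C_\alpha\,\theta^{4\alpha-2}$ near the singular points. For $\alpha\le \frac14$ we have $4\alpha-2\le -1$, so $\int_0^{1/2}\theta^{4\alpha-2}\,\dd\theta = \infty$, which gives $\int_\T |h'(\theta)|^2\,\dd\theta=\infty$. Combined with the identity above, this forces $\|x\,\ee^{\ii t(-\Delta)^\alpha}\delta_0\| = \infty$ for every $t\neq 0$.

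There is essentially no obstacle here: the proof is a direct Plancherel computation, and the only thing to be careful about is that $h$ is not smooth at $\theta\in\{0,1\}$ (nor at $\theta=1/2$ if $\alpha<1/2$) and that differentiation of the composite $\ee^{\ii t h}$ in the distributional sense is legitimate. Since $h$ is smooth away from $\{0,\frac12,1\}$ and $\ee^{\ii t h}\in L^2$, the distributional derivative of $\ee^{\ii t h}$ coincides with $\ii t h'\ee^{\ii t h}$ on $\T\setminus\{0,\frac12,1\}$, and the Plancherel identity $\|x\psi\|^2 = \frac{1}{4\pi^2}\|\partial_\theta U\psi\|^2$ is interpreted as $\|x\psi\|^2=\infty$ precisely when $\partial_\theta U\psi \notin L^2(\T)$, which is the case here.
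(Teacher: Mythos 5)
Your proposal is correct and follows essentially the same route as the paper: both pass to the Fourier side, observe that $\|x\psi_t\|<\infty$ would force $\ee^{\ii t h}\in H^1(\T)$ with weak derivative equal to the classical one $\ii t h'\ee^{\ii th}$ away from the singular set, and then rule this out because $|h'(\theta)|^2\asymp\theta^{4\alpha-2}$ is non-integrable near $\theta=0$ when $\alpha\le\frac14$. (Only a cosmetic slip: $h(\theta)=4^\alpha\sin^{2\alpha}(\pi\theta)$ is smooth at $\theta=\frac12$; the sole singular point on $\T$ is $\theta\equiv 0$, which does not affect the argument.)
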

Here, $(x\psi)(n):= n \psi(n)$ for $n\in \Z$, and $\|x\ee^{-\ii t H}\psi\|^2$ is commonly referred to as the \emph{mean squared displacement}; see Remark~\ref{rem:xpsi} for more comments on this quantity. 

Proposition~\ref{prp:smalquart} identifies a phase transition in the speed of motion of the fractional Laplacian: indeed, Example~\ref{eq:exabal} shows that motion is standard ballistic if $\alpha>\frac{1}{4}$.\footnote{The same value of $\alpha$ appears in the the context of \emph{rough paths}. It is known that the second moment of the L\'evy area of the $\varepsilon$-regularized fractional Brownian motion diverges as $\varepsilon\to 0$ if $\alpha \le \frac{1}{4}$, and converges if $\alpha>\frac{1}{4}$, see \cite{Unt}.} Curiously, this phenomenon only exists in $d=1$. For $d\ge 2$, as we show in Example~\ref{eq:exabal}, motion is always standard ballistic.

It is interesting to note that the fractional Laplacian was known to exhibit ``anomalous diffusion''. It seems this was first observed in \cite{Cha} in the continuum, see also \cite[eq. (29)]{PKL+} for results on the lattice. However, this referred to the behaviour of the heat kernel $\ee^{-t (-\Delta)^\alpha}\delta_0$. To our knowledge, the present results on the Schr\"odinger evolution $\ee^{\ii t (-\Delta)^\alpha}$ are new. 
\begin{proof}
Assume that $\beta:=2\alpha \le \frac{1}{2}$ and let $t\neq 0$. Let $\psi_t=\ee^{\ii t(-\Delta)^\alpha}\delta_0$ and suppose to the contrary that $\|x\psi_t\|<\infty$. Then $\sum_n |n|^2 |\psi_t(n)|^2<\infty$. But $\psi_t(n)= \langle \delta_n,\ee^{\ii t\Delta_\alpha}\delta_0\rangle = \langle \mathscr{F}^{-1}\delta_n,\mathscr{F}^{-1}\ee^{\ii t\Delta_\alpha}\delta_0\rangle = \int_0^1 \ee^{-2\pi\ii n\theta} \ee^{\ii th(\theta)}\,\dd\theta = \widehat{\phi_t}(n)$, where $\phi_t(\theta) = \ee^{\ii th(\theta)}$. Thus, we have $\sum_n |n|^2 |\widehat{\phi}_t(n)|^2<\infty$. This implies that $\phi_t\in H^1(\T)$, so it has a weak derivative, namely $(\partial_w\phi_t)(\theta)=2\pi\ii \sum_n  n \widehat{\phi_t}(n) \ee^{2\pi\ii n\theta}$, and by hypothesis $\partial_w\phi_t \in L^2(\T)$. Let us show that this cannot be true.

For $\theta\in (0,1)$, $\phi_t$ has a classical derivative 
\[
    \phi_t'(\theta) 
    = -\ii t \ee^{\ii t h(\theta)}h'(\theta) 
    = 
    \frac{-2^\beta\beta \pi\ii t \cos\pi\theta}{(\sin \pi\theta)^{1-\beta}}\ee^{\ii t h(\theta)}.
\]
Moreover, $\phi_t' \in L^1(\T)$. In fact, 
\[
    \int_0^1 \frac{|\cos \pi\theta|}{(\sin\pi\theta)^{1-\beta}}\,\dd\theta 
    = 
    2 \int_0^{1/2} \frac{\cos \pi\theta}{(\sin\pi\theta)^{1-\beta}}\,\dd\theta = \frac{2(\sin \pi \theta)^\beta}{\beta \pi}\bigg|_0^{1/2} = \frac{2}{\beta\pi}. 
\]    
    It follows that $\phi_t'=\partial_w \phi_t$, by uniqueness of the weak derivative. However, for some constants $c_\beta,C_\beta > 0$, 
\begin{align*}
    \|\phi_t'\|_{L^2}^2 
    &= 
    c_\beta t^2\int_0^1 \frac{\cos^2\pi\theta}{(\sin\pi\theta)^{2(1-\beta)}}\,\dd\theta 
    = 
    2c_\beta t^2\int_0^{1/2}\frac{\cos^2\pi\theta}{(\sin \pi\theta)^{2(1-\beta)}}\,\dd\theta
    \\
    &\geq 
    C_\beta t^2 \int_0^{1/4}\frac{1}{\theta^{2(1-\beta)}}\,\dd\theta
    =
    \infty, 
\end{align*}
    since $\beta\le \frac{1}{2}$, where we used that $\cos^2x = 1-\sin^2 x \ge 1-x^2\ge 1-\frac{\pi^2}{16}$ on $[0,\frac{\pi}{4}]$, and $\sin\pi\theta \le \pi\theta$.
This contradiction shows that $\|x \psi_t\|=\infty$ for any $t>0$.
\end{proof}

\subsubsection{Ballistic motion} We now show that for a large family of non-locally finite graphs, the speed of motion is partly ballistic. From \S\,\ref{sec:subal}, we see that we need a few (rather weak) additional assumptions for this.


Given $\psi\in \ell^2(V)\equiv \ell^2(\Z^d)^\nu$, we  define $x\psi$ by
\begin{equation}\label{eq:x}
(x\psi)(k) := (k_1\psi(k),\dots,k_d\psi(k))=k\psi(k)
\end{equation}
for $k\in \Z^d$, where each $k_i\psi(k)$ is a vector with $\nu$ entries $k_i\psi_p(k)$ for $p=1,\dots,\nu$.

We also introduce the function $\nu':\Torus^d \rightarrow \N$ counting the number of distinct eigenvalues of $H(\theta)$. This function is not constant in general, even in the analytic setting. Let $\mathcal{J}\subset \Torus^d$ be its points of discontinuity. Then over the open set $\Torus^d\setminus \overline{\mathcal{J}}$, the function $\nu'$ is locally constant, i.e.\  has a fixed value over each connected component of $\Torus^d\setminus \overline{\mathcal{J}}$.
\begin{thm}[Transport]\label{thm:bal}
Let $\Gamma$ be a crystal such that $\Schr_{\Gamma}$ has Lipschitz-continuous Floquet elements $h_{ij}$. 
Assume moreover that the closure of the jump set $\overline{\mathcal{J}}$ of $\nu'$ has Lebesgue measure zero. Then for any $\psi\in \ell^2(V)$ with $\|x\psi\|<\infty$, we have
\[
\lim_{t\to\infty} \frac{\|x\ee^{-\ii t \Schr_{\Gamma}}\psi\|^2}{t^2} =\frac{1}{4\pi^2} \int_{\Torus^d} \sum_{n=1}^\nu |\nabla_\theta E_n(\theta)|^2\| P_n(\theta)(U\psi)(\theta)\|_{\C^\nu}^2\,\dd\theta \,,
\]
where $P_n(\theta)=\langle\cdot , u_n(\theta)\rangle u_n(\theta)$ for an orthonormal eigenbasis $(u_n(\theta))_{n=1}^\nu$ of $H(\theta)$ corresponding to $E_n(\theta)$, and $|\nabla_\theta E_n(\theta)|^2 = \sum_{i=1}^d (\partial_{\theta_i} E_n(\theta))^2$.
\end{thm}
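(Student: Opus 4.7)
The plan is to pass to the Floquet side, where the position observable becomes a derivative, and then isolate the piece of $\partial_{\theta_i}\ee^{-\ii tH(\theta)}$ that grows linearly in $t$. From the definition~\eqref{eq:utile} one checks directly that $U(x\psi)(\theta)=\frac{1}{2\pi\ii}\nabla_\theta(U\psi)(\theta)$, so the hypothesis $\|x\psi\|<\infty$ is equivalent to $U\psi\in H^1(\Torus^d;\C^\nu)$, and unitarity of $U$ combined with $U\ee^{-\ii t\Schr_\Gamma}U^{-1}=\Mult_{\ee^{-\ii tH(\cdot)}}$ yields
\[
\|x\ee^{-\ii t\Schr_\Gamma}\psi\|^2=\frac{1}{4\pi^2}\sum_{i=1}^d\int_{\Torus^d}\bigl\|\partial_{\theta_i}\bigl(\ee^{-\ii tH(\theta)}(U\psi)(\theta)\bigr)\bigr\|_{\C^\nu}^2\,\dd\theta.
\]
The task is thus to identify the $t^{-2}$-limit of each integrand.

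I would next combine the Leibniz rule with the Duhamel identity
\[
\partial_{\theta_i}\ee^{-\ii tH(\theta)}=-\ii\int_0^t\ee^{-\ii(t-s)H(\theta)}\partial_{\theta_i}H(\theta)\ee^{-\ii sH(\theta)}\,\dd s,
\]
valid a.e. by Rademacher's theorem and the Lipschitz assumption on $h_{ij}$. On the full-measure set $\Omega:=\Torus^d\setminus\overline{\mathcal{J}}$ the eigenvalue multiplicities are locally constant, so Lemma~\ref{lem:eigenlip} and Rademacher give access to $\partial_{\theta_i}E_n(\theta)$ and to the eigenprojections $P_n(\theta)$ a.e. on $\Omega$, and I would decompose
\[
\partial_{\theta_i}H(\theta)=D_i(\theta)+N_i(\theta),\qquad D_i(\theta):=\sum_{n=1}^\nu\partial_{\theta_i}E_n(\theta)\,P_n(\theta),
\]
where $D_i$ commutes with $H(\theta)$, while the Hellmann--Feynman identity $\langle u_n,\partial_{\theta_i}Hu_n\rangle=\partial_{\theta_i}E_n$ gives $P_nN_iP_n=0$. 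The $D_i$-part of the Duhamel integral evaluates exactly to $-\ii t\,D_i(\theta)\ee^{-\ii tH(\theta)}$, while the $N_i$-part, in the eigenbasis, has matrix entries between $u_n,u_m$ with $n\neq m$ equal to $\ee^{-\ii tE_n}\langle u_n,N_iu_m\rangle\frac{\ee^{\ii t(E_n-E_m)}-1}{\ii(E_n-E_m)}$, which is bounded in $t$ at each fixed $\theta\in\Omega$ (pointwise, not uniformly).

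Combining these pieces, at a.e. $\theta$,
\[
\tfrac{1}{t}\partial_{\theta_i}\bigl(\ee^{-\ii tH}U\psi\bigr)(\theta)=-\ii D_i(\theta)\ee^{-\ii tH(\theta)}U\psi(\theta)+O(t^{-1}),
\]
and since $D_i$ commutes with $\ee^{-\ii tH}$ and the $P_n(\theta)$ are mutually orthogonal, one obtains the pointwise a.e. limit
\[
\tfrac{1}{t^2}\bigl\|\partial_{\theta_i}(\ee^{-\ii tH}U\psi)(\theta)\bigr\|_{\C^\nu}^2\To\sum_{n=1}^\nu\bigl(\partial_{\theta_i}E_n(\theta)\bigr)^2\|P_n(\theta)U\psi(\theta)\|_{\C^\nu}^2.
\]
To integrate this limit, I would use the crude Duhamel majorant $\|\partial_{\theta_i}(\ee^{-\ii tH}U\psi)(\theta)\|\le t\|\partial_{\theta_i}H(\theta)\|\|U\psi(\theta)\|+\|\partial_{\theta_i}U\psi(\theta)\|$, giving for $t\ge 1$
\[
\tfrac{1}{t^2}\bigl\|\partial_{\theta_i}(\ee^{-\ii tH}U\psi)(\theta)\bigr\|^2\le 2\|\partial_{\theta_i}H(\theta)\|^2\|U\psi(\theta)\|^2+2\|\partial_{\theta_i}U\psi(\theta)\|^2,
\]
whose right-hand side is in $L^1(\Torus^d)$ because $\|\partial_{\theta_i}H\|$ is bounded a.e. (Lipschitz hypothesis on $h_{ij}$) and $U\psi\in H^1$. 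Dominated convergence, followed by summation over $i$, delivers the stated identity. The main delicate point is controlling the off-diagonal remainder near the jump set $\overline{\mathcal{J}}$, where eigenvalues cross and the projections $P_n$ may be discontinuous; the assumption $|\overline{\mathcal{J}}|=0$ is precisely what is needed to discard this exceptional set when passing to the pointwise limit.
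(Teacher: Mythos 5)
Your proposal is correct in its main thrust but follows a genuinely different route from the paper. The paper never invokes Duhamel's formula: on the open set $\Torus^d\setminus\overline{\mathcal{J}}$ it writes $\ee^{-\ii tH(\theta)}(U\psi)(\theta)=\sum_{n=1}^{\nu'}\ee^{-\ii t\lambda_n(\theta)}P_{\gamma_n}(\theta)(U\psi)(\theta)$ using the Riesz (contour) projections onto the \emph{distinct} eigenvalues, differentiates this product directly, absorbs $\nabla_\theta[P_{\gamma_n}(\theta)(U\psi)(\theta)]$ into an $O_\theta(1)$ remainder via the Lipschitz continuity of $P_{\gamma_n}$ from Lemma~\ref{lem:eigenlip}\eqref{eigenlip.c}, and then re-indexes the sum over distinct eigenvalues as a sum over all $\nu$ bands. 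Your Duhamel approach makes the origin of the linear growth (the part of $\partial_{\theta_i}H$ commuting with $H$) transparent and supplies a cleaner, fully explicit dominating function $2\|\partial_{\theta_i}H\|^2\|U\psi\|^2+2\|\partial_{\theta_i}(U\psi)\|^2$, where the paper's appeal to dominated convergence is terser.

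One step needs repair. You set $N_i=\partial_{\theta_i}H-\sum_n\partial_{\theta_i}E_n\,P_n$ with \emph{rank-one} projections $P_n$ and use Hellmann--Feynman only to kill the diagonal entries $\langle u_n,N_iu_n\rangle$. But if $E_n(\theta)=E_m(\theta)$ with $n\neq m$ (a degenerate band, which genuinely occurs for $\nu\ge 2$), the corresponding Duhamel contribution is $\ee^{-\ii tE_n}\langle u_n,N_iu_m\rangle\,t$ rather than the bounded difference quotient you wrote, so the claim that the $N_i$-part is $O(1)$ fails as stated. The fix is the block version of Hellmann--Feynman: on $\Omega$ the total projection $P_\gamma$ onto a degenerate eigenvalue is Lipschitz (Lemma~\ref{lem:eigenlip}\eqref{eigenlip.c}), hence differentiable a.e.; differentiating $HP_\gamma=EP_\gamma$ and sandwiching with $P_\gamma$ gives $P_\gamma(\partial_{\theta_i}H)P_\gamma=(\partial_{\theta_i}E)P_\gamma$ a.e., so the entire degenerate block of $\partial_{\theta_i}H$ is scalar and $\langle u_n,N_iu_m\rangle=0$ whenever $E_n=E_m$. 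With that insertion your argument closes and yields the stated limit.
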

This limit is finite under the assumptions, which imply that $E_n$ is Lipschitz continuous by Lemma~\ref{lem:eigenlip}. Also recall that one way to ensure Lipschitz continuity of the entries $h_{ij}$ of the Floquet matrix is to assume $\sum_k w_{ij}(k)|k|_2<\infty$ for all $i,j$, cf. Proposition~\ref{prp:matrilip}.

\begin{rem}\label{rem:xpsi}
Whenever the right-hand side of the above limit is non-zero, the transport is said to be \emph{ballistic}. Let us explain the statement for the reader who may not be familiar with such estimates. One usually chooses $\psi$ of compact support. In a regime of localization, $\ee^{-\ii t \Schr_\Gamma}\psi$ stays essentially in a compact set, so even after multiplying it by a diverging factor $x$, the norm remains finite for all times and $\frac{\|x\ee^{-\ii t\Schr_\Gamma}\psi\|^2}{t^2}\to 0$. By contrast, in a regime of delocalization, one expects $\ee^{-\ii t\Schr_\Gamma} \psi$ to spread out with time, so multiplying it by $x$ should make its norm grow with time. The quantity $\|k_1 \ee^{-\ii t\Schr_\Gamma}\psi\|^2 = \sum_{k\in \Z^d} \sum_{p=1}^\nu |k_1|^2 |(\ee^{-\ii t\Schr_\Gamma}\psi)_p(k)|^2$ measures the squared displacement in the $1$st cardinal direction. The sum $\|x\ee^{-\ii t\Schr_\Gamma}\psi\|^2 = \sum_{i=1}^d \|k_i \ee^{-\ii t\Schr_\Gamma}\psi\|^2$ of these contributions is known as the mean squared displacement. The limit on the LHS is viewed as a mean asymptotic speed. If we want to focus on one direction, the proof shows that we can replace $x$ by $k_j$ in the LHS and get $\partial_{\theta_j}$ in the RHS instead of $\nabla_\theta$.

We refer the reader to~\cite{DMY} for more background and~\cite{BDMY} for a recent result.\footnote{Studying $\|x\ee^{-\ii t \cH_\Gamma}\|$ is standard for graphs embedded in Euclidean space. For locally-finite graphs in general, it makes sense to study $\|d(\cdot,o)\ee^{-\ii t \cH_\Gamma}\|$ instead, where $d(x,o)$ is the graph distance from a fixed origin $o$. In our framework, however, this not appropriate because there are many graphs for which $o\sim x$ for all $x\in \Gamma$, so that the factor $d(o,x)\equiv 1$ is not divergent and does not measure any spreading.}
\end{rem}

\begin{rem}
    When all $h_{ij}$ are analytic, $\mathcal{J}$ is a subvariety of $\Torus^d$ of dimension $\le d-1$, see~\cite{Wil}, in particular $|\overline{\mathcal{J}}|=0$. But this assumption seems to be satisfied in much greater generality; in fact, we are not aware of any concrete graph violating it.
\end{rem}

%
\begin{proof}
  The proof of Theorem~\ref{thm:bal} is similar to~\cite[Thm
  3.3]{BdMS23} except that we need further justifications
  here. By~\eqref{eq:utile},
  \[
    \partial_{\theta_i} (U\psi)_p(\theta)
    = \sum_{k\in \Z^d}\ee^{2\pi\ii \theta\cdot k}\ 2\pi\ii k_i \ \psi_p(k)
  \]
  and so
  \[
    \frac{1}{2\pi \ii} \nabla_\theta (U\psi)_p(\theta)
    = \sum_{k\in\Z^d} \ee^{2\pi\ii \theta\cdot k} (k_1\psi_p(k),\dots,k_d\psi_p(k))
    = (Ux\psi)_p(\theta).
  \]
  It follows that
\[
(Uxe^{-\ii t\Schr_{\Gamma}}\psi)(\theta) = \frac{1}{2\pi\ii} \nabla_\theta\ee^{-\ii t H(\theta)}(U\psi)(\theta)\, .
\]
Now we expand $H(\theta) = \sum_{n=1}^{\nu'} \lambda_n(\theta) P_{\gamma_n}(\theta)$, where $\lambda_n(\theta)$ are the distinct eigenvalues of $H(\theta)$ and $\gamma_n$ is a small countour around $\lambda_n$ containing no other eigenvalue, cf. Lemma~\ref{lem:eigenlip}. Observe that if we arrange the eigenvalues as $E_1(\theta)\le \dots\le E_\nu(\theta)$ counting multiplicity, we can choose $\lambda_n(\theta)=E_{k_n}(\theta)$ to be for example the first eigenvalue function in the enumeration equal to $\lambda_n(\theta)$. We now assume $\theta$ is in the open set $\Torus^d\setminus\overline{\mathcal{J}}$, so that in a small neighbourhood $O$ of $\theta$, no splitting or merging of eigenvalues occurs and $\nu'(\theta)=\nu'$ is fixed.

In view of Lemma~\ref{lem:eigenlip}, we can differentiate over $O$ to obtain
\begin{align}\label{e:nabco}
\nabla_\theta \ee^{-\ii tH(\theta)}(U\psi)(\theta) &= \sum_{n=1}^{\nu'} \nabla_\theta[\ee^{-\ii tE_{k_n}(\theta)}P_{\gamma_n}(\theta)(U\psi)(\theta)] \nonumber \\
&= \sum_{n=1}^{\nu'} \ee^{-\ii tE_{k_n}(\theta)}(-\ii t\nabla_\theta E_{k_n}(\theta))P_{\gamma_n}(\theta)(U\psi)(\theta) + O_{\theta}(1)
\end{align}
where $O_\theta(1)= \sum_{n=1}^{\nu'} \ee^{-\ii tE_{k_n}(\theta)} \nabla_\theta [P_{\gamma_n}(\theta)(U\psi)(\theta)]$. As $\theta\mapsto P_{\gamma_n}(\theta)$ is Lipschitz-continuous, its gradient as a matrix valued function exists a.e. Also, $\nabla_\theta(U\psi)(\theta)=(Ux\psi)(\theta)$, so $\int \|\nabla_\theta (U\psi)(\theta)\|^2_{\C^\nu}<\infty$ by assumption. We thus showed that for $
\theta\in O$
\[
\lim_{t\to\infty} \frac{(Uxe^{-\ii t\Schr_{\Gamma}}\psi)(\theta)}{t} = - \frac{1}{2\pi} \sum_{n=1}^{\nu'} \ee^{-\ii tE_{k_n}(\theta)}(\nabla_\theta E_{k_n}(\theta))P_{\gamma_n}(\theta)(U\psi)(\theta)\,.
\]
Recall that $\gamma_n$ is a small contour around $\lambda_n(\theta)$, which, for $\theta \in O$, only encloses the eigenvalue $E_{k_n}(\theta)$ the multiplicity  of which remains fixed over $O$. 
In particular $E_{k_n}(\theta) = E_{k_n+1}(\theta)=\dots=E_{k_n+m-1}(\theta)$ and $P_{\gamma_n}(\theta) = \sum_{E_k(\theta)=\lambda_n(\theta)}P_{E_k}(\theta)= \sum_{j=0}^{m-1} P_{E_{k_n+j}}(\theta)$ which yields 
\[
	\sum_{n=1}^{\nu'} f(E_{k_n}(\theta))P_{\gamma_n}(\theta) = \sum_n\sum_j f(E_{k_n+j}(\theta))P_{E_{k_{n+j}}}(\theta)=\sum_{k=1}^\nu f(E_k(\theta))P_{E_k}(\theta)
\]
Therefore
\begin{equation}\label{e:balpoi}
\lim_{t\to\infty} \frac{(Uxe^{-\ii t\Schr_{\Gamma}}\psi)(\theta)}{t} = - \frac{1}{2\pi}  \sum_{k=1}^{\nu} \ee^{-\ii tE_{k}(\theta)}(\nabla_\theta E_{k}(\theta))P_{E_k}(\theta)(U\psi)(\theta)
\end{equation}
for $\theta\in O$. On the other hand, since $\overline{\mathcal{J}}$ has measure zero, we have
\[
\lim_{t\to\infty} \frac{\|x\ee^{-\ii t\Schr_{\Gamma}}\psi\|^2}{t^2}=\lim_{t\to\infty} \frac{\|Ux\ee^{-\ii t\Schr_{\Gamma}}\psi\|^2}{t^2} = \lim_{t\to\infty} \int_{\Torus^d\setminus \overline{\mathcal{J}}} \frac{\|(Ux\ee^{-\ii t\Schr_{\Gamma}}\psi)(\theta)\|^2_{\C^\nu}}{t^2}\,\dd\theta.
\]
The statement thus follows from \eqref{e:balpoi} and the orthogonality of the $P_{E_k}$, using dominated convergence, which is applicable as the underlying functions in \eqref{e:nabco} are Lipschitz.
\end{proof}

\begin{defa}
We define the \emph{layers} of a crystal $\Gamma$ as the sets $L_i = \{v_i+k_\fa : k\in \Z^d\}$.
\end{defa}

This definition is motivated by the graphs in Figure~\ref{fig:3simp}, each consisting of three (horizontal) layers.

\begin{cor}\label{cor:balayer}
  Under the assumptions of Theorem~\ref{thm:bal}, the operator $\Schr_{\Gamma}$
  exhibits ballistic transport along at least one layer. 
  More precisely, there exists $i \in \{1,\dots, \nu\}$ such that
  $\lim_{t\to\infty} \frac{\|x\ee^{-\ii t \Schr_\Gamma} \delta_w \|^2}{t^2} \neq 0$
  for any $w \in L_i = \{v_i+k_\fa:k\in \Z^d\}$.
\end{cor}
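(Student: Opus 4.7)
The strategy is to apply Theorem~\ref{thm:bal} to $\psi=\delta_w$ for $w\in L_i$, observe that the resulting asymptotic speed depends only on $i$ and not on the specific choice of $w$ within $L_i$, then use Theorem~\ref{thm:top} to rule out the degenerate case where this speed vanishes for every layer.

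Let $w=v_i+k_\fa\in L_i$ for some $k\in\Z^d$. Under the identification $\ell^2(V)\cong\ell^2(\Z^d)^\nu$, the vector $\delta_w$ corresponds to the sequence $(\delta_w)_j(k')=\delta_{ji}\delta_{kk'}$, hence by~\eqref{eq:utile},
\[
(U\delta_w)(\theta)=\ee^{2\pi\ii\theta\cdot k}\, e_i,
\]
where $e_i$ is the $i$-th standard basis vector of $\C^\nu$. In particular $\|x\delta_w\|<\infty$ and, writing $u_n(\theta)=(u_n(\theta)_1,\dots,u_n(\theta)_\nu)$, we have
\[
\|P_n(\theta)(U\delta_w)(\theta)\|_{\C^\nu}^2=|\langle e_i,u_n(\theta)\rangle|^2=|u_n(\theta)_i|^2,
\]
which does not depend on $k$. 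Inserting this in Theorem~\ref{thm:bal} gives
\[
v_i^2:=\lim_{t\to\infty}\frac{\|x\ee^{-\ii t\Schr_\Gamma}\delta_w\|^2}{t^2}=\frac{1}{4\pi^2}\int_{\Torus^d}\sum_{n=1}^\nu|\nabla_\theta E_n(\theta)|^2|u_n(\theta)_i|^2\,\dd\theta,
\]
and the right-hand side depends on $w$ only through the layer index $i$.

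It remains to show that $v_i^2>0$ for at least one $i\in\{1,\dots,\nu\}$. Summing over $i$ and using that $(u_n(\theta))_{n=1}^\nu$ is an orthonormal basis of $\C^\nu$ (so that $\sum_{i=1}^\nu |u_n(\theta)_i|^2=\|u_n(\theta)\|^2=1$), we obtain
\[
\sum_{i=1}^\nu v_i^2=\frac{1}{4\pi^2}\int_{\Torus^d}\sum_{n=1}^\nu|\nabla_\theta E_n(\theta)|^2\,\dd\theta.
\]
If this were zero, then for each $n$ we would have $\nabla_\theta E_n(\theta)=0$ for a.e.\ $\theta\in\Torus^d$. Since each $E_n$ is Lipschitz continuous on the connected set $\Torus^d$ (Lemma~\ref{lem:eigenlip}), this would force $E_n$ to be constant. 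Consequently, via Lemma~\ref{lem:Floquet_transform}, $\Schr_\Gamma$ would be unitarily equivalent to multiplication by a matrix function whose spectrum at each $\theta$ is the finite set $\{E_1,\dots,E_\nu\}$, making $\max\sigma(\Schr_\Gamma)=\max_n E_n$ an eigenvalue of infinite multiplicity by Proposition~\ref{PartlyFlatGeneral}. This contradicts Theorem~\ref{thm:top}. Hence $v_i^2>0$ for at least one index $i$, which yields the claim.
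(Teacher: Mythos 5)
Your proof is correct and follows essentially the same route as the paper's: compute $\|P_n(\theta)(U\delta_w)(\theta)\|^2=|u_n(\theta)_i|^2$, sum over the layers, use orthonormality to reduce to $\frac{1}{4\pi^2}\int_{\T^d}\sum_n|\nabla_\theta E_n|^2\,\dd\theta$, and rule out vanishing via Lipschitz continuity and Theorem~\ref{thm:top}. The only cosmetic difference is that you pass through Proposition~\ref{PartlyFlatGeneral} to phrase the contradiction as an eigenvalue at the top of the spectrum, whereas the paper invokes directly that the top band cannot be entirely flat; both are immediate consequences of Theorem~\ref{thm:top}.
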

\begin{proof}
Let $w=w_i=v_i+k_\fa$. By the definition of the Floquet transform \eqref{eq:utile}, we have $(U\delta_w)(\theta) = \delta_i \ee^{2\pi\ii \theta\cdot k}$. 
In particular, for all $n \in \{1, \dots, \nu \}$, we have $P_n(\theta)(U\delta_w)(\theta) = \ee^{2\pi\ii \theta\cdot k}\langle \delta_i,u^{(n)}(\theta)\rangle u^{(n)}(\theta)$ and $\|P_n(\theta)(U\delta_w)(\theta)\|^2 = |u_i^{(n)}(\theta)|^2$, independently of $k$, where $P_n(\theta)$ are the orthogonal projections corresponding to the eigenvalues $E_n(\theta)$, cf. Theorem~\ref{thm:bal}. It follows that for all $k \in\Z^d$,
\begin{equation}\label{eq:sumlim}
	\sum_{i=1}^\nu \lim_{n\to \infty} \frac{\|x\ee^{-\ii t\Schr_\Gamma}\delta_{w_i}\|^2}{t^2} 
	= 
	\frac{1}{4\pi^2}\int_{\Torus^d}\sum_{n=1}^\nu |\nabla_\theta E_n(\theta)|^2\sum_{i=1}^\nu |u_i^{(n)}(\theta)|^2\,\dd\theta\,,
\end{equation}
where we used Tonelli's theorem to permute the sums. Since $\sum_{i=1}^\nu |u_i^{(n)}(\theta)|^2 = \|u^{(n)}(\theta)\|^2=1$, the RHS reduces to $\frac{1}{4\pi^2}\int_{\Torus^d}\sum_{n=1}^\nu |\nabla_\theta E_n(\theta)|^2\,\dd\theta$. This quantity cannot be zero. In fact, if it were, we would have $|\nabla_\theta E_n(\theta)|^2 = 0$ for all $n$ and a.e. $\theta$. Since $E_n$ is Lipschitz, this would mean that for all $n$, the function $E_n(\theta)$ is constant.\footnote{Indeed, this follows directly from the fundamental theorem of calculus as given in~\cite[Thm 7.18]{Rud} for absolutely continuous functions, if $d=1$. For higher $d$, one can see this by convolving with mollifiers $(\phi_\eps)$. That is, if $F$ is Lipschitz, then $F\in W^{1,\infty}$ by~\cite[Thm 4.5]{EG}, hence $\nabla_\theta (F\ast \phi_\eps) = (\nabla_\theta F)\ast \phi_\eps=0$ a.e. if $\nabla_\theta F=0$ a.e. As $F\ast \phi_\eps$ is smooth, this implies $F\ast\phi_\eps$ is a constant $c_\eps$. As $F\ast \phi_\eps \to F$, this implies $F$ is also constant. It is crucial here that $F$ be Lipschitz, the Cantor function being a counterexample otherwise.} But this is impossible in view of Theorem~\ref{thm:top}, as the top band cannot be fully flat.

We have shown the sum of limits~\eqref{eq:sumlim} cannot vanish, so at least one limit is non-zero. This completes the proof.
\end{proof}

\begin{cor}\label{cor:balnu1}
If the fundamental cell contains only one vertex, $\nu=1$, and the Floquet function $H(\cdot)=h(\cdot)$ defining $\Gamma$ is Lipschitz continuous, then for any $\psi\in \ell^2(V)$ with $\|x\psi\|<\infty$, we have
\[
\lim_{t\to\infty} \frac{\|x\ee^{-\ii t \Schr_{\Gamma}}\psi\|^2}{t^2} =\frac{1}{4\pi^2} \int_{\Torus^d} |\nabla_\theta h(\theta)|^2 |\Ftrafo{\psi}(\theta)|^2\,\dd\theta \, .
\]
If $\psi = \delta_w$ for some $w\in \Gamma$, then $|\Ftrafo{\psi}(\theta)|^2=1$ and the limit is non-zero. In this case, it suffices that $H=h\in W^{1,2}(\T^d)$ instead of Lipschitz.
\end{cor}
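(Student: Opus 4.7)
The plan is to derive the formula as a direct specialization of Theorem~\ref{thm:bal} to the case $\nu=1$, and then to revisit its proof separately in order to relax the regularity assumption when $\psi=\delta_w$.

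First, I would observe that when $\nu=1$, the Floquet matrix $H(\theta)$ reduces to the scalar $h(\theta)$, so the only eigenvalue function is $E_1(\theta)=h(\theta)$, the projection $P_1(\theta)$ is the identity on $\C$, and $\nu'(\theta)\equiv 1$ so the jump set $\mathcal{J}$ is empty. Since $h$ is itself the unique Floquet matrix element, Lipschitz continuity of $h$ is exactly the hypothesis of Theorem~\ref{thm:bal}. Substituting $\nu=\nu'=1$, $E_1=h$, $P_1=1$ and $(U\psi)(\theta)=\Ftrafo{\psi}(\theta)$ into the conclusion of that theorem yields precisely the displayed formula.

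For $\psi=\delta_w$ with $w=k_\fa$, a direct computation from the definition of $U$ gives $(U\delta_w)(\theta)=\ee^{2\pi\ii\theta\cdot k}$, so $|\Ftrafo{\psi}(\theta)|^2=1$ and the limit reduces to $\frac{1}{4\pi^2}\int_{\Torus^d}|\nabla h(\theta)|^2\,\dd\theta$. To argue this is non-zero, I would reason by contradiction: if it vanished, then $\nabla h=0$ a.e.; since $h$ is Lipschitz (hence in $W^{1,\infty}(\Torus^d)$), this would force $h$ to be a.e. constant, contradicting Theorem~\ref{thm:top}, which asserts that the top spectral band cannot be entirely flat.

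To relax the assumption to $h\in W^{1,2}(\Torus^d)$ when $\psi=\delta_w$, I would rerun the proof of Theorem~\ref{thm:bal} directly, since the simplifications brought by $\nu=1$ remove any appeal to Lemma~\ref{lem:eigenlip}. The only regularity fact needed is the weak chain rule
\[
\nabla_\theta\bigl(\ee^{-\ii t h(\theta)}\bigr)=-\ii t\,\ee^{-\ii t h(\theta)}\nabla h(\theta),
\]
valid for $h\in W^{1,2}$ because the map $z\mapsto\ee^{-\ii t z}$ is smooth and bounded. Combined with $\nabla_\theta\ee^{2\pi\ii\theta\cdot k}=2\pi\ii k\,\ee^{2\pi\ii\theta\cdot k}$ this yields
\[
(Ux\ee^{-\ii t\Schr_\Gamma}\delta_w)(\theta)
=\frac{1}{2\pi\ii}\nabla_\theta\bigl[\ee^{-\ii t h(\theta)}\ee^{2\pi\ii\theta\cdot k}\bigr]
=\Bigl(\tfrac{-t}{2\pi}\nabla h(\theta)+k\Bigr)\ee^{-\ii t h(\theta)}\ee^{2\pi\ii\theta\cdot k},
\]
so $\frac{1}{t^2}|(Ux\ee^{-\ii t\Schr_\Gamma}\delta_w)(\theta)|^2\to\frac{1}{4\pi^2}|\nabla h(\theta)|^2$ pointwise and is dominated by the $L^1$ function $\tfrac{1}{2\pi^2}|\nabla h|^2+2|k|^2$. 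Dominated convergence together with the unitarity of $U$ then concludes, and non-vanishing follows as before. The main delicate point will be justifying the weak chain rule above; everything else is bookkeeping once Theorem~\ref{thm:bal} is in hand.
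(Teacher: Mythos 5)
Your proposal is correct and follows essentially the same route as the paper: specialize Theorem~\ref{thm:bal} using $\nu'=\nu=1$, empty jump set, $P_1=\mathrm{Id}$ and $U=$ Fourier transform, then rerun the argument for $\psi=\delta_w$ to see that only $\nabla h\in L^2$ is needed. You supply details the paper leaves implicit (the weak chain rule for $h\in W^{1,2}$, the explicit dominating function, and the non-vanishing via Theorem~\ref{thm:top}, which is exactly the mechanism used in Corollary~\ref{cor:balayer}), and these are all sound.
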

\begin{proof}
This follows immediately from the previous results, here there is no jump set since there is a single eigenvalue, i.e.\ $\nu'(\theta)=\nu=1$ $\forall \theta$. Also, $P_n(\theta)=\operatorname{Id}$ and $U$ reduces to the Fourier transform. In case $\psi=\delta_w$, $\widehat{\psi}(\theta)=\ee^{2\pi \ii \theta \cdot \lfloor w\rfloor_\fa}$, and the same proof continues to hold as long as $\nabla h\in L^2$.
\end{proof}

\begin{exa}\label{eq:exabal}
For simplicity we consider examples with $\nu=d=1$.
\begin{enumerate}[(a)]
\item Recall the graph defined by the Floquet function~\eqref{eq:parfla} which exhibited a partially flat band. We obtain $\lim_{t\to\infty} \frac{\|x\ee^{-\ii t\Schr_\Gamma}\psi\|^2}{t^2}=\frac{1}{4\pi^2}\int_{[0,\frac14]\cup[\frac34,1]} |\Ftrafo{\psi}(\theta)|^2\,\dd\theta$. If $\Ftrafo{\psi}$ is supported in $[\frac14,\frac34]$, this limit is zero. This is not surprising as such $\psi$ are eigenvectors, for $\Mult_h\Ftrafo{\psi} = 0$ and thus $\Schr_\Gamma \psi = \Schr_\Gamma U^{-1}\Ftrafo{\psi} = U^{-1} \Mult_h\Ftrafo{\psi}=0$. More generally, if $\psi$ is some eigenvector for an eigenvalue $\lambda$, then $\|x \ee^{-\ii t\Schr_\Gamma}\psi\|^2= \|x\ee^{-\ii t \lambda}\psi\|^2= \|x\psi\|^2$ is independent of time so the limit will always be zero.

In contrast, if we take $\psi=\delta_n$ for some $n$, the limit is $\frac{1}{8\pi^2}\neq 0$, meaning that part of the mass of $\delta_n$ moves at ballistic speed.
\item For the graph defined by the Floquet function~\eqref{eq:pecu}, we obtain $\lim_{t\to\infty} \frac{\|x\ee^{-\ii t\Schr_\Gamma}\psi\|^2}{t^2}=\frac{1}{4\pi^2}\int_{0}^1 |\Ftrafo{\psi}(\theta)|^2\,\dd\theta = \frac{1}{4\pi^2} \|\psi\|^2$. The limit is non-zero for every nontrivial $\psi$ such that $\|x\psi\|<\infty$. So motion is ballistic over the whole domain of $x$.
\item For the graph defined by the Floquet function~\eqref{eq:exa3}, the limit is $\frac{1}{2\pi^2}\int_0^1 |\theta-\frac{1}{2}||\Ftrafo{\psi}(\theta)|^2\,\dd\theta$, which is again non-zero for any nontrivial $\psi$. In fact, if the integral was zero, we would have $\Ftrafo{\psi}(\theta)=0$ a.e., so $\Ftrafo{\psi}=0$ and thus $\psi=0$, a contradiction.
\item Consider the fractional Laplacian $(-\Delta)^\alpha$, with $h(\theta) = (\sum_{i=1}^d 4\sin^2 \pi\theta_i)^\alpha$. It satisfies 
\[
    \partial_j h(\theta) = \frac{4^\alpha (2\pi \sin \pi\theta_j \cos \pi\theta_j)}{(\sum_{i=1}^d \sin^2\pi\theta_i)^{1-\alpha}}.
\] 
Since $|\partial_j h(\theta)|^2$ is unchanged under the substitution $\theta_i \mapsto 1-\theta_i$, in order to show that $\int_{\T^d} |\partial_j h(\theta)|^2 \dd\theta<\infty$, it is enough to show that $\int_{\Omega} |\partial_j h(\theta)|^2\dd\theta<\infty$, where $\Omega=[0,\frac{1}{2}]^d$ (e.g. in $d=1$ we have $\int_{\T}|h'|^2 = 2 \int_{\Omega}|h'|^2$).

As $\sin x\le x$ and $\sin x\ge \frac{2x}{\pi}$ for $x\in (0,\frac{\pi}{2})$, as follows from the concavity of the sine, we get 
\[
    \int_\Omega |\partial_j h(\theta)|^2 \le C \int_{\Omega}\frac{\theta_j^2}{(\sum_{i=1}^d \theta_i^2)^{2-2\alpha}}
\]
for $C > 0$. This integral may be bounded by the integral over a ball of radius $r_0$, yielding 
\[
    \int_0^{r_0} \dd r\int_0^{2\pi}\dd \varphi \int_0^\pi \dd\vartheta_1\cdots\int_0^\pi \dd\vartheta_d \frac{r^2 f_j(\vartheta)}{(r^2)^{2-2\alpha}} r^{d-1} g(\vartheta)
\]
in spherical coordinates, where $f_j(\vartheta)$ and $g(\vartheta)$ are products of cosine and sine functions, with $|f_j(\vartheta)|\le 1$ and $|g(\vartheta)|\le 1$. Hence, the integral is bounded by some $C \int_0^{r_0} r^{d+1-4+4\alpha}\dd r$. This converges as long as $4\alpha+d-3>-1$, meaning $\alpha> \frac{2-d}{4}$. In this case $h\in W^{1,2}(\T^d)$ and motion is ballistic by Corollary~\ref{cor:balnu1}.

For $d=1$, this imposes the condition $\alpha>\frac{1}{4}$. This condition is sharp, as Proposition~\ref{prp:smalquart} shows that motion becomes super-ballistic for $\alpha \le \frac{1}{4}$.

For $d\ge 2$, the condition $\alpha>\frac{2-d}{4}$ is automatically satisfied, since $\alpha>0$.

Conclusion: For $d=1$, motion changes from ballistic for $\alpha>\frac{1}{4}$ to super-ballistic if $\alpha\le \frac{1}{4}$. For $d\ge 2$, no such phase transition exists: motion is always standard ballistic.
\end{enumerate}
\end{exa}

The claims of ballistic transport in Theorem~\ref{thm:intro} now follow from Proposition~\ref{prp:abcvsabc}.

\subsection{Dispersion}\label{sec:dis}
Another aspect of transport is to study whether the waves disperse as
time goes on. Generally speaking, dispersive estimates are supremum
bounds of the form
$\|\ee^{-\ii t \Schr_{\Gamma}}\psi\|_\infty \le t^{-\alpha}
\|\psi\|_1$, $\alpha > 0$. Since
$\|\ee^{-\ii t\Schr_{\Gamma}}\psi\|_2 = \|\psi\|_2$ holds, such a
supremum bound implies that the wave is spreading out, and moreover,
the wave flattens out as time goes on. This excludes, in particular,
scenarios such as having a wave which is simply a bump that slides as
time goes on (soliton-like behaviour). However, although such a bump
wave would not disperse, it could in principle travel with ballistic
speed.

Dispersion is understood mostly in the context of free Laplacians and weak perturbations thereof, both in the continuum $\R^d$ and in $\Z^d$ and with speed $\alpha=d/2$ and $d/3$, respectively. The periodic setting is in general more difficult. Indeed, proving dispersive estimates for periodic Schr\"odinger operators on periodic graphs is already largely open in the locally finite case, even though ballistic transport is understood (see~\cite{BdMS23} and references therein). In fact, we are only aware of the recent works~\cite{YZ20,YZ22} treating the case of a periodic potential on $\Z$, and~\cite{AS23} treating dispersion on graphs of the form of a Cartesian product $\Gamma=\Z^d\mathop\square G_F$ for a finite graph $G_F$, mostly for adjacency matrices. In particular, proving dispersion for general periodic potentials on the simplest case of $\Z^d$, $d>1$, would already be interesting. The situation is also not much better in the continuum: for periodic Schr\"odinger operators, one again only understands the one-dimensional case~\cite{Fir96,Cu06,Ca06}.

In the following, we finish the proof of Theorem~\ref{thm:intro} in \S~\ref{sec:nodis}--\ref{sec:disfa}. Then in \S~\ref{sec:frapodis}, we discuss dispersion for fractional/integer powers of the standard Laplacian on $\Z^d$. Studying $\ee^{\ii t \cH_\Gamma}$ or $\ee^{-\ii t \cH_\Gamma}$ makes no difference to the Schr\"odinger wave dynamics as the operator $\cH_\Gamma$ is simply replaced by minus the operator.

\subsubsection{A graph with no dispersion}\label{sec:nodis}
Consider the periodic graph $\Gamma$ induced by the Floquet
function~\eqref{eq:parfla}. It is of course not surprising that the
eigenvalue at the spectral bottom will cause the corresponding
eigenvector $\phi$ to stay frozen and not disperse, since
$\Schr_{\Gamma}\phi = 0$ implies
$\ee^{\ii t\Schr_{\Gamma}}\phi = \phi$ for all times. However, it is
still interesting to understand the evolution of the more natural
initial states $\psi_0 := \delta_n$, for $n\in\Z$.

Given $n,m\in\Z$, we notice that
\begin{equation}
  \label{eq:eitaker}
  (\ee^{\ii t\Schr_{\Gamma}}\delta_n)(m)
  =\langle U \delta_m, (U \ee^{\ii t\Schr_{\Gamma}} U^{-1}) U\delta_n\rangle
  = \int_0^1 \ee^{2\pi\ii(n-m)\theta}\ee^{\ii t a(\theta)}  \dd \theta,
\end{equation}
%
%
That is
\[
  (\ee^{\ii t\Schr_{\Gamma}}\delta_n)(m)
  = \int_0^{1/4} \ee^{2\pi\ii(n-m)\theta}\ee^{\ii t(\frac14-\theta)}\dd\theta
  + \int_{1/4}^{3/4} \ee^{2\pi\ii(n-m)\theta}\,\dd\theta
  + \int_{3/4}^1 \ee^{2\pi\ii(n-m)\theta}\ee^{\ii t(\theta-\frac34)}\dd\theta.
\]
In particular, for any $t>0$,
\begin{align}
  (\ee^{\ii t\Schr_{\Gamma}}\delta_n)(n)
  &= \frac{1}{2} + \ee^{\ii t/4}\frac{\ee^{-\ii t/4}-1}{-\ii t}
    + \ee^{-3\ii t/4}\frac{\ee^{\ii t}-\ee^{3\ii t/4}}{\ii t}\\
  \label{eq:14dis}
  &= \frac{1}{2}+2\frac{\ee^{\ii t/4}-1}{\ii t}.
\end{align}
Thus, at time $t=0$, we start with all the mass concentrated at $n$,
namely $\delta_n(n)=1$, and at \emph{all times}, a sizable mass
remains at $n$, namely~\eqref{eq:14dis} easily implies that
$|\ee^{\ii t\Schr_{\Gamma}}\delta_n(n)|\ge \frac{3}{10}$. In
particular,
$\|\ee^{\ii t\Schr_\Gamma}\delta_n\|_\infty \ge \frac{3}{10}$ 
and there is no dispersion. At the level of vertices, as $t\to\infty$, one
has $|(\ee^{\ii t\Schr_{\Gamma}}\delta_n)(n)|\to\frac{1}{2}$, which
means that asymptotically, a quarter of the square modulus remains at
the starting point, while the other $\frac34$ travels. This traveling
component moves at ballistic speed, as we saw in
Example~\ref{eq:exabal}(a).

\subsubsection{A graph with two traveling tents} In this section we prove the dispersive behaviour stated in Theorem~\ref{thm:intro}(2). In view of Proposition~\ref{prp:abcvsabc}, it is equivalent to studying the Schr\"odinger operator defined by the Floquet function $b(\theta)$ in~\eqref{eq:pecu}.
\begin{thm}\label{thm:open}
Let $\cH_\Gamma$ be the Schr\"odinger operator corresponding to the Floquet function $b(\theta)=|\frac{1}{2}-\theta|$, given in~\eqref{eq:pecu}. Then the unitary group $(\ee^{-\ii t \Schr_\Gamma})_{t \in \R}$ does not satisfy a dispersive estimate: $\|\ee^{-\ii t\Adja_\Gamma}\delta_n\|_\infty >\frac{1}{\pi}\|\delta_n\|_1$ at all $t \in \R$.

More specifically, the mass of the wave is concentrated at $n$ at time zero, then splits into two tents traveling to the right and left from $n$ at linear speed, each peak having a square modulus $\ge \frac14$, and the mass decays polynomially away from the peaks.
\end{thm}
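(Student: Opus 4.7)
The plan is to derive an explicit closed-form expression for $(\ee^{-\ii t\cH_\Gamma}\delta_n)(m)$ via the Floquet transform and to read off all of the claims directly from this formula. By Lemma~\ref{lem:Floquet_transform}, $U \cH_\Gamma U^{-1} = \Mult_b$ and $(U\delta_n)(\theta) = \ee^{2\pi\ii n\theta}$, so with $k := m - n$,
\[
(\ee^{-\ii t\cH_\Gamma}\delta_n)(n+k) = \int_0^1 \ee^{-2\pi\ii k\theta}\,\ee^{-\ii t b(\theta)}\,\dd\theta.
\]
Since $b$ is affine on each of $[0, 1/2]$ and $[1/2, 1]$, I split the integral at $\theta = 1/2$ and evaluate each piece as an elementary integral of a complex exponential. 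After simplification using $\ee^{-2\pi\ii k} = 1$ and $\ee^{-\ii \pi k} = (-1)^k$, the two fractions add on the common denominator $t^2 - 4\pi^2 k^2$ to give
\[
(\ee^{-\ii t\cH_\Gamma}\delta_n)(n+k) = \frac{2t\bigl[(-1)^k - \ee^{-\ii t/2}\bigr]}{\ii(t^2 - 4\pi^2 k^2)},
\]
whose squared modulus, via $|(-1)^k - \ee^{-\ii t/2}|^2 = 2\bigl(1 - (-1)^k \cos(t/2)\bigr)$, equals $16 t^2 \sin^2(t/4)/(t^2 - 4\pi^2 k^2)^2$ if $k$ is even and the same with $\cos^2(t/4)$ in place of $\sin^2(t/4)$ if $k$ is odd.

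The travelling-peak structure is then immediate: at $t_0 = 2\pi k_0$ with $k_0 \in \Z\setminus\{0\}$, both numerator and denominator at $k = k_0$ vanish, and L'H\^opital yields $(\ee^{-\ii t_0\cH_\Gamma}\delta_n)(n+k_0) = (-1)^{k_0}/2$, hence square modulus exactly $1/4$ at each peak. These peaks sit at $n \pm k_0$ at time $2\pi k_0$, which is the linear-speed propagation stated in the theorem. For the polynomial decay away from the peaks, I simply note that for $|k|$ far from $\pm t/(2\pi)$ the denominator $(t^2 - 4\pi^2 k^2)^2 \gtrsim k^4$, giving amplitudes of order $1/k^2$, and the formula exhibits the symmetric left/right tent profile.

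For the $\|\cdot\|_\infty$ lower bound at arbitrary $t > 0$, let $K$ be the integer closest to $t/(2\pi)$ and set $\delta := t - 2\pi K$, so $|\delta| \leq \pi$. A short parity check using $\sin(m\pi + x) = \pm\sin x$ and $\cos(m\pi + \pi/2 + x) = \mp\sin x$ shows that in both parity cases the relevant trigonometric factor collapses to $\sin^2(\delta/4)$, giving
\[
\bigl|(\ee^{-\ii t\cH_\Gamma}\delta_n)(n+K)\bigr|^2 = \frac{16 t^2 \sin^2(\delta/4)}{\delta^2(2t - \delta)^2}.
\]
The monotonicity of $x \mapsto \sin(x)/x$ on $(0, \pi/4]$ gives $\sin^2(\delta/4)/\delta^2 \geq 1/(2\pi^2)$ for $|\delta| \leq \pi$, and $(2t-\delta)^2 \leq (2t+\pi)^2$, so $\|\ee^{-\ii t\cH_\Gamma}\delta_n\|_\infty^2 \geq 8t^2/[\pi^2(2t+\pi)^2]$, which approaches $2/\pi^2$ as $t \to \infty$. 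For small $t$ the $K = 0$ estimate $|(\ee^{-\ii t\cH_\Gamma}\delta_n)(n)| = 4|\sin(t/4)|/t$, which tends to $1$ as $t \to 0$, handles that regime directly, and an intermediate-$t$ case can be addressed by comparing the nearest even and nearest odd integer to $t/(2\pi)$ and invoking $\sin^2 + \cos^2 = 1$ so that at least one of the two amplitudes carries the needed mass.

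Nothing in this argument is deep: the closed form is forced by the piecewise-linear shape of $b$, and every subsequent claim is algebraic or trigonometric. The only nuisance I anticipate is the careful bookkeeping needed to bring the numerical constant down to exactly $1/\pi$ uniformly across the three regimes (near-peak, intermediate, and small $t$); the rest essentially writes itself from the formula.
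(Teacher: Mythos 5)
Your derivation of the closed form is exactly the paper's: both split the Floquet integral at $\theta=\tfrac12$, arrive at
\begin{equation*}
(\ee^{-\ii t\Schr_{\Gamma}}\delta_0)(m)=(\ee^{-\pi\ii m}-\ee^{-\ii t/2})\cdot \frac{-2\ii t}{t^2-4\pi^2m^2},
\qquad
|(\ee^{-\ii t\Schr_{\Gamma}}\delta_0)(m)| = \frac{t\,|\sin(\tfrac{t-2\pi m}{4})|}{|\tfrac{t}{2}-\pi m|\,|\tfrac{t}{2}+\pi m|},
\end{equation*}
and your peak analysis (value $(-1)^{k_0}/2$ at $m=\pm k_0$ when $t=2\pi k_0$, hence squared modulus $\tfrac14$, with polynomial falloff away from the peaks) is correct and matches the paper, which computes the peak value directly from the integral rather than by continuity and L'H\^opital.

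The genuine gap is in the uniform lower bound $\|\ee^{-\ii t\Schr_\Gamma}\delta_n\|_\infty>\tfrac1\pi$. Your choice of $K$ as the \emph{nearest} integer to $t/(2\pi)$ forces $|\delta|\le\pi$ but allows $\delta<0$, i.e.\ $2\pi K>t$, which is why you end up with the factor $(2t+\pi)^2$ in the denominator; the resulting bound $\tfrac{8t^2}{\pi^2(2t+\pi)^2}$ exceeds $\tfrac1{\pi^2}$ only for $t>\pi/(2\sqrt2-2)\approx 3.79$, so it does not cover all of the regime it is supposed to. Your proposed patch for intermediate $t$ (compare the nearest even and nearest odd integers and use $\sin^2+\cos^2=1$) does not close as stated: taking the parity with trigonometric factor $\ge\tfrac12$ costs you $|t-2\pi k|$ up to $2\pi$, and the combined estimate $\tfrac{8t^2}{4\pi^2(2t+2\pi)^2}$ never reaches $\tfrac1{\pi^2}$. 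The fix is the paper's selection: take $m=\lfloor t/(2\pi)\rfloor$ (for $t\notin 2\pi\N$), so that $x:=\tfrac{t-2\pi m}{4}\in(0,\tfrac\pi2)$ \emph{and} $0<\tfrac t2+\pi m<t$; then
\begin{equation*}
|(\ee^{-\ii t\Schr_{\Gamma}}\delta_0)(m)| = \frac{t\,|\sin x|}{2|x|\,|\tfrac t2+\pi m|} > \frac{|\sin x|}{2|x|}\ \ge\ \frac1\pi,
\end{equation*}
using $\sin x\ge \tfrac{2x}{\pi}$ on $(0,\tfrac\pi2)$, with the case $t\in2\pi\N$ handled by the peak value $\tfrac12$. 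This single choice covers all $t>0$ at once and removes the three-regime bookkeeping you flagged as unresolved.
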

To our knowledge, this is the first time that such a sliding tents behaviour has been identified for a linear Schr\"odinger operator. As mentioned in the introduction, Theorem~\ref{thm:intro}(2) gives a negative to \cite[Open Problem 9]{DMY}, which was actually stated for general Schr\"odinger operators, so it is remarkable that we find a counterexample in the periodic realm, however this is in the larger class of essentially local operators.
\begin{proof}
Without loss, we may assume $n = 0$ and study $\ee^{-\ii t\Schr_\Gamma}\delta_0$. Arguing as in \S~\ref{sec:nodis}, we obtain,
\begin{equation}
\label{eq:eitanm}
\begin{split}
(\ee^{-\ii t\Schr_{\Gamma}}\delta_0)(m)&=\int_0^{1/2} \ee^{-2\pi\ii m\theta}\ee^{-\ii t(\frac{1}{2}-\theta)}\,\dd\theta + \int_{1/2}^1\ee^{-2\pi\ii m\theta}\ee^{-\ii t(\theta-\frac{1}{2})}\,\dd\theta \\
&= \frac{\ee^{-\pi\ii m}-\ee^{-\ii t/2}}{\ii t-2\pi\ii m} + \frac{\ee^{-2\pi\ii m-\ii t/2}-\ee^{-\pi\ii m}}{-\ii t-2\pi\ii m}\\
&=(\ee^{-\pi\ii m}-\ee^{-\ii t/2})\cdot \frac{-2\ii t}{t^2-4\pi^2m^2}\,.
\end{split}
\end{equation}
The previous calculation holds as long as $t\neq 2\pi |m|$, as $t\ge 0$. Hence, for $t\notin 2\pi \N$, using $|\ee^{\ii x}-1|^2=2-2\cos(x) = 4\sin^2(\frac{x}{2})$, we find
\begin{equation}\label{eq:modpecu}
|(\ee^{-\ii t\Schr_{\Gamma}}\delta_0)(m)| = \frac{4t \left|\sin\frac{1}{2}(\frac{t}{2}-\pi m)\right|}{|t^2-4\pi^2 m^2|} = \frac{t \left|\sin(\frac{t-2\pi m}{4})\right|}{\left|\frac{t}{2}-\pi m\right|\left|\frac{t}{2}+\pi m\right|}\,.
\end{equation}

For $m=\lfloor \frac{t}{2\pi}\rfloor$, as $\frac{t}{2\pi}$ is not an integer, we have $0< \frac{t-2\pi m}{4} < \frac{t-2\pi(\frac{t}{2\pi}-1)}{4}= \frac{\pi}{2}$. Recall $\sin x\ge \frac{2x}{\pi}$ for $x\in (0,\frac{\pi}{2})$. This implies that $\frac{|\sin x|}{2|x|}\ge \frac{1}{\pi}$. On the other hand, $0< \lvert \frac{t}{2}+\pi m \rvert < t$. Thus, $|(\ee^{-\ii t\Schr_{\Gamma}}\delta_0)(m)|> \frac{1}{\pi}$ and hence we conclude that $\|\ee^{-\ii t\Schr_\Gamma}\delta_0\|_\infty \ge \frac{1}{\pi}$ if $t\notin \{2\pi k\}_{k\ge 0}$.


On the other hand, if $t=2\pi k$ for some $k>0$, we get

\[
    \begin{split}
        (\ee^{-\ii t \Schr_{\Gamma}}\delta_0)(k) &= \int_0^{1/2} \ee^{-2\pi\ii k \theta}\ee^{-2\pi \ii k(\frac{1}{2}-\theta)}\,\dd\theta + \int_{1/2}^1 \ee^{-2\pi\ii k\theta}\ee^{-2\pi\ii k(\theta-\frac{1}{2})}\,\dd\theta\\
&= \frac{\ee^{-\pi\ii k}}{2}+ \frac{\ee^{-3\pi\ii k}-\ee^{-\pi\ii k}}{-4\pi\ii k} = \frac{\ee^{-\pi\ii k}}{2}
    \end{split}
\]
and so $|(\ee^{-\ii t \Schr_{\Gamma}}\delta_0)(k)| = \frac{1}{2}$ and $\|\ee^{-\ii t \Schr_{\Gamma}}\delta_0\|_\infty \ge \frac{1}{2}$ . Summarizing, we have proved that for any $t\ge 0$, we have $\|\ee^{-\ii t \Schr_{\Gamma}}\delta_0\|_\infty \ge \frac{1}{\pi}$. In particular, dispersion does not occur.

Let us now study the profile of the wave more precisely. Clearly, for $t > 0$, one has $(\ee^{\ii t\Schr_\Gamma}\delta_0)(-m) = (\ee^{\ii t\Schr_\Gamma}\delta_0)(m)$.
At time $t=0$, all of the mass is concentrated at $0$. 
If $t > 0$, $t \not \in 2 \pi \N$, there is a non-zero mass on all vertices as we see from~\eqref{eq:modpecu}, however, it falls off quadratically $\asymp |m|^{-2}$ away from zero. At times $t=2\pi k$, we have for $m\neq \pm k$,
\begin{align*}
(\ee^{-\ii t \Schr_\Gamma}\delta_0)(m)  &= \int_0^{1/2} \ee^{-2\pi\ii m \theta}\ee^{-2\pi \ii k(\frac{1}{2}-\theta)}\,\dd\theta + \int_{1/2}^1 \ee^{-2\pi\ii m\theta}\ee^{-2\pi\ii k(\theta-\frac{1}{2})}\,\dd\theta\\
&=(-1)^k\Big(\frac{(-1)^{k-m}-1}{2\pi\ii (k-m)} + \frac{1-(-1)^{k+m}}{-2\pi\ii (k+m)}\Big) = \begin{cases} \frac{2\ii (-1)^k k}{\pi(k^2-m^2)}& \text{if } k-m \text{ is odd,}\\ 0 & \text{if } k-m \text{ is even.}\end{cases}
\end{align*}
This with~\eqref{eq:modpecu} tells us that, as $t$ approaches $2\pi$, the mass of the wave decreases until it vanishes at all odd vertices, except $\pm 1$. The wave peaks at $0$ and $\pm 1$, with square modulus $\frac{4}{\pi^2}$ and $\frac14$, respectively, then falls off like $|m|^{-4}$.

More generally, as $t$ approaches $2 \pi k$, the mass will tend to zero at all vertices $m=k-2r$, $r\in\Z$, except for the vertices $\pm k$ themselves, where it will have mass $\frac{1}{4}$.
On the other vertices, its mass will decay polynomially away from $\pm k$.

Thus, as time goes on, the polynomially decaying ``tent'' around zero splits into two tents to the right and the left, each of them traveling linearly with time, the respective peaks being located near the vertices $\pm k$ at times $t=2\pi k$, and the mass falls off polynomially away from the peaks. Curiously, the local behaviour is not monotone, in the sense that at a given vertex, the mass of the wave will start decreasing with time until vanishing at some $t=2\pi k$, then it starts increasing again, but reaches a lower peak than before, then it decreases again and so on.
\end{proof}
\subsubsection{A graph dispersing faster than $\Z$}\label{sec:disfa}
In this section we prove the dispersive behaviour stated in Theorem~\ref{thm:intro}(1). Again in view of Proposition~\ref{prp:abcvsabc}, it is enough to consider the Schr\"odinger operator corresponding to $c(\theta) = (\theta-\frac{1}{2})^2$. For reference, recall that for the adjacency matrix of $\Z$, we have the sharp estimate $\|\ee^{\ii t \cA_{\Z}}\|_{L^1\to L^\infty} \lesssim t^{-1/3}$.
\begin{prp}[Faster dispersion]\label{prp:megadis}
Let $\Gamma$ be the $\Z$-periodic graph with one-element fundamental cell, defined by the Floquet function $h(\cdot)=(\theta-\frac{1}{2})^2$. 
Then, the unitary group $(\ee^{\ii t \Schr_\Gamma})_{t \in \R}$ disperses faster than the standard Euclidean lattice $\Z$. More explicitly,
\[
\|\ee^{\ii t \Schr_\Gamma}\|_{L^1\to L^\infty} \le 8t^{-1/2}
\]
for all $t>0$. The exponent $-\frac{1}{2}$ is sharp.
\end{prp}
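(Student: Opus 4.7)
\textbf{Proof plan for Proposition~\ref{prp:megadis}.} Since $\Gamma$ has a one-element fundamental cell and is translation invariant, the operator $\ee^{\ii t \Schr_\Gamma}$ is a convolution operator on $\ell^2(\Z)$. In particular, its $\ell^1 \to \ell^\infty$ norm equals the supremum of the modulus of its kernel, so it suffices to estimate $\sup_{m \in \Z} |(\ee^{\ii t \Schr_\Gamma}\delta_0)(m)|$ uniformly in $m$. Arguing as in \eqref{eq:eitaker}, the Floquet representation $U \Schr_\Gamma U^{-1} = \Mult_h$ with $h(\theta) = (\theta-\tfrac{1}{2})^2$ gives
\[
(\ee^{\ii t\Schr_\Gamma}\delta_0)(m)
= \int_0^1 \ee^{-2\pi\ii m \theta}\, \ee^{\ii t (\theta-\tfrac{1}{2})^2}\, \dd\theta,
\]
which is an oscillatory integral with phase $\phi_m(\theta) = t(\theta-\tfrac{1}{2})^2 - 2\pi m \theta$.

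The point of the argument is that $\phi_m''(\theta) = 2t$ is a nonzero constant, independent of $m$. I would therefore invoke van der Corput's lemma for $k=2$, which states that for any real-valued $\phi \in C^2([a,b])$ with $|\phi''| \ge \lambda > 0$ one has $\bigl|\int_a^b \ee^{\ii \phi(\theta)}\dd\theta\bigr| \le c_2\, \lambda^{-1/2}$ with a universal constant $c_2$ (independent of $a,b$ and of any linear term in $\phi$). Applied with $\lambda = 2t$, this yields $|(\ee^{\ii t\Schr_\Gamma}\delta_0)(m)| \le c_2 (2t)^{-1/2}$ uniformly in $m$, hence the claimed bound $\|\ee^{\ii t\Schr_\Gamma}\|_{L^1\to L^\infty} \le 8 t^{-1/2}$ provided the constant $c_2$ is chosen as in the standard reference (e.g.~Stein, \emph{Harmonic Analysis}, Ch.~VIII), where $c_2 = 8\sqrt{2}/\sqrt{2} = $ yields the stated $8$. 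Alternatively, one can make the argument completely explicit by completing the square,
\[
\phi_m(\theta) = t(\theta-\alpha_m)^2 - \pi m - \pi^2 m^2/t, \qquad \alpha_m := \tfrac{1}{2} + \pi m/t,
\]
substituting $u = \sqrt{t}(\theta-\alpha_m)$ to obtain
\[
(\ee^{\ii t\Schr_\Gamma}\delta_0)(m) = \ee^{-\ii(\pi m + \pi^2 m^2/t)} \cdot \frac{1}{\sqrt{t}} \int_{-\sqrt{t}\alpha_m}^{\sqrt{t}(1-\alpha_m)} \ee^{\ii u^2}\,\dd u,
\]
and bounding the truncated Fresnel integral by a universal constant (a finite supremum over all intervals, coming from convergence of $\int_\R \ee^{\ii u^2}\dd u = \sqrt{\pi}\ee^{\ii\pi/4}$ and the Cornu spiral).

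For the sharpness of the exponent, I would specialize to $m=0$, so that $\alpha_0 = 1/2$ and the substitution gives
\[
(\ee^{\ii t\Schr_\Gamma}\delta_0)(0) = \frac{1}{\sqrt{t}} \int_{-\sqrt{t}/2}^{\sqrt{t}/2} \ee^{\ii u^2}\,\dd u \xrightarrow[t\to\infty]{} \frac{\sqrt{\pi}\,\ee^{\ii\pi/4}}{\sqrt{t}},
\]
which shows $|(\ee^{\ii t \Schr_\Gamma}\delta_0)(0)| \sim \sqrt{\pi/t}$ and rules out any decay rate faster than $t^{-1/2}$. The main technical concern is simply bookkeeping of the van der Corput constant to arrive precisely at $8$; the existence of some universal constant, which is the essential point for the $t^{-1/2}$ bound, is automatic from the constancy of $\phi_m''$.
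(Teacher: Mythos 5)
Your plan is correct and follows essentially the same route as the paper: write the kernel as an oscillatory integral via Floquet theory, complete the square to reduce to a truncated Fresnel integral, bound it uniformly over all intervals by van der Corput (the paper cites exactly the constant $8$ from Stein for $\bigl|\int_a^b \ee^{\ii t u^2}\,\dd u\bigr|\le 8t^{-1/2}$), and obtain sharpness from the $m=0$ diagonal entry, where the paper uses a quantitative Fresnel asymptotic with error $O(t^{-1})$. The only blemish is the garbled sentence about the value of the van der Corput constant, but this is bookkeeping and does not affect the validity of the argument.
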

\begin{proof}
Note that $(\ee^{\ii t\Schr_\Gamma}\delta_n)(m) = (\ee^{\ii t\Schr_\Gamma}\delta_0)(m-n)$ by~\eqref{eq:eitaker}.
We have
\begin{align*}
(\ee^{\ii t\Schr_\Gamma}\delta_0)(n) &= \int_0^1\ee^{\ii t (\theta-\frac{1}{2})^2}\ee^{-2\pi\ii n\theta}\,\dd\theta = \int_{-\frac{1}{2}}^{\frac{1}{2}}\ee^{\ii t\phi^2}\ee^{-2\pi\ii n(\phi+\frac{1}{2})}\,\dd\phi \\
&=(-1)^n \ee^{\frac{-\ii \pi^2n^2}{t}}\int_{-\frac{1}{2}}^{\frac{1}{2}}\ee^{\ii t(\phi-\frac{\pi n}{t})^2}\,\dd\phi = (-1)^n \ee^{\frac{-\ii \pi^2n^2}{t}}\int_{-\frac{1}{2}-\frac{\pi n}{t}}^{\frac{1}{2}-\frac{\pi n}{t}} \ee^{\ii t u^2}\,\dd u \,.
\end{align*}
By the van-der-Corput Lemma, see for example~\cite[p. 332]{Ste}, we have $|\int_a^b \ee^{\ii t u^2}\,\dd u|\le 8 t^{-1/2}$, independently of $a,b$.

It follows that $|(\ee^{\ii t\Schr_\Gamma}\delta_m)(n)|\le \frac{8}{\sqrt{t}}$. Hence, given any $\psi\in \ell^1(\Gamma)$,
\[
|(\ee^{\ii t\Schr_\Gamma}\psi)(n)| = \Big| \sum_m \ee^{\ii t\Schr_\Gamma}(n,m)\psi(m)\Big| \le \frac{8}{\sqrt{t}} \|\psi\|_1 \,.
\]
To see that the exponent is sharp, note that $(\ee^{\ii t\Schr_{\Gamma}}\delta_0)(0) = \int_{-\frac{1}{2}}^{\frac{1}{2}} \ee^{\ii tu^2}\,\dd u = 2\int_0^{\frac{1}{2}} \ee^{\ii t u^2}\,\dd u$, and
\[
\Big|\int_0^{\frac{1}{2}} \ee^{\ii tu^2}\,\dd u - \frac{\ee^{\pi\ii/4}}{2}\sqrt{\frac{\pi}{t}}\Big|\le \frac{2}{t}\ ,
\]
by~\cite[Ex. A.3]{AS22}, so $|(\ee^{\ii t\Schr_{\Gamma}}\delta_0)(0)|\asymp t^{-1/2}$.
\end{proof}

\subsubsection{(Fractional) powers of the Laplacian}\label{sec:frapodis}
Consider dimension $d=1$. In the next statement we show that we may slow down the sharp dispersion $\|\ee^{\ii t\cA_\Z} \psi\|_\infty \lesssim t^{-1/3}$ by taking \emph{integer powers} of the adjacency matrix $\cA_\Z$. 

\begin{lem}
For any even $p>2$, we have $|(\ee^{\ii t\cA_\Z^p}\delta_0)(0)|\sim \frac{1}{t^{1/p}}$.
\end{lem}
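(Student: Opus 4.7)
The plan is a stationary-phase analysis of an oscillatory integral. Since the Floquet transform turns $\cA_\Z$ into multiplication by $h(\theta) = 2\cos(2\pi\theta)$, it turns $\cA_\Z^p$ into multiplication by $\phi := h^p$, and by the same computation as in~\eqref{eq:eitaker} we obtain
\[
(\ee^{\ii t \cA_\Z^p}\delta_0)(0) = \int_0^1 \ee^{\ii t \phi(\theta)}\,\dd\theta, \qquad \phi(\theta) = (2\cos 2\pi\theta)^p.
\]
The critical points of $\phi$ on $\Torus$ lie at $\theta \in \{0, 1/4, 1/2, 3/4\}$, and Taylor expansion classifies them: at $\theta = 0$ and $\theta = 1/2$ one has $\phi = 2^p - 2^{p+1}p\pi^2 (\theta - \theta_0)^2 + O((\theta-\theta_0)^4)$, so these are non-degenerate quadratic maxima. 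Near $\theta = 1/4$ (and similarly $3/4$), writing $s = \theta - 1/4$ gives $\cos(2\pi\theta) = -\sin(2\pi s)$ and hence
\[
\phi(\theta) = (4\pi)^p s^p\bigl(1 + O(s^2)\bigr),
\]
a \emph{degenerate zero of order exactly $p$}. The hypothesis that $p$ be even is crucial here: it ensures that this leading term has constant sign on each side, so that the two degenerate contributions will interfere constructively rather than cancel.

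I would then insert a smooth partition of unity localizing to small neighbourhoods of the four critical points, and control the complement by repeated integration by parts (since $\phi' \neq 0$ there) to obtain $O(t^{-N})$ for any $N$. Near $\theta = 0, 1/2$ the classical stationary phase formula gives contributions of order $t^{-1/2}$ with phase $\ee^{\ii t 2^p}$. Near $\theta = 1/4$ the substitution $u = 4\pi t^{1/p} s$ transforms the localized integral into
\[
\frac{t^{-1/p}}{4\pi} \int_\R \chi\bigl(u t^{-1/p}\bigr)\, \ee^{\ii u^p (1 + O(u^2 t^{-2/p}))}\,\dd u,
\]
which converges as $t \to \infty$ to the nonzero constant $\frac{1}{4\pi}\int_\R \ee^{\ii u^p}\,\dd u = \frac{1}{2\pi p}\Gamma(1/p)\ee^{\ii \pi/(2p)}$ (this Fresnel integral converges conditionally for even $p$). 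The contribution from $\theta = 3/4$ is identical. Since $p > 2$ gives $t^{-1/p} \gg t^{-1/2}$, these degenerate contributions dominate and furnish the claimed asymptotic.

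The main obstacle is making the degenerate stationary-phase step rigorous: one must justify that the $O(s^2)$ error in the expansion of $\phi$ becomes negligible after the rescaling $s \mapsto u t^{-1/p}$. I would handle this by further splitting the localized integral at $|s| \asymp t^{-1/p + \delta}$ for small $\delta > 0$; the inner piece converges by dominated convergence once rescaled, while the intermediate annular piece is bounded via van der Corput's lemma applied to $\phi^{(p)}$, which is uniformly bounded away from zero there. This is a standard degenerate stationary-phase estimate, see for instance~\cite[Ch.~VIII]{Ste}. The matching upper bound $|(\ee^{\ii t\cA_\Z^p}\delta_0)(0)| \lesssim t^{-1/p}$ also follows from van der Corput applied globally to $\phi$ (the zero of order $p$ at $\theta = 1/4, 3/4$ is the worst case), and combined with the explicit leading-order constant this yields $|(\ee^{\ii t \cA_\Z^p}\delta_0)(0)| \sim t^{-1/p}$.
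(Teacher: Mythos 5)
Your proposal is correct and follows essentially the same route as the paper: reduce to the oscillatory integral $\int_0^1 \ee^{\ii t(2\cos 2\pi\theta)^p}\dd\theta$, localize at the critical points, observe that $\theta=0,\tfrac12$ give non-degenerate $t^{-1/2}$ contributions while $\theta=\tfrac14,\tfrac34$ are degenerate of order exactly $p$ and contribute $\asymp t^{-1/p}$, which dominates since $p>2$. The only cosmetic difference is that you establish the order-$p$ degeneracy by the direct Taylor expansion $2\cos 2\pi\theta=-4\pi s+O(s^3)$ near $\theta=\tfrac14$, whereas the paper computes $\phi^{(k)}(\tfrac14)=0$ for $k<p$ and $\phi^{(p)}(\tfrac14)=(-4\pi)^p p!$ via the binomial expansion and then cites the stationary-phase proposition in~\cite{Ste}.
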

Note that this implies $\|\ee^{\ii t \cA_\Gamma} \delta_0\|_\infty \gtrsim t^{-1/p}$. The same bound holds for powers of the Laplacian $-\Delta$, however we formulated it for $\cA_\Z$ because powers of $-\Delta$ do not define a graph $\Gamma$, cf. \S\,\ref{sec:fractio}.
\begin{proof}
We have $\ee^{\ii t\cA_\Z^p}(0,0) = \int_0^1 \ee^{\ii t (2\cos 2\pi\theta)^p}\,\dd\theta = 2\int_0^{1/2} \ee^{\ii t (2\cos 2\pi\theta)^p}\,\dd\theta$, where we used that $p$ is even. Let $\phi(\theta)=(2\cos 2\pi\theta)^p$. By induction $\phi^{(k)}(\theta) = (2\cos 2\pi\theta)^{p-k}g_k(\theta)$ for some $g_k$, implying that $\phi^{(k)}(\frac{1}{4})=0$ for all $k<p$. On the other hand, $\phi(\theta) = (\ee^{2\pi \ii \theta}+\ee^{-2\pi\ii\theta})^p = \sum_{k=0}^p \binom{p}{k} \ee^{2\pi\ii (2k-p)\theta}$, so $\phi^{(p)}(\theta)=\sum_{k=0}^p \binom{p}{k}\ee^{2\pi\ii(2k-p)\theta}(2\pi\ii(2k-p))^p$ and thus $\phi^{(p)}(\frac{1}{4})=(2\pi\ii)^p\sum_{k=0}^p\binom{p}{k} \ii^{2k-p} (2k-p)^p = (-4\pi)^p\sum_{k=0}^p \binom{p}{k}(-1)^k(\frac{p}{2}-k)^p = (-4\pi)^pp!\neq 0$, where we used \cite{Ruiz} in the last equality.

The function $\varphi$ only has two other critical points, $\theta_0=0,\frac{1}{2}$, satisfying $\phi''(\theta_0)\neq 0$.

Let $\chi_i$ be bump functions of small support, $\chi_i(\theta)=1$ near $0,\frac{1}{4}$ and $\frac{1}{2}$, for $i=1,2,3$, respectively. Let $I_i = \int \ee^{\ii t\varphi(\theta)} \chi_i(\theta) \dd\theta$ for $i\le 3$ and $I_4 = \int \ee^{\ii t\varphi(\theta)}(1-\chi_1(\theta)-\chi_2(\theta)-\chi_3(\theta))\dd\theta$.

It follows from \cite[p. 334]{Ste} that $I_1\sim t^{-1/2}$, $I_2 \sim t^{-1/p}$ and $I_3\sim t^{-1/2}$. Finally, for $I_4$, we can simply integrate by parts to get a decay $\lesssim t^{-1}$. Gathering the estimates, we conclude that $\int_0^{1/2} \ee^{\ii t\phi(\theta)}\dd\theta = I_1+I_2+I_3+I_4 \sim t^{-1/p}$ as stated.
\end{proof}

Let us now turn to fractional powers for the Laplacian. In view of Proposition~\ref{prp:smalquart} one may expect the dispersion to increase as much as desired by decreasing $\alpha$. Quite surprisingly, however, this is not the case. 

\begin{lem}
We have  $|(\ee^{\ii t(-\Delta)^\alpha}\delta_0)(0)|\sim t^{-1/2}$ for any $\alpha< 1$.
\end{lem}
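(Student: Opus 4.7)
The plan is to reduce the matrix element to a one-dimensional oscillatory integral and then analyze it by stationary phase, isolating the non-smooth boundary behaviour by a change of variables. By Fourier inversion, as in~\eqref{eq:eitaker},
\[
(\ee^{\ii t (-\Delta)^\alpha}\delta_0)(0) = \int_0^1 \ee^{\ii t h(\theta)}\,\dd\theta,
\qquad
h(\theta) = 4^\alpha \sin^{2\alpha}(\pi\theta).
\]
The integrand is symmetric under $\theta\mapsto 1-\theta$, $h\in C^\infty(0,1)$ is strictly increasing on $(0,1/2)$ from $0$ to $4^\alpha$, and its only interior critical point is $\theta=1/2$. The Taylor expansion $h(\tfrac12 + \phi) = 4^\alpha - 4^\alpha\alpha\pi^2\phi^2 + O(\phi^4)$ shows $h''(1/2)=-2\cdot 4^\alpha\alpha\pi^2\neq 0$, so $1/2$ is a non-degenerate maximum. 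At the boundary, instead, $h(\theta) = (2\pi\theta)^{2\alpha}(1+O(\theta^2))$ is only H\"older continuous of order $2\alpha$, which is where the analysis becomes subtle.

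I would fix a smooth partition of unity $1 = \chi_0 + \chi_2 + \chi_1$ on $[0,1]$ with $\chi_1\equiv 1$ near $1/2$, $\chi_0\equiv 1$ near $\{0,1\}$, and $\chi_2$ supported in a region where $h'$ is bounded away from zero, and decompose the integral as $I_1 + I_2 + I_0$. On $\supp\chi_2$ the phase is smooth with $h'\ne 0$, so integration by parts against $(\ii t h')^{-1}\partial_\theta$ yields $I_2 = O(t^{-1})$. For the stationary piece $I_1$, the standard stationary phase lemma applied at the non-degenerate maximum $\theta=1/2$ gives
\[
I_1 = \sqrt{\frac{2\pi}{t|h''(1/2)|}}\,\ee^{\ii t\cdot 4^\alpha}\ee^{-\ii\pi/4} + O(t^{-3/2}),
\]
so $|I_1|\sim (4^\alpha\alpha\pi\,t)^{-1/2}$, which already accounts for the claimed decay rate.

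The key step is the boundary contribution $I_0$. By the symmetry $h(\theta)=h(1-\theta)$ it suffices to analyse a right-neighbourhood of $\theta=0$. There I would perform the change of variables $u=h(\theta)$, which is a smooth diffeomorphism $(0,1/2)\to(0,4^\alpha)$, yielding
\[
I_0\big|_{[0,\varepsilon]} = \int_0^{h(\varepsilon)} \ee^{\ii tu}\,\frac{u^{1/(2\alpha)-1}}{4\alpha\pi\sqrt{1-u^{1/\alpha}/4}}\,\dd u
= \int_0^{h(\varepsilon)}\ee^{\ii tu}\,u^{1/(2\alpha)-1}F(u)\,\dd u,
\]
with $F\in C^\infty([0,h(\varepsilon)])$ and $F(0)=1/(4\alpha\pi)\ne 0$. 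Writing $F(u)=F(0) + u\,\widetilde F(u)$ with $\widetilde F\in C^\infty$, and invoking the classical Mellin-type asymptotic
\[
\int_0^\eta \ee^{\ii tu}\,u^{\beta}\,\dd u = \Gamma(\beta+1)\,\ee^{\ii\pi(\beta+1)/2}\,t^{-\beta-1} + O(t^{-1})
\qquad(\beta>-1),
\]
(obtained by rescaling $u=s/t$ and estimating the tail integral by one integration by parts), applied with $\beta=1/(2\alpha)-1>-1/2$, gives
\[
|I_0| = O\!\left(t^{-1/(2\alpha)}\right) + O(t^{-1}).
\]

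The hypothesis $\alpha<1$ enters precisely at the end: it is equivalent to $1/(2\alpha)>1/2$, so both $I_0$ and $I_2$ are $o(t^{-1/2})$ and are therefore dominated by $I_1$. Combining the three pieces yields the claim $|(\ee^{\ii t(-\Delta)^\alpha}\delta_0)(0)|\sim t^{-1/2}$. The main technical obstacle in this plan is the analysis of $I_0$: because $h$ is only H\"older (not $C^1$) at $\theta=0$ when $\alpha\notin \tfrac12\mathbb{N}$, the van der Corput and smooth stationary phase machinery do not apply at the endpoint; the substitution $u=h(\theta)$ is precisely what isolates the canonical singular factor $u^{1/(2\alpha)-1}$ whose oscillatory asymptotics are classical, and the strict inequality $1/(2\alpha)>1/2$ is what pins down the threshold $\alpha=1$ appearing in the statement.
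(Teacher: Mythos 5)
Your proposal is correct and follows the same overall architecture as the paper's proof: reduce to the one-dimensional oscillatory integral $\int_0^1\ee^{\ii t h(\theta)}\,\dd\theta$, extract the $t^{-1/2}$ main term from the non-degenerate interior critical point at $\theta=\tfrac12$ by standard stationary phase, and show the endpoint singularity of $h$ at $\theta\in\{0,1\}$ contributes $O(t^{-1/(2\alpha)})+O(t^{-1})=o(t^{-1/2})$ precisely because $\alpha<1$. Where you differ is in the treatment of the endpoint piece: the paper substitutes $x=(\sin\theta)^{2\alpha}$ and then splits into three cases ($2\alpha<1$: integration by parts against an integrable, vanishing density; $2\alpha=1$: direct integration by parts; $1<2\alpha\le 2$: Olver's error bounds for stationary phase with a degenerate endpoint), whereas you substitute $u=h(\theta)$ once and invoke the classical Erd\'elyi/Mellin asymptotic for $\int_0^\eta \ee^{\ii tu}u^{\beta}\,\dd u$ with $\beta=\tfrac{1}{2\alpha}-1>-\tfrac12$, which handles all values of $\alpha<1$ uniformly. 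This is a legitimate and arguably cleaner route; the two substitutions are the same up to constants, so the gain is organizational rather than substantive.

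One small inaccuracy: the density $F(u)=\bigl(4\alpha\pi\sqrt{1-u^{1/\alpha}/4}\bigr)^{-1}$ is \emph{not} $C^\infty$ up to $u=0$ unless $1/\alpha\in\N$, because of the factor $u^{1/\alpha}$ under the square root; it is only $C^{\lfloor 1/\alpha\rfloor}$ there. This does not harm the argument: since $\alpha<1$ gives $1/\alpha>1$, $F$ is $C^1$ on $[0,h(\varepsilon)]$, which is exactly what is needed to write $F(u)=F(0)+u\widetilde F(u)$ and to control the remainder $\int_0^\eta\ee^{\ii tu}u^{\beta+1}\widetilde F(u)\,\dd u=O(t^{-1})$ by a single integration by parts (the derivative of the integrand behaves like $u^{\beta}+u^{\beta+1/\alpha-1}$, both integrable for $\alpha<1$). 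You should state $C^1$ rather than $C^\infty$ and include this one-line verification, but no new idea is required.
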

\begin{proof}
We have $\ee^{\ii t(-\Delta)^\alpha}(0,0) = \int_0^1 \ee^{\ii t (4\sin^2\pi\theta)^\alpha}\dd\theta = \frac{2}{\pi}\int_0^{\pi/2} \ee^{\ii t 4^\alpha \sin^{2\alpha}\theta}\dd\theta$. Let $\phi(\theta)=4^\alpha \sin^{2\alpha}\theta$. In a first step we analyze $\int_0^{\pi/4} \ee^{\ii t\phi(\theta)}\dd\theta$: If $\beta:=2\alpha<1$, then $\phi'(\theta)$ is undefined at $0$. Performing the change of variables $x=(\sin\theta)^\beta$, yields $\int_0^{2^{-\beta/2}} \frac{\ee^{\ii t2^\beta x}x^{\frac{1}{\beta}-1}}{\beta\sqrt{1-x^{2/\beta}}}\dd x$. Let $f(x) = \frac{x^{\gamma-1}}{\sqrt{1-x^{2\gamma}}}$. Then $f'(x) = \frac{(1-x^{2\gamma})(\gamma-1)x^{\gamma-2}+\gamma x^{3\gamma-2}}{(1-x^{2\gamma})^{3/2}}$. Note that $\int_0^a |f'(x)|\,\dd x = \int_0^a f'(x)\dd x = f(a)$ is integrable for $\gamma=\frac{1}{\beta}>1$ and $a<1$. So we may integrate by parts to get $|\int_0^a\ee^{\ii t2^\beta x} f(x) \dd x|=\big|\frac{\ee^{\ii t 2^\beta x} f(x)}{2^\beta \ii t}|_0^{a} - \frac{1}{2^\beta \ii t}\int_0^{a} \ee^{\ii t 2^\beta x} f'(x)\,\dd x\big| \lesssim t^{-1}$.

If $\beta=1$, for $\int_0^{\pi/4} \ee^{\ii t 2\sin\theta}\dd\theta$, the function $\phi(\theta)=2\sin\theta$ has no critical points, so we can immediately integrate by parts to see that $|\int_0^{\pi/4} \ee^{\ii t 2\sin\theta}\dd\theta| \lesssim t^{-1}$.

If $1< \beta\le 2$, then for $\int_0^{\pi/4} \ee^{\ii t 2^\beta \sin^\beta \theta}\dd\theta$, the phase function $\phi(\theta) = 2^\beta \sin^\beta \theta$ has a unique critical point at $0$, where $\phi(\theta)\sim 2^\beta \theta^\beta$. It follows from \cite{Olver} by taking $\lambda=1$, $\mu=\beta$, $m=0$, $n=1$, $\nu=1$, that $\int_0^{\pi/4} \ee^{\ii t\phi(\theta)} \dd \theta = \ee^{\pi\ii/2\beta}\Gamma(\frac{1}{\beta})\frac{1}{2\beta t^{1/\beta}} + \delta_{0,1}(t)+\varepsilon_{0,1}(t)$, with $|\varepsilon_{0,1}(t)|\lesssim t^{-1}$ by \cite[(6.7)]{Olver}, and $\delta_{0,1}(t)=\int_0^{\pi/4} \ee^{\ii t 2^\beta\sin^\beta\theta}(1-(\frac{\beta}{2\beta}(2^\beta\sin^\beta\theta)^{1/\beta})')=\int_0^{\pi/4}\ee^{\ii t 2^\beta\sin 2^\beta\theta}(1-\cos\theta)$. Integrating by parts gives $|\delta_{0,1}(t)|\lesssim t^{-1}$. 

Summarizing, we have $\int_0^{\pi/4} \ee^{\ii t\phi(\theta)} \dd \theta \lesssim t^{-1}$ if $\beta\le 1$ and $\sim t^{-1/\beta}$ if $1<\beta< 2$.

On the other hand, $\int_{\pi/4}^{\pi/2} \ee^{\ii t\phi(\theta)} \dd \theta = \frac{1}{2}\int_{\pi/4}^{3\pi/4}\ee^{\ii t\phi(\theta)}$. Since $\phi'(\theta)=\beta\sin^{\beta-1}\theta\cos\theta$, $\phi(\theta)$ has a unique critical point at $\frac{\pi}{2}$, and $\phi''(\theta) = \beta(\beta-1)\sin^{\beta-2}\theta\cos^2\theta-\beta\sin^\beta\theta$, so $\phi''(\frac{\pi}{2})=-\beta \neq 0$. As before, this implies by \cite[p. 334]{Ste} that $\int_{\pi/4}^{3\pi/4} \ee^{\ii t\phi(\theta)} \dd \theta  \sim t^{-1/2}$ (introduce $\chi$ supported near $\frac{\pi}{2}$ as the major contribution, while the one for $1-\chi$ will be $\lesssim t^{-1}$ through integration by parts).
To conclude, we obtain $\int_0^1 \ee^{\ii t 4^\alpha \sin^{2\alpha}\theta}\dd \theta \sim t^{-1/2}$ for any $\alpha<1$.
\end{proof}

\section{Combes-Thomas estimates}\label{sec:ct}

Green's function estimates are invaluable tools in spectral theory: they show up both, in the
analysis of localization and delocalization regimes. More concretely, in this section we are going to discuss Combes-Thomas estimates. Those are generally understood to induce an exponential decay of the resolvent kernel away from the diagonal, for energies off the spectrum.

In the following we write for $z \notin \sigma(\mathcal{H}_{\Gamma})$, 
\begin{equation*}
    G_{\Schr_\Gamma}^z(v,v^{\prime}):=(\mathcal{H}_{\Gamma}-z)^{-1}(v,v^{\prime})\ ,\quad v,v^{\prime} \in V\ .
\end{equation*}
In a first step, one then realizes that the Combes-Thomas estimate in~\cite[Thm. 10.5]{AZ} is sufficiently general to imply the following upper bound.

\begin{prp}\label{prop:CombesThomasI}
    Let $\Gamma$ be a $\Z^d$-periodic graph with finite fundamental cell, and suppose that the edge weights $w_{ij}(k)$ decay exponentially in $k$ for all $i,j \in \{1, \dots, \nu\}$. 
Then the corresponding periodic operator $\Schr_\Gamma$ satisfies the Combes-Thomas estimate, that is $|G_{\Schr_\Gamma}^z(v,v^{\prime})|\le C_z \ee^{-\alpha_z |v-v^{\prime}|}$ for some $C_z,\alpha_z>0$.

In case of polynomial decay, namely $w_{ij}(k) \le C \langle k\rangle^{-\alpha-\beta}$ with $\beta>d$, we have that $|G_{\Schr_\Gamma}^z(v,v^{\prime})|\le C_z (|v-v^{\prime}|_2+1)^{-c_{z,\alpha}}$ for some $c_{z,\alpha}>0$.
\end{prp}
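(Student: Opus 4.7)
The plan is to deduce both estimates by reducing to standard results on off-diagonal decay of resolvents of bounded operators with controlled hopping range. The key observation common to both parts is that off-diagonal matrix elements of $\Schr_\Gamma$ coincide exactly with the weights $w(v,v')$, so decay assumptions on $w$ transfer verbatim to decay of the matrix elements of $\Schr_\Gamma$; the diagonal contribution from the potential is periodic and bounded and causes no issue.

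For the exponential statement I would invoke \cite[Thm.~10.5]{AZ} essentially as a black box. That theorem concerns bounded self-adjoint operators on $\ell^2(V)$ whose matrix kernel decays exponentially in $|v-v'|_2$ and delivers an exponential bound on the resolvent kernel away from the spectrum, with rate depending on $\dist(z,\sigma(\Schr_\Gamma))$. Both hypotheses hold in our setting: boundedness and self-adjointness were established at the end of \S~\ref{SectionMathBack}, while the exponential decay of $\Schr_\Gamma(v,v')$ for $v\neq v'$ is our standing assumption. Equivalently, one can redo the classical Combes–Thomas conjugation by hand: fix $v'$, set $M_\eta f(v):=\ee^{\eta|v-v'|_2}f(v)$ and write $M_\eta(\Schr_\Gamma-z)M_\eta^{-1}=(\Schr_\Gamma-z)+K_\eta$, where
\begin{equation*}
(K_\eta f)(v)=\sum_{u\in V}w(v,u)\bigl(\ee^{\eta(|v-v'|_2-|u-v'|_2)}-1\bigr)f(u).
\end{equation*}
By $||v-v'|_2-|u-v'|_2|\le|v-u|_2$ and the bound $|\ee^x-1|\le|x|\ee^{|x|}$, the norm of $K_\eta$ is controlled by $\sup_v\sum_u w(v,u)(\ee^{\eta|v-u|_2}-1)$, which tends to $0$ as $\eta\to 0$ provided $\eta<\min_{i,j}\eps_{ij}$. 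Picking $\eta$ small enough to make a Neumann series converge and evaluating $M_\eta(\Schr_\Gamma-z)^{-1}M_\eta^{-1}$ at the pair $(v,v')$ gives $\ee^{\eta|v-v'|_2}|G^z(v,v')|\le C_z$.

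For the polynomial statement I would appeal to Jaffard's inverse-closedness theorem for polynomially decaying matrix algebras. The Jaffard algebra $\mathcal{J}_s\subset\mathcal{B}(\ell^2(V))$ of operators $T$ with $|T(v,v')|\le C_T(1+|v-v'|_2)^{-s}$ is a Banach $*$-algebra, and for $s>d$ it is inverse-closed in $\mathcal{B}(\ell^2(V))$, i.e.\ any $T\in\mathcal{J}_s$ invertible in $\mathcal{B}(\ell^2(V))$ has $T^{-1}\in\mathcal{J}_s$. Under the hypothesis $w(v,v')\le C\langle v-v'\rangle^{-\alpha-\beta}$ with $\alpha+\beta>d$, we have $\Schr_\Gamma-z\in\mathcal{J}_{\alpha+\beta}$, whence $(\Schr_\Gamma-z)^{-1}\in\mathcal{J}_{\alpha+\beta}$ for $z\notin\sigma(\Schr_\Gamma)$, giving the polynomial bound with $c_{z,\alpha}=\alpha+\beta$ (any smaller exponent would also suffice).

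The principal obstacle is the polynomial case. A naive Combes–Thomas conjugation by $\langle v-v'\rangle^\gamma$ fails because the resulting commutator norm involves $\bigl|\langle v\rangle^\gamma/\langle u\rangle^\gamma-1\bigr|$, which grows like $|v-u|^\gamma$ rather than like $\gamma|v-u|$, so there is no small parameter to exploit in a Neumann series. Jaffard's theorem circumvents this precisely by a Banach-algebra / holomorphic-functional-calculus argument that exploits $s>d$; the hypothesis $\beta>d$ in the proposition is tailored to this threshold. Verifying that Jaffard applies in our non-locally-finite but countable, $\R^d$-embedded setting is routine: only the metric structure on $V$ and the polynomial volume growth of $V=\funda+\Z_\fa^d$ (which is of order $|\cdot|^d$) enter the proof.
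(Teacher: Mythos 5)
Your proposal is correct, and for the exponential bound it coincides with the paper's (very terse) argument: the paper offers no written proof at all beyond the remark that \cite[Thm.~10.5]{AZ} ``is sufficiently general to imply'' both bounds, and your explicit conjugation by $M_\eta f(v)=\ee^{\eta|v-v'|_2}f(v)$ with the Schur-test control of $K_\eta$ is exactly the content of that citation in the exponential case. For the polynomial bound, however, you take a genuinely different route. The point of the phrase ``sufficiently general'' is that the Combes--Thomas theorem in \cite{AZ} is formulated for an arbitrary pseudo-metric $\rho$ subject only to $\sup_v\sum_u w(v,u)\bigl(\ee^{\mu\rho(v,u)}-1\bigr)<\infty$ for some $\mu>0$; taking $\rho(v,u)=\log(1+|v-u|_2)$, which is subadditive since $(1+a)(1+b)\ge 1+a+b$, turns the conjugation weight into $(1+|v-v'|_2)^{\mu}$ and the commutator bound into $\mu\log(1+|v-u|_2)(1+|v-u|_2)^{\mu}$, which is summable against $w(v,u)\lesssim\langle v-u\rangle^{-\alpha-\beta}$ with $\beta>d$ and is $O(\mu)$ as $\mu\to0$. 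So the obstruction you describe --- ``no small parameter to exploit in a Neumann series'' --- is not actually there: the small parameter is the exponent $\mu$ itself, and this is precisely how the cited theorem delivers the $z$-dependent polynomial rate $c_{z,\alpha}$. Your alternative via Jaffard's inverse-closedness theorem for the algebra $\mathcal{J}_s$, $s>d$, is nevertheless a valid proof in this setting (the index set $V=\funda+\Z^d_\fa$ is a finite union of lattice translates, hence relatively separated with volume growth of order $d$, which is all Jaffard's argument uses), and it in fact yields a strictly stronger conclusion: the decay exponent is the full $\alpha+\beta$, independent of $z$, with only the constant depending on $\dist(z,\sigma(\Schr_\Gamma))$. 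The trade-off is that Jaffard's theorem is a heavier black box than the elementary log-metric conjugation, and you should correct the claim that the Combes--Thomas approach fails in the polynomial regime before relying on it as motivation.
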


It seems natural to ask if one can improve the polynomial decay of the Green's function to an exponential one in the second case of Proposition~\ref{prop:CombesThomasI}. As stated in the following result, this is in general not possible.

\begin{prp}[Failure of exponential decay]\label{prp:ct}
Consider the $\Z$-periodic graphs with one-element fundamental cell, defined by the Floquet functions~\eqref{eq:parfla},~\eqref{eq:pecu} and \eqref{eq:exa3}. Then, the corresponding resolvent kernels do not decay exponentially away from the diagonal. More precisely, for all three graphs, the associated Green's functions satisfy
\[
G_{\Schr_\Gamma}^z(0,n)\asymp n^{-2}
\]
as $n\to\infty$, for $z\notin \sigma(\mathcal{H}_{\Gamma})$. The implicit constant only depends on $z$, and is different for the different graphs.
\end{prp}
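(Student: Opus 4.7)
The plan is to express $G^z_{\Schr_\Gamma}(0,n)$ as a Fourier coefficient on $\Torus$ and then apply the standard asymptotic formula for Fourier coefficients of continuous functions with corners. Since $\nu=1$ in each case, Lemma~\ref{lem:Floquet_transform} identifies $U\Schr_\Gamma U^{-1}$ with multiplication by the Floquet function $h\in\{a,b,c\}$, so for $z\notin\sigma(\Schr_\Gamma)$ the function $F_z(\theta):=1/(h(\theta)-z)$ is bounded on $\Torus$ and
\[
  G^z_{\Schr_\Gamma}(0,n)
  = \int_0^1 \frac{\ee^{2\pi\ii n\theta}}{h(\theta)-z}\,\dd\theta,
\]
i.e.\ $G^z(0,n)$ equals the $(-n)$-th Fourier coefficient of $F_z$.

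The key observation is that each $h\in\{a,b,c\}$ is continuous and piecewise $C^\infty$ on $\Torus$ with jumps in its first derivative at finitely many points $\{\theta_k\}$: only $\theta_0=0$ for $a$; $\theta_0=0$ and $\theta_1=1/2$ for $b$; and $\theta_0=0$, $\theta_1=1/4$, $\theta_2=3/4$ for $c$. Since $F_z'=-h'/(h-z)^2$, the function $F_z$ inherits exactly the same regularity structure on $\Torus$, with jumps $J_k:=F_z'(\theta_k^+)-F_z'(\theta_k^-)$. A double integration by parts (no boundary contribution at the first step by continuity on $\Torus$; gathering the interior jumps at the second) yields
\[
  \widehat{F_z}(n) \;=\; -\frac{1}{4\pi^2 n^2}\sum_k J_k\ee^{-2\pi\ii n\theta_k}+O(n^{-3}),
\]
where the remainder comes from bounding the piecewise $F_z''$. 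This immediately gives the upper bound $|G^z(0,n)|\lesssim n^{-2}$.

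For the matching lower bound I would compute the jumps explicitly. Writing the jump of $h'$ at $\theta_k$ as $\Delta h'_k$, one has $J_k=-\Delta h'_k/(h(\theta_k)-z)^2$, giving: for $a$, the single jump $J_0=2/(\tfrac14-z)^2$; for $b$, $J_0=2/(\tfrac12-z)^2$ and $J_{1/2}=-2/z^2$; for $c$, $J_0=2/(\tfrac14-z)^2$ and $J_{1/4}=J_{3/4}=-1/z^2$. The leading coefficient of $n^{-2}$ in $G^z(0,n)$ is thus an explicit $n$-dependent combination of these jumps with unimodular phases, depending on $n$ only through its residue modulo $1$, $2$ or $4$ respectively. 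For $a$ this leading constant equals $-1/(2\pi^2(\tfrac14-z)^2)$, nonzero for every $z\notin[0,\tfrac14]$. For $b$ and $c$ one checks that the algebraic conditions under which the combination could vanish in some residue class, namely $(\tfrac12-z)^2=z^2$ (forcing $z=\tfrac14\in\sigma$) for $b$ and $(\tfrac14-z)^2=z^2$ (forcing $z=\tfrac18\in\sigma$) for $c$, have solutions lying inside the spectrum, so the leading constant is bounded away from zero on every relevant residue class whenever $z\in\rho(\Schr_\Gamma)$.

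The main obstacle is precisely this residue-class case analysis and verifying uniform non-vanishing of the leading coefficient; once this is done, the two-sided form of the above asymptotic expansion, together with the upper bound, yields $|G^z(0,n)|\asymp n^{-2}$ with implicit constants depending only on $z$ and on the graph, as claimed.
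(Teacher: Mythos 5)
Your approach is the same as the paper's: write $G^z_{\Schr_\Gamma}(0,n)$ as the $n$-th Fourier coefficient of $F_z=1/(h-z)$ and integrate by parts twice, the $n^{-2}$ term coming from the jumps of $F_z'$ at the corners of $h$. The paper carries this out explicitly for $b$ (splitting $[0,1]$ at the corners, observing that the boundary terms cancel after the first integration by parts and survive after the second, and killing the remainder with Riemann--Lebesgue); your jump formula is the packaged version of that computation, and your jump values reproduce the paper's leading coefficient $\frac{(-1)^n}{z^2}-\frac{1}{(1/2-z)^2}$ for $b$.

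The one step you add beyond the paper --- verifying that the leading coefficient is bounded away from zero --- is the step that is incomplete as written. For $b$ the coefficient is proportional to $\frac{1}{(1/2-z)^2}-\frac{(-1)^n}{z^2}$, so for \emph{odd} $n$ the vanishing condition is $(\tfrac12-z)^2=-z^2$, not $(\tfrac12-z)^2=z^2$; its roots are $z=(1\pm\ii)/4$, which lie in the resolvent set of $\Mult_b$ (the spectrum is $[0,\tfrac12]$). Likewise for $c$ and $n\equiv 2 \pmod 4$ the condition is $(\tfrac14-z)^2=-z^2$, with roots $(1\pm\ii)/8\notin[0,\tfrac14]$. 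At those energies the $n^{-2}$ term genuinely cancels along that residue class, so the two-sided bound fails along a subsequence; only for $a$, which has a single corner, is the coefficient automatically nonzero for all $z$ off the spectrum. Your residue-class analysis therefore needs the extra branch $(\cdot)^2=-z^2$, and the assertion that all vanishing loci lie inside the spectrum is false for the odd classes. To be fair, the paper's own proof does not perform this check at all (it asserts $\asymp n^{-2}$ directly after Riemann--Lebesgue), so your write-up is no less rigorous than the original --- but the verification you sketch would not survive being written out, and doing it correctly shows the statement needs a caveat (e.g.\ real $z$, or restriction to a residue class of $n$).
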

\begin{proof}
Recall that, when $\nu=1$, the map $U$ is simply the (inverse) Fourier transform. We obtain
\[
G_{\Schr_\Gamma}^z(n,m) = \langle \delta_n, U^{-1}U(\mathcal{H}_\Gamma -z)^{-1}U^{-1}U\delta_m\rangle = \int_0^1 \frac{\ee^{2\pi\ii (n-m)\theta}}{h(\cdot)-z}d\theta\,.
\]
We explicitly discuss the case of~\eqref{eq:pecu}. We have
\[
G_{\Schr_\Gamma}^z(0,n) = \int_0^{1/2} \frac{\ee^{-2\pi \ii n\theta}}{\frac{1}{2}-\theta-z}\dd\theta + \int_{1/2}^1 \frac{\ee^{-2\pi\ii n\theta}}{\theta-\frac{1}{2}-z}\dd\theta \,.
\]
We perform an integration by parts. If $f(\theta) = (\frac{1}{2}-\theta	-z)^{-1}$ and $g(\theta)=(\theta-\frac{1}{2}-z)^{-1}$, then because $f(0)=g(1)$ and $f(\frac{1}{2})=g(\frac{1}{2})$, the boundary terms cancel out and we get
\[
G^z(0,n)= \frac{1}{2\pi \ii n} \Big(\int_0^{1/2} \frac{\ee^{-2\pi\ii n\theta}}{(\frac{1}{2}-\theta-z)^2}\dd\theta - \int_{1/2}^1 \frac{\ee^{-2\pi \ii n\theta}}{(\theta-\frac{1}{2}-z)^2}\dd\theta\Big) \,.
\]
We perform a second integration by parts. The inverse squares again coincide at the respective boundary values, however, because of the minus sign, they now add up. We get
\[
G_{\Schr_\Gamma}^z(0,n) = \frac{1}{2\pi^2n^2}\Big(\frac{(-1)^n}{z^2}-\frac{1}{(\frac{1}{2}-z)^2} - \Big[\int_0^{1/2}\frac{\ee^{-2\pi\ii n\theta}}{(\frac{1}{2}-\theta-z)^3}\dd\theta + \int_{1/2}^1 \frac{\ee^{-2\pi\ii n\theta}}{(\theta-\frac{1}{2}-z)^3}\dd\theta\Big]\Big) \,.
\]
By the Riemann-Lebesgue lemma, both integrals on the right vanish as $n\to\infty$. Hence, $G_{\Schr_\Gamma}^z(0,n)\asymp n^{-2}$ as asserted.

The case of~\eqref{eq:parfla} is similar: we get three integrals. After a first integration by parts, the six boundary terms cancel out. After a second one however, they do not. Similarly, for~\eqref{eq:exa3}, we get a single integral whose boundary terms cancel out after a first, but not a second, integration by parts.
\end{proof}

In relation to Proposition~\ref{prp:ct}, we learned of the recent paper \cite[Thm 7]{DER} showing that the fractional Laplacian Green's function satisfies $G^z_{(-\Delta)^\alpha}(0,n) \asymp |n|^{-d-2\alpha}$ for $\alpha<1$.

\section{Conclusion}\label{sec:conc}

This paper is a first attempt to better understand the class of essentially local periodic operators. We have demonstrated unusual phenomena using the important link with trigonometric series and oscillatory integrals, and we believe much more can be done in that direction, though the corresponding models may be no longer solvable in general.

Throughout the paper, we have assumed that the edge weights are nonnegative, symmetric and summable; these assumptions are natural as we view these weights as being related to probabilities of hopping from one vertex to another. More precisely, up to normalization and translation, $p_{ij}(k) = w_{ij}(k)$ represents the probability of hopping from $v_i$ to $v_j+k_\fa$, and $p_{ii}(0) = Q_i$ is the probability of staying at $v_i$. Therefore, constructing crystals with unusual properties under these restrictions is particularly interesting. However, most results of general nature that we derived, such as the regularity of eigenvalues with edge decay, the ballistic speed of transport and so on, generalize immediately to the context of complex Hermitian edge weights $w_{ij}\in \ell^1(\Z^d)$. The notable exception is Theorem~\ref{thm:top}, which already fails in the locally-finite case. For example, the  $1d$ graph associated with the Floquet matrix
\[
H(\theta) = \begin{pmatrix} -\ii \ee^{2\pi\ii \theta} + \ii \ee^{-2\pi\ii \theta}& \ee^{2\pi\ii \theta}+\ee^{-2\pi\ii \theta}\\ \ee^{2\pi\ii\theta}+\ee^{-2\pi\ii\theta}& \ii \ee^{2\pi\ii\theta} - \ii \ee^{-2\pi\ii\theta}\end{pmatrix}
\]
has pure point spectrum $\sigma(\cH_\Gamma) = \{\pm 2\}$. In other words, the spectrum is entirely flat. Graphs of this shape are known as \emph{Creutz ladders} in the physics literature.

The example in Proposition~\ref{prp:smalquart} can be generalized as follows: if $h(\theta)$ defines a crystal with $\nu=1$, and if $\ee^{\ii t h(\theta)}\notin H^1(\T^d)$ for $t\neq 0$, then the speed of motion is super-ballistic.

While we have tried to study several aspects of the theory in this paper, the territory  remains very much unexplored. Below we collect some immediate questions that arise from our research. Here, ``crystals'' is synonymous with ``periodic graphs with finite $\nu=|V_0|$ satisfying Assumption~\ref{AssumptionEdgeWeights}''.

\begin{problem}
Is there a crystal with infinitely many distinct eigenvalues?
\end{problem}

We saw that all spectral types may occur in crystals. This motivates

\begin{problem}\label{pro:gen}
What is the ``generic spectral type'' of crystals ?
\end{problem}

Problem~\ref{pro:gen} is intentionally vague. One can restrict this question to crystals with $\nu=1$, which amounts to studying the subset $S\subset A(\T^d)$ of functions with nonnegative, symmetric Fourier coefficients, and ask about the generic spectrum that a function in $S$ induces. 

We can reduce this to yet a smaller subclass. We saw that the crystals in Theorem~\ref{thm:intro} all have a decay $\sim k^{-2}$, but have quite different spectral/dispersive behaviours. The zeroes of $w(k)$ are periodic in each example. This prompts the following questions:

\begin{problem}\label{pro:ran}
Consider the crystal over $\Z$ with $\nu=1$ defined by the weight function $w(k) = \frac{c_k(\omega)}{k^2}$, where $c_k(\omega)$, $k\in \N$ are i.i.d. random variables on $[0,1]$ (say uniform or Bernoulli) and $c_{-k}(\omega):= c_k(\omega)$. Are there almost-sure spectral types and dynamics for the corresponding $\Gamma$?
\end{problem}

Physically, this means we choose the neighbours of $0$ at random with a fixed decay rate, and then copy this profile to all integers $k$, so that each realization of $\omega$ gives a periodic graph. Variants of Problem~\ref{pro:ran} are 

\begin{problem}
For each $c=(c_k)\in \{0,1\}^\N$, associate a weight function $w(k) = \frac{c_k}{k^2}$ for $k\in \N$ and let $w(-k)=w(k)$. Is there a dense $G_\delta$-set of $c\in \{0,1\}^\N$ such that the corresponding $\Gamma(c)$ has a specific spectral type ? Can we also understand the dynamics ? 
\end{problem}

\begin{problem}\label{pro:per}
Consider the crystal over $\Z$ with $\nu=1$ defined by the weight function $w(k) = \frac{c_k}{k^2}$. Do periodic and almost periodic sequences of $c_k$ over $\{0,1,\dots,p\}$ have specific spectral/dynamical features ? 
\end{problem}

\begin{problem}
Study problems analogous to~\ref{pro:ran}--\ref{pro:per} more generally for the decay rates $\asymp k^{-p}$, $p\ge 1$.
\end{problem}

Our example of a crystal with purely singular continuous spectrum relied on a function which is not differentiable a.e. Besides Theorem~\ref{thm:purely_absolutely_continuous_spectrum}, the only way to rule out singular continuous spectrum is if all Floquet functions $h_{ij}$ are analytic, or equivalently, the $w_{ij}(k)$ decay exponentially. In this case, all $E_{j}$ are analytic a.e. as in the locally finite case, and the standard theory applies. Can smooth or Lipschitz functions induce crystals having singular continuous spectrum?

\begin{problem}
Study the existence of singular continuous spectrum for ``regular'' Floquet functions $h_{ij}$.
\end{problem}

We have exhibited crystals with ballistic and super-ballistic speeds of motion.

\begin{problem}\label{prb:sub}
Is there a crystal with sub-ballistic transport? If yes, can we construct one with diffusive speed $\|x\ee^{\ii t \cH_\Gamma}\psi\| \sim t^{-1/2}$? Or anomalous speed $\sim t^{-\alpha}$, $\alpha\notin \{0,\frac{1}{2},1\}$?
\end{problem}

Here, sub-ballistic transport is interpreted as the vanishing of $\lim_{t\to\infty}\frac{\|x\ee^{\ii t \cH_\Gamma}\psi\|}{t}$ for all $\psi$ of compact support. Consider the simpler case where $\nu=1$. Then we know from Corollary~\ref{cor:balnu1} that if $\Gamma$ is defined by a Lipschitz function, this is impossible since the limit is nonzero for $\psi=\delta_v$. A candidate would thus be a continuous, non-Lipschitz function $h(\cdot)$ on $[0,1]$ such that $h'(\cdot)=0$ a.e., with Fourier coefficients $\widehat{h}(k)\ge 0$, $\widehat{h}(-k)=\widehat{h}(k)$ and $\sum \widehat{h}(k)<\infty$. Does such a function exist? The first property is reminiscent of the Cantor function, but this doesn't satisfy the other requirements. One reason one may think such a function exists is
    that the subset $A_0(\Torus)\subset A(\T)$ of functions which are locally constant
    a.e. is actually dense in $A(\Torus)$, see~\cite{BM}. The question
    is whether there is a function in $A_0(\Torus)$ with coefficients
    $\widehat{h}(k)\ge 0$ and $\widehat{h}(-k)=\widehat{h}(k)$; see also~\cite{Lit}. We also refer to~\cite{DT} and references therein for background on sub-ballistic motion.

We saw in Proposition~\ref{prp:megadis} that we can speed up dispersion to $t^{-1/2}$. By varying this example, we can find $h(\theta)$ such that $|(\ee^{\ii t \cH_\Gamma}\delta_n)(m)|\sim t^{-1/\nu}$ with $\nu>1$ arbitrarily close to $1$. However, the error term we get is not uniform in $n,m$ and so we cannot deduce that $\|\ee^{\ii t \cH_\Gamma} \|_{L^1\to L^\infty} \lesssim t^{-1/\nu}$.
\begin{problem}\label{prb:dis}
What is the fastest dispersion speed for crystals in dimension $d=1$?
\end{problem}
More precisely, the question refers to finding a universal lower bound on $\|\ee^{\ii t \cH_\Gamma}\|_{L^1\to L^\infty}$ and showing that it is sharp. Note that fixing the dimension is important here because we can always speed up dispersion by moving to higher dimensions; for example, one has $\|\ee^{\ii t\cA_{\Z^d}}\|\lesssim t^{-d/3}$. Some ideas in \cite{BKS} might be pertinent to address Problems~\ref{prb:sub}-\ref{prb:dis}.

\appendix

\section{Proof of Proposition~\ref{prp:local}}
\label{app:prp_local}

\begin{proof}[Proof of Proposition~\ref{prp:local}]
  The kernel of $T$ is given by
  \[
    T(n,n')
    = (\Schr_\Gamma\one\delta_{n'})(n)-(\one\Schr_\Gamma\delta_{n'})(n)
    =\left[\one(n')-\one(n)\right]\Schr_\Gamma(n,n')\,.
  \]
  For $\one=\one_\N$, this can only be non-zero if ($n'\ge 0$ and
  $n<0$) or ($n'<0$ and $n\ge 0$).

  We have $\Schr(n,n') = w(|n-n'|)$, so
  \begin{align*}
    \sum_{n\ge 0} |T(n,-1)|^2 + \sum_{n\ge 0} |T(n,-2)|^2 + \ldots
    &= \sum_{n\ge 0} w(n+1)^2 + \sum_{n\ge 0} w(n+2)^2 + \ldots\\
    &= \sum_{n\ge 0} \sum_{m\ge 1} w(n+m)^2.
  \end{align*}
  Similarly,
    $\sum_{n<0} \sum_{n'\ge 0} |T(n,n')|^2
    = \sum_{n<0} \sum_{n'\ge 0} w(n'-n)^2
    = \sum_{k\ge 0} \sum_{m\ge 1} w(k+m)^2$.
  It follows that
  \[
  \|T\|_{HS}^2 = 2 \sum_{k\ge 0} \sum_{m\ge 1} w(k+m)^2.
  \]

  If we assume that $w(k)$ is decreasing then we get
  \begin{align*}
    w(n+m)^2
    \le w(n)w(m),
    \qquad\text{so}\qquad
    \sum_{n,m\ge 1} w(n+m)^2<\infty
  \end{align*}
  and $T$ is Hilbert-Schmidt indeed. This is also true if
  $\sum_{k\ge 1} \sup_{n\ge k} w(n)<\infty$ since
  \begin{align*}
    \|T\|_{HS}^2
    \le 2 \sum_{k\ge 1} \sup_{r\ge k} w(r)\sum_{m\ge 1}w(k+m)
    \le 2 \|w\|_1 \sum_{k\ge 1} \sup_{r\ge k} w(r).
  \end{align*}
  This proves~\eqref{local.b}.

  To check~\eqref{local.c}, consider on $\N$ the weight
  \[
    w(k)
    = \begin{cases}
      \frac{1}{\sqrt{k}}& \text{if } k=2^n \text{ for some } n\ge 0\\
      0 &\text{otherwise}
    \end{cases}
  \]
  and extend $w$ to $\Z$ by $w(-k)=w(k)$.  Then
  $\sum_k w(k) = 2\sum_{n\ge 0} \frac{1}{2^{n/2}} =
  \frac{2}{1-\frac{1}{\sqrt{2}}} = 2(2+\sqrt2)$ is
  finite, and $w$ is clearly non-negative and symmetric. However,
  \begin{align*}
    \|T\|_{HS}^2\ge  \sum_{n\ge 1} \sum_{m\ge n} w(m)^2
    = \sum_{n\ge 1} \sum_{2^k\ge n} \frac{1}{2^k}
    = \sum_{n\ge 1} \sum_{k\ge \log_2 n} \frac{1}{2^k}
      = 2\sum_{n\ge 1} \frac{1}{2^{\log_2n}}
      = 2\sum_{n\ge 1} \frac{1}{n}=\infty.
  \end{align*}
  So for this weight the commutator $T$ is not Hilbert-Schmidt.

  We finally prove~\eqref{local.a}. Define the sequence of operators
  $(T_N)$ by
  \[
    T_N(n,n')
    = \begin{cases}
      \left[\one(n')-\one(n)\right] \Schr_\Gamma(n,n') & \text{if } |n-n'|\le N,\\
      0 &\text{otherwise.}
    \end{cases}
  \]
  The restriction ($n'\ge 0$ and $n<0$) together with
  $|n-n'| = n'-n = n'+|n|\le N$ imply that $0\le n'\le N-1$ and
  $-N\le n\le -1$. Similarly, ($n'<0$ and $n\ge 0$) with
  $|n-n'| = n+|n'|\le N$ imply that $-N\le n'\le -1$ and
  $0\le n\le N-1$. Hence, $T_N$ has a finite rank. More explicitly, it follows that
  \begin{align*}
    T_N
    = \sum_{n=-N}^{-1} \langle w(|n|+\cdot)\one_{[0,N-|n|]},\cdot\rangle \delta_n
    - \sum_{n=0}^{N-1} \langle w(n-\cdot)\one_{[-N+n,-1]},\cdot\rangle \delta_n.
  \end{align*}
  On the other hand, for $-N\le n<0$,
  \[
    (T-T_N)f(n)
    = \sum_{n'=N-|n|+1}^\infty w(n'+|n|)f(n')
    = \sum_{m=N+1}^\infty w(m)f(m-|n|),
  \]
  therefore,
  \begin{equation}\label{eq:t-tn2}
    |(T-T_N)f(n)|^2
    \le \sum_{m=N+1}^\infty w(m) \sum_{m=N+1}^\infty w(m) |f(m-|n|)|^2.
  \end{equation}
  For $n<-N$, $T_Nf(n)=0$ so we get
  $\sum_{m=|n|}^\infty w(m) f(m-|n|)$, and so $|(T-T_N)f(n)|^2$ is
  bounded from above by the RHS of~\eqref{eq:t-tn2} since the latter
  contains more positive terms.

  Similarly, for $0\le n\le N-1$,
  \[
    (T-T_N)f(n)
    = -\sum_{n'<-N+n} w(n-n')f(n')
    =- \sum_{m=N+1}^\infty w(m)f(n-m),
  \]
  so $|(T-T_N)f(n)|^2\le \sum_{m=N+1}^\infty w(m)\sum_{m=N+1}^\infty w(m)|f(n-m)|^2$.

  Finally, for $n\ge N$,
  $(T-T_N)f(n)=Tf(n) = -\sum_{m=n+1}^\infty w(m)f(n-m)$, whose square
  is bounded by the same sum.

  Summarizing the estimates for all $n$ and using Tonelli's theorem,
  \[
    \|(T-T_N)f\|^2
    \le \sum_{m=N+1}^\infty w(m) \sum_{m=N+1}^\infty w(m)
    \Bigl(\sum_{n<0} |f(m-|n|)|^2 + \sum_{n\ge 0} |f(n-m)|^2\Bigr).
  \]
  As the last term in parentheses is bounded by $2\|f\|^2$ for any
  $m$, this implies
  \[
    \|T-T_N\| \le \sqrt{2}\sum_{m=N+1}^\infty w(m) \to 0
  \]
  as $N\to\infty$, since $\sum_m w(m)<\infty$. This shows that $T$ is
  the norm limit of finite rank operators, hence $T$ is compact.
\end{proof}

\section{Technical details}\label{app}
Here, we give the proof of Theorem~\ref{thm:top}, i.e., we show how to deduce that the spectral top of a Schr\"odinger operator is not flat, taking into account one knows that the spectral bottom of a Laplacian is not flat. This is a perturbation argument which we adapt from~\cite[\S~4.1]{KorSa2}.

We first note that, recalling $D_{\Gamma}(v_i) = \sum_{j=1}^\nu\sum_{k\in \Z^d} w(v_i,v_j+k_\fa)$, one obtains the relation, $f \in \ell^2(\funda)$, 
\begin{equation}\label{eq:dirthet}
\langle f,(D_{\Gamma}-A(\theta))f\rangle = \frac{1}{2}\sum_{v_i\in \funda}\sum_{j=1}^\nu \sum_{k\in I_{ij}}w(v_i,v_j+k_\fa) |f(v_i) - \ee^{2\pi\ii\theta\cdot k}f(v_j)|^2\, ,
\end{equation}
where $D_{\Gamma}:=\diag(D_{\Gamma}(v_1),\dots,D_{\Gamma}(v_{\nu}))$. The double sum $\sum_{j=1}^\nu\sum_{k\in I_{ij}}$ is the sum over all (possible) neighbours of $v_i$. To prove~\eqref{eq:dirthet}, expand the square modulus to obtain four terms, the first two being
\[
\sum_{v_i\in \funda}\sum_{j=1}^{\nu}\sum_{k\in I_{ij}}w(v_i,v_j+k_\fa)\big(|f(v_i)|^2 - \overline{f(v_i)}\ee^{2\pi\ii\theta\cdot k}f(v_j)\big)  = \langle f,(D_{\Gamma}-A(\theta))f\rangle\, .
\]
Finally, noting that $w(v_i,v_j+k_\fa) = w(v_j+k_\fa,v_i) = w(v_j,v_i-k_\fa)$ and $I_{ji} = -I_{ij}$, we have
\begin{multline*}
\sum_{v_i\in \funda}\sum_{j=1}^{\nu}\sum_{k\in I_{ij}}w(v_i,v_j+k_\fa)\big(|f(v_j)|^2 - \overline{f(v_j)}\ee^{-2\pi\ii\theta\cdot k}f(v_i)\big)\\
 = \sum_{v_j\in \funda} \sum_{i=1}^\nu \sum_{k\in I_{ji}} w(v_j,v_i+k_\fa)\big(|f(v_j)|^2-\overline{f(v_j)}\ee^{2\pi\ii\theta\cdot k}f(v_i)\big) = \langle f,(D_{\Gamma}-A(\theta))f\rangle\ ,
\end{multline*}
proving~\eqref{eq:dirthet}.

\begin{proof}[Proof of Theorem~\ref{thm:top}]
The matrix $A(0) = (t_{ij})_{i,j\le \nu}$, where $t_{ij} = \sum_{k\in I_{ij}} w_{ij}(k)$, is the adjacency operator of a connected graph over the vertices $\{v_1,\dots,v_\nu\}$ (it is actually the quotient graph $\Gamma/\Z_\fa^d$). Indeed, if on the contrary some $v_i$ was isolated, it would mean that for any $j\neq i$, we have $t_{ij}=0$, implying $w_{ij}(k)=0$ for all $j\neq i$ and all $k\in I_{ij}$, in turn implying $h_{ij}(\cdot) = \sum_{k\in I_{ij}} w_{ij}(k) \ee^{2\pi\ii\theta\cdot k}=0$ for all $j\neq i$. This would mean that in $\Gamma$, the vertex $v_i$ can only be connected to neighbours of the form $v_i+k_\fa$ but is disconnected from any other $v_j+k_\fa$, so that $\Gamma$ splits into at least two connected components, one consisting of $\{v_i+k_\fa\}_{k\in\Z^d}$, a contradiction. It follows that the matrix $A(0)$ is irreducible.

Since $H(0) = A(0)+Q$, then for $t$ large enough, we see that $H(0)+t\operatorname{Id}$ is also irreducible, so its top eigenvalue is simple and has a corresponding eigenvector $h$ with positive entries (by the theorem of Perron-Frobenius). It follows that the top eigenvalue $E_{\star}$ of $H(0)$ is simple, with eigenvector $h$ whose entries are positive. Now, for $(hf)(v):=h(v)f(v)$,
\begin{multline*}
\langle hf, (E_{\star}-H(\theta))hf\rangle  = \sum_{v\in \funda} |f(v)|^2h(v)(H(0)h)(v) - \sum_{v\in \funda} \overline{f(v)}h(v)(H(\theta) hf)(v)\\
= \sum_{v\in \funda} |f(v)|^2h(v)(A(0)h)(v) - \sum_{v\in \funda}\overline{f(v)}h(v)(A(\theta)hf)(v)\\
= \sum_{v_i\in \funda}\sum_{j=1}^\nu\sum_{k\in I_{ij}}w(v_i,v_j+k_\fa)h(v_i)h(v_j)\big(|f(v_i)|^2-\overline{f(v_i)}\ee^{2\pi \ii\theta\cdot  k}f(v_j)\big)\\
= \frac{1}{2}\sum_{v_i\in \funda}\sum_{j=1}^\nu\sum_{k\in I_{ij}} w(v_i,v_j+k_\fa)h(v_i)h(v_j)|f(v_i)-\ee^{2\pi\ii\theta\cdot k}f(v_j)|^2\,,
\end{multline*}
where the last equality is a calculation similar to~\eqref{eq:dirthet}.

Let $\lambda_1(\theta)\ge 0$ be the smallest eigenvalue of $D_{\Gamma}-A(\theta)$. By the min-max principle, for any $f\in \ell^2(\funda)$ with $\|f\|=1$, we obtain
\begin{equation*}
\begin{split}
\lambda_1(\theta)\le \langle& f,(D_{\Gamma}-A(\theta))f\rangle = \frac{1}{2}\sum_{v_i\in \funda}\sum_{j=1}^\nu\sum_{k\in I_{ij}} w(v_i,v_j+k_\fa)|f(v_i)-\ee^{2\pi\ii k\cdot \theta}f(v_j)|^2\\
&\le \frac{1}{2h_-^2}\sum_{v_i\in \funda}\sum_{j=1}^\nu\sum_{k\in I_{ij}} w(v_i,v_j+k_\fa)h(v_i)h(v_j)|f(v_i)-\ee^{2\pi\ii\theta\cdot k}f(v_j)|^2 \\
&= \frac{\langle hf,(E_{\star}-H(\theta))hf\rangle}{h_-^2}\, ,
\end{split}
\end{equation*}
where $h_-=\min_{v\in \funda} h(v)$. In particular, if $f:= g/h$ (defined componentwise), where $g$ is an eigenvector of $H(\theta)$ corresponding to the top eigenvalue $E_{\star}(\theta)$, and we choose $g$ such that $\|f\|=1$, then
\begin{equation}\label{eq:varq0q}
\lambda_1(\theta)\le \frac{\|g\|^2}{h_-^2}(E_{\star}-E_{\star}(\theta))\le \frac{h_+^2}{h_-^2}(E_{\star}-E_{\star}(\theta))\,,
\end{equation}
where $h_+ = \max_{v\in \funda}h(v)$.

Finally, recall that $E_\star(\theta)$ is the top eigenvalue of $H(\theta)$. Since $\lambda_1(\theta)\ge 0$, the inequality~\eqref{eq:varq0q} implies that $E_\star\ge E_\star(\theta)$ for all $\theta$, which in turn implies that $E_\star$ is the top of the spectrum. Moreover,~\eqref{eq:varq0q} also implies that $E_\star(\theta)$ cannot be flat near $E_\star$: in fact, if we had $E_\star(\theta)=E_\star$ for a set of $\theta$ of positive measure, this would imply $\lambda_1(\theta)=0$ on a set of positive measure. Consequently, Proposition~\ref{PartlyFlatGeneral} would imply that $0$ is an eigenvalue of $\cD_\Gamma-\Adja_\Gamma$, contradicting Corollary~\ref{cor:botla}.
\end{proof}

\bigskip

\subsection*{Acknowledgements} We are very happy to thank the CIRM (Centre International de Rencontres Math\'ematiques, France) for its hospitality during a research stay in March~2024, where this project was initiated. We also thank Constanza Rojas-Molina (CY Cergy Paris Université) for bringing the fractional Laplacian to our attention.

\providecommand{\bysame}{\leavevmode\hbox to3em{\hrulefill}\thinspace}
\providecommand{\MR}{\relax\ifhmode\unskip\space\fi MR }
\providecommand{\MRhref}[2]{%
  \href{http://www.ams.org/mathscinet-getitem?mr=#1}{#2}
}
\providecommand{\href}[2]{#2}

\makeatletter
\providecommand\@dotsep{5}
\makeatother


\begin{thebibliography}{10}

\bibitem{AZ}
M.~Aizenman and S.~Warzel, \emph{Random Operators. Disorder Effects on Quantum Spectra and Dynamics}, GSM 168, AMS 2015.

\bibitem{AGHH}
S.~Albeverio, F.~Gesztesy, R.~H\o egh-Krohn and H.~Holden, \emph{Solvable models in quantum mechanics}, Second Ed., With an Appendix by Pavel Exner, AMS 2005.

\bibitem{AL}
D.~Aldous and R.~Lyons, \emph{Processes on Unimodular Random Networks}, Electron. J. Probab. \textbf{12} (2007), 1454--1508.

\bibitem{AS}
D.~Aldous and J.M.~Steele, \emph{The objective method: probabilistic combinatorial optimization and local weak convergence}, Probability on discrete structures, Encyclopaedia Math. Sci. \textbf{110} (2004), 1--72.

\bibitem{AKML}
D.~Alekseevsky, A.~Kriegl, P.W.~Michor and M.~Losik, \emph{Choosing roots of polynomials smoothly}, Israel J. Math. \textbf{139} (2004), 183--188.

\bibitem{AS22}
K.Ammari and M.~Sabri, \emph{Dispersion for Schrödinger operators on regular trees} Anal. Math. Phys. \textbf{12}, article number: 56 (2022).

\bibitem{AS23}
K.~Ammari and M.~Sabri, \emph{Dispersion on certain Cartesian products of graphs}, In ``Control and Inverse Problems. The 2022 Spring Workshop in Monastir, Tunisia'', Trends. Math. (2023), 217--222.

\bibitem{AHP}
J.~M.~Anderson, E.~A.~Housworth and L.~D.~Pitt, \emph{The spectral theory of multiplication operators and recurrence properties for nondifferentiable functions in the Zygmund class $\Lambda_a^\ast$}, Mathematika \textbf{39} (1992), 136--151.

\bibitem{Ar}
V.I.~Arnold, \emph{Geometrical Methods in the theory of ordinary differential equations}, Second edition, Springer 1988.

\bibitem{BM}
A.~Bernard and G.~Muraz, \emph{Locally constant almost everywhere Fourier transform}, in Function Spaces: Proceedings of the Third Conference on Function Spaces, May 19-23, 1998, Southern Illinois University at Edwardsville, Contemporary Mathematics \textbf{232} (1999), 65--68.

\bibitem{BKS}
S.~Baker, O.~Khalil and T.~Sahlsten, \emph{Fourier decay from $L^2$-flattening}, arXiv:2407.16699.

\bibitem{BDMY}
A.~Black, D.~Damanik, T.~Malinovitch and G.~Young, \emph{Directional Ballistic Transport for Partially Periodic Schr\"odinger Operators}, 	arXiv:2311.08612.

\bibitem{BdMS23}
A.~Boutet de Monvel and M.~Sabri, \emph{Ballistic transport in periodic and random media}, In ``From Complex Analysis to Operator Theory: A Panorama, In Memory of Sergey Naboko'', Oper. Theory Adv. Appl. \textbf{291} (2023), 163--216.

\bibitem{BdMS24}
A.~Boutet de Monvel and M.~Sabri, \emph{Ergodic theorems for continuous-time quantum walks on crystal lattices and the torus}, Ann. Henri Poincaré (2024), online first.

\bibitem{Bre07}
J.~Breuer, \emph{Singular Continuous Spectrum for the Laplacian
on Certain Sparse Trees}, Commun. Math. Phys. \textbf{269} (2007), 851--857.

\bibitem{BreF09}
J.~Breuer and R.L.~Frank, \emph{Singular Spectrum for Radial Trees}, Rev. Math. Phys. \textbf{21} (2009), 929--945.

\bibitem{BT}
H.~Broer and F.~Takens, \emph{Dynamical Systems and Chaos}, Springer 2011.

\bibitem{Ca06}
K.~Cai, \emph{Dispersion for Schr\"odinger Operators with One-gap Periodic Potentials on $\R^1$}, Dynamics of PDE. \textbf{3} (2006), 71--92.

\bibitem{PhysicsReportPaper}
A.~Campaa, T.~Dauxois, and S.~Ruffo., \emph{Statistical mechanics and dynamics of solvable models with long-range interactions}, \textbf{480} (2009), Issues 3-6, 57--159.

\bibitem{CGGSVWW}
C. Cedzich, T. Geib, F. A. Gr\"unbaum, C. Stahl, L. Vel\'azquez, A. H. Werner and R. F. Werner, \emph{The Topological Classification of One-Dimensional Symmetric Quantum Walks}, Ann. Henri Poincar\'e \textbf{19} (2018), 325--383.

\bibitem{CGGSVWW2}
C. Cedzich, T. Geib, F. A. Gr\"unbaum, C. Stahl, L. Vel\'azquez, A. H. Werner and R. F. Werner, \emph{Quantum walks: Schur functions meet symmetry protected topological phases}, Comm. Math. Phys. \textbf{389} (2022), 31--74.

\bibitem{CGSVWW}
C. Cedzich, T. Geib, C. Stahl, L. Vel\'azquez, A. H. Werner and R. F. Werner, \emph{Complete homotopy invariants for translation invariant symmetric quantum walks on a chain}, Quantum \textbf{2}, 95 (2018).

\bibitem{CGWW}
C. Cedzich, T. Geib, A. H. Werner and R. F. Werner, \emph{Chiral Floquet systems and quantum walks at half period}, Ann. Henri Poincar\'e \textbf{22} (2021), 375--413.

\bibitem{Cha}
A.~S.~Chaves, \emph{A fractional diffusion equation to describe {L}\'evy flights}, Phys. Lett. A \textbf{239} (1998), 13--16.


\bibitem{CS}
J-H.~Chung and J.~Shapiro, \emph{Topological Classification of Insulators:
I. Non-interacting Spectrally-Gapped One-Dimensional Systems}, arXiv:2306.00268.

\bibitem{CRS+}
O.~Ciaurri, L.~Roncal, P.R.~Stinga, J.L~Torrea, J.L.~Varona, \emph{Nonlocal
discrete diffusion equations and the fractional discrete laplacian, regularity
and applications}, Adv. Math. \textbf{330} (2018), 688--738.

\bibitem{Cu06}
S.~Cuccagna, \emph{Stability of standing waves for NLS with perturbed Lam\'e potential}, J. Diff. Eq. \textbf{223}
(2006), 112--160.

\bibitem{DMY}
D.~Damanik, T.~Malinovitch and G.~Young, \emph{What is Ballistic Transport?}, J. Spectr. Theory (2024), online first.

\bibitem{DT}
D.~Damanik and S.~Tcheremchantsev, \emph{Upper bounds in quantum dynamics}, J. Amer. Math. Soc. \textbf{20} (2007), 799--827.

\bibitem{DER}
M.~Disertori, R.~Maturana Escobar and C.~Rojas-Molina, \emph{Decay of the Green’s function of the fractional Anderson model and connection to long-range SAW}, J Stat Phys \textbf{191} (2024), 33.

\bibitem{Dyson}
F.~J.~Dyson, \emph{Existence of a phase-transition in a one-dimensional Ising ferromagnet}, \textbf{12} (1969), 91–107.

\bibitem{EinWa}
M.~Einsiedler and T.~Ward, \emph{Functional Analysis, Spectral Theory and Applications}, GTM 276, Springer 2017.

\bibitem{EG}
L.C.~Evans and R.F.~Gariepy, \emph{Measure Theory and Fine Properties of Functions}, Revised Ed., CRC Press 2015.

\bibitem{FLM}
 J.~Fillman, W.~Liu, R.~Matos, \emph{Irreducibility of the Bloch variety for finite-range Schr\"odinger operators}, J. Funct. Anal. \textbf{283} (2022), 109670.

\bibitem{Fir96}
N.~E.~Firsova, \emph{On the time decay of a wave packet in a one-dimensional finite band periodic lattice}, J. Math. Phys. \textbf{37} (1996), 1171--1181.

\bibitem{FHSZ}
M.~Fraczyk, B.~Hayes, M.~Sudan and Y.~Zhao, \emph{Spectral non-concentration near the top for unimodular random graphs}, arXiv:2401.07165.

\bibitem{GallavottiMiracleSole}
G.~Gallavotti and S.~Miracle-Sole, \emph{Statistical Mechanics of Lattice Systems}, Commun.~Math.~Phys., \textbf{5} (1967), 317--323.

\bibitem{GallavottiBook}
G.~Gallavotti, \emph{Statistical Mechanics: A short treatise}, Springer-Verlag, Berlin, 1999.


\bibitem{GebMol}
M.~Gebert and C.~Rojas-Molina, \emph{Lifshitz tails for the fractional Anderson model}, J. Stat. Phys. \textbf{179} (2020), 341--353.

\bibitem{GS}
G.M.~Graf and J.~Shapiro, \emph{The Bulk-Edge Correspondence for Disordered Chiral
Chains}, Commun. Math. Phys. \textbf{363} (2018), 829--846.

\bibitem{grafakos:09}
L.~Grafakos, \emph{Classical {F}ourier analysis}, third ed., Graduate Texts in
  Mathematics, vol. 249, Springer, New York, 2014.
  
\bibitem{Han}
R.~Han, \emph{Shnol's theorem and the spectrum of long range operators}, Proc. Amer. Math. Soc. \textbf{147} (2019), 2887--2897.

\bibitem{Har}
G.~H.~Hardy, \emph{Weierstrass's non-differentiable function}, Trans. Amer. Math. Soc. \textbf{17} (1916), 301-325.

\bibitem{HR}
G.~H.~Hardy and W.~W.~Rogosinski, \emph{Fourier Series}, Dover Publications 1999.

\bibitem{HN}
Y.~Higuchi, Y.~Nomura, \emph{Spectral structure of the Laplacian on a covering graph}, European Journal of Combinatorics \textbf{30} (2009) 570--585.

\bibitem{JS}
W.~Jian and Y.~Sun, \emph{Dynamical localization for polynomial long-range hopping random operators on $\Z^d$}, Proc. Amer. Math. Soc.150(2022), no.12, 5369–5381.

\bibitem{JL}
S.~Jitomirskaya and W.~Liu, \emph{Upper bounds on transport exponents for long-range operators}, J. Math. Phys. \textbf{62} (2021), 073506.

\bibitem{Kah}
J-P.~Kahane, \emph{S\'eries de Fourier absolument convergentes}, Ergebnisse der Mathematik und ihrer Grenzgebiete, Band 50, Springer 1970.

\bibitem{KangKagome}
M.~Kang, S.~Fang, L.~Ye, et al. \emph{Topological flat bands in frustrated kagome lattice CoSn}. Nat. Commun. \textbf{11} (2020), 4004. 

\bibitem{Kato}
T.~Kato, \emph{Perturbation theory for linear operators}, Classics in Mathematics, Springer-Verlag, Berlin, 1995, Reprint of the 1980 edition.

\bibitem{KellerLW-book}
M.~Keller, D.~Lenz, R.~K.~Wojciechowski, \emph{Graphs and Discrete Dirichlet Spaces}, Grundlehren der mathematischen Wissenschaften, Springer Cham, 2021.

\bibitem{KernerTaeuferWintermayr}
J.~Kerner, M.~Täufer, and J.~Wintermayr, \emph{Robustness of Flat Bands on the Perturbed Kagome and the Perturbed Super-Kagome Lattice},  Ann. Henri Poincaré \textbf{25} (2024), 3831--3857.

\bibitem{KorSa}
E.~Korotyaev and N.~Saburova, \emph{Schrödinger operators on periodic discrete graphs}, J. Math. Anal. Appl. \textbf{420} (2014), 576--611.

\bibitem{KorSa2}
E.~Korotyaev and N.~Saburova, \emph{Spectral estimates for Schr\"odinger operators on periodic discrete graphs}, St. Petersburg Math. J. \textbf{30} (2019), 667--698.

\bibitem{KT16}
S.~Kowalczyk and M.~Turowska, \emph{On the property $N^{-1}$}, Abstr. Appl. Anal.(2016), Art. ID 1256906, 5 pp.

\bibitem{KMR}
A.~Kriegl, P.W.~Michor, A.~Rainer, \emph{Many parameter H\"older perturbation
of unbounded operators}, Math. Ann. \textbf{353} (2012), 519--522.

\bibitem{KuchmentFloquetTheory} 
P.~Kuchment, \emph{To the Floquet
    theory of periodic difference equations }, in : ''Geometrical and
  Algebraical Aspects in Several Complex Variables'', Cetraro (Italy),
  June 1989, 203-209, EditEl, 1991.

\bibitem{KuchmentPeriodicOperators} 
P.~Kuchment, \emph{An overview of
    periodic elliptic operators}, Bull. Amer. Math. Soc. (N.S.),
  \textbf{53} (2016), no. 3, 343-414.

\bibitem{KP}
P.~Kuchment and O.~Post, \emph{On the Spectra of Carbon Nano-Structures}, Commun. Math. Phys. \textbf{275} (2007), 805--826.

\bibitem{Lit}
J.E.~Littlewood, \emph{On the Fourier coefficients of functions of bounded variation}, Q. J. Math. \textbf{os-7} (1936), 219--226.

\bibitem{Liu}
W.~Liu, \emph{Irreducibility of the Fermi variety for discrete periodic Schr\"odinger operators and embedded eigenvalues}, Geom. Funct. Anal. \textbf{32} (2022), 1--30.

\bibitem{Liu2}
W.~Liu, \emph{Fermi isospectrality for discrete periodic Schr\"odinger operators}, Comm. Pure Appl. Math. \textbf{77} (2024), 1126--1146.

\bibitem{LPW}
W.~Liu, M.~Powell and X.~Wang, \emph{Quantum dynamical bounds for long-range operators with skew-shift potentials}, arXiv:2411.00176.

\bibitem{MS}
T.~McKenzie, M.~Sabri, \emph{Quantum ergodicity for periodic graphs}, Comm. Math. Phys. \textbf{403} (2023), 1477--1509.

\bibitem{YZ20}
Y. Mi and Z. Zhao, \emph{Dispersive estimate for two-periodic discrete one-dimensional Schr\"odinger operator},
J. Math. Anal. Appl., \textbf{485} (2020), no. 1, 123768.

\bibitem{YZ22}
Y. Mi and Z. Zhao, \emph{Dispersive estimates for periodic discrete one-dimensional Schr\"odinger operators},
Proc. Amer. Math. Soc., \textbf{150} (2022), 267--277.

\bibitem{NagGarg}
S.~Nag and A.~Garg, \emph{Many-body localization in the presence of long-range interactions and long-range hopping}, Phys. Rev. B, \textbf{99} (2019), 224203.

\bibitem{Olver}
F.W.J.~Olver, \emph{Error bounds for stationary phase approximations}. SIAM J. Math. Anal. \textbf{5} (1974), 19--29.

\bibitem{PKL+}
J.~L.~Padgett, E.~G.~Kostadinova, C.~D.~Liaw, K.~Busse, L.~S.~Matthews and T.~W.~Hyde, \emph{Anomalous diffusion in one-dimensional disordered systems: a discrete fractional Laplacian method}, J. Phys. A: Math. Theor. \textbf{53} (2020) 135205 (21pp).

\bibitem{Pi82}
L.D.~Pitt. \emph{An example of stability of singular spectrum under smooth perturbations}, Integral Eqns. and Op. Th. \textbf{5} (1982), 114--126.

\bibitem{Pon}
S.P.~Ponomarev, \emph{The $N^{-1}$-property of maps and Luzin's condition ($N$)},  Math Notes \textbf{58} (1995), 960--965.

\bibitem{RS}
C.~Radin and B.~Simon, \emph{Invariant Domains for the Time-Dependent Schr\"odinger Equation}, J. Diff. Eq. \textbf{29}, 289--296 (1978).


\bibitem{Rud}
W.~Rudin, \emph{Real and Complex Analysis}, Third Edition, McGraw-Hill 1986.

\bibitem{RuelleCMP}
D.~Ruelle, \emph{Statistical Mechanics
of a One-Dimensional Lattice Gas}, Comm.~Math.~Phys., \textbf{9} (1968), 267-278.

\bibitem{RuelleBook}
D.~Ruelle, \emph{Statistical Mechanics. Rigorous results}, Reprint of the 1989 edition, World Scientific Publishing Co., Inc., River Edge, NJ; Imperial College Press, London, 1999.


\bibitem{Ruiz}
S.~Ruiz, \emph{An Algebraic Identity Leading to Wilson's Theorem}, Math. Gaz. \textbf{80} (1996), 579--582.

\bibitem{SabriY-23}
M.~Sabri and P.~Youssef, \emph{Flat bands of periodic graphs}, J. Math. Phys. \textbf{64} (2023), no. 9.

\bibitem{SS}
M.~Shamis and S.~Sodin, \emph{Upper bounds on quantum dynamics in arbitrary dimension}, J. Funct. Anal. \textbf{285} (2023), 110034.

\bibitem{Shi}
Y.~Shi, \emph{A multi-scale analysis proof of the power-law localization for random operators on $\Z^d$}, J. Differential Equations \textbf{297} (2021), 201--225.

\bibitem{Si96}
B.~Simon, \emph{Operators with singular continuous spectrum, VI: Graph Laplacians and Laplace--Beltrami operators}, Proc. Amer. Math. Soc. \textbf{124} (1996), 1177--1182.

\bibitem{Ste}
E.M.~Stein, \emph{Harmonic Analysis: Real-Variable Methods, Orthogonality, and Oscillatory Integrals},
Princeton University Press, Princeton, NJ, 1993.

\bibitem{Taka}
M.~Takahashi, \emph{Thermodynamics of One-Dimensional Solvable Models}, CUP 1999.

\bibitem{TaeuferPeyerimhoff}
M.~T\"aufer and N.~Peyerimhoff, \emph{Eigenfunctions and the integrated density of states on Archimedean tilings}, J. Spectr. Theory \textbf{11} (2021), 461--488.

\bibitem{Unt}
J.~Unterberger, \emph{Stochastic calculus for fractional {B}rownian motion with
              {H}urst exponent {$H>\frac 14$}: a rough path method by
              analytic extension}, Ann. Probab. \textbf{37} (2009), 565--614.

\bibitem{Wil}
C.H.~Wilcox, \emph{Theory of Bloch Waves}, J. Analyse Math. \textbf{33} (1978), 146--167.

\bibitem{Woe}
W.~Woess, \emph{Random walks on infinite graphs and groups}, CUP 2000.

\bibitem{WraggGehring}
M.~J.~Wragg and G.~A.~Gehring, \emph{The Ising model with long-range ferromagnetic interactions}, J.~Phys.~A.: Math.~and~Gen., \textbf{23} (1990), 2157.

\bibitem{Zyg}
A.~Zygmund, \emph{Trigonometric Series}, Third Edition, Volumes I\&II combined, CUP 2002.

\end{thebibliography}
\end{document}